\patchcmd{\abstract}{\null\vfil}{}{}{}
\theoremstyle{plain}
\newtheorem{theorem}{Theorem}
\newtheorem{proposition}[theorem]{Proposition}
\newtheorem{lemma}[theorem]{Lemma}
\theoremstyle{definition}
\newtheorem{definition}[theorem]{Definition}
\theoremstyle{remark}
\newtheorem{remark}[theorem]{Remark}
\numberwithin{equation}{section}
\numberwithin{theorem}{section}
\def\bs{\boldsymbol}
\def\mrm{\mathrm}
\def\msf{\mathsf}
\def\mcl{\mathcal}
\def\mbb{\mathbb}
\def\mbf{\mathbf}
\let\P\undefined
\let\exp\undefined
\let\log\undefined
\def\d{\mathop{}\!\mrm d}
\def\Leb{\mathop{}\!\msf{Leb}}
\def\deg{\mathop{}\!\msf{deg}}
\DeclarePairedDelimiterXPP\P[1]{\mathop{}\!\mbb{P}}{(}{)}{}{#1}
\DeclarePairedDelimiterXPP\E[1]{\mathop{}\!\mbb{E}}{[}{]}{}{#1}
\DeclarePairedDelimiterXPP\1[1]{\mathop{}\!\mathbbm{1}}{\{}{\}}{}{#1}
\DeclarePairedDelimiterXPP\Poi[1]{\mathop{}\!\msf{Poi}}{(}{)}{}{#1}
\DeclarePairedDelimiterXPP\Var[1]{\mathop{}\!\msf{Var}}{(}{)}{}{#1}
\DeclarePairedDelimiterXPP\Cov[1]{\mathop{}\!\msf{Cov}}{(}{)}{}{#1}
\DeclarePairedDelimiterXPP\exp[1]{\mathop{}\!\msf{exp}}{(}{)}{}{#1}
\DeclarePairedDelimiterXPP\log[1]{\mathop{}\!\msf{log}}{(}{)}{}{#1}
\def\f{\frac}
\def\e{\mrm{e}}
\def\w{\wedge}
\def\any{\,\bs\cdot\,}
\def\es{\varnothing}
\def\su{\subseteq}
\def\co{\colon}
\def\sm{\setminus}
\def\ff{\infty}
\def\tff{\uparrow\ff}
\def\given{\nonscript\:\vert\nonscript\:\mathopen{}}
\def\biggiven{\nonscript\:\big\vert\nonscript\:\mathopen{}}
\def\Biggiven{\nonscript\:\Big\vert\nonscript\:\mathopen{}}
\def\bigggiven{\nonscript\:\bigg\vert\nonscript\:\mathopen{}}
\def\ti{\times}
\def\AA{\mcl{A}}
\def\BB{\mcl{B}}
\def\DD{\mcl{D}}
\def\FF{\mcl{F}}
\def\HH{\mcl{H}}
\def\PP{\mcl{P}}
\def\UU{\mcl{U}}
\def\XX{\mcl{X}}
\def\wt{\widetilde}
\def\le{\leqslant}
\def\ge{\geqslant}
\DeclarePairedDelimiter{\norm}{\lVert}{\rVert}
\DeclarePairedDelimiter{\abs}{\lvert}{\rvert}
\DeclarePairedDelimiter{\floor}{\lfloor}{\rfloor}
\DeclarePairedDelimiter{\set}{\{}{\}}
\def\R{\mbb R}
\def\J{\mbb J}
\def\N{\mbb N}
\def\S{\mbb S}
\def\T{\mbb T}
\def\V{\mbb V}
\def\Z{\mbb Z}
\def\a{\alpha}
\def\b{\beta}
\def\g{\gamma}
\def\Ga{\Gamma}
\def\de{\delta}
\def\De{\Delta}
\def\eps{\varepsilon}
\def\k{\kappa}
\def\la{\lambda}
\def\La{\Lambda}
\def\om{\omega}
\def\r{\rho}
\def\t{\tau}
\def\k{\kappa}
\def\m{\mu}
\def\s{\sigma}
\def\Ns{N_{\msf{s}}}
\def\Nt{N_{\msf{t}}}
\def\Nl{\mbf{N}_{\msf{loc}}}
\def\So{\overline{S}}
\def\pp{\bs{p}}
\def\pw{\wt{p}}
\def\Gb{G^{\msf{bip}}}
\def\PPP{\mathop{}\!\msf{PPP}}
\newcommand{\dell}[1][]{\ifthenelse{\equal{#1}{}}{\mathop{}\!\mbb{P}_L(\mrm{d} \ell)}{\mathop{}\!\mbb{P}_L(\mrm{d} \ell_{\mathnormal{#1}})}}
\newcommand{\dbdell}[1][]{\ifthenelse{\equal{#1}{}}{\mathop{}\!\m_\msf{t}(\mrm{d}(b, \ell))}{\mathop{}\!\m_\msf{t}(\mrm{d}(b_{#1}, \ell_{#1}))}}
\newcommand{\mdp}[1][]{\ifthenelse{\equal{#1}{}}{\mathop{}\!\m(\mrm{d} p)}{\mathop{}\!\m(\mrm{d} p_{\mathnormal{#1}})}}
\newcommand{\ps}[1][]{\ifthenelse{\equal{#1}{}}{p_\mathnormal{\msf{s}}}{p_{{\mathnormal{\msf s}},#1}}}
\newcommand{\pt}[1][]{\ifthenelse{\equal{#1}{}}{p_\mathnormal{\msf{t}}}{p_{{\mathnormal{\msf s}},#1}}}
\newcommand{\bsps}[1][]{\ifthenelse{\equal{#1}{}}{\bs{p}_\msf{s}}{\bs{p}_{\msf{s},#1}}}
\newcommand{\bspt}[1][]{\ifthenelse{\equal{#1}{}}{\bs{p}_\msf{t}}{\bs{p}_{\msf{t},#1}}}
\newcommand{\dpt}[1][]{\mathop{}\!\m_\msf{t}(\mrm{d} \pt[#1])}
\newcommand{\Ps}[1][]{\ifthenelse{\equal{#1}{}}{P_\mathnormal{\msf{s}}}{P_{\mathnormal{\msf{s},#1}}}}
\newcommand{\Pt}[1][]{\ifthenelse{\equal{#1}{}}{P_\mathnormal{\msf{t}}}{P_{\mathnormal{\msf{t},#1}}}}
\DeclareRobustCommand{\quote}[1]{\textquotedblleft\ignorespaces#1\ignorespaces\textquotedblright}
\pgfplotsset{compat=1.18}
\title{Functional limit theorems for edge counts in dynamic random connection hypergraphs}
\author[C. Hirsch]{Christian Hirsch $^{1,2}$}
\email{hirsch@math.au.dk}
\author[B. Jahnel]{Benedikt Jahnel $^{3,4}$}
\email{benedikt.jahnel@tu-braunschweig.de}
\author[P. Juhasz]{Peter Juhasz $^1$}
\email{peter.juhasz@math.au.dk}
\address{$^1$Department of Mathematics, Aarhus University, Aarhus, Denmark}
\address{$^2$DIGIT Center, Aarhus University, Aarhus, Denmark}
\address{$^3$Department of Mathematics, Technische Universit\"at Braunschweig, Braunschweig, Germany}
\address{$^4$Weierstrass Institute for Applied Analysis and Stochastic, Berlin, Germany}
\begin{document}

\begin{abstract}
    We introduce a dynamic random hypergraph model constructed from a bipartite graph.
    In this model, both vertex sets of the bipartite graph are generated by marked Poisson point processes.
    Vertices of both vertex sets are equipped with marks representing their weight that influence their connection radii.
    Additionally, we assign the vertices of the first vertex set a birth-death process with exponential lifetimes and the vertices of the second vertex set a time instant representing the occurrence of the corresponding vertices.
    Connections between vertices are established based on the marks and the birth-death processes, leading to a weighted dynamic hypergraph model featuring power-law degree distributions.
    We analyze the edge-count process in two distinct regimes.
    In the case of finite fourth moments, we establish a functional central limit theorem for the normalized edge count, showing convergence to a Gaussian AR(2)-type process as the observation window increases.
    In the challenging case of the heavy-tailed regime with infinite variance, we prove convergence to a novel stable process that is not L\'evy and not even Markov.
\end{abstract}

\maketitle

\subsection*{Keywords} Poisson point process; random graph; functional limit theorem; stable process; hypergraph; bipartite graph

\thispagestyle{empty}

\section{Introduction}\label{sec:introduction}

In the last 20 years, network science has evolved into a vibrant field of research, with applications in many areas such as biology, sociology, computer science, and physics.
This is not surprising due to the fundamental nature of the idea that many systems are essentially described by a system of nodes together with a set of links indicating which nodes interact with one another~\cite{complex}.
The success of network science is based on the observation that a broad variety of networks share some fundamental features, such as being scale-free (existence of hubs) and exhibiting strong clustering.

One of the widely used models in network science is the \emph{preferential attachment model} where nodes arrive over time and connect more likely to nodes with an already high degree~\cite{barabasi}.
However, while this model has a compelling narrative and is able to reproduce the scale-free property, its most basic versions lack the crucial feature of clustering.
While several solutions have been proposed to address this weakness, one of the most elegant and powerful ones is the idea to embed the components in some ambient space and make the connection probabilities distance dependent.
Then, the triangle inequality from the ambient space implies the desired clustering, and this idea led to the development of the spatial preferential attachment models~\cite{jacMor1,jacMor2}.

While spatial preferential attachment exhibits many of the qualitative features of real complex networks, they are notoriously hard to analyze from a rigorous mathematical perspective.
This is because the existence of edges can only be understood through following the complex time-dynamics of the evolving networks.
Due to these disadvantages, recently the class of \emph{spatial inhomogeneous random graphs} or \emph{weight-dependent random connection models} has gained popularity~\cite{komjathy,gracar2022recurrence,glm,glm2}.
Loosely speaking, the connection probabilities are here not driven by the actual node degrees but rather by their conditional expected values given their birth time.
This is a dramatic simplification, as now the edge connection event can be determined entirely from the knowledge of the position and the appearance times of the nodes involved in the pair.

Despite the success of network science, with the rise of the amount and complexity of data available, it is now apparent that in many settings a simple network with binary interactions is insufficient, and we need to take into account \emph{higher-order interactions}.
This urgent need has been a main driver of the recent research effort in \emph{hypergraphs} and \emph{simplicial complexes}~\cite{hon,bianconi,edHar,yvinec}.
For instance, in economics, several companies might simultaneously be exposed to the same risk, or in neuroscience, entire groups of neurons need to fire to create an effect~\cite{scaff,curto}.
Finally, in social sciences, a prototypical example is collaboration networks where several researchers work together on a paper~\cite{scolamiero,carstens,patania}.

Due to the explosion of complexities, modeling such higher-order networks is much harder than modeling binary networks.
Inspired by ensembles from statistical physics, a powerful class of such models is proposed in~\cite{flavor}, but it is challenging to analyze from a rigorous mathematical perspective~\cite{sulzbach}.
The generic model is the \emph{multiparameter simplicial complex} that is a higher-order extension of the Erd\H{o}s--R\'enyi model~\cite{nanda}, but it lacks the scale-free property.
However, as noted before, as in the case of binary networks, enforcing clustering properties can be elegantly implemented by embedding nodes in an ambient space.
In an earlier work, we investigated such systems when higher-order networks are given by cliques in scale-free networks~\cite{hj23}, and, in~\cite{juh}, we introduced a novel spatial model called \emph{random connection hypergraphs}, which is based on a bipartite Poisson point process.

A particular challenge in analyzing higher-order networks is that they change over time.
For instance, the topology of the neuronal network changes through the process of learning, while in scientific collaboration networks, PhD students enter the network when completing their first paper and leave it when they retire.
There are currently highly active research streams in topological data analysis with the goal of providing a toolkit to track the topological changes over time, e.g., vines and zigzag persistence~\cite{vine1, vine2, vine3}.
The goal in this paper is to endow the random connection model from~\cite{juh} with a time dynamics and to prove functional central and stable limit theorems in this setting.
Here, we focus our attention on the most fundamental quantity, namely the edge count of the underlying bipartite graph.

Before we describe more precisely our results, we comment on the existing literature of functional limit results in time-varying models.
The earliest investigations in this context consider higher-order networks defined on a discrete point set, such as the clique complex associated with a dynamic Erd\H{o}s--R\'enyi graph or the multiparameter simplicial complex~\cite{clique,gugan}.
In these works, topological quantities such as Betti numbers and Euler characteristics are considered.
However, since the networks do not exhibit the occurrence of hubs, we only see light-tailed contributions, and therefore the limits are Gaussian processes, or more precisely Ornstein--Uhlenbeck processes in the case of~\cite{clique}.
Additionally, a major disadvantage of these works is that the node set is entirely discrete.
More recently, however, spatial models were also considered~\cite{onaran,fclt}.
Here, the dynamics are both of movement and of birth-death type, and the statistics are of local nature.
However, again, since the models do not allow for the occurrence of hubs, all limits are Gaussian.

In our setting, we are able to go beyond this limiting behavior and prove two main results.
First, a functional central limit theorem (CLT) for the normalized edge count in growing domains when the fourth moment of the degree distribution is finite.
Second, a convergence of the normalized edge count to a certain stable process.
While this process is not L\'evy and not even Markov, we nevertheless provide a specific representation.
In both limits, the main methodological challenge is to deal with the long-range correlations and heavy tails coming from the hub nodes.
We now discuss briefly the proof ideas with further details provided in Sections~\ref{sec:proof_univariate_normal}, \ref{sec:proof_multivariate_normal}, \ref{sec:edge_count_gaussian}, and~\ref{sec:edge_count_stable} below.

We start with the setting of the finite fourth moments.
Here, we proceed via the classical two steps, namely convergence of the finite-dimensional marginals and tightness.
Already for the finite-dimensional marginals, the methodology from~\cite{onaran} breaks down as our functionals are not local, and we cannot rely on the stabilization CLTs from~\cite{yukCLT}.
Hence, our approach is to use instead the Malliavin--Stein normal approximation from~\cite{normalapproximation}, which involves delicate higher-order moment computations.
For the tightness, we essentially use the moment-based criterion from~\cite{billingsley}.
The fourth moments are approached using cumulant computations.
However, since our functional can be represented as a difference of monotone functionals, a refinement due to~\cite{davydov1996weak} allows us to simplify the tightness proof.

In the case of infinite variance, we use a different approach.
We distinguish between lightweight and heavyweight nodes.
First, we show that the lightweight nodes are negligible.
The difficulty here is that the negligibility needs to be established in the process sense, and the involved processes are not martingales.
Hence, we refine our tightness computations to achieve this goal.
Then, it remains to deal with the heavy nodes.
Since the edge count is Poisson conditioned on the weight, it concentrates around its conditional mean.
This allows us to pass from a space-time process to a classical time-varying stochastic process.
While the limit process is not L\'evy, the classical proof machinery for L\'evy-type convergence from~\cite{heavytails} is flexible enough to also be applicable in our more involved setting.
More precisely, we first show that the edge count converges to a stable process with index $\a \in (1, 2)$.
The proof of the convergence is based on the method of moments, and the tightness is shown using the method of characteristic functions.
To summarize, the main contributions of our work are as follows.
\begin{enumerate}
    \item
        We propose a dynamic version of the random connection hypergraph model from~\cite{juh} where nodes are born and die over time.
        This model therefore features both higher-order interactions, spatial effects, and scale-free degree distributions.
    \item
        In the case of finite fourth moments of the degree distribution, we establish the convergence of the edge count to a Gaussian process of AR(2)-type.
        Here, the mechanism of the random connection hypergraphs induces long-range dependencies that are absent in the models considered in the literature so far.
        This requires us to consider normal approximation techniques that are completely different from the ones used in earlier works.
    \item
        Our work is the first to establish a functional limit theorem for the edge count in the particularly challenging case of infinite variance of the degree distribution.
        The process limit is a stable process that is not L\'evy and not even Markov.
        Loosely speaking, it can be considered as a variant of a L\'evy process where the noise variables have a finite lifetime.
\end{enumerate}

While the focus of the investigation in this paper is on the case of the edge count, we are convinced that the methods from our work will also be useful to study more complex functionals such as simplex counts or Betti numbers.
We leave these topics for future investigations.

The rest of the paper is organized as follows.
In Section~\ref{sec:model}, we introduce the considered model and present our main results.
In Sections~\ref{sec:proof_univariate_normal},~\ref{sec:proof_covariance_function}, and~\ref{sec:proof_multivariate_normal} we give the proofs of Propositions~\ref{prop:univariate_normal},~\ref{prop:covariance_function_of_Snt}, and~\ref{prop:multivariate_normal}, respectively.
Next, in Sections~\ref{sec:edge_count_gaussian} and~\ref{sec:edge_count_stable} we present the outlines of the proofs of Theorems~\ref{thm:functional_normal} and~\ref{thm:functional_stable}, respectively.
In the final part of the paper, we exhibit the proofs of the minor propositions and lemmas used in the main proofs presented in the previous part of the paper.
In Sections~\ref{sec:minor_lemmas} we present the preliminary lemmas used frequently in the later part of the paper.
Section~\ref{sec:proofs_univariate_multivariate} contains the proofs of the lemmas used for Propositions~\ref{prop:univariate_normal},~\ref{prop:covariance_function_of_Snt}, and~\ref{prop:multivariate_normal}.
Section~\ref{sec:proofs_functional_normal} gives the proofs of the lemmas used to show Theorem~\ref{thm:functional_normal}.
In Section~\ref{sec:proofs_functional_stable}, we present the proofs of the lemmas used to prove Theorem~\ref{thm:functional_stable}.
Finally, in Section~\ref{sec:proofs_minor_lemmas} we conclude the paper by proving the lemmas presented in Section~\ref{sec:minor_lemmas}.

\section{Model and main results}\label{sec:model}

In this section, we present the \emph{dynamic random connection hypergraph model} (DRCHM) and state our main results.
First, consider two independent Poisson point processes
\[ \PP \su \S \ti \T := (\R \ti [0, 1]) \ti (\R \ti \R_+) \qquad \text{and} \qquad \PP' \su \S \ti \R, \]
that we refer to as \emph{vertices} and \emph{interactions}, respectively.
We will refer to the four random variables characterizing a vertex $P := (X, U, B, L) \in \PP$ as the \emph{position}, \emph{weight}, \emph{birth time}, and \emph{lifetime} of the vertex $P$, respectively.
Similarly, in the case of an interaction $P' := (Z, W, R) \in \PP'$, the three real random variables denote the \emph{position}, \emph{weight}, and \emph{time} of the interaction $P'$.
If the point $P$ or $P'$ is decorated with some indices, its coordinates $X, U, B, L$ or $Z, W, R$ receive the same indices as well.
Whenever we refer to a nonrandom point, we will use the notations $p := (x, u, b, \ell)$ and $p' := (z, w, r)$.
We emphasize that the above symbols are reserved for denoting the coordinates of vertices and interactions, and will be used throughout the remainder of the paper without further explicit definition.
A pair of vertices $P \in \PP$ and $P' \in \PP'$ are connected in the \emph{bipartite graph} $\Gb := \Gb(\PP, \PP')$ if and only if the following two conditions hold:
\begin{equation}
    \abs{X - Z} \le \b U^{-\g} W^{-\g'} \qquad \textrm{and} \qquad B \le R \le B + L,
    \label{eq:wrchm}
\end{equation}
where $\b > 0$ and $\g, \g' \in (0, 1)$ are real parameters.
In words, vertices and interactions share an edge based on the smallness of their spatial distance compared to the product of their associated weights, with small weights favoring connections, and under the constraint that the time of the interaction is within the lifetime of the vertex.
Note that the parameter $\b$ influences the expected number of edges in $\Gb$, and, as it will be shown later, the parameters $\g, \g'$ determine the power-law exponents of the degree distribution of the vertices and interactions.
In the following, the parameter $\b > 0$ is chosen arbitrarily, therefore we will not mention it explicitly throughout the presentation of the results.
The connection condition~\eqref{eq:wrchm} is a natural extension of the age-dependent random connection models from~\cite{glm2,glm}.
The spatial connection condition in~\eqref{eq:wrchm} is visualized in Figure~\ref{fig:spatial_connection_condition}.
Additionally, Figure~\ref{fig:temporal_connection_condition} illustrates the temporal connection condition of the DRCHM, where the horizontal axis represents the time, and the vertical axis represents the position.
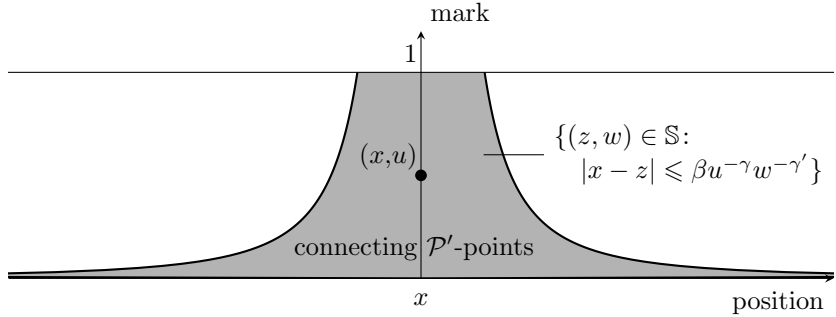
\begin{figure} [h] \centering
    \begin{tikzpicture}
        \begin{axis}[
                scale=1.6,
                axis lines=middle, 
                axis on top=true, 
                unit vector ratio*=1 1 1, 
                xmin=-2, xmax=2,
                ymin=0, ymax=1.2,
                tick style={draw=none},
                xtick={0.001},
                xticklabels={$x$},
                ytick={0.5, 1},
                yticklabels={$(x{,} u)$, $1$},
                xlabel=position,
                x label style={yshift=-0.6cm},
                ylabel=mark,
                y tick label style={
                        xshift=+1.5mm,
                        yshift=+2.5mm
                    },
                y label style={yshift=0.5cm},
            ]
            \addplot [black] {1};
            \addplot [black, only marks, mark size=2pt] coordinates {(0, 0.5)};
            \addplot [black,   thick, samples=100, domain=0.0:1.0, name path=left   ] (+0.1 * x^(-0.5) * 0.2^(0.3 - 1), x); 
            \addplot [black,   thick, samples=100, domain=0.0:1.0, name path=right  ] (-0.1 * x^(-0.5) * 0.2^(0.3 - 1), x); 
            \addplot [black!30]   fill between [of=left and right,   reverse=true,];
            \draw[-, shorten >=0.1mm, shorten <=0.1mm] (axis cs:+0.6   , 0.60) node[right, align=left  , anchor=west ] {
                $\set{ (z, w) \in \S \co$ \\ \quad $\abs{x - z} \le \b u^{-\g} w^{-\g'}}$} -- (axis cs:+0.3, 0.6);
            \draw[-, shorten >=0.1mm, shorten <=0.1mm] (axis cs:-0.66  , 0.15) node[right, align=left  , anchor=west ] {connecting $\PP'$-points};
        \end{axis}
    \end{tikzpicture}
    \caption{
        The spatial connection condition of the DRCHM.
        The horizontal axis represents the position of a vertex $P \in \PP$, and the vertical axis represents the mark of $P$.
        The shaded area indicates the set of points $(z, w) \in \S$ that connect to $P$ based on the connection Condition~\eqref{eq:wrchm}.}
    \label{fig:spatial_connection_condition}
\end{figure}
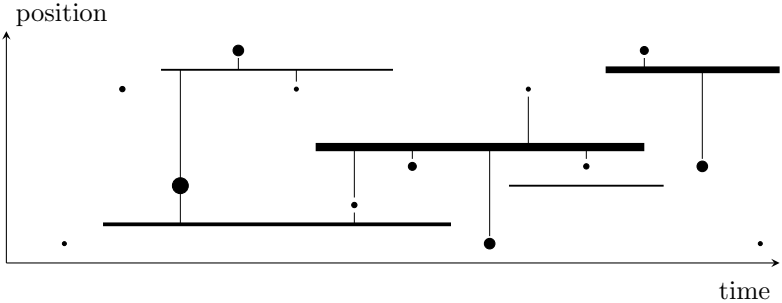
\begin{figure} [h] \centering
    \begin{tikzpicture}
        \begin{axis}[
                scale=1.5,
                axis lines=middle, 
                axis on top=true, 
                unit vector ratio*=1 1 1, 
                xmin=0, xmax=4,
                ymin=0, ymax=1.2,
                tick style={draw=none},
                xtick=\empty,
                ytick=\empty,
                xlabel=time,
                x label style={yshift=-0.6cm},
                ylabel=position,
                y tick label style={
                        xshift=+1.5mm,
                        yshift=+2.5mm
                    },
                y label style={yshift=0.5cm},
            ]
            \addplot [black, domain=0.8:2.0, line width=0.25mm] {1};
            \addplot [black, domain=3.1:4.0, line width=0.90mm] {1};
            \addplot [black, domain=1.6:3.3, line width=1.10mm] {0.6};
            \addplot [black, domain=2.6:3.4, line width=0.25mm] {0.4};
            \addplot [black, domain=0.5:2.3, line width=0.50mm] {0.2};
            \addplot [black, only marks, mark size=0.75pt] coordinates {
                (0.3, 0.1)
                (1.5, 0.9)
                (2.7, 0.9)
                (3.9, 0.1)
            };
            \addplot [black, only marks, mark size=1pt] coordinates {
                (0.6, 0.9)
                (1.8, 0.3)
                (3.0, 0.5)
            };
            \addplot [black, only marks, mark size=3pt] coordinates {
                (0.9, 0.4)
            };
            \addplot [black, only marks, mark size=1.5pt] coordinates {
                (2.1, 0.5)
                (3.3, 1.1)
            };
            \addplot [black, only marks, mark size=2pt] coordinates {
                (1.2, 1.1)
                (2.5, 0.1)
                (3.6, 0.5)
            };
            \draw[shorten <=1mm, line width=0.1mm] (axis cs:0.9, 0.4) -- (axis cs:0.9, 0.2);
            \draw[shorten <=1mm, line width=0.1mm] (axis cs:0.9, 0.4) -- (axis cs:0.9, 1.0);
            \draw[shorten <=1mm, line width=0.1mm] (axis cs:1.2, 1.1) -- (axis cs:1.2, 1.0);
            \draw[shorten <=1mm, line width=0.1mm] (axis cs:1.5, 0.9) -- (axis cs:1.5, 1.0);
            \draw[shorten <=1mm, line width=0.1mm] (axis cs:1.8, 0.3) -- (axis cs:1.8, 0.2);
            \draw[shorten <=1mm, line width=0.1mm] (axis cs:1.8, 0.3) -- (axis cs:1.8, 0.6);
            \draw[shorten <=1mm, line width=0.1mm] (axis cs:2.1, 0.5) -- (axis cs:2.1, 0.6);
            \draw[shorten <=1mm, line width=0.1mm] (axis cs:2.5, 0.1) -- (axis cs:2.5, 0.6);
            \draw[shorten <=1mm, line width=0.1mm] (axis cs:2.7, 0.9) -- (axis cs:2.7, 0.6);
            \draw[shorten <=1mm, line width=0.1mm] (axis cs:3.0, 0.5) -- (axis cs:3.0, 0.6);
            \draw[shorten <=1mm, line width=0.1mm] (axis cs:3.3, 1.1) -- (axis cs:3.3, 1.0);
            \draw[shorten <=1mm, line width=0.1mm] (axis cs:3.6, 0.5) -- (axis cs:3.6, 1.0);
        \end{axis}
    \end{tikzpicture}
    \caption{
        The temporal connection condition of the DRCHM based on~\eqref{eq:wrchm}.
        The horizontal axis represents the time, and the vertical axis represents the position.
        $\PP$-vertices are represented by intervals of different lengths and widths, where the width of the interval corresponds to the weight of the vertex.
        $\PP'$-vertices are represented by points, and the size of the point corresponds to the weight of the vertex.
        Vertical lines mark if a pair of $\PP$- and $\PP'$-vertices is connected.
    }
    \label{fig:temporal_connection_condition}
\end{figure}

We assume that the Poisson point process $\PP$ is stationary in the first and third components, the second component is uniformly distributed, and the fourth component is distributed according to $\mbb{P}_L$, i.e., the intensity measure is given by
\[ \mdp = \m(\d x, \d u, \d b, \d \ell) := \d (x, u, b) \dell, \]
and assume that $\PP'$ is also stationary with intensity $\d (z, w, r)$.

Next, for a point $p := (x, u, b, \ell) \in \PP$, we define the spatial part $\ps := (x, u)$ and the temporal part $\pt := (b, \ell)$ of~$p$, together with their corresponding measures $\d \ps := \d (x, u)$ and $\dpt := \dbdell = \d b \dell$, respectively.
Similarly, for a point $p' := (z, w, r) \in \PP'$, we define its spatial part $\ps' := (z, w)$ with the corresponding measure $\d \ps' := \d (z, w)$.

Furthermore, we consider the \emph{spatial neighborhood} $\Ns(\ps)$, the \emph{temporal neighborhood} $\Nt(\pt; t)$, and the general \emph{neighborhood} $N(p; t)$ as
\[ \begin{aligned}
    \Ns(\ps) &:= \set{(z, w) \in \S \co \abs{x - z} \le \b u^{-\g} w^{-\g'}} \\
    \Nt(\pt, t) &:= \set{r \in \R \co b \le r \le t \le b + \ell} \\
    N(p; t) &:= \Ns(\ps) \ti \Nt(\pt; t).
\end{aligned} \]
The \emph{degree} of $p$ at time $t$ is then defined as the number of interactions connecting to $p$ at time $t$, i.e.,
\[ \deg(p; t) := \sum_{P' \in \PP'} \1{P' \in N(p; t)}. \]
Using this, we define our central quantity of interest, the \emph{edge count} and the \emph{normalized edge count} as
\[ S_n(\any) := \sum_{P \in \PP \cap(\S_n \ti \T)} \deg(P; \any) \qquad \text{and} \qquad \So_n(\any) := n^{-1/2} (S_n(\any) - \E{S_n(\any)}), \]
respectively, where $\S_n := [0, n] \ti [0, 1]$ is an \emph{observation window} of size $n$.

Our first statement is about the univariate normal limit of the edge count for a fixed time point.
\begin{proposition}[Univariate normal limit of edge count]\label{prop:univariate_normal}
    Let $\g, \g' < 1/2$.
    Then, for all $t \in \R$, the normalized edge count $\So_n(t)$ converges weakly to a normal distribution, as $n \to \ff$.
\end{proposition}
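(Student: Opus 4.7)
The plan is to establish the univariate CLT by applying the Malliavin--Stein normal approximation bound for Poisson functionals from~\cite{normalapproximation}. The starting observation is that $S_n(t)$ is a bilinear functional of the two independent point processes $\PP$ and $\PP'$, namely
\begin{equation*}
    S_n(t) = \sum_{P \in \PP}\sum_{P' \in \PP'} \1{X \in [0, n], \, P' \in N(P; t)},
\end{equation*}
so its Wiener--It\^o chaos decomposition with respect to the marked superposition $\PP \cup \PP'$ terminates at second order. In particular, the second-order add-one cost $D^2_{p_1, p_2} S_n(t)$ vanishes unless the two added points lie in different processes, in which case it equals the \emph{deterministic} indicator $\1{x \in [0, n], \, p' \in N(p; t)}$, while the first-order add-one cost $D_p S_n(t)$ for $p \in \PP$ (and symmetrically $D_{p'} S_n(t)$ for $p' \in \PP'$) is, conditional on the other process, a Poisson random variable whose mean can be written down explicitly via the Mecke formula.

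I would then proceed in two steps. First, I would establish the variance asymptotics $\s_n^2 := \Var{S_n(t)} \sim c(t) n$ via the conditional variance decomposition $\Var{S_n(t)} = \E{\Var{S_n(t) \given \PP}} + \Var{\E{S_n(t) \given \PP}}$. Because each $\deg(P; t)$ is a Poisson with mean proportional to $U^{-\g}$ times a finite integral over the $\PP'$-weight, both terms reduce to integrals of the form $\int_0^1 u^{-k\g}\, \d u \cdot \int_0^1 w^{-k\g'}\, \d w$ with $k \in \{1, 2\}$, which are finite precisely when $\g, \g' < 1/2$, yielding the linear variance growth. Second, I would invoke the Malliavin--Stein bound, which controls the Wasserstein distance between $\So_n(t)/\s_n$ and a standard Gaussian by an expression of the form $C(\gamma_1 + \gamma_2 + \gamma_3)/\s_n^2$, where $\gamma_1$ and $\gamma_2$ are double integrals of $(\E{(D^2 S_n)^2 (D S_n)^2})^{1/2}$-type quantities over the ground measure, and $\gamma_3 = \int \E{|D S_n|^3}\, \d\m$. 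Since $D^2 S_n$ is deterministic and supported only on cross-pairs, $\gamma_1$ and $\gamma_2$ collapse to single integrals against $\E{(D S_n)^2}$; by the spatial stationarity and the explicit Poisson moment computations these are of order $n$, and hence $o(\s_n^4)$.

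The principal obstacle is the third error term $\gamma_3$, since $\E{(D_p S_n(t))^3}$ involves a factor of order $u^{-3\g}$, which fails to be integrable near $u = 0$ as soon as $\g \ge 1/3$. To handle this under the weaker hypothesis $\g < 1/2$, I would exploit the finite-order chaos structure of $S_n - \E{S_n} = I_1(f_n^{(1)}) + I_2(f_n^{(2)})$ and appeal to a fourth-moment-type normal approximation bound tailored to second-order Poisson Wiener--It\^o integrals: such a bound only requires $L^2$-integrability of the two kernels together with the vanishing of the contraction $\| f_n^{(2)} \star_1^1 f_n^{(2)} \|_{L^2}/\s_n^2 \to 0$ as $n \to \ff$. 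Both integrability and contraction decay follow from exactly the same weight integrals $\int_0^1 u^{-2\g}\, \d u < \ff$ and $\int_0^1 w^{-2\g'}\, \d w < \ff$ that appeared in the variance step, so the condition $\g, \g' < 1/2$ is precisely what is required.
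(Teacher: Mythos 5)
Your approach differs fundamentally from the paper's, and the suggested remedy for the $\gamma_3$ obstacle contains a genuine gap. The paper proves the univariate CLT via \emph{association}: it tiles the spatial window into unit blocks, sets $T_i := \sum_{P \in \PP \cap (\S_{[i-1,i]} \ti \T)} \deg(P; t)$, observes via Harris--FKG that the sequence $(T_i)$ is associated (because $\deg(P;t)$ is monotone in $\PP'$), and then invokes Newman's CLT for associated, stationary sequences. That theorem needs nothing beyond linear variance growth (Lemma~\ref{lem:mean_variance_of_Snt}) and summable covariances across blocks (Lemma~\ref{lem:covariance_of_Snt}), so $\g, \g' < 1/2$ is exactly enough and no higher-moment integral ever enters. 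The Malliavin--Stein route you propose is precisely what the paper reserves for the \emph{multivariate} statement (Proposition~\ref{prop:multivariate_normal}), and there it is applied only under the strictly stronger hypothesis $\g' < 1/3$ and only after the low-mark/high-mark truncation of Section~\ref{sec:proof_multivariate_normal}, precisely because the error term $E_3$ involves $\int u^{-3\g}\,\d u$.

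The step that does not hold up is the appeal to a ``fourth-moment-type normal approximation bound tailored to second-order Poisson Wiener--It\^o integrals'' requiring \emph{only} $L^2$-integrability of the kernels plus vanishing contraction. That characterization is correct on \emph{Gaussian} Wiener chaos (Nualart--Peccati), but it is not correct on the Poisson space. Poisson fourth-moment theorems (D\"obler--Peccati and their successors) require additional integrability of the kernels beyond $L^2$: controlling $\E{F_n^4}$ produces diagonal terms with integrands $(f_n^{(1)})^4$, $(f_n^{(1)})^3$, and mixed products, and for your first-order kernel $f_n^{(1)}(p) \propto u^{-\g}\1{x \in [0,n]}$ these give $\int_0^1 u^{-k\g}\,\d u$ with $k \in \{3,4\}$, which diverges for $\g \ge 1/3$ (resp.\ $\g \ge 1/4$). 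In other words, the obstacle you correctly diagnose in $\gamma_3$ resurfaces inside the fourth-moment bound rather than being circumvented by the finite-chaos structure. The bounded second-order kernel and the contraction condition are unproblematic, but that does nothing to save the first-order chaos. To run any Malliavin--Stein or fourth-moment argument all the way to $\g < 1/2$ you would have to import the paper's mark-truncation and prove the analogue of Lemma~\ref{lem:low_mark_edge_count_negligible}; for the one-dimensional marginal the association argument is both shorter and moment-cheap, which is why the paper uses it here.
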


The following statement provides the limiting covariance function in the thin-tailed case.
\begin{proposition}[Limiting covariance function of~$\So_n$] \label{prop:covariance_function_of_Snt}
    Let $\g, \g' < 1/2$.
    Then, for all $t_1 < t_2$, the limiting covariance function of the edge count $\So_n$ is given by
    \begin{equation}
        \lim_{n \tff} \Cov[\big]{\So_n(t_1), \So_n(t_2)} = \big( c_1 + c_3 + c_2 (2 + t_2 - t_1) \big) \e^{-(t_2 - t_1)},
        \label{eq:cov_func_total}
    \end{equation}
    where $c_1 = 2 \b / ((1 - \g) (1 - \g'))$, $c_2 = (2 \b)^2 / ((1 - 2 \g) (1 - \g')^2)$ and $c_3 = (2 \b)^2 / ((1 - \g)^2 (1 - 2 \g'))$.
\end{proposition}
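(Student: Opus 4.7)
The plan is to view the edge count $S_n(t)$ as the bilinear Poisson functional
\[
S_n(t) = \iint f_t(p, p') \PP(\d p) \PP'(\d p'), \qquad f_t(p, p') := \1{(x, u) \in \S_n} \1{p' \in N(p; t)},
\]
and to apply a second-order covariance identity for functionals of two independent Poisson point processes. Expanding the product $S_n(t_1) S_n(t_2)$ as a quadruple sum over $(P_1, P_1', P_2, P_2') \in (\PP \times \PP')^2$ and partitioning according to whether $P_1 = P_2$ and/or $P_1' = P_2'$, Campbell--Mecke calculus applied to each Poisson process separately (using the independence of $\PP$ and $\PP'$) produces the decomposition
\[
\Cov[\big]{S_n(t_1), S_n(t_2)} = I_1 + I_2 + I_3,
\]
with $I_1 = \iint f_{t_1} f_{t_2} \mdp \d p'$, $I_2 = \int \big(\int f_{t_1} \d p'\big)\big(\int f_{t_2} \d p'\big) \mdp$, and $I_3 = \int \big(\int f_{t_1} \mdp\big)\big(\int f_{t_2} \mdp\big) \d p'$; here $\d p' = \d z\, \d w\, \d r$ denotes the intensity of $\PP'$.

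Each integrand factors through $N(p; t) = \Ns(\ps) \times \Nt(\pt; t)$, and the $\ell$-integration reduces to $\P{L \ge s} = \e^{-s}$. Write $\De := t_2 - t_1$ and substitute $s = t_1 - b$. For $I_1$, the intersection length $|\Nt(\pt; t_1) \cap \Nt(\pt; t_2)| = (t_1 - b) \1{b \le t_1, \ell \ge t_2 - b}$ combined with $\P{L \ge t_2 - b} = \e^{-(t_2 - b)}$ collapses the temporal integral to $\int_0^\ff s \e^{-(s + \De)} \d s = \e^{-\De}$; the spatial factor $|\Ns((x, u))| = 2\b u^{-\g}/(1 - \g')$ integrated over $u \in [0, 1]$ contributes $1/(1 - \g)$, so $I_1/n \to c_1 \e^{-\De}$. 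For $I_2$, the squared spatial factor carries $u^{-2\g}$, forcing $\g < 1/2$ for integrability, while the temporal integral becomes $\int_0^\ff s(s + \De) \e^{-(s + \De)} \d s = (2 + \De) \e^{-\De}$, so $I_2/n \to c_2 (2 + \De) \e^{-\De}$. For $I_3$ one integrates over $p$ first: in the bulk $z \in [0, n]$, the $\d x$-integration yields $\approx 2\b u^{-\g} w^{-\g'}$, whence $\int f_t \mdp \sim (2\b/(1 - \g)) w^{-\g'} \e^{-(t - r)} \1{r \le t, \, z \in [0, n]}$; squaring then forces $\g' < 1/2$ via $\int_0^1 w^{-2\g'} \d w = 1/(1 - 2\g')$, and $\int_{-\ff}^{t_1} \e^{-(t_1 + t_2 - 2r)} \d r$ provides the remaining exponential in $\De$.

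The main technical obstacle lies in $I_3$, where the restriction $x \in [0, n]$ forces careful handling of the spatial boundary: when $z$ lies within distance $\b u^{-\g} w^{-\g'}$ of $\{0, n\}$ the effective neighborhood is truncated, and $u, w$ may be arbitrarily small. These boundary contributions are $O(1)$ uniformly in $n$, since $\int_0^1 u^{-\g} \d u$ and $\int_0^1 w^{-\g'} \d w$ are finite under $\g, \g' < 1$, so they vanish after dividing by $n$. A dominated convergence argument legitimizes the passage to the limit, and summing the three asymptotic contributions yields the claimed formula $(c_1 + c_3 + c_2(2 + \De)) \e^{-\De}$.
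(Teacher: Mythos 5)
Your chaos/$U$-statistic decomposition is a genuinely different route from the paper's, and it is a sound one. The paper slices by time, writing $S_n(t_1) = S_n^\msf{A} + S_n^\msf{B}$ and $S_n(t_2) = S_n^\msf{A} + S_n^\msf{C}$, then computes the three nonzero cross-terms via Lemma~\ref{lem:covariance_function_terms}. You instead treat $S_n(t)$ directly as a bilinear Poisson functional, partition the quadruple Mecke sum according to whether $P_1=P_2$ and $P_1'=P_2'$, and cancel the fully off-diagonal block against $\E{S_n(t_1)}\E{S_n(t_2)}$; this yields $I_1+I_2+I_3$ in one step and lets the spatial and temporal integrals factor more transparently. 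Your $I_1$ and $I_2$ evaluations are correct.

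There is, however, a gap in the final assembly. Your $I_3$ pieces are the spatial square $\big(2\b/(1-\g)\big)^2\, n/(1-2\g') = c_3\,n$ and the temporal factor $\int_{-\ff}^{t_1}\e^{-(t_1+t_2-2r)}\,\d r = \tfrac{1}{2}\e^{-(t_2-t_1)}$, whose product is $\tfrac{c_3}{2}\,n\,\e^{-(t_2-t_1)}$, not $c_3\,n\,\e^{-(t_2-t_1)}$. Summing your own three limits therefore gives $\big(c_1 + \tfrac{c_3}{2} + c_2(2+t_2-t_1)\big)\e^{-(t_2-t_1)}$, which does not reproduce~\eqref{eq:cov_func_total}. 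You must display the $I_3$ limit explicitly and confront this factor of two; the final sentence cannot assert that summation \quote{yields the claimed formula} when the intermediate integrals produce something else. (It is worth noting that the paper's own proof of Lemma~\ref{lem:covariance_function_terms} computes $I_\msf{s}\ti I_\msf{t} = c_3\cdot\tfrac{1}{2}\e^{-2(t_2-t_1)}$ for the covariance contribution to $\Var{\So_n^\msf{A}}$ while the lemma records $c_3\e^{-2(t_2-t_1)}$, so the factor you encounter appears to trace to the stated constants rather than to a flaw in your method --- but a proof must either recover the stated coefficient or flag the discrepancy, not silently absorb it.)

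A smaller point: dismissing the spatial boundary effects in $I_3$ as $O(1)$ uniformly in $n$ is not justified as stated. When $u$ and $w$ are small the connection radius $\b u^{-\g}w^{-\g'}$ can be comparable to $n$, so a macroscopic fraction of $[0,n]$ is near-boundary; the correct claim is that the boundary contribution is $o(n)$, not $O(1)$. The paper establishes precisely this in Lemma~\ref{lem:int_of_ns}~(\ref{lem:int_of_ns:common_p_0}) via a split of the window into a near-boundary layer of width $\eps n$ and an interior region; you should either cite that lemma or reproduce the split rather than rely on the one-line justification you offer.
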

\noindent
The limiting covariance function is an appropriate parameterization of the Mat\'ern covariance function~\cite{matern2013spatial} with smoothness parameter $\bar\nu = 3/2$, which is the covariance function of a continuous-time AR(2) process.

Next, we turn our attention to the multivariate normal limit of the edge counts.
\begin{proposition}[Multivariate normal limit of the edge count]\label{prop:multivariate_normal}
    Let $\g < 1/2$ and $\g' < 1/3$.
    Then, for all $k \in \Z_+$ and $t_1, \dots, t_k \in \R$, the vector of normalized edge counts $(\So_n(t_1), \dots, \So_n(t_k))$ converges weakly to a multivariate normal distribution, as $n \to \ff$.
\end{proposition}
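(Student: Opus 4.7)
The strategy is to reduce the multivariate convergence to a family of univariate statements via the Cramér--Wold device, and then invoke the Malliavin--Stein normal approximation of~\cite{normalapproximation} in exactly the spirit of the proof of Proposition~\ref{prop:univariate_normal}. Fix $k \in \Z_+$, times $t_1, \dots, t_k \in \R$, and coefficients $a_1, \dots, a_k \in \R$. Joint normal convergence of $(\So_n(t_1), \dots, \So_n(t_k))$ is equivalent to one-dimensional normal convergence of every linear combination
\[ Y_n := \sum_{i=1}^k a_i \So_n(t_i), \]
and since $Y_n$ is still a centered functional of $\PP$ and $\PP'$ normalized at the scale $n^{-1/2}$, the same normal approximation scheme is directly applicable.

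The limiting variance of $Y_n$ is
\[ \lim_{n \tff} \Var{Y_n} = \sum_{i, j = 1}^k a_i a_j c(t_i, t_j), \]
where $c(s, t) = (c_1 + c_3 + c_2 (2 + |t - s|)) \e^{-|t - s|}$ for $s \neq t$ comes from Proposition~\ref{prop:covariance_function_of_Snt} together with the symmetry of covariance, and $c(t, t) = c_1 + c_3 + 2 c_2$ on the diagonal is obtained either by continuity or by specializing the proof of Proposition~\ref{prop:covariance_function_of_Snt} to equal time points. Provided this limit is strictly positive (otherwise $Y_n$ converges to a constant and there is nothing to do), I apply the Malliavin--Stein bound to $Y_n / \sqrt{\Var{Y_n}}$. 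This reduces matters to estimating integrals of first- and second-order difference operators of $Y_n$ with respect to $\PP$ and $\PP'$. Expanding bilinearly in the coefficients $a_i$, each such integral decomposes into a finite sum of expressions structurally analogous to those treated in the univariate case, but now decorated with extra temporal indicators of the form $\1{b \le t_i \le b + \ell}$ for vertex increments and $\1{r \le t_i \le b + \ell}$ for interaction increments, recording which time slots the added point feeds into.

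The main technical obstacle is to verify the required higher-moment integrability under the sharpened hypothesis $\g' < 1/3$. The tightening of the admissible range from $\g' < 1/2$ in the univariate case reflects the fact that when a single vertex or a single interaction contributes to several time slots simultaneously, higher powers of the mark-dependent connection radius $\b u^{-\g} w^{-\g'}$ enter the integrals, and the sharpest constraint on integrability of the weight $W \in [0, 1]$ shifts from $2 \g' < 1$ to $3 \g' < 1$. A careful bookkeeping of which powers of $U$ and $W$ actually appear across the cross-time terms, and confirmation that each contributing exponent is integrable under $\g < 1/2$ and $\g' < 1/3$, is the only genuinely new ingredient compared to the univariate proof. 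Once these moment bounds are in place, the Malliavin--Stein theorem delivers the one-dimensional normal convergence of $Y_n$, and the Cramér--Wold device concludes the argument.
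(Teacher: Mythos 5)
Your proposed route via the Cramér--Wold device is a legitimate and arguably cleaner alternative to the paper's direct application of a multivariate Malliavin--Stein bound; the paper invokes~\cite[Theorem~1.1]{normalapproximation2} to control the $d_3$ distance of the whole vector at once, but reducing to linear combinations and applying the univariate bound would also be sound. However, there is a genuine gap in the proposal: you never introduce a mark truncation, and without one the Malliavin--Stein moment bounds do not close under the stated hypothesis $\g < 1/2$.

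Concretely, the Malliavin--Stein estimate requires controlling an error term of the type $\int \E[\big]{\abs{D_{\pw} Y_n}^3} \, \d\pw$. When $\pw = p \in \S \ti \T$, the add-one cost is Poisson with mean $\abs{N(p; t)}$, and its third moment is a polynomial in $\abs{N(p; t)}$ whose leading term is $\abs{N(p; t)}^3$. Integrating the spatial part over $\S_n$ produces $\int_0^1 u^{-3\g} \, \d u$, which diverges for $\g \ge 1/3$. You explicitly claim that \emph{``confirmation that each contributing exponent is integrable under $\g < 1/2$ and $\g' < 1/3$''} is the only new ingredient, but in fact this confirmation fails for $\g \in [1/3, 1/2)$ — it is precisely why the paper does not apply Malliavin--Stein to $\So_n$ directly. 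To reach the full range $\g < 1/2$, the paper first splits $S_n = S_n^\ge + S_n^\le$ at the $n$-dependent mark level $u_n = n^{-2/3}$, shows via Chebyshev that the normalized low-mark part $\So_n^\le$ is negligible (Lemma~\ref{lem:low_mark_edge_count_negligible}), and then applies Malliavin--Stein to $\So_n^\ge$, for which the problematic integral becomes $\int_{u_n}^1 u^{-3\g} \, \d u = O(u_n^{1-3\g})$; together with the $n^{-3/2}$ prefactor and the choice of $u_n$, this yields $E_3(n) \to 0$ for all $\g < 1/2$. Your diagnosis of the $\g'$-constraint (it indeed sharpens to $3\g' < 1$ through the third power of $w^{-\g'}$ in the interaction-side cost operator) is correct; the missing piece is the analogous cubic power of $u^{-\g}$ and the mark truncation that tames it.
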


Functional limit theorems play a key role in understanding the dynamic behavior of complex random systems.
In the context of dynamic higher-order networks, edge counts and their evolution in time describe dependencies between the heavy-tailed degree distribution and the temporal evolution of the vertices.
Thus, establishing functional limit theorems for the edge counts provides insights into the interplay between the spatial distribution and the temporal variability of these systems.
Let $X := \set{X(t) \co t \ge 0}$ denote a Gaussian process with \emph{AR(2)} covariance function
\[ \Cov{X(t_1), X(t_2)} = \big( c_1 + c_3 + c_2 (2 + \abs{t_2 - t_1}) \big) \e^{-\abs{t_2 - t_1}}, \]
where the constants are presented in Proposition~\ref{prop:covariance_function_of_Snt}.
Then, the first main result of this paper is the following.
\begin{theorem}[Functional normal limit of edge count]\label{thm:functional_normal}
    Let $\g, \g' < 1/4$.
    Then,
    \[ \So_n(\any) \xlongrightarrow[n \tff]{d} X(\any) \]
    in the Skorokhod space $D([0, 1], \R)$.
\end{theorem}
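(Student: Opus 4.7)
The plan is to follow the classical two-step strategy: first, establish convergence of the finite-dimensional marginals; second, prove tightness in the Skorokhod space $D([0, 1], \R)$.

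For the finite-dimensional marginals, the assumption $\g, \g' < 1/4$ implies the hypotheses $\g < 1/2$, $\g' < 1/3$ of Proposition~\ref{prop:multivariate_normal}. Hence for every $k \in \Z_+$ and every $0 \le t_1 < \dots < t_k \le 1$, the vector $(\So_n(t_1), \dots, \So_n(t_k))$ converges weakly to a centered multivariate normal, and Proposition~\ref{prop:covariance_function_of_Snt} identifies the limiting covariances as those of the Mat\'ern/AR(2) Gaussian process $X$. Thus the limit law is $(X(t_1), \dots, X(t_k))$.

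Tightness is the substantive step. The key observation is that $S_n$ fails to be monotone in $t$: when a vertex $P$ dies at time $B + L$ its accumulated degree drops out of the total, producing a downward jump. However, $S_n$ admits the decomposition $S_n = \snc - \snd$ as a difference of two monotone non-decreasing processes, where $\snc(t)$ counts all pairs $(P, P') \in (\PP \cap (\S_n \ti \TT)) \ti \PP'$ that are spatially connected via~\eqref{eq:wrchm} and have interaction time $R \in [B, t \w (B + L)]$, and $\snd(t)$ counts the analogous pairs such that the vertex has already died by time $t$ (i.e.\ $B + L \le t$). This representation enables the refinement of Davydov~\cite{davydov1996weak} of the Billingsley moment criterion~\cite{billingsley}: for a difference of monotone processes, tightness in $D$ follows from a single-increment moment bound, rather than the two-interval bound usually required for processes with jumps. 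Concretely, it suffices to exhibit $\delta > 0$ and $C < \ff$ such that, for all $0 \le s < t \le 1$, $n \ge 1$, and $i \in \{1, 2\}$,
\[ n^{-2} \E{\bigl(S_n^{(i)}(t) - S_n^{(i)}(s) - \E{S_n^{(i)}(t) - S_n^{(i)}(s)}\bigr)^{\!4}} \le C (t - s)^{1 + \delta}. \]

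The derivation of this fourth-moment bound is the main technical obstacle. Since $S_n^{(i)}(t) - S_n^{(i)}(s)$ is a second-order functional of the product Poisson process $\PP \ti \PP'$, I would expand the centered fourth moment via the cumulant formula for Poisson $U$-statistics, obtaining a sum of diagrams indexed by partitions of four incidence pairs; each diagram is a multiple integral against the intensity measure of the indicators of the connection conditions~\eqref{eq:wrchm}. The spatial integration produces factors of the form $u^{-\g}$ and $w^{-\g'}$, and the appearance of up to four such factors in the leading integrand forces precisely the integrability condition $\g, \g' < 1/4$, which is the finite fourth-moment hypothesis of the theorem. The temporal integration supplies the factor $(t - s)^{1+\delta}$ because, on the increment, at least one of the interaction times $R_i$ or death times $B_i + L_i$ must fall in an interval of length $t - s$; the stationarity of the birth times and the exponential lifetime distribution $\mbb{P}_L$ turn these constraints into clean factors of $t - s$, while the exponential decay of correlations in $L$ provides the room to take $\delta > 0$. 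A careful accounting of the $n$-scaling of each diagram, combined with repeated applications of the Mecke equation to the off-diagonal diagrams, shows that the normalization $n^{-2}$ absorbs the leading $O(n^2)$ contributions and leaves a bound of the desired form; the diagonal diagrams where all four pairs coincide are subleading. Combining this tightness with the finite-dimensional convergence from the first step yields convergence in $D([0, 1], \R)$ and completes the proof.
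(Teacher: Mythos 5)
You have the right overall strategy — finite-dimensional convergence from Propositions~\ref{prop:covariance_function_of_Snt}--\ref{prop:multivariate_normal} plus a decomposition into monotone processes and an appeal to Davydov's refinement — and your $\snc, \snd$ decomposition is essentially the paper's $S_n = S_n^+ - S_n^-$ (with one technical caveat: the paper restricts the outer sum to vertices with death time $B + L \ge 0$, i.e.\ $P \in \PP \cap (\S_n \ti \T^{0 \le})$; without that restriction $S_n^+$ and $S_n^-$ each have infinite expectation even though their difference is finite). The gap is in your statement of the tightness criterion.

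You claim it suffices to find $\delta > 0$ and $C < \ff$ so that the normalized centered fourth moment of the increment is at most $C(t - s)^{1 + \delta}$ for \emph{all} $0 \le s < t \le 1$ and all $n$. This cannot hold, and the obstruction is structural rather than model-specific. Write $\De_n^\pm(s, t) := S_n^\pm(t) - S_n^\pm(s)$ and split the centered fourth moment as $\k_4(\De_n^\pm(s,t)) + 3 \Var{\De_n^\pm(s,t)}^2$. The proof establishes $\Var{\De_n^\pm(s,t)} \in O(n(t-s))$ and $\k_4(\De_n^\pm(s,t)) \in O(n(t-s))$, so after dividing by $n^2$ the cumulant term contributes $O\big(n^{-1}(t-s)\big)$, and $n^{-1}(t-s) / (t-s)^{1+\delta} = n^{-1}(t-s)^{-\delta}$ is unbounded as $t - s \downarrow 0$ for every fixed $n$ and every $\delta > 0$. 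Intuitively, once $t - s$ is small enough that at most one event is likely to land in $(s, t]$, the increment is essentially a single Bernoulli jump and its fourth moment scales \emph{linearly} in $t - s$, not super-linearly.

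This is precisely what Davydov's Theorem~2 is designed to handle, and it is the reason the paper invokes it rather than the plain Billingsley criterion. The moment bound is required only for $\abs{t - s} \ge a_n$ for a vanishing sequence $a_n$ (the paper takes $a_n = n^{-1/(1+\eta)}$, under which $n(t-s) \le n^2 (t-s)^{1+\eta}$ does hold), and the short-time-scale behaviour is instead controlled by a separate third condition — that $\max_k \abs{\E{\So_n^\pm(t_{k+1})} - \E{\So_n^\pm(t_k)}} \to 0$ over the mesh $t_k = k a_n$ — which exploits the monotonicity of $S_n^\pm$ (Lemma~\ref{lem:expectation_of_increments}). You have omitted both the restriction $\abs{t-s} \ge a_n$ and this third condition; without them there is nothing controlling the oscillation of $\So_n^\pm$ on intervals shorter than $a_n$, and the argument does not close. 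You also apply Davydov only implicitly to $S_n^\pm$ but cite finite-dimensional convergence only for $\So_n$; Davydov's theorem requires Condition~(\ref{condition:finite_dimensional}) for the monotone processes $\So_n^\pm$ themselves, which the paper proves separately as Proposition~\ref{prop:davydov_condition_1}.
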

\noindent
For the precise definition of the Skorokhod metric $d_\msf{Sk}$, we refer the reader to Definition~\ref{def:skorokhod_metric}.
\begin{remark}
    We conjecture that Theorem~\ref{thm:functional_normal} could be proved for $\g \in (0, 1/2)$ instead of $\g \in (0, 1/4)$, using Lemma~\ref{lem:low_mark_edge_count_negligible} showing that the low-mark edge count converges to $0$ in probability if $\g < 1/2$.
    We refrain from presenting the proof in this version, since this would considerably lengthen the manuscript.
\end{remark}

When $\g > 1/2$, the variance of the edge count $\So_n(t)$ diverges as $n \to \ff$ for any $t \in \R$.
This means that a few vertices with very high degrees dominate the edge count, leading to a heavy-tailed distribution.
Thus, if $\g > 1/2$, the limiting process is not a Gaussian process.

To state our second main result, let $\nu$ be the measure on $\J := [0, \ff)$ defined by $\nu([\eps, \ff)) := ((2 \b) / (1 - \g'))^{1 / \g} \eps^{-1 / \g}$ and let $\PP_\ff$ denote the Poisson point process on $\J \ti \T$ with intensity measure $\nu \ti \Leb \ti \mbb{P}_L$.
Note that the component $\J$ corresponds to the limit of the appropriately scaled size of the spatial neighborhoods of the heaviest vertices in the sense detailed in Lemma~\ref{lem:convergence_to_levy} later.
Furthermore, define
\begin{equation}
    S_\eps^\ast(\any) := \sum_{(J, B, L) \in \PP_\ff} J (\any - B) \1{J \ge \wt{c} \eps^\g} \1{B \le \any \le B + L} \quad \text{and} \quad \So_\eps^\ast(\any) := S_\eps^\ast(\any) - \E{S_\eps^\ast(\any)},
    \label{eq:definition_S_eps_ast}
\end{equation}
where $\wt{c} = 2 \b / (1 - \g')$ and $\E{S_\eps^\ast(\any)} = \wt{c}^{1 / \g} \eps^{-(1/\g - 1)} / (1 - \g)$.
Then, we have the following stable limit theorem.
\begin{theorem}[Functional stable limit of edge count]\label{thm:functional_stable}
    Let $\g > 1/2$ and $\g' < 1/4$.
    Then, the limit $\So(\any) := \lim_{\eps \downarrow 0} \So_\eps^\ast(\any)$ exists in the Skorokhod space $D([0, 1], \R)$, and the centered edge-count process $\So_n(\any)$ converges weakly to the process $\So(\any)$ in the Skorokhod space $D([0, 1], \R)$.
\end{theorem}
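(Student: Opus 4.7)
The plan is to construct the limit $\So$ as a compensated Poisson stochastic integral against $\PP_\ff$, and then to reduce $\So_n$ to the contribution of only its heaviest vertices by using Poisson concentration to replace random edge counts by their conditional means; a L\'evy-type convergence of the heavy-vertex configuration identifies the two limits.

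For the existence of $\So$ in $D([0,1], \R)$, for $\eps' < \eps$ I would write $\So_{\eps'}^\ast - \So_\eps^\ast$ as a centered Poisson integral supported on the shell $J \in [\wt{c} (\eps')^\g, \wt{c} \eps^\g)$. The assumption $\g > 1/2$, equivalent to $\a := 1/\g < 2$, is precisely what ensures $\int_0^a j^2 \,\nu(\d j) < \ff$ for each $a > 0$, and Campbell's formula then gives the uniform bound $\sup_{t \in [0,1]} \E{\abs{\So_{\eps'}^\ast(t) - \So_\eps^\ast(t)}^2} \to 0$ as $\eps, \eps' \downarrow 0$. This yields finite-dimensional convergence of $\So_\eps^\ast$; tightness in the Skorokhod topology for the family $\{\So_\eps^\ast\}_{\eps > 0}$ follows from a modulus-of-continuity estimate obtained via the characteristic-function approach from~\cite{heavytails}, the L\'evy exponent of $\So_\eps^\ast(t+h) - \So_\eps^\ast(t)$ being explicitly computable from the L\'evy--Khintchine formula and uniformly controlled in $\eps$.

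For the convergence $\So_n \Rightarrow \So$, the starting point is a light/heavy decomposition of $\PP \cap (\S_n \ti \TT)$ at the weight threshold $U \le \eps$, so that after the rescaling $J := \wt{c} U^{-\g}$ the heavy vertices correspond to the atoms of $\PP_\ff$ with $J \ge \wt{c} \eps^\g$. The lightweight contribution is controlled by a process-level strengthening of Lemma~\ref{lem:low_mark_edge_count_negligible}, obtained through second-moment estimates for increments combined with the monotonicity-based tightness trick of Davydov and Wschebor already used in the proof of Theorem~\ref{thm:functional_normal}. For the heavy part, conditioning on the positions, weights, birth times, and lifetimes of the heavy vertices, the edge counts become conditionally independent Poisson variables with means $\wt{c} U_i^{-\g}(t - B_i) \1{B_i \le t \le B_i + L_i}$ up to small spatial-radius fluctuations; the hypothesis $\g' < 1/4$ supplies enough concentration of these fluctuations that the random counts can be replaced by their conditional means with a Skorokhod-negligible error. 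Lemma~\ref{lem:convergence_to_levy} then identifies the limiting distribution of the rescaled heavy-vertex configuration $\{(\wt{c} U_i^{-\g}, B_i, L_i)\}$ as $\PP_\ff$ restricted to $[\wt{c} \eps^\g, \ff) \ti \TT$, and the corresponding centered mean-sum converges in $D([0,1], \R)$ to $\So_\eps^\ast$.

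The hard part will be the tightness of $\So_n$: its variance diverges for $\g > 1/2$, so the moment-based criteria of Theorem~\ref{thm:functional_normal} are unavailable, and the finite lifetimes $L$ induce long-range dependence across disjoint time intervals, making $\So$ neither L\'evy nor Markov and putting standard semimartingale tightness tools out of reach. Adapting the characteristic-function machinery of~\cite{heavytails} to this non-Markov setting is the technical heart of the argument. Once the tightness and the finite-dimensional convergence are in hand, a standard double-limit procedure (first $n \to \ff$ with $\eps > 0$ fixed, then $\eps \downarrow 0$), combined with the negligibility of the lightweight part and the existence of $\So$ from the first step, concludes $\So_n \Rightarrow \So$ in $D([0,1], \R)$.
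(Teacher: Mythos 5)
Your high-level strategy — split at a weight threshold into lightweight and heavyweight nodes, show the lightweight part is process-negligible, reduce the heavy nodes to conditional Poisson means, identify the heavy-vertex configuration limit via Lemma~\ref{lem:convergence_to_levy}, and close by a double limit — does track the paper's Steps~1 through~5. However, several of the steps you flag as the technical heart are misattributed or left blank, so the sketch as written does not close.

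Most concretely: you place $\g' < 1/4$ in the Poisson concentration of the heavy nodes, but that step (Proposition~\ref{prop:low_mark_edge_count}) requires only $\g' \in (0,1)$ — the centered count $\deg(P;t)-\E{\deg(P;t)\given P}$ is a martingale in $t$ and Doob's inequality closes it after a calibrated mark threshold $u_n=n^{-2/3}$. The real use of $\g' < 1/4$ is in Proposition~\ref{prop:high_mark_edge_count}, the Skorokhod negligibility of the lightweight nodes, via the fourth-cumulant bounds of Lemma~\ref{lem:cumulant_term_cases_ge}. You also cite Lemma~\ref{lem:low_mark_edge_count_negligible} for that lightweight estimate, but that lemma lives in the $\g<1/2$ regime and targets the opposite tail; the correct reference is Proposition~\ref{prop:high_mark_edge_count}. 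Beyond these misattributions, two genuine pieces of the argument are missing. First, converting the point-process convergence of the heavy configuration into convergence in $D([0,1],\R)$ requires the almost-sure continuity of the summation functional (Proposition~\ref{prop:continuity_of_summation}), a step the paper singles out as differing from~\cite[Section~7.2.3]{heavytails} because the unbounded lifetimes force a compactification argument in the time direction; your sketch does not mention it, yet without it the continuous mapping theorem is unavailable. Second, both of your tightness claims — the modulus-of-continuity estimate for $\{\So_\eps^\ast\}$ and the "characteristic-function machinery" for $\So_n$ — are deferred, and the paper does not use characteristic functions at all: it shows $\{\So_{\eps_k}^\ast\}$ is almost-surely Cauchy in the supremum norm (Lemma~\ref{lem:So_eps_cauchy_probability}, Proposition~\ref{prop:So_cauchy_uniform}) via the plus-minus decomposition and Doob, explicitly noting that the Kolmogorov-inequality route of~\cite{heavytails} fails since the limit is not L\'evy. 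For $\So_n$, direct tightness is replaced by the Billingsley triangle, whose uniform-in-$n$ hinge $\lim_{\eps\downarrow 0}\limsup_n\P{d_\msf{Sk}(\So_{n,\eps}^{(3)},\So_n^{(2)})>\de}=0$ is Proposition~\ref{lem:convergence_of_Sn_eps_to_Sn} and is nontrivial — it again relies on the integral representation of the plus part and a martingale/Doob estimate for the minus part, none of which appear in your sketch.
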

\begin{remark}
    In Section 1, we mentioned that our limiting process $\So(\any)$ fails to be a L{\'e}vy process or even to be Markov.
    After having derived a precise mathematical expression, we now elaborate on these properties in further detail.
    \begin{itemize}
        \item
            The processes $\So_\eps^\ast(\any)$ are not Markov.
            Loosely speaking, knowing the edge count $\So_\eps^\ast(\any)$ is insufficient to predict the evolution of the process.
            To see this, suppose that there is a single $\PP$-vertex with $2$ edges at time $t$.
            If this vertex dies, both its edges simultaneously vanish.
            In contrast, if there are $2$ $\PP$-vertices each with a single edge, the process evolves more smoothly.
            Thus, the edge count $\So_\eps^\ast(\any)$ alone does not capture enough information to predict the future of the process.
        \item
            Since the processes $\So_\eps^\ast(\any)$ are not Markov, they cannot be a L{\'e}vy processes either.
            Moreover, the increments of $\So_\eps^\ast(\any)$ are not independent.
            Consider two adjacent time intervals $[t, t + \eps_1]$ and $[t + \eps_1, t + \eps_1 + \eps_2]$ for some $\eps_1, \eps_2 > 0$.
            $\PP$-vertices alive at time $t + \eps_1$ are likely to remain alive at time $t + \eps_1 + \eps_2$ as well.
            Thus, the increments are not independent.
    \end{itemize}
    Note that the above properties carry over to the limiting process $\So(\any)$ as well when taking the limit $\eps \to 0$.
\end{remark}
\noindent
Before turning to the proofs, we briefly describe the main techniques used in each of the following sections.
\begin{itemize}
    \item \textbf{Proof of Proposition~\ref{prop:univariate_normal}}.
        We prove the convergence of the normalized edge count to a Gaussian distribution using the theory of associated random variables.
        The key tool is a CLT for sums of associated Poisson functionals.
    \item \textbf{Proof of Proposition~\ref{prop:covariance_function_of_Snt}}.
        With $\g < 1/2$, we calculate the covariance function of the normalized edge count $\So_n(t)$ by decomposing the edge count to a sum of three terms and then calculating the covariance function for each part.
    \item \textbf{Proof of Proposition~\ref{prop:multivariate_normal}}.
        We establish the multivariate normal limit by applying the Malliavin--Stein approximation.
        More precisely, we bound the so-called $d_3$~distance of the distribution of the edge counts and the multivariate normal distribution.
        This is done by applying~\cite[Theorem~1.1]{normalapproximation2}, which involves the examination of cost operators.
    \item \textbf{Proof of Theorem~\ref{thm:functional_normal}}.
        We show functional convergence in the Skorokhod space by decomposing the edge count into differences of monotone functionals and applying a refined tightness criterion due to Davydov~\cite[Theorem~2]{davydov1996weak}.
        The main challenge of this proof is to show the refined tightness criterion required by the theorem.
    \item \textbf{Proof of Theorem~\ref{thm:functional_stable}}.
        In the proof of this theorem, the main challenge is that the variance of the edge count is infinite.
        For infinite-variance cases, we split the edge count into contributions from light- and heavy-tailed nodes.
        We show the light-node contribution is negligible and the heavy-node contribution converges to a stable process that is not Markov.
        The proof combines conditional expectation approximations, Poisson point process convergence, and vague convergence of L\'evy measures.
\end{itemize}

\medskip
Before entering proofs, let us also introduce the \emph{common neighborhood} of a set of $m$ points $\pp_m := (p_1, \dots, p_m) \in \PP^m$, i.e.,
\[ \Ns(\bsps[m]) := \bigcap_{i = 1}^m \Ns(\ps[i]) \qquad \Nt(\bspt[m]; t) := \bigcap_{i = 1}^m \Nt(\pt[i]; t) \qquad N(\pp_m; t) := \bigcap_{i = 1}^m N(p_i; t), \]
and the associated degree
\[ \deg(\pp_m; t) := \sum_{P' \in \PP'} \1{P' \in N(\pp_m; t)}. \]
Next, for Borel sets $\XX, \UU, \BB, \DD \su \R$, we introduce the notations
\[ \begin{array}{lll}
    \S_\XX := \set{(x, u) \in \S \co x \in XX} & \S^\UU := \set{(x, u) \in \S \co u \in \UU} & \S_\XX^\UU := \S_\XX \cap \S^\UU \\
    \T_\BB := \set{(b, \ell) \in \T \co b \in \BB} & \T^\DD := \set{(b, \ell) \in \T \co b + \ell \in \DD} & \T_\BB^\DD := \T_\BB \cap \T^\DD
\end{array} \]
and for the edge counts restricted to a Borel measurable spatial domain $\AA$, we introduce
\[ S_\AA(t) := \sum_{P \in \PP \cap (\S_\AA \ti \T)} \deg(P; t). \]
For the coordinates of the points $\pp_m \in \PP^m$ we use the notation $\bs{x}_m$, $\bs{u}_m$, $\bs{b}_m$ and $\bs{\ell}_m$.
As the Poisson point processes are stationary, we neglect the time arguments $t$ in the notations whenever a formula contains only one time argument.
We note that for readability, we keep the indices of the constants unique within each part of the proofs.

\section{Proof of Proposition~\ref{prop:univariate_normal}}\label{sec:proof_univariate_normal}

In this section, we present the proof of the univariate normal limit of the normalized edge count for a fixed time.
After calculating the mean and variance of the edge count, we apply a CLT for associated random variables to show that the edge count converges to a Gaussian distribution.
We begin with a lemma that gives the asymptotic behavior of the mean and variance of the edge count.
\begin{lemma}[Asymptotic behavior of mean and variance of $S_n(t)$]\label{lem:mean_variance_of_Snt}
    Let $t \in \R$ and $\g' \in (0, 1)$.
    If $\g \in (0, 1)$, then $\lim_{n \tff} n^{-1} \E{S_n(t)} < \ff$.
    Furthermore, if $\g < 1/2$, then also $\lim_{n \tff} n^{-1} \Var{S_n(t)} < \ff$.
\end{lemma}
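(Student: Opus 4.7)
Both assertions will follow from direct computations using the Mecke--Slivnyak formula for Poisson processes, applied separately to the two independent processes $\PP$ and $\PP'$, and exploiting the product structure $N(p;t)=\Ns(\ps)\ti\Nt(\pt;t)$. For the mean, conditioning on $\PP$ makes $\deg(p;t)=\abs{\PP'\cap N(p;t)}$ Poisson with parameter $\abs{N(p;t)}=\abs{\Ns(\ps)}\cdot(t-b)\1{b\le t\le b+\ell}$, and Mecke gives $\E{S_n(t)}=\int_{\S_n\ti\T}\abs{N(p;t)}\mdp$. The integrand factorises: the $x$-integral yields $n$, the $u$-integral equals $\int_0^1 u^{-\g}\d u=1/(1-\g)$, the $w$-integral embedded in $\abs{\Ns(\ps)}$ contributes $2\b/(1-\g')$, and the $(b,\ell)$-integral reduces to $\E{L^2}/2$, finite by the exponential lifetime assumption. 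This delivers the first claim.

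For the variance I would expand $\E{S_n(t)^2}$ as the fourfold sum $\sum_{P_1,P_2\in\PP}\sum_{P_1',P_2'\in\PP'}\1{P_i'\in N(P_i;t)}$ and apply Mecke separately in $\PP$ and $\PP'$. The four resulting terms, indexed by whether $P_1=P_2$ and whether $P_1'=P_2'$, combine as follows: the fully off-diagonal term cancels $\E{S_n(t)}^2$, the fully diagonal one reproduces $\E{S_n(t)}$, and the two mixed contributions are
\[A_n:=\int_{\S_n\ti\T}\abs{N(p;t)}^2\mdp,\qquad B_n:=\int_{(\S_n\ti\T)^2}\abs{N(p_1;t)\cap N(p_2;t)}\mdp[1]\mdp[2].\]
In $A_n$, squaring $\abs{\Ns(\ps)}$ produces a factor $u^{-2\g}$ that is integrable on $[0,1]$ precisely when $\g<1/2$, and the temporal factor reduces to $\E{L^3}/3$, finite by the exponential distribution of $L$.

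The pair integral $B_n$ is the main technical obstacle. Splitting the intersection as $\abs{\Ns(\ps[1])\cap\Ns(\ps[2])}\cdot\abs{\Nt(\pt[1];t)\cap\Nt(\pt[2];t)}$, stationarity lets me extract a factor $n$ from the $x_1$-integration, and the elementary identity $\int_\R\abs{[x_1-r_1,x_1+r_1]\cap[x_2-r_2,x_2+r_2]}\d x_2=4r_1r_2$ applied slice-by-slice in $w\in[0,1]$ reduces the spatial integral to $\int_0^1 4\b^2 u_1^{-\g}u_2^{-\g}w^{-2\g'}\d w$. This is finite under the ambient constraint $\g'<1/2$, and the residual $u_i$-integrals are finite for $\g<1$. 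The temporal double integral is dominated by a moment of $L$ (at most $\E{L^3}$). Summing the contributions of $\E{S_n(t)}$, $A_n$, and $B_n$ yields $\Var{S_n(t)}=O(n)$, and the explicit expressions show that $n^{-1}\Var{S_n(t)}$ converges to a finite limit.
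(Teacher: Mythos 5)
Your approach is essentially the paper's: Mecke's formula for the mean, and for the variance the same decomposition $\Var{S_n(t)} = \E{S_n(t)} + A_n + B_n$, which is exactly what the paper writes as $\int\E{\deg(p)^2}\mdp + \iint\Cov{\deg(p_1),\deg(p_2)}\mdp[1]\mdp[2]$ after unpacking $\E{\deg(p)^2}=\abs{N(p;t)}+\abs{N(p;t)}^2$ and $\Cov{\deg(p_1),\deg(p_2)}=\abs{N(p_1,p_2;t)}$. The constraints $\g<1/2$ (from $A_n$) and $\g'<1/2$ (from $B_n$) emerge at the same spots; the one hand-waved step is the temporal double integral in $B_n$, where the paper's cleaner route is to rewrite it as $\int_\R\big(\int_\T\1{r\in\Nt(\pt;t)}\dpt\big)^2\d r=1/2$ rather than invoke a moment of $L$, but your bound is finite and the conclusion is unchanged.
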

We present the proofs of this and the following statement in Section~\ref{sec:proofs_univariate_multivariate}.
The next lemma gives the covariance of the edge counts of different spatial domains at a fixed time.
\begin{lemma}[Covariance of edge counts at fixed times]\label{lem:covariance_of_Snt}
    Let $t \in \R$, $\g, \g' < 1/2$, and let~$\AA_1, \AA_2$ be two disjoint Borel sets.
    Then,
    \[ \Cov{S_{\AA_1}(t), S_{\AA_2}(t)} \le \f{\abs{\AA_1}}{2 (1 - 2 \g')} \Big( \f{2 \b}{1 - \g} \Big)^2. \]
    In particular, the right-hand side is independent of the time~$t$ and of the set $\AA_2$.
\end{lemma}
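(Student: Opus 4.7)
The plan is to reduce the covariance to an integral of Poisson covariances via the bivariate Mecke formula, factor the resulting integrand into a spatial and a temporal double integral using the product structure of $\m$, and finally compute each factor explicitly.

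First, applying the multivariate Mecke identity to $\PP$ -- no diagonal term arises since $\AA_1 \cap \AA_2 = \es$ -- and using that $\deg(p_i;t)$ depends only on $\PP'$, I obtain the exact identity
\[ \Cov{S_{\AA_1}(t), S_{\AA_2}(t)} = \int_{\S_{\AA_1}\ti\T}\int_{\S_{\AA_2}\ti\T} \Cov[\big]{\deg(p_1;t), \deg(p_2;t)} \m(\d p_1) \m(\d p_2), \]
where the inner covariance is taken with respect to $\PP'$ only. Decomposing $\deg(p_i;t) = \#(\PP' \cap N(p_i;t))$ into independent Poisson counts over $N(p_i;t) \sm N(p_j;t)$ and $N(p_1;t) \cap N(p_2;t)$ gives the inner covariance equal to $\abs{N(p_1;t) \cap N(p_2;t)}$. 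Since $N(p;t) = \Ns(p_s) \ti \Nt(p_t; t)$ factorizes and $\m$ is a product measure, enlarging $\AA_2$ to $\R$ then yields the upper bound $I_\msf{s} \cdot I_\msf{t}$, where $I_\msf{s}$ and $I_\msf{t}$ denote the spatial and temporal double integrals of $\abs{\Ns(p_{s,1}) \cap \Ns(p_{s,2})}$ and $\abs{\Nt(p_{t,1};t) \cap \Nt(p_{t,2};t)}$, respectively.

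For $I_\msf{s}$, the key step is to write $\abs{\Ns(p_{s,1})\cap \Ns(p_{s,2})} = \int_\S \1{(z,w)\in\Ns(p_{s,1})} \1{(z,w)\in\Ns(p_{s,2})} \d z \d w$ and integrate $p_{s,2}$ out first: since $\int_\S \1{(z,w)\in\Ns(p_{s,2})} \d p_{s,2} = \f{2\b}{1-\g} w^{-\g'}$ is independent of $z$, the remaining integral collapses to $I_\msf{s} = \f{4 \b^2 \abs{\AA_1}}{(1-\g)^2 (1-2\g')}$. For $I_\msf{t}$, substituting $s_i := t - b_i$ turns the integral into
\[ I_\msf{t} = \E[\big]{\int_0^{L_1}\int_0^{L_2} \min(s_1, s_2) \d s_1 \d s_2} = \E[\Big]{\f{(L_1 \w L_2)^2 (L_1 \vee L_2)}{2} - \f{(L_1 \w L_2)^3}{6}} \]
with $L_1, L_2$ independent exponential-$1$ lifetimes. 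Using $L_1 \w L_2 \sim \mrm{Exp}(2)$ and the independence of $L_1 \w L_2$ and $L_1 \vee L_2 - L_1 \w L_2$, this evaluates to exactly $1/2$, and $I_\msf{s} \cdot I_\msf{t}$ reproduces the claimed bound $\f{\abs{\AA_1}}{2(1-2\g')} \left(\f{2\b}{1-\g}\right)^2$.

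The main obstacle is the correct integration order in the spatial factor. A naive triangular-kernel bound $\abs{\Ns(p_{s,1})\cap \Ns(p_{s,2})} \le \int_0^1 (r_1(w) + r_2(w) - \abs{x_1-x_2})_+ \d w$ integrates over $x_2$ to $(r_1+r_2)^2$ instead of the sharp Fubini value $4 r_1 r_2$, and introduces a spurious $\f{1}{1-2\g}$ contribution that would destroy the clean constant $\f{4}{(1-\g)^2}$ appearing in the lemma.
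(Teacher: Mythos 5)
Your proposal is correct and follows essentially the same route as the paper's proof: Mecke's formula reduces the covariance to a double integral of $\abs{N(p_1,p_2;t)}$, which is then bounded by enlarging $\AA_2$ to $\R$ and factorizing into spatial and temporal parts, exactly as the paper does. The only difference is that you carry out the two auxiliary integrals inline -- the spatial one via the same Fubini trick that underlies the upper bound in the proof of Lemma~\ref{lem:int_of_ns}~(\ref{lem:int_of_ns:common_p_0}), and the temporal one via a direct $\min$-integral rather than the identity in Lemma~\ref{lem:int_of_nt}~(\ref{lem:int_of_nt:common_p}) -- and your closing remark correctly pinpoints that the Fubini order of integration (integrate out the far point first) is what produces the sharp constant $\f{4\b^2}{(1-\g)^2}$ in place of a wasteful $\f{1}{1-2\g}$ term.
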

To show the Gaussian limit, we apply the CLT for associated random variables~\cite[Theorem~4.4.3]{whitt}, which states that if $\bs{T} := T_1, T_2, \dots$ is a sequence of associated iid random variables with $\sum_{k \ge 1} \Cov{T_1, T_k} < \ff$, then the centered and normalized sum converges to a Gaussian distribution.
Let us recall that a sequence of random variables $\bs{T}$ is associated if and only if $\Cov{f(T_1, \dots, T_k), g(T_1, \dots, T_k)} \ge 0$ for all non-decreasing functions~$f$, $g$ for which $\E{f(T_1, \dots, T_k)}$, $\E{g(T_1, \dots, T_k)}$, and $\E{f(T_1, \dots, T_k) g(T_1, \dots, T_k)}$ exist~\cite[Definition~1.1]{associatedrvs}.
In order to see this, we partition the spatial coordinates of the edges into intervals of length one and define
\[ T_i := \sum_{P \in \PP \cap (\S_{[i - 1, i]} \ti \T)} \deg(P; t). \]
Since $\deg(P; t)$ is increasing in the Poisson process $\PP'$, we conclude from the Harris-FKG inequality~\cite[Theorem~20.4]{poisBook} that the sequence $T_1, \dots, T_k$ is associated.
For the covariance condition, note that we have $T_i \in O(1)$ for each $i \in \N$ by Lemma~\ref{lem:mean_variance_of_Snt}, where we require that $\g < 1/2$.
By Lemma~\ref{lem:covariance_of_Snt}, we have that
\[ \sum_{k \ge 2} \Cov{T_1, T_k} = \Cov[\Big]{T_1, \sum_{k \ge 2} T_k} < \ff. \]
Thus, $\sum_{k \ge 1} \Cov{T_1, T_k}$ is finite, and Proposition~\ref{prop:univariate_normal} is proved.

\section{Proof of Proposition~\ref{prop:covariance_function_of_Snt}}\label{sec:proof_covariance_function}

The calculation of the limiting covariance function of $\So_n$ relies on a time-interval based decomposition of the edge count.
Without loss of generality, we may assume that $t_1 \le t_2$.
First, note that
\[ \Cov{\So_n(t_1), \So_n(t_2)} = n^{-1} \Cov{S_n(t_1), S_n(t_2)}. \]
Now, to simplify the calculations, we decompose the edge count into three parts as
\[ \begin{aligned}
    S_n^\msf{A}(t_1, t_2) &:= \sum_{P \in \PP \cap (\S_n \ti \T_{\le t_1}^{t_2 \le})} \deg(P; t_1) \\
    S_n^\msf{B}(t_1, t_2) &:= \sum_{P \in \PP \cap (\S_n \ti \T_{\le t_1}^{[t_1, t_2]})} \deg(P; t_1) \\
    S_n^\msf{C}(t_1, t_2) &:= \sum_{P \in \PP \cap (\S_n \ti \T_{\le t_2}^{t_2 \le})} \sum_{P' \in \PP'} \1{P' \in N(P; t_2)} \1{t_1 \le R},
\end{aligned} \]
where we abbreviate $\T_{\le t_1}^{t_2 \le} = \T_{(-\ff, t_1]}^{(t_2, \ff)}$, $\T_{\le t_1}^{[t_1, t_2]} = \T_{(-\ff, t_1]}^{[t_1, t_2]}$ and $\T_{\le t_2}^{t_2 \le} = \T_{(-\ff, t_2]}^{(t_2, \ff)}$ and present a visualization in Figure~\ref{fig:covariance_intervals}.
\begin{figure}[htb] \centering
    \begin{tikzpicture}[scale = 0.4]
        \draw[->,line width=0.07cm] (0,-3.75) -- (30,-3.75);
        \node[draw=none, anchor=north] at (30, -3.75) {$\R$};
        \node[draw=none, anchor=north] at (10, -3.75) {$t_1$};
        \node[draw=none, anchor=north] at (20, -3.75) {$t_2$};
        \node[draw=none] at (-2, 6.25) {\Large{$A$}};
        \draw[line width=0.025cm] ( 0,6.75) -- (10,6.75);
        \draw[line width=0.025cm] (20,6.75) -- (30,6.75);
        \draw[dashed, line width=0.025cm] (0,6) -- (10,6);
        \draw[fill=white] (10,6) circle (0.3);
        \draw[fill=white] (10,6.75) circle (0.3);
        \draw[fill=white] (20,6.75) circle (0.3);
        \node[draw=none, anchor=south] at ( 5, 6.75) {$b$};
        \node[draw=none, anchor=south] at (25, 6.75) {$b+\ell$};
        \node[draw=none, anchor=north] at ( 5, 6) {$r$};
        \node[draw=none] at (-2, 2.5) {\Large{$B$}};
        \draw[line width=0.025cm] (0,2.75) -- (10,2.75);
        \draw[line width=0.025cm] (10,2.75) -- (20,2.75);
        \draw[dashed, line width=0.025cm] (0,2) -- (10,2);
        \draw[fill=white] (10,2) circle (0.3);
        \draw[fill=white] (10,2.75) circle (0.3);
        \draw[fill=white] (20,2.75) circle (0.3);
        \node[draw=none, anchor=south] at ( 5, 2.75) {$b$};
        \node[draw=none, anchor=south] at (15, 2.75) {$b+\ell$};
        \node[draw=none, anchor=north] at ( 5, 2) {$r$};
        \node[draw=none] at (-2, -1.25) {\Large{$C$}};
        \draw[line width=0.025cm] ( 0,-1) -- (20,-1);
        \draw[line width=0.025cm] (20,-1) -- (30,-1);
        \draw[dashed, line width=0.025cm] (10,-1.75) -- (20,-1.75);
        \draw[fill=white] (10,-1.75) circle (0.3);
        \draw[fill=white] (20,-1.75) circle (0.3);
        \draw[fill=white] (20,-1) circle (0.3);
        \draw[fill=white] (20,-1) circle (0.3);
        \node[draw=none, anchor=south] at (10, -1   ) {$b$};
        \node[draw=none, anchor=south] at (25, -1   ) {$b+\ell$};
        \node[draw=none, anchor=north] at (15, -1.75) {$r$};
    \end{tikzpicture}
    \caption{Decomposition of the covariance function}\label{fig:covariance_intervals}
\end{figure}
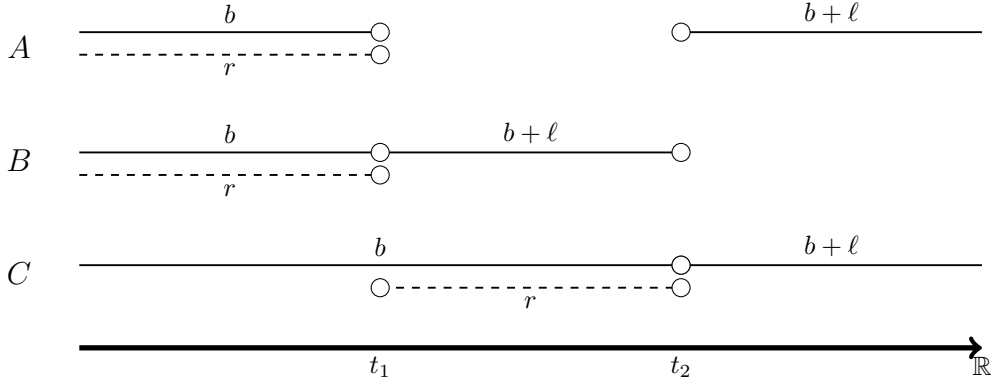
With these notations, we have that
\[ S_n(t_1) = S_n^\msf{A}(t_1, t_2) + S_n^\msf{B}(t_1, t_2) \qquad \text{and} \qquad S_n(t_2) = S_n^\msf{A}(t_1, t_2) + S_n^\msf{C}(t_1, t_2). \]
Since, by the independence of the Poisson process, $\Cov{S_n^\msf{B}(t_1, t_2), S_n^\msf{C}(t_1, t_2)} = 0$, the covariance function is given by
\[ \Cov{\So_n(t_1), \So_n(t_2)} = \Var[\big]{\So_n^\msf{A}(t_1, t_2)} + \Cov[\big]{\So_n^\msf{A}(t_1, t_2), \So_n^\msf{B}(t_1, t_2)} + \Cov[\big]{\So_n^\msf{A}(t_1, t_2), \So_n^\msf{C}(t_1, t_2)}, \]
and we have the following limiting behavior.
\begin{lemma}[Terms of the limiting covariance function of $\So_n(t)$]\label{lem:covariance_function_terms}
    Let $\g, \g' < 1/2$ and $t_1 \le t_2$.
    Then,
    \[ \begin{aligned}
        \lim_{n \tff} \Var[\big]{\So_n^\msf{A}(t_1, t_2)} &= (c_1 + 2 c_2) \e^{-(t_2 - t_1)} + c_3 \e^{-2 (t_2 - t_1)} \\
        \lim_{n \tff} \Cov[\big]{\So_n^\msf{A}(t_1, t_2), \So_n^\msf{B}(t_1, t_2)} &= c_3 \big( \e^{-(t_2 - t_1)} - \e^{-2 (t_2 - t_1)} \big) \\
        \lim_{n \tff} \Cov[\big]{\So_n^\msf{A}(t_1, t_2), \So_n^\msf{C}(t_1, t_2)} &= c_2 (t_2 - t_1) \e^{-(t_2 - t_1)},
    \end{aligned} \]
    where $c_1 = 2 \b / ((1 - \g) (1 - \g'))$, $c_2 = (2 \b)^2 / ((1 - 2 \g) (1 - \g')^2)$ and $c_3 = (2 \b)^2 / ((1 - \g)^2 (1 - 2 \g'))$.
\end{lemma}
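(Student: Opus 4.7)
The plan is to express each of the three quantities $\Var{\So_n^\msf{A}}$, $\Cov{\So_n^\msf{A}, \So_n^\msf{B}}$, and $\Cov{\So_n^\msf{A}, \So_n^\msf{C}}$ as covariances of bilinear Poisson functionals and then compute the leading order in $n$ by explicit spatial and temporal integrations. Each of $S_n^\msf{A}, S_n^\msf{B}, S_n^\msf{C}$ can be written in the form $F = \sum_{P \in \PP} \sum_{P' \in \PP'} f(P, P')$, with $f$ an indicator encoding the spatial proximity $\abs{X - Z} \le \b U^{-\g} W^{-\g'}$, the relevant lifetime constraints on $(B, L)$, the interaction-time constraint on $R$, and the window $X \in [0, n]$. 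For two such functionals $F_1, F_2$, the independence of $\PP$ and $\PP'$ combined with two applications of the Mecke formula yield the three-term decomposition
\[ \Cov{F_1, F_2} = \iint f_1 f_2 \, \m(\d p) \m'(\d p') + \int \Bigl(\int f_1 \m'(\d p')\Bigr)\Bigl(\int f_2 \m'(\d p')\Bigr) \m(\d p) + \int \Bigl(\int f_1 \m(\d p)\Bigr)\Bigl(\int f_2 \m(\d p)\Bigr) \m'(\d p'). \]

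The second step is to sort, case by case, which of the three summands actually contributes. For $\Var{\So_n^\msf{A}}$ all three contribute: the diagonal produces $\E{S_n^\msf{A}}/n \to c_1 \e^{-(t_2 - t_1)}$; the $\m$-squared term, corresponding to two $\PP'$-interactions sharing one $\PP$-vertex, produces $2 c_2 \e^{-(t_2 - t_1)}$ with the $(1-2\g)^{-1}$-factor coming from $\int_0^1 u^{-2\g} \d u$; the $\m'$-squared term, corresponding to two $\PP$-vertices sharing one $\PP'$-interaction, produces the $c_3 \e^{-2(t_2 - t_1)}$-contribution with the $(1-2\g')^{-1}$-factor coming from $\int_0^1 w^{-2\g'} \d w$. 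For $\Cov{\So_n^\msf{A}, \So_n^\msf{B}}$ the lifetime sets $\set{b + \ell > t_2}$ and $\set{b + \ell \in [t_1, t_2]}$ are disjoint, so the diagonal and the $\m$-squared term vanish; only the $\m'$-squared (shared $\PP'$-interaction) term survives, and since the lifetime integral for $\msf{B}$ equals $\P{L \in [t_1 - b, t_2 - b]} = \e^{-(t_1 - b)} - \e^{-(t_2 - b)}$ while for $\msf{A}$ it equals $\e^{-(t_2 - b)}$, multiplying and integrating in $b$ produces the factor $\e^{-(t_2 - t_1)} - \e^{-2(t_2 - t_1)}$. For $\Cov{\So_n^\msf{A}, \So_n^\msf{C}}$ the interaction-time restrictions $\set{r \le t_1}$ and $\set{t_1 \le r \le t_2}$ are essentially disjoint, so the diagonal and the $\m'$-squared term vanish; the $\m$-squared term (common $\PP$-vertex) contributes, and the polynomial factor $(t_2 - t_1)$ arises as the length of the $R$-interval available to the $\msf{C}$-factor.

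The spatial integrations reduce to $\int_\R \1{\abs{x - z} \le \b u^{-\g} w^{-\g'}} \d z = 2\b u^{-\g} w^{-\g'}$ and, for the $\m'$-squared term, $\int_\R \bigl(\int_0^n \1{\abs{x - z} \le r} \d x\bigr)^2 \d z = 4 r^2 n + O(r^3)$, where the $O(r^3)$ boundary term is negligible after dividing by $n$. Integrating the marks $u, w$ over $[0,1]$ then produces the denominators $(1-\g), (1-2\g), (1-\g'), (1-2\g')$ appearing in $c_1, c_2, c_3$, whose finiteness is exactly what the hypothesis $\g, \g' < 1/2$ guarantees. The temporal integrations all reduce, after the substitution $s = t_i - b$, to elementary moments of $\mrm{Exp}(1)$ and $\mrm{Exp}(2)$; the key identities are
\[ \int_{-\ff}^{t_1} (t_1 - b) \e^{-(t_2 - b)} \d b = \e^{-(t_2 - t_1)}, \qquad \int_{-\ff}^{t_1} (t_1 - b)^2 \e^{-(t_2 - b)} \d b = 2 \e^{-(t_2 - t_1)}, \]
together with
\[ \iint_{(-\ff, t_1]^2} \e^{-(t_2 - b_1) - (t_2 - b_2)} (t_1 - \max(b_1, b_2)) \, \d b_1 \, \d b_2 = \f{1}{2} \e^{-2(t_2 - t_1)}. \]

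The main obstacle is not any single integral but the careful bookkeeping required to identify which of the three Poisson covariance summands contributes in each of the three cases, and to trace how the three distinct decay patterns $\e^{-(t_2 - t_1)}, \e^{-2(t_2 - t_1)}$, and $(t_2 - t_1) \e^{-(t_2 - t_1)}$ emerge from the different combinations of lifetime and interaction-time restrictions defining $\msf{A}, \msf{B}, \msf{C}$. Once this structural decomposition is in hand, the remaining computations are elementary.
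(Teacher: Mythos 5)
Your proposal follows essentially the same route as the paper: decompose each variance/covariance of the bilinear Poisson functionals $S_n^{\msf{A}}$, $S_n^{\msf{B}}$, $S_n^{\msf{C}}$ into the three Mecke summands (shared $(P,P')$ pair; shared $P$ but distinct $P'$; shared $P'$ but distinct $P$), observe which summand survives via disjointness of the lifetime constraints on $B+L$ and the interaction-time constraints on $R$, and then factorize the survivors into a spatial mark integral (producing the constants $c_1$, $c_2$, $c_3$) and an elementary exponential integral in $b$ or $r$. Your identification of which Poisson summand contributes in each of the three cases is exactly the mechanism the paper uses, and your three temporal identities are the same elementary integrals that appear in the paper's proof.

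One constant-tracking caution before you write this up: your last identity, $\iint_{(-\infty,t_1]^2}\e^{-(t_2-b_1)-(t_2-b_2)}\,(t_1-\max(b_1,b_2))\,\d b_1\,\d b_2=\tfrac{1}{2}\e^{-2(t_2-t_1)}$, is correct, but together with the spatial normalization $\tfrac{1}{n}\iint_{\S_n^2}|\Ns(\ps[1],\ps[2])|\,\d\ps[1]\d\ps[2]\to c_3$ it yields a shared-$\PP'$ contribution of $\tfrac{c_3}{2}\e^{-2(t_2-t_1)}$ to $\Var{\So_n^{\msf{A}}}$, not $c_3\e^{-2(t_2-t_1)}$ as you (and the lemma statement) report. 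The paper's own proof computes the same $\tfrac{1}{2}$ in $I_{\msf{t}}$, so the discrepancy appears to trace to the source rather than to your strategy; still, as written your intermediate identities do not multiply out to the constant you quote, and you should reconcile that before finalizing.
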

\noindent
The proof of the lemma is given in Section~\ref{sec:proofs_univariate_multivariate} in a slightly more complex form, since the lemma is used in a very similar scenario in the proof of Lemma~\ref{lem:covariance_function_of_Snt_ge}.

Finally, by summing the above terms, we have that
\[ \lim_{n \tff} \Cov[\big]{\So_n(t_1), \So_n(t_2)} = \big( c_1 + c_3 + c_2 (2 + t_2 - t_1) \big) \e^{-(t_2 - t_1)}, \]
which finishes the proof of Proposition~\ref{prop:covariance_function_of_Snt}.

\section{Proof of Proposition~\ref{prop:multivariate_normal}}\label{sec:proof_multivariate_normal}

Next, we show that if $\g < 1/2$ and $\g' < 1/3$, the finite-dimensional distributions of the normalized edge count converge to a multivariate normal distribution.

To this end, we would like to apply the normal approximation result in~\cite[Theorem~1.1]{normalapproximation2}, which uses Malliavin--Stein approximation to bound the so-called $d_3$ distance between the distribution of Poisson functionals and the normal distribution.
Applying this theorem requires controlling the error terms $E_1(n), E_2(n), E_3(n)$ detailed below, and establishing that they converge to zero in probability as $n \to \ff$.
However, this can only be achieved under the stricter condition $\g < 1/3$, since it involves showing that an integral of the form $\int_0^1 u^{-3 \g} \d u$ is finite, which diverges for $\g \ge 1/3$.
To circumvent this restriction, we apply a low-mark/high-mark decomposition of the edge count $S_n$.
We first show that the contribution from low-mark edges is negligible, and then analyze the high-mark edge count separately.
This strategy allows us to establish convergence to a multivariate normal distribution under the milder condition $\g < 1/2$, rather than the stricter $\g < 1/3$.
We begin by setting a mark $u_n := n^{-2/3}$ as a function of the window size~$n$, and we decompose the edge count~$S_n$ to the sum of the high-mark edge count~$S_n^\ge$ and the low-mark edge count~$S_n^\le$ as $S_n(\any) = S_n^\ge(\any) + S_n^\le(\any)$, with
\begin{equation}
    S_n^\ge(\any) := \sum_{P \in \PP \cap (\S_n^{u_n \le} \ti \T)} \deg(P; \any) \qquad \text{and} \qquad S_n^\le(\any) := \sum_{P \in \PP \cap (\S_n^{\le u_n} \ti \T)} \deg(P; \any),
    \label{eq:high_low_mark_decomposition}
\end{equation}
where $\S_n^{\le u_n} = \S_n^{[0, u_n)}$ and $\S_n^{u_n \le} = \S_n^{[u_n, 1]}$.
This decomposition is illustrated in Figure~\ref{fig:decomposition_of_edge_count}.
\begin{figure} \centering
    \begin{tikzpicture}[>=latex]
        \begin{axis}[
                scale=0.8,
                axis x line=center,
                axis y line=center,
                axis equal image,
                xtick={0.01,6},
                xticklabels={$0$, $n$},
                ytick={0.01,3,5},
                yticklabels={$0$, $n^{-2/3}$, $1$},
                xlabel={location},
                ylabel={mark},
                xlabel style={above left},
                ylabel style={above right},
                xmin=0, xmax=8.5,
                ymin=0, ymax=5.5,
            ]
            \addplot [mark=none, dashed, domain=0:6] {3};
            \addplot [mark=none, dashed, domain=0:6] {5.0};
            \draw [dashed] (6,0) -- (6,5);
            \node [fill=white] at (3,4) {$S_n^\ge$};
            \node [fill=white] at (3,1.5) {$S_n^\le$};
        \end{axis}
    \end{tikzpicture}
    \caption{
        Decomposition of the edge count $S_n$ to a sum of the low-mark edge count $S_n^\le$ and the high-mark edge count $S_n^\ge$.
    }
    \label{fig:decomposition_of_edge_count}
\end{figure}
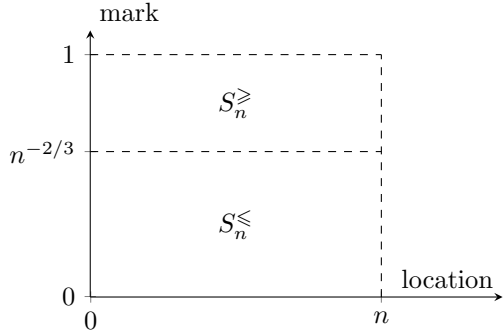
The next lemma shows that the low-mark edge count $S_n^\le$ is negligible in the sense that its normalized version
\[ \So_n^\le := n^{-1/2} \big( S_n^\le - \E{S_n^\le} \big) \]
converges to zero in probability for all times~$t$.
\begin{lemma}[The low-mark edge count is negligible]
\label{lem:low_mark_edge_count_negligible}
    Let $\g, \g' < 1/2$.
    Then, for any $t$, in probability,
    \[ \So_n^\le(t) \xlongrightarrow[n \tff]{\mbb{P}} 0.\]
\end{lemma}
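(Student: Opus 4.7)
The plan is to apply Chebyshev's inequality, which reduces the problem to verifying that $n^{-1}\Var{S_n^\le(t)}\to 0$ as $n\tff$. Since, conditional on a fixed vertex $p$, the degree $\deg(p;t)$ is a Poisson count of points of $\PP'$ falling into $N(p;t)$, applying the univariate and bivariate Mecke formulas to $\PP$ yields the decomposition $\Var{S_n^\le(t)} = V_1(n) + V_2(n)$, where
\[ V_1(n) := \int_{\S_n^{\le u_n}\ti\T}\big(\abs{N(p;t)}+\abs{N(p;t)}^2\big)\,\mdp \]
and $V_2(n) := \iint\abs{N(p_1;t)\cap N(p_2;t)}\,\mdp[1]\mdp[2]$, with both outer integrations taken over $\S_n^{\le u_n}\ti\T$. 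Here we used that the covariance of two Poisson counts over overlapping sets equals the Lebesgue measure of the intersection. It thus suffices to show $V_1(n), V_2(n) = o(n)$.

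For $V_1(n)$, the product structure $N(p;t)=\Ns(\ps)\ti\Nt(\pt;t)$ factorizes the integrand: the spatial factor contributes a power $u^{-\g}$, and the temporal factor depends only on $b$ and $\ell$. Stationarity in $x$ and $b$ combined with finiteness of all moments of the exponential lifetime reduces the estimate to a mark integral $\int_0^{u_n}u^{-2\g}\,\d u = u_n^{1-2\g}/(1-2\g)$, finite precisely because $\g<1/2$; with $u_n=n^{-2/3}$ this gives $n^{-1}V_1(n)=O(n^{-2(1-2\g)/3})\to 0$. For $V_2(n)$, I would order the marks by symmetry, assuming $u_1\le u_2$, and bound the spatial intersection at each interaction weight $w$ by $\big(2\b u_2^{-\g}w^{-\g'}-\abs{x_1-x_2}\big)^+$. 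The elementary identity $\int_\R(a-\abs{y})^+\,\d y=a^2$ handles the $x_2$-integration, $\int_0^1 w^{-2\g'}\,\d w$ is finite thanks to $\g'<1/2$, and the temporal double integral is a finite constant by stationarity and the exponential assumption on $L$. The remaining mark integral $\int_0^{u_n}\int_0^{u_2}u_2^{-2\g}\,\d u_1\,\d u_2 = u_n^{2(1-\g)}/(2(1-\g))$ is finite since $\g<1$, giving $n^{-1}V_2(n)=O(n^{-(4-4\g)/3})\to 0$.

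The hardest step will be the off-diagonal term $V_2(n)$, because the intersection $\abs{\Ns(\ps[1])\cap\Ns(\ps[2])}$ depends nontrivially on both marks as well as on $\abs{x_1-x_2}$, and the integrand is singular as $w\downarrow 0$. The key to avoiding a cumbersome case analysis is to order $u_1\le u_2$ and then use the sharp-enough upper bound $\big(2\b u_2^{-\g}w^{-\g'}-\abs{x_1-x_2}\big)^+$, which reduces the $x_2$-integration to a one-line identity. Notably, the two power-law exponents play asymmetric roles in the analysis: $\g<1/2$ is required for the diagonal integral $V_1(n)$, whereas $\g'<1/2$ enters only through the $w$-integration in $V_2(n)$.
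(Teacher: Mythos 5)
Your overall strategy matches the paper's exactly: Chebyshev's inequality, the Mecke-formula variance decomposition into a diagonal term $V_1$ (from coincident $\PP$-points, contributing $\abs{N(p;t)}+\abs{N(p;t)}^2$) and an off-diagonal covariance term $V_2$ (contributing $\abs{N(p_1,p_2;t)}$), and then $o(n)$ bounds on each via the spatial/temporal factorization. Your $V_1$ estimate is correct; the only stylistic difference from the paper is that it offloads the $V_2$ spatial estimate to a prepared integral lemma while you attempt a direct computation.

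That direct computation of $V_2$ contains a genuine error. After ordering $u_1 \le u_2$, the spatial radii at interaction weight $w$ are $r_i := \b u_i^{-\g}w^{-\g'}$, and since $\g > 0$ this ordering gives $r_1 \ge r_2$; you then assert that the length of the intersection of the two intervals is bounded above by $(2r_2 - \abs{x_1-x_2})^+$, i.e.\ using the \emph{smaller} radius. This is false whenever the shorter interval is nested inside the longer one but off-center: take $r_1 = 2$, $r_2 = 1$, $\abs{x_1-x_2} = 1$, for which the true intersection length is $2r_2 = 2$ while your expression evaluates to $1$. Consequently the quantity you integrate via $\int_\R (a-\abs{y})^+\,\d y = a^2$ is a \emph{lower} bound on the integrand, not an upper bound, and the resulting estimate does not control $V_2(n)$ from above. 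The fix is small: use the \emph{larger} radius $(2r_1 - \abs{x_1-x_2})^+$, or the sharper $(r_1 + r_2 - \abs{x_1-x_2})^+$, or simply note that the exact value of the $x_2$-integral is $4r_1 r_2 = 4\b^2 u_1^{-\g}u_2^{-\g}w^{-2\g'}$. With the exact value the mark integral is $\big(\int_0^{u_n} u^{-\g}\,\d u\big)^2 = u_n^{2(1-\g)}/(1-\g)^2$, which reproduces the exponent $n^{-1}V_2(n) = O(n^{-4(1-\g)/3})$ you stated, so the conclusion (and your observation about the asymmetric roles of $\g$ and $\g'$) survives after the correction — but the inequality as written is wrong, and the argument does not close as stated.
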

\noindent
Next, we apply~\cite[Theorem~1.1]{normalapproximation2} bounding the $d_3$~distance between the distribution of $S_n^\ge$ and the normal distribution.
The definition of the $d_3$~distance, a metric on the space of random vectors is given below.
\begin{definition}[$d_3$ distance]\label{def:d3_distance}
    Let $\HH_m^{(3)}$ be the set of all $C^3$ functions $h \co \R^m \to \R$ such that the absolute values of the second and third partial derivatives of~$h$ are bounded by~$1$ and $\abs{h(x) - h(y)} \le \norm{x - y}_E$ for all $x, y \in \R^m$, where $\norm{\any}_E$ denotes the Euclidean norm.
    Then, the $d_3$ distance of two $m$-dimensional random vectors $X, Y$ with $\E{\norm{X}_E}, \E{\norm{Y}_E} < \ff$ is defined by
    \[ d_3(X, Y) := \sup_{h \in \HH_m^{(3)}} \abs[\big]{\E{h(X)} - \E{h(Y)}}. \]
\end{definition}
\noindent
Note that as convergence in $d_3$~distance implies convergence in distribution~\cite[Proposition~20.A.2]{doucmarkov}, it is enough to show convergence in $d_3$~distance to prove Proposition~\ref{prop:multivariate_normal}.
Next, we introduce the cost operators that will be used to bound the $d_3$~distance.
\begin{definition}[Cost operators]\label{def:cost_operators}
    For the edge count $S_n^\ge$, the \emph{add-one cost operators} $D_p$ and $D_{p'}$ are defined by
    \[ D_p S_n^\ge(t) := \sum_{P' \in \PP'} \1{P' \in N(p; t)} \qquad \text{and} \qquad D_{p'} S_n^\ge(t) := \sum_{P \in \PP \cap (\S_n^{u_n \le} \ti \T)} \1{p' \in N(P; t)}. \]
    Note that the cost operator $D_p S_n^\ge(t)$ is Poisson distributed since it counts the number of interactions in the neighborhood of $p$.
    Similarly, the cost operator $D_{p'} S_n^\ge(t)$ is also Poisson distributed, as it is the number of vertices such that $\ps' \in \Ns(\Ps)$ and $B \le r$ on the one hand, and this restricted Poisson process is thinned with a $B$-dependent probability $\exp{-(t - B)}$.

    Turning our attention to the \emph{add-two cost operators} $D^2_{p_1, p_2}$, $D^2_{p_1, p'_2}$, $D^2_{p'_1, p_2}$ and $D^2_{p'_1, p'_2}$, we have
    \[ \begin{aligned}
        D^2_{p_1, p_2} S_n^\ge(t) &= S_n^\ge(t) + D_{p_1} S_n^\ge(t) + S_n^\ge(t) + D_{p_2} S_n^\ge(t) + S_n^\ge(t) \\
        &\qquad - \big( S_n^\ge(t) + D_{p_1} S_n^\ge(t) + S_n^\ge(t) + D_{p_2} S_n^\ge(t) \big) = 0 \\
        D^2_{p'_1, p'_2} S_n^\ge(t) &= S_n^\ge(t) + D_{p'_1} S_n^\ge(t) + S_n^\ge(t) + D_{p'_2} S_n^\ge(t) \\
        &\qquad - \big( S_n^\ge(t) + D_{p'_1} S_n^\ge(t) + S_n^\ge(t) + D_{p'_2} S_n^\ge(t) \big) = 0 \\
        D^2_{p, p'} S_n^\ge(t) &= \1{p' \in N(p; t)} + S_n^\ge(t) + D_p S_n^\ge(t) + S_n^\ge(t) + D_{p'} S_n^\ge(t) \\
        &\qquad - \big( S_n^\ge(t) + D_p S_n^\ge(t) + S_n^\ge(t) + D_{p'} S_n^\ge(t) \big) = \1{p' \in N(p; t)},
    \end{aligned} \]
\end{definition}
\noindent
where we note that the add-two cost operators are much simpler than the add-one cost operators.

To apply~\cite[Theorem~1.1]{normalapproximation2}, we need to have a single Poisson process, which we obtain by merging the two Poisson processes $\PP$ and $\PP'$ into a single Poisson process $\wt{\PP} := \PP \sqcup \PP'$, where $\sqcup$ denotes the disjoint union of the two Poisson processes.
The point process $\wt{\PP}$ is defined on the space $\S_n^{u_n \le} \ti \T \sqcup \S \ti \R$ using appropriate marks, and its intensity measure is given by $\wt{\m}$.
In other words, if $\pw \in \wt{\PP}$, then
\[ \wt{\m}(\d \pw) = \left\{ \begin{array}{ll} \m(\d \pw) & \text{if } \pw \in \S_n^{u_n \le} \ti \T \\ \d \pw & \text{if } \pw \in \S \ti \R. \end{array} \right. \]

Now, to show the multivariate convergence, we apply~\cite[Theorem~1.1]{normalapproximation2}, which we restate here for convenience.
\begin{proposition}[Multivariate CLT]\label{prop:multivariate_normal_limit}
    Let $N_\Sigma$ be an $m$-dimensional centered multivariate normal distribution with a positive semi-definite covariance matrix $\Sigma \in \R^{m \ti m}$ with elements $\s_{i, j} \in \R$.
    Further, let $\pw \in \wt{\PP}$ and let $F := \big(\So_n^\ge(t_1), \dots, \So_n^\ge(t_m)\big)$ denote an $m$-dimensional random vector of Poisson functionals.
    Then, the distance $d_3(F, N_\Sigma)$ is upper bounded by
    \begin{equation}
        d_3(F, N_\Sigma) \le \f{m}{2} \sum_{i, j = 1}^m \abs[\big]{\s_{ij} - \Cov{\So_n^\ge(t_1), \So_n^\ge(t_2)}} + m E_1(n) + \f{m}{2} E_2(n) + \f{m^2}{4} E_3(n),
        \label{eq:d3_distance}
    \end{equation}
    where the terms $E_1(n), E_2(n), E_3(n)$ are defined as
    \[ \begin{aligned}
        E_1(n) &:= \bigg( \sum_{i, j = 1}^m \int \big( \E[\big]{(D^2_{\pw_1, \pw_3} \So_n^\ge(t_1))^2 (D^2_{\pw_2, \pw_3} \So_n^\ge(t_1))^2} \E[\big]{(D_{\pw_1} \So_n^\ge(t_2))^2 (D_{\pw_2} \So_n^\ge(t_2))^2} \big)^{1/2} \d (\pw_1, \pw_2, \pw_3) \bigg)^{1/2} \\
        E_2(n) &:= \bigg( \sum_{i, j = 1}^m \int \Big( \E[\big]{(D^2_{\pw_1, \pw_3} \So_n^\ge(t_1))^2 (D^2_{\pw_2, \pw_3} \So_n^\ge(t_1))^2} \\
        &\hspace{2.2cm} \ti \E[\big]{(D^2_{\pw_1, \pw_3} \So_n^\ge(t_2))^2 (D^2_{\pw_2, \pw_3} \So_n^\ge(t_2))^2} \Big)^{1/2} \d (\pw_1, \pw_2, \pw_3) \bigg)^{1/2} \\
        E_3(n) &:= \sum_{i = 1}^m \int \E[\big]{\abs{D_{\pw} \So_n^\ge(t_1)}^3} \d \pw.
    \end{aligned} \]
    In the above, the cost operators involving $\pw$ are defined as the number of edges of the point $\pw$, in alignment with Definition~\ref{def:cost_operators}.
    For notational convenience, we do not introduce further notation for the cost operators involving $\pw$.
\end{proposition}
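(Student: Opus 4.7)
The plan is to apply the multivariate Malliavin--Stein normal approximation bound from~\cite[Theorem~1.1]{normalapproximation2} directly to the random vector $F$, viewed as a functional of the merged Poisson point process $\wt{\PP}$. Hence, the bulk of the argument amounts to verifying that the hypotheses of that theorem are met, and that the abstract cost operators appearing there coincide with the concrete operators $D_{\pw}$ and $D^2_{\pw_1, \pw_2}$ from Definition~\ref{def:cost_operators}.

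First, I would check that $\wt{\PP}$ is indeed a Poisson point process: it is the superposition of two independent Poisson processes, namely the restriction of $\PP$ to $\S_n^{u_n \le} \ti \T$ and the whole of $\PP'$, and since independent Poisson processes on disjoint carrier spaces superpose to a Poisson process, $\wt{\PP}$ is Poisson with intensity $\wt{\m}$ as defined just before the statement. Next, each coordinate $\So_n^\ge(t_i) = n^{-1/2} \big( S_n^\ge(t_i) - \E{S_n^\ge(t_i)} \big)$ is a centered $L^2$-Poisson functional of $\wt{\PP}$, the finiteness of second moments being provided by (the high-mark analogue of) Lemma~\ref{lem:mean_variance_of_Snt}. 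Thus the integrability hypotheses of the cited theorem are satisfied.

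With the setup in place, the theorem yields a bound of the form
\[ d_3(F, N_\Sigma) \le \f{m}{2} \sum_{i, j = 1}^m \abs[\big]{\s_{ij} - \Cov{F_i, F_j}} + m \wt{E}_1 + \f{m}{2} \wt{E}_2 + \f{m^2}{4} \wt{E}_3, \]
where $\wt{E}_1, \wt{E}_2, \wt{E}_3$ are the abstract error terms from~\cite{normalapproximation2}. It then remains to rewrite these error terms in terms of the concrete cost operators of Definition~\ref{def:cost_operators}. The abstract add-one operator evaluated at $\pw \in \wt{\PP}$ coincides with $D_{\pw} F_i$. For the second-order operator, one uses the identities $D^2_{p_1, p_2} \So_n^\ge = D^2_{p'_1, p'_2} \So_n^\ge = 0$ and $D^2_{p, p'} \So_n^\ge(t) = \1{p' \in N(p; t)}$ already recorded in Definition~\ref{def:cost_operators}, so that only mixed pairs $(p, p')$ contribute. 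Substituting these expressions into the abstract bound and splitting the integrations against $\wt{\m}$ into the respective pieces over $\S_n^{u_n \le} \ti \T$ and $\S \ti \R$ reproduces $E_1(n)$, $E_2(n)$, $E_3(n)$ as stated, while the covariance term is left unchanged.

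The main point where care is needed is not the analytic content, since \cite{normalapproximation2} does the heavy lifting, but the bookkeeping associated with the merged process: one must verify that integration against the intensity $\wt{\m}$ factorizes correctly over the two carrier spaces, and that the cost operators in the two cases are exactly those recorded in Definition~\ref{def:cost_operators}. Once this identification is complete, no further computation is required and~\eqref{eq:d3_distance} follows. The genuinely hard part, namely bounding each of $E_1(n), E_2(n), E_3(n)$ by $o(1)$ under the hypotheses $\g < 1/2$ and $\g' < 1/3$, is deferred to the subsequent analysis; the low-mark truncation at $u_n = n^{-2/3}$ is crucial there, since it ensures finiteness of the mark integrals that would otherwise diverge when $\g \ge 1/3$.
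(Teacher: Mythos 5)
Your proposal is correct and takes the same route as the paper: the proposition is stated as a direct restatement of \cite[Theorem~1.1]{normalapproximation2} specialized to the merged Poisson process $\wt{\PP}$, and the paper gives no further proof beyond this citation, exactly as you anticipate. The verification steps you list — that the disjoint union of the two independent Poisson processes is again Poisson with intensity $\wt{\m}$, that the normalized edge counts are square-integrable Poisson functionals, and that the abstract cost operators reduce to those of Definition~\ref{def:cost_operators} (with $D^2_{p_1,p_2} = D^2_{p'_1,p'_2} = 0$ so only mixed pairs survive) after splitting the $\wt{\m}$-integrals over the two carrier pieces — are precisely the bookkeeping implicitly assumed by the paper.
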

\noindent
We begin by setting the elements~$\s_{ij}$ of the covariance matrix $\Sigma$ in~\eqref{eq:d3_distance}.
To do so, we show the below lemma, which is very similar to the covariance function of the total edge count introduced in Proposition~\ref{prop:covariance_function_of_Snt}.
\begin{lemma}[Limiting covariance function of~$\So_n^\ge$] \label{lem:covariance_function_of_Snt_ge}
    Let $\g, \g' < 1/2$.
    Then, the limiting covariance function of the edge count $\So_n^\ge$ is given by~\eqref{eq:cov_func_total}.
\end{lemma}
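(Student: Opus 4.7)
The plan is to adapt the time-interval decomposition of Section~\ref{sec:proof_covariance_function} to the high-mark edge count $S_n^\ge$. Assuming $t_1 \le t_2$, I would introduce restricted versions $S_n^{\ge,\msf A}(t_1,t_2)$, $S_n^{\ge,\msf B}(t_1,t_2)$, $S_n^{\ge,\msf C}(t_1,t_2)$ of the three building blocks from Section~\ref{sec:proof_covariance_function}, defined identically except that the outer sum is restricted to $P \in \PP \cap (\S_n^{u_n \le} \ti \T)$. As before, one then has $S_n^\ge(t_1) = S_n^{\ge,\msf A} + S_n^{\ge,\msf B}$ and $S_n^\ge(t_2) = S_n^{\ge,\msf A} + S_n^{\ge,\msf C}$, and the independence of $\PP'$ on disjoint time intervals yields $\Cov(S_n^{\ge,\msf B}, S_n^{\ge,\msf C}) = 0$. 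Consequently $\Cov(\So_n^\ge(t_1), \So_n^\ge(t_2))$ splits into three pieces paralleling those appearing in Lemma~\ref{lem:covariance_function_terms}.

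Each of these three pieces reduces via the Mecke formula to integrals against the weight marginals that differ from the integrals producing $c_1, c_2, c_3$ in Proposition~\ref{prop:covariance_function_of_Snt} only by the replacement of the integration range $u \in [0,1]$ by $u \in [u_n, 1]$. This is precisely the reason Lemma~\ref{lem:covariance_function_terms} is stated and proved in Section~\ref{sec:proofs_univariate_multivariate} in a slightly more general form: the Mecke calculations there are designed to accommodate an additional lower cutoff $u_n$ on the weight, and they go through verbatim in the present setting. To finish, I would verify that each truncated integral converges, as $n \tff$, to its full-domain counterpart; the integrands carry singularities near $u = 0$ and $w = 0$ of order at most $u^{-2\g}$ and $w^{-2\g'}$, so integrability near zero is exactly the hypothesis $\g, \g' < 1/2$. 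With $u_n = n^{-2/3} \downarrow 0$, dominated convergence gives convergence of each truncated integral, and summing the three limits as at the end of Section~\ref{sec:proof_covariance_function} reproduces formula~\eqref{eq:cov_func_total}.

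The main obstacle I anticipate is not conceptual but in the uniform-in-$n$ control of the integrands required for the dominated-convergence step: one must confirm that, under $\g, \g' < 1/2$ alone and without any stronger assumption, the integrands obtained from the three Mecke reductions are bounded by $L^1$ functions uniformly in $n$. Once this is confirmed, the argument becomes a routine variant of the computations already carried out in Sections~\ref{sec:proof_univariate_normal} and~\ref{sec:proof_covariance_function}, and the limiting covariance function of $\So_n^\ge$ coincides with that of $\So_n$.
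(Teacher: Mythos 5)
Your proposal is correct and matches the paper's proof: the same time-interval decomposition of Section~\ref{sec:proof_covariance_function} restricted to $\S_n^{u_n \le}$, followed by an appeal to Lemma~\ref{lem:covariance_function_terms}. The dominated-convergence worry you flag is in fact already absorbed into Lemma~\ref{lem:covariance_function_terms}, whose proof is carried out directly for the high-mark case via Lemma~\ref{lem:int_of_ns}~(\ref{lem:int_of_ns:common_p_minus}), so no additional uniform-in-$n$ control needs to be verified here.
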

\noindent
Then, we see that the first term in the right-hand side of~\eqref{eq:d3_distance} converges to zero as $n \to \ff$.
Next, we show that the error terms $E_1(n), E_2(n), E_3(n)$ converge to zero, as $n \to \ff$.
This result is summarized in the following lemma, which we prove in Section~\ref{sec:proofs_univariate_multivariate} by examining the cost operators $D_{\pw}$, $D^2_{\pw_1, \pw_3}$ and $D^2_{\pw_2, \pw_3}$.
\begin{lemma}[Bounds of error terms]\label{lem:bounds_of_error_terms}
    Let $\g < 1/2$, $\g' < 1/3$.
    Then, the error terms defined in~\eqref{eq:d3_distance} satisfy
    \[ \lim_{n \tff} (E_1(n) + E_2(n) + E_3(n)) = 0. \]
\end{lemma}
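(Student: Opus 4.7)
The plan is to bound each of the three error terms by direct integration, exploiting the structure of the add-two cost operators recorded in Definition~\ref{def:cost_operators} and the conditional Poisson distribution of the add-one cost operators. The two key analytic inputs are the explicit means $\la(p;t):=|N(p;t)|\asymp u^{-\g}(t-b)\1{b\le t\le b+\ell}$ and, after integrating the vertex position $x$ over $[0,n]$ for $z$ in the observation window, $\la'(p';t)\asymp w^{-\g'}\e^{-(t-r)}\1{r\le t}$, together with the classical Poisson moment bound $\E{X^k}\le C(\la+\la^k)$ for $X\sim\Poi{\la}$.

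I would start with $E_3$, splitting the $\pw$-integral by type. The $\PP'$-part gives $\E{|D_{p'}\So_n^\ge(t)|^3}\le Cn^{-3/2}(\la'+(\la')^3)$; the $z$-integration contributes $O(n)$ and $\int_0^1 w^{-3\g'}\d w$ is finite since $\g'<1/3$, producing an $O(n^{-1/2})$ bound. The $\PP$-part yields $Cn^{-3/2}(\la+\la^3)$; the $x$-integration contributes a factor $n$, the $(b,\ell)$-integration is bounded thanks to the exponential lifetime, and the $u$-integration $\int_{u_n}^1 u^{-3\g}\d u$ is either bounded (when $\g<1/3$) or grows like $u_n^{1-3\g}=n^{2\g-2/3}$ (when $\g\ge 1/3$). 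In either subcase the total is $O(n^{2\g-7/6})\to 0$ because $\g<1/2<7/12$. This is precisely where the high-mark cutoff $u_n=n^{-2/3}$ rescues the argument.

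For $E_1$ and $E_2$ the decisive observation is that $D^2_{p_1,p_2}S_n^\ge=D^2_{p'_1,p'_2}S_n^\ge=0$, while $D^2_{p,p'}S_n^\ge=\1{p'\in N(p;t)}$. Consequently the integrand in both triple integrals is supported on configurations in which $\pw_3$ and $\pw_1,\pw_2$ are of opposite types, splitting the estimate into two cases. In Case A ($\pw_3$ of type $\PP$ and $\pw_1,\pw_2$ of type $\PP'$), the add-two factor under the square root is the deterministic indicator $\1{p'_1,p'_2\in N(p_3;t_i)}$; an application of Cauchy--Schwarz together with the Poisson moment bound turns the add-one factor into $C((\la'_1)^{1/4}+\la'_1)((\la'_2)^{1/4}+\la'_2)$. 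Integrating $p'_1,p'_2$ over $N(p_3;t_i)$ then yields a bound of order $u_3^{-2\g}$ times bounded temporal factors, since the weights $w^{-\g'}(1+w^{-\g'})$ are integrable on $[0,1]$ precisely when $\g'<1/3$; the subsequent $p_3$-integration contributes $n\cdot\int_{u_n}^1 u_3^{-2\g}\d u_3\le Cn$ under $\g<1/2$, giving total contribution $O(1/n)$ to $E_1^2$ and $E_2^2$. Case B ($\pw_3$ of type $\PP'$ and $\pw_1,\pw_2$ of type $\PP$) is analogous, but the add-one factor now involves the Poisson variables $D_{p_k}S_n^\ge(t_j)\sim\Poi{\la_k}$; the crucial step here is to bound the spatial intersection $|\Ns(\bsps[1])\cap\Ns(\bsps[2])|$ geometrically, extracting the indicator $\1{|x_1-x_2|\le C(u_1\w u_2)^{-\g}}$ and a factor $(u_1\vee u_2)^{-\g}$, rather than using the cruder Cauchy--Schwarz bound $\sqrt{\la_1\la_2}$. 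This reduces the $(x_1,x_2)$-integration to order $n\cdot(u_1\w u_2)^{-\g}$, which combined with the $(u_1\vee u_2)^{-\g}$ factor gives $n\cdot(u_1u_2)^{-2\g}$, still integrable over $[u_n,1]^2$ once $\g<1/2$.

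The main technical obstacle I foresee is precisely Case B: a naive Cauchy--Schwarz estimate $|N(p_1;t)\cap N(p_2;t)|\le\sqrt{\la_1\la_2}$ is too lossy and produces an $O(1)$ term instead of $O(1/n)$. The sharper geometric bound that explicitly uses the constraint $|x_1-x_2|\le C(u_1\w u_2)^{-\g}$ forced by the existence of a common neighbor $\pw_3$ is essential, and together with careful bookkeeping of the cross-terms arising from $\E{X^k}\le C(\la+\la^k)$ it yields the required $o(1)$ estimate.
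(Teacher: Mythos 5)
Your proposal follows the same overall architecture as the paper: split each triple integral by the types of $\pw_1,\pw_2,\pw_3$, exploit $D^2_{p_1,p_2}=D^2_{p'_1,p'_2}=0$ to restrict to mixed types, use the Poisson moment bound for the add-one operators, and rely on the cutoff $u_n=n^{-2/3}$ to rescue the potentially divergent $\int u^{-3\g}\d u$ in $E_3$ when $\g\in[1/3,1/2)$. The identification that $\g'<1/3$ is forced only by the cubic integral $\int w^{-3\g'}\d w$ in $E_3$ also agrees with the paper (for $E_1,E_2$ the paper only needs $\g'<1/2$, via Lemma~\ref{lem:int_of_ns}~(\ref{lem:int_of_ns:power_cap_power_0})).

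There is, however, one step that does not hold as you have written it. The claimed bound
\[
|\Ns(\ps[1])\cap\Ns(\ps[2])|\ \le\ C\,\1{|x_1-x_2|\le C'(u_1\w u_2)^{-\g}}\,(u_1\vee u_2)^{-\g}
\]
is false: for any $|x_1-x_2|$, the set $\set{(z,w)\co |x_i-z|\le\b u_i^{-\g}w^{-\g'},\, i=1,2}$ is nonempty once $w<\big(2\b(u_1\w u_2)^{-\g}/|x_1-x_2|\big)^{1/\g'}$, so the left-hand side is strictly positive on a region where your right-hand side vanishes. The correct behaviour is a polynomial decay, as in Lemma~\ref{lem:size_of_ns}~(\ref{lem:size_of_ns:common_p}), namely $|\Ns(\ps[1],\ps[2])|\lesssim |x_1-x_2|^{-(1/\g'-1)}u_1^{-\g/\g'}u_2^{-\g/\g'}$. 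The conclusion you want, $\int_\R|\Ns(\ps[1],\ps[2])|\,\d x_2 \asymp (u_1u_2)^{-\g}$, is nonetheless correct; the clean way to get it is to skip the pointwise bound entirely and compute the $x_2$-integral via Fubini,
\[
\int_\R|\Ns(\ps[1],\ps[2])|\,\d x_2=\int_\S \1{(z,w)\in\Ns(\ps[1])}\Big(\int_\R\1{(z,w)\in\Ns(\ps[2])}\d x_2\Big)\d(z,w)=\tf{(2\b)^2}{1-2\g'}\,(u_1u_2)^{-\g},
\]
which is exactly what the paper packages inside Lemma~\ref{lem:int_of_ns}~(\ref{lem:int_of_ns:power_cap_power_0}). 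With this replacement the rest of your Case~B argument goes through, and you arrive at the same $E_1,E_2\in O(n^{-1/2})$ as the paper. There is also a minor arithmetic slip: $(u_1\w u_2)^{-\g}(u_1\vee u_2)^{-\g}=(u_1u_2)^{-\g}$, not $(u_1u_2)^{-2\g}$; the extra factor $(u_1u_2)^{-\g}$ that makes your final exponent $-2\g$ comes from the Poisson moment factors, not from the intersection.
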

\medskip
\noindent
In particular, this proves Proposition~\ref{prop:multivariate_normal}.

\section{Proof of Theorem~\ref{thm:functional_normal}}\label{sec:edge_count_gaussian}

Broadly speaking, apart from the convergence of finite-dimensional distributions showed in Proposition~\ref{prop:multivariate_normal}, we need to show that the sequence of the normalized edge counts is tight when $t \in [0, 1]$.
For this, we would like to apply~\cite[Theorem~2]{davydov1996weak}, which holds for non-decreasing processes.
The key advantage of this theorem is that to show the moment bound for tightness, it is enough to consider time increments of size larger than a minimum $n$-dependent threshold, which makes the proof more manageable.

We would like to write the edge count as the difference $S_n = S_n^+ - S_n^-$ of two non-decreasing processes.
To define $S_n^+$ and $S_n^-$, we first introduce the \quote{plus} and \quote{minus} neighborhoods of a point $p \in \S \ti \T$ as
\[ \begin{aligned}
    &N^+(p; t) := \Ns(\ps) \ti \Nt^+(\pt; t), \qquad \text{where} \qquad \Nt^+(\pt; t) := \left\{ \begin{array}{ll} \set{r \in \R \co b \le r \le b + \ell} & \text{if } b + \ell \le t \\ \set{r \in \R \co b \le r \le t} & \text{if } b + \ell > t \end{array} \right. \\
    &N^-(p; t) := \Ns(\ps) \ti \Nt^-(\pt; t), \qquad \text{where} \qquad \Nt^-(\pt; t) := \left\{ \begin{array}{ll} \set{r \in \R \co b \le r \le b + \ell} & \text{if } b + \ell \le t \\ \es & \text{if } b + \ell > t. \end{array} \right.
\end{aligned} \]
Note that the spatial parts of the neighborhoods $N^+(p; t)$ and $N^-(p; t)$ are the same as the spatial part of $N(p; t)$ and the difference is only in the temporal part.
Considering that $t \in [0, 1]$, we also define
\[ \begin{aligned}
    S_n^+(t) &:= \sum_{P \in \PP \cap( \S_n \ti \T^{0 \le})} \deg^+(P; t), \qquad \text{where} \qquad \deg^+(P; t) := \sum_{P' \in \PP'} \1{P' \in N^+(P; t)} \text{ and} \\
    S_n^-(t) &:= \sum_{P \in \PP \cap (\S_n \ti \T^{0 \le})} \deg^-(P; t), \qquad \text{where} \qquad \deg^-(P; t) := \sum_{P' \in \PP'} \1{P' \in N^-(P; t)},
\end{aligned} \]
and $\T^{0 \le} := \T^{[0, \ff)}$ denotes the set for which the death time $b + \ell \ge 0$.
In words, both $S_n^+$ and $S_n^-$ consider only points from $\PP$ whose lifetime $[B, B + L]$ intersects the temporal interval $[0, 1]$, and $\deg^+(P)$ counts the $\PP'$-points that
\begin{itemize}
    \item{are in the spatial neighborhood $\Ns(\Ps)$ of the vertex $P$ and}
    \item{have a birth time $R$ in the interval $[B, (B + L) \w t]$, regardless of the lifetime $L$.}
\end{itemize}
On the other hand, $\deg^-(P; t)$ considers a point $P$ if its lifetime ends before the time $t$, and counts those interactions in $\PP'$ that
\begin{itemize}
    \item{are in the spatial neighborhood $\Ns(\Ps)$ of the vertex $P$ and}
    \item{have a birth time $R$ in the interval $[B, B + L]$, regardless of the birth time $B$.}
\end{itemize}
We show in the following lemma that the \quote{plus-minus decomposition} holds.
\begin{lemma}[Plus-minus decomposition of edge count]
    \label{lem:plus_minus_decomposition}
    We have that $S_n = S_n^+ - S_n^-$.
\end{lemma}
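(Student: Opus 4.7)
The plan is to prove the identity pointwise in $t$ and termwise in $P \in \PP$. The intuition is that $\deg^+(P;t)$ counts all $\PP'$-neighbors of $P$ that have come into existence by time $t$, with the wrinkle that once $P$ has died, all of its eventual neighbors are pre-counted via the first branch of $\Nt^+$; by contrast, $\deg^-(P;t)$ counts the $\PP'$-neighbors attached to $\PP$-vertices that are already dead by time $t$. Their difference should therefore be exactly the number of edges of $P$ currently alive, namely $\deg(P;t)$.

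To make this precise, fix $t \in [0, 1]$ and $P \in \PP \cap (\S_n \ti \T^{0 \le})$. Since $N$, $N^+$, and $N^-$ all share the common spatial factor $\Ns(\Ps)$, the identity reduces to a comparison of the temporal neighborhoods, which I split according to the sign of $B + L - t$. If $B + L < t$, then $\Nt(\Pt;t) = \es$ (the defining condition $t \le B + L$ fails), while $\Nt^+(\Pt;t) = \Nt^-(\Pt;t) = [B, B + L]$ via their first branches; hence $\deg(P;t) = 0 = \deg^+(P;t) - \deg^-(P;t)$. If $B + L > t$, then $\Nt(\Pt;t) = \Nt^+(\Pt;t) = [B, t]$ (second branch of $\Nt^+$) and $\Nt^-(\Pt;t) = \es$, so $\deg^+(P;t) - \deg^-(P;t) = \deg(P;t)$. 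The only defective case is $B + L = t$, where all three temporal sets coincide with $[B, B+L]$, so $\deg(P;t)$ may be non-zero while $\deg^+(P;t) - \deg^-(P;t) = 0$; however, for any fixed $t$ the $\m$-mass of $\set{(b, \ell) \in \T \co b + \ell = t}$ is zero, so almost surely no $P \in \PP$ falls on this boundary.

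Finally, the restriction to $\T^{0 \le}$ in the definitions of $S_n^\pm$ is harmless: any $P \in \PP$ with $B + L < 0$ satisfies $t \ge 0 > B + L$, so the condition $t \le B + L$ again fails and $\deg(P;t) = 0$, meaning such a $P$ contributes nothing to $S_n$ either. Summing the pointwise identity over $P \in \PP \cap (\S_n \ti \T^{0 \le})$ thus yields $S_n(t) = S_n^+(t) - S_n^-(t)$ almost surely for each fixed $t$. I do not anticipate any real obstacle here: the lemma is a purely combinatorial bookkeeping statement, and the only subtle point — the defective boundary case $B + L = t$ — is dispatched by the absolute continuity of the Poisson intensity~$\m$. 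If a simultaneous-in-$t$ pathwise identity is needed downstream, one adopts right-continuous modifications (so that at a death time $B + L$ the process $\deg(P;\any)$ drops to $0$ immediately afterwards), and the cancellation then holds for every $t$ and every realization.
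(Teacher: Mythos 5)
Your argument follows essentially the same termwise case analysis as the paper: reduce to the temporal indicators and check cancellation according to the sign of $B + L - t$. You are in fact slightly more careful than the paper's proof, which silently asserts the indicator identity $\1{B \le R \le (B+L)\w t} - \1{B \le R \le B+L \le t} = \1{B \le R \le t \le B+L}$ that actually fails on the boundary $B + L = t$; your observation that this set has zero $\m$-mass (hence is avoided by $\PP$ almost surely for fixed $t$) is the right way to dispatch it.
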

\noindent
Since the difference of tight sequences is tight, it is enough to show that~\cite[Theorem~2]{davydov1996weak} holds for the normalized edge counts
\[ \So_n^+ := n^{-1/2} (S_n^+ - \E{S_n^+}) \qquad \text{and} \qquad \So_n^- := n^{-1/2} (S_n^- - \E{S_n^-}). \]
From now on, we will use the index $\any^\pm$ whenever a formula is valid for both $\any^+$ and $\any^-$.
Unless stated otherwise, all the indices of a formula are either $+$ or~$-$.
For ease of reference, we summarize the statement of~\cite[Theorem~2]{davydov1996weak} in our context.
\begin{theorem}[Specialized version of Davydov's theorem]\label{thm:davydov}
    Let $a_n \to 0$ be a sequence of positive numbers converging to $0$, and set $t_k = k a_n$ for $k = 0, 1, \dots, k_n$ with $k_n = \floor{1 / a_n}$ and $t_{k_n + 1} = 1$.
    Then, if $\So_n^\pm(\any)$ are non-decreasing processes defined on the interval $[0, 1]$ such that
    \begin{enumerate}
        \item
            the finite-dimensional distributions of the processes $\So_n^\pm(\any)$ converge to the finite-dimensional distribution of a limiting process $\So_\ff^\pm(\any)$, \label{condition:finite_dimensional}
        \item
            there exists some constants $\chi_1, \chi_2 > 1$ such that
            \[ \E[\big]{\abs[\big]{\So_n^\pm(t) - \So_n^\pm(s)}^{\chi_1}} \in O(\abs{t - s}^{\chi_2}) \]
            for all $n$ if $\abs{t - s} \ge a_n$, and \label{condition:cumulant}
        \item
            for the limit of the expected increments, we have
            \[ \lim_{n \tff} \max_{k \le \floor{1 / a_n}} \abs[\big]{\E[\big]{\So_n^\pm(t_{k + 1})} - \E[\big]{\So_n^\pm(t_k)}} = 0, \] \label{condition:third}
    \end{enumerate}
    then $\So_\ff^\pm(\any)$ is almost surely continuous and the sequence $\So_n^\pm(\any)$ converges in distribution to $\So_\ff^\pm(\any)$.
\end{theorem}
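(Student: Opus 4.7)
The plan is to establish weak convergence in $D([0,1],\mbb{R})$ by combining the finite-dimensional convergence from condition~(1) with tightness of the sequence $\{\So_n^\pm\}_{n \ge 1}$. The crucial structural input is that $S_n^\pm$ is non-decreasing and the deterministic centering $\E{S_n^\pm(\any)}$ is also non-decreasing, so $\So_n^\pm$ is a difference of two monotone processes and all of its jumps are non-negative. This lets me work with the uniform modulus $w_X(\delta) := \sup_{|t-s|\le\delta}|X(t)-X(s)|$ in place of the càdlàg Skorokhod modulus $w'$; tightness of the uniform modulus automatically forces any subsequential limit to be continuous and, together with Prokhorov's theorem, implies tightness in $D([0,1],\mbb{R})$. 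Combined with condition~(1), this identifies every subsequential limit with $\So_\ff^\pm$ and upgrades convergence of finite-dimensional marginals to convergence in distribution in $D([0,1],\mbb{R})$.

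For the tightness estimate, fix $\eps,\delta>0$ and consider $n$ large enough so that $a_n<\delta$. Partition $[0,1]$ into a coarse grid $s_k := k\delta$ and observe that for any $s<t$ with $|t-s|\le\delta$, the pair $(s,t)$ lies inside a pair of adjacent cells $[s_{k-1},s_{k+1}]$. Monotonicity of $S_n^\pm$ and of its expectation yield
\[
|\So_n^\pm(t)-\So_n^\pm(s)| \le |\So_n^\pm(s_{k+1})-\So_n^\pm(s_{k-1})| + 2n^{-1/2}\bigl(\E{S_n^\pm(s_{k+1})}-\E{S_n^\pm(s_{k-1})}\bigr).
\]
The deterministic remainder is handled by tiling $[s_{k-1},s_{k+1}]$ with $O(\delta/a_n)$ sub-intervals of length at most $a_n$ and applying condition~(3) to each, producing a quantity that vanishes as $n\to\ff$ uniformly over $k$. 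The random part is controlled via condition~(2): Markov's inequality applied with $|t-s| = 2\delta \ge a_n$ gives $\P{|\So_n^\pm(s_{k+1})-\So_n^\pm(s_{k-1})| > \eps} \le C\delta^{\chi_2}\eps^{-\chi_1}$, and a union bound over the $O(1/\delta)$ grid cells produces
\[
\P[\Big]{\max_k |\So_n^\pm(s_{k+1})-\So_n^\pm(s_{k-1})| > \eps} \le C\delta^{\chi_2-1}\eps^{-\chi_1},
\]
which tends to $0$ as $\delta\downarrow 0$ since $\chi_2>1$. Combining the two bounds yields $\lim_{\delta\downarrow 0}\limsup_{n\to\ff}\P{w_{\So_n^\pm}(\delta)>\eps}=0$, which is the desired tightness statement.

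The main obstacle I anticipate is calibrating the two scales: condition~(3) only controls the centering increments at the fine scale $a_n$, while condition~(2) is only applicable to random increments at the coarser scale $\delta\ge a_n$. The Davydov refinement rests precisely on monotonicity, which allows one to absorb the centering on a $\delta$-cell by tiling it with $O(\delta/a_n)$ sub-cells of length at most $a_n$, rather than paying for a moment bound at scale $a_n$, which would be far more onerous to verify in our setting. Once the two scales are coupled in this way, the remainder of the argument is a routine Billingsley-style tightness proof specialized to the monotone case, combined with the Prokhorov-plus-marginals identification of the limit in $D([0,1],\mbb{R})$.
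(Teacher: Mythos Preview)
The paper does not actually prove Theorem~\ref{thm:davydov}; it is quoted as a black box from \cite[Theorem~2]{davydov1996weak}. So there is no proof in the paper to compare against. Evaluating your sketch on its own merits, the overall structure (monotonicity $\Rightarrow$ reduce to grid increments, then condition~(2) plus a union bound for the random part, condition~(3) for the deterministic centering) is the right idea, but the treatment of the deterministic remainder has a genuine gap.

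You claim that the quantity $n^{-1/2}\bigl(\E{S_n^\pm(s_{k+1})}-\E{S_n^\pm(s_{k-1})}\bigr)$ ``vanishes as $n\to\infty$'' after tiling $[s_{k-1},s_{k+1}]$ into $O(\delta/a_n)$ sub-intervals of length $a_n$ and applying condition~(3) to each. But condition~(3) only says that each fine-scale expected increment is at most some $M_n=o(1)$; summing $O(\delta/a_n)$ of them yields $(\delta/a_n)M_n$, and nothing in the hypotheses forces this product to vanish. In fact, in the paper's own application (see the proof of Lemma~\ref{lem:expectation_of_increments}) one has $M_n\le c\,n^{1/2}a_n$, so $(\delta/a_n)M_n\sim c\,\delta\,n^{1/2}\to\infty$: your bound diverges there.

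The correct role of condition~(3) in Davydov's argument is not additive across a coarse $\delta$-cell but \emph{local}: together with monotonicity of $S_n^\pm$ and of its mean it controls the oscillation of $\So_n^\pm$ inside a \emph{single} fine-grid cell $[t_j,t_{j+1}]$, via
\[
-M_n \;\le\; \So_n^\pm(t)-\So_n^\pm(t_j) \;\le\; \bigl(\So_n^\pm(t_{j+1})-\So_n^\pm(t_j)\bigr)+M_n, \qquad t\in[t_j,t_{j+1}].
\]
This reduces every off-grid point to its neighboring fine-grid points at the cost of a single $M_n$, not $O(\delta/a_n)$ copies of it. The random part is then handled entirely on the fine grid, where condition~(2) is available for every pair of distinct points (their separation is $\ge a_n$). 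You correctly flagged the two-scale calibration as the main obstacle, but your resolution runs in the wrong direction: condition~(3) is a one-cell device, not a summable one.
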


Condition~(\ref{condition:finite_dimensional}) of Theorem~\ref{thm:davydov} is fulfilled for $\So_n^\pm(\any)$, which is stated in the next proposition.
\begin{proposition}[Convergence of the finite-dimensional distributions of $\So_n^\pm$]\label{prop:davydov_condition_1}
    If $\g < 1/2$ and $\g' < 1/3$, then the finite-dimensional distributions of $\So_n^\pm(\any)$ converge to a multivariate normal distribution.
\end{proposition}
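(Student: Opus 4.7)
The plan is to mirror the proof of Proposition~\ref{prop:multivariate_normal} by applying the low-mark/high-mark decomposition to both $S_n^+$ and $S_n^-$. With $u_n = n^{-2/3}$ as in~\eqref{eq:high_low_mark_decomposition}, I would write $S_n^\pm = S_n^{\pm,\ge} + S_n^{\pm,\le}$, where $N$ is replaced by $N^\pm$ throughout, and denote by $\So_n^{\pm,\ge}$ and $\So_n^{\pm,\le}$ the centered, $n^{-1/2}$-normalized versions.

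The first step is to prove the analog of Lemma~\ref{lem:low_mark_edge_count_negligible}, namely $\So_n^{\pm,\le}(t) \to 0$ in probability for each fixed $t \in [0,1]$. This reduces to a variance estimate that copies the proof of Lemma~\ref{lem:low_mark_edge_count_negligible}: the spatial factor of $N^\pm$ is identical to that of $N$, and a direct integration over the birth time $b$ against the exponential lifetime density shows that the temporal contribution from $N^\pm$ is dominated by a constant multiple of that from $N$. Hence the condition $\g, \g' < 1/2$ still suffices.

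The second step is to apply the Malliavin--Stein bound of Proposition~\ref{prop:multivariate_normal_limit} to the vector $(\So_n^{\pm,\ge}(t_1), \dots, \So_n^{\pm,\ge}(t_m))$. The limiting covariances $\lim_n \Cov{\So_n^{\pm,\ge}(t_i), \So_n^{\pm,\ge}(t_j)}$ exist and are finite by a case analysis adapted from Lemma~\ref{lem:covariance_function_terms}, modified to respect the asymmetric temporal support of $N^\pm$. For the error terms $E_1(n), E_2(n), E_3(n)$, the add-one cost operator $D_p \So_n^{\pm,\ge}(t)$ remains conditionally Poisson with parameter $\m(N^\pm(p;t))$, and the add-two cost operator again reduces to $\1{p' \in N^\pm(p;t)}$ exactly as in Definition~\ref{def:cost_operators}. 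Consequently, the integrands controlling $E_1, E_2, E_3$ for $\So_n^{\pm,\ge}$ differ from those for $\So_n^\ge$ only by bounded multiplicative constants, and Lemma~\ref{lem:bounds_of_error_terms} then forces $E_1(n) + E_2(n) + E_3(n) \to 0$ under $\g < 1/2$ and $\g' < 1/3$.

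The main technical point lies in this last domination: in contrast with the original setting, $N^+(p;t)$ can be strictly larger than $N(p;t)$ when $b + \ell < t$, so the cost operators are not pointwise dominated. Nevertheless, after carrying out the $(b,\ell)$-integration against $\mbb{P}_L$, the extra contribution produces only a bounded multiplicative factor compared to the stationary counterpart, because the exponential lifetime density integrates any polynomial weight in $\ell$ to a finite value. Once this estimate is in place, Proposition~\ref{prop:multivariate_normal_limit} yields multivariate normal convergence of $\So_n^{\pm,\ge}$, and combining this with the negligibility of $\So_n^{\pm,\le}$ gives Proposition~\ref{prop:davydov_condition_1}.
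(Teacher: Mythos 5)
Your proposal matches the paper's proof: the paper likewise establishes Proposition~\ref{prop:davydov_condition_1} by repeating the Malliavin--Stein argument from Section~\ref{sec:proof_multivariate_normal} for $\So_n^\pm$, showing that the covariance limit exists (Lemma~\ref{lem:limiting_covariance_function_Snt_pm}) and that $E_1, E_2, E_3$ vanish (Lemma~\ref{lem:bounds_of_error_terms_pm}), with the bounded temporal integrals supplied by Lemma~\ref{lem:int_of_nt_pm}~(\ref{lem:int_of_nt_pm:power_p_bound}) and~(\ref{lem:int_of_nt_pm:cap}) exactly along the lines of your ``domination after $(b,\ell)$-integration'' observation. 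Your write-up spells out the low/high-mark decomposition of $S_n^\pm$ explicitly, whereas the paper carries it over implicitly by declaring the spatial parts of the error integrals identical to those in Lemma~\ref{lem:bounds_of_error_terms}, but the underlying argument is the same.
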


\noindent
The next proposition shows that Condition~(\ref{condition:cumulant}) holds for $\So_n^\pm$.
We set $\chi_1 := 4$, $\chi_2 := 1 + \eta$ and $a_n := n^{-1/(1 + \eta)}$ for some $\eta > 0$.
\begin{proposition}[Tightness of the sequence $\So_n^\pm$]\label{prop:tightness_So_n_pm}
    Let $\g, \g' < 1/4$.
    Then, there exists an $\eta > 0$ such that if $\abs{t - s} \ge n^{-1/(1 + \eta)}$, then
    \[ \E[\big]{\abs[\big]{\So_n^\pm(t) - \So_n^\pm(s)}^4} \in O(\abs{t - s}^{1 + \eta}) \]
    for all $n$.
\end{proposition}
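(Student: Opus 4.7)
The plan is to bound the fourth central moment of the unnormalized increment $D_n^\pm(s,t) := S_n^\pm(t) - S_n^\pm(s) \ge 0$ directly and then divide by $n^2$; the argument runs in parallel for the $+$ and $-$ variants. For each $p \in \PP$, let $I^\pm(p; s, t) \subseteq (s, t]$ denote the temporal slice of length $\le t - s$ over which $\deg^\pm$ picks up new interactions, and set $R^\pm(p; s, t) := \Ns(\ps) \times I^\pm(p; s, t)$. Then
\[
D_n^\pm(s,t) = \sum_{P \in \PP \cap (\S_n \times \T^{0 \le})} \sum_{P' \in \PP'} \1{P' \in R^\pm(P; s, t)}
\]
is a Poisson U-statistic of order two, and the target estimate is equivalent to $\E{(D_n^\pm(s,t) - \E{D_n^\pm(s,t)})^4} \le C n^2 |t-s|^{1+\eta}$ for some $\eta > 0$.

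\textbf{Variance via the conditional decomposition.} The core identity is $\E{(X - \E{X})^4} = 3 \kappa_2(X)^2 + \kappa_4(X)$. To access both cumulants, use the split $D_n^\pm = D_1 + D_2 + \E{D_n^\pm}$ with $D_1 := D_n^\pm - \E{D_n^\pm \given \PP}$ and $D_2 := \E{D_n^\pm \given \PP} - \E{D_n^\pm}$. Conditionally on $\PP$, the variable $D_1$ is a centered Poisson integral against $\PP'$ with multiplicity kernel $f(p') := \sum_{P \in \PP \cap (\S_n \times \T^{0 \le})} \1{p' \in R^\pm(P; s, t)}$, while $D_2$ is a centered Poisson integral against $\PP$ with kernel $\lambda(p) := |R^\pm(p; s, t)|$. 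By the Mecke formula,
\[
\Var{D_n^\pm} = \E{D_n^\pm} + \int g(p')^2 \d(z,w,r) + \int \lambda(p)^2 \mdp \le C n (t-s),
\]
where $g(p') := \int \1{p' \in R^\pm(p; s, t)} \mdp$, and the mark integrals $\int_0^1 u^{-2\g} \d u$ and $\int_0^1 w^{-2\g'} \d w$ converge since $\g, \g' < 1/4 < 1/2$. Hence $3\kappa_2(D_n^\pm)^2 \le C n^2 (t-s)^2$.

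\textbf{Fourth cumulant, main obstacle, and conclusion.} For the fourth cumulant, expand $\E{(D_1 + D_2)^4}$ by the tower property. Using $\E{D_1 \given \PP} = 0$ and the Poisson Wiener-It\^o moment formula, which expresses $\E{D_1^k \given \PP}$ as a polynomial in $\int f^j \d(z,w,r)$ for $j \le k$, each surviving summand is controlled either by $\E{\int f^k \d(z,w,r)}$ with $k \le 4$ or by $\E{D_2^k}$, with cross-contributions absorbed via Cauchy-Schwarz. Applying multifold Mecke once more, $\E{\int f^k \d(z,w,r)}$ decomposes as a sum over set-partitions $\pi$ of $\{1,\ldots,k\}$ of integrals of the form $\int g(p')^{|\pi|} \d(z,w,r) \le C n (t-s) / (1 - |\pi| \g')$, while the Poisson moment formula for $D_2$ yields $\int \lambda(p)^k \mdp \le C n (t-s)^k / (1 - k \g)$. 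The worst case $|\pi| = k = 4$—four $\PP$-vertices sharing a common $\PP'$-neighbour—produces the integral $\int_0^1 w^{-4\g'} \d w$, which forces $\g' < 1/4$; symmetrically $\int_0^1 u^{-4\g} \d u$ forces $\g < 1/4$. This combinatorial bookkeeping, together with the identification of the sharp parameter threshold, is the main technical obstacle of the proof. Collecting everything yields $\kappa_4(D_n^\pm) \le C n (t-s)$ and hence
\[
\E{|\So_n^\pm(t) - \So_n^\pm(s)|^4} = n^{-2} \E{(D_n^\pm(s,t) - \E{D_n^\pm(s,t)})^4} \le C (t-s)^2 + C n^{-1}(t-s).
\]
Choose $\eta := 1/2$: since $|t-s| \le 1$, the first term is $\le C |t-s|^{1+\eta}$, and the hypothesis $|t-s| \ge n^{-1/(1+\eta)}$ rearranges to $n^{-1} \le |t-s|^{\eta}$, which bounds the second term by $C |t-s|^{1+\eta}$ as well, completing the tightness estimate.
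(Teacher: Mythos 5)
Your proposal is correct in its conclusion and correctly identifies all the key ingredients — the fourth central moment bound $n^{-2}\bigl(O(n(t-s)) + O(n^2(t-s)^2)\bigr)$, the origin of the threshold $\g,\g'<1/4$ in the quartic kernel powers, and the rearrangement $n^{-1}\le|t-s|^{\eta}$ supplied by the lower bound on $|t-s|$ — but it takes a genuinely different route from the paper.

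The paper partitions the spatial window into $n$ unit blocks $\V_i$, expands $\k_4(\De_n^\pm)$ multilinearly over indices $(i,j,k,\ell)$, and invokes a Baryshnikov--Yukich-type cumulant decomposition (\cite[Lemma~5.1]{baryshnikov}) together with a three-way case analysis on the separation pattern $\r(i,j,k,\ell)$ of the blocks (Lemmas~\ref{lem:cumulant_term_cases}, \ref{lem:cumulant_fourth_moment_bound}, \ref{lem:cumulant_covariance_bound}). That localization introduces a distance hierarchy that lets one bound the joint cumulant by common-neighborhood integrals block by block. You instead work globally: you split $D_n^\pm = D_1 + D_2 + \E{D_n^\pm}$ with $D_1 := D_n^\pm - \E{D_n^\pm\given\PP}$, $D_2 := \E{D_n^\pm\given\PP}-\E{D_n^\pm}$, exploit that conditionally on $\PP$ the increment $D_1$ is a linear Poisson integral in $\PP'$ with kernel $f$, and reduce everything to Mecke/chaos-type moment formulae $\E{\int f^k}$ and $\E{D_2^k}$. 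This saves the entire geometric case analysis and matches the paper's variance estimate (its Lemma~\ref{lem:variance_of_De} via Poincar\'e is exactly your decomposition $\E{D_n} + \int\lambda^2 + \int g^2$). What you lose in return is transparency on the cross-terms: your remark that mixed contributions such as $\E{D_1^2 D_2^2}$ and $\E{D_1^3 D_2}$ are ``absorbed via Cauchy--Schwarz'' is stated but not executed, and the identification of which Mecke-partition diagrams produce the $O(n(t-s))$ connected part versus the $O(n^2(t-s)^2)$ disconnected part (which must match the $3\Var^2$ term) needs to be carried out to make the argument complete. You also implicitly use that $\int_{\T^{0\le}}\abs{I^\pm(\pt)}^m\dpt\in O(t-s)$ for every $m\ge 1$, which is exactly Lemma~\ref{lem:int_of_nt_pm}~(\ref{lem:int_of_nt_pm:difference_p}); it would be worth citing, since the temporal factor, not the spatial one, is what produces the $(t-s)$ in the connected terms. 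With those gaps filled, your route is a shorter alternative to the paper's block decomposition, and your explicit choice $\eta=1/2$ is a valid instantiation of the paper's ``for some $\eta>0$''.
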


\noindent
Finally, we turn our attention to Condition~(\ref{condition:third}).
Note that the absolute value can be dropped due to the monotonicity of $S_n^\pm$.
Then, the condition is fulfilled if the following lemma holds.
\begin{lemma}[Bound on the expectation of the increments of $\So_n^\pm(t)$]\label{lem:expectation_of_increments}
    Let $t_k := k n^{1 / (1 + \eta)}$ for any $k \in \N$.
    Then, for all $\eps > 0$,
    \[ \lim_{n \tff} \max_{k \le \floor{n^{1 / (1 + \eta)}}} \big( n^{-1/2} \E[\big]{\De_n^\pm(t_k, t_{k + 1})} \big) = 0. \]
\end{lemma}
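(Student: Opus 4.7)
The plan is to compute $\E{\De_n^\pm(t_k, t_{k+1})}$ exactly via Mecke's formula (with $\De_n^\pm(s, t) := S_n^\pm(t) - S_n^\pm(s)$), and show that it is of order $n a_n$ uniformly in $k$, where $a_n = n^{-1/(1+\eta)}$ is as in Proposition~\ref{prop:tightness_So_n_pm} (so the partition is $t_k = k a_n$, consistent with Theorem~\ref{thm:davydov}). Multiplying by $n^{-1/2}$ then gives a bound $O(n^{\eta/(1+\eta) - 1/2})$, uniform in $k$, that vanishes as $n \tff$ whenever $\eta < 1$. Since the parameter $\eta > 0$ of Proposition~\ref{prop:tightness_So_n_pm} may be chosen arbitrarily small, we simply take $\eta \in (0, 1)$.

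On the event level, as the time argument grows from $s$ to $t$ the temporal neighborhood $\Nt^+(\pt; \any)$ expands from $[B, (B+L) \w s]$ to $[B, (B+L) \w t]$, so after intersecting with $\{r \ge B\}$ a $\PP'$-point contributes to $\De_n^+$ precisely when $Z \in \Ns(\Ps)$ and $R \in [B, B+L] \cap (s, t]$. Analogously, $\De_n^-$ registers only vertices whose death time $B + L$ lies in $(s, t]$, with $Z \in \Ns(\Ps)$ and $R \in [B, B+L]$. Applying Mecke's formula to $\PP$ and $\PP'$ and integrating out the spatial coordinates produces in both cases the common prefactor
\[ \int_0^n \d x \int_0^1 \d u \cdot 2 \b u^{-\g} \int_0^1 w^{-\g'} \d w = \f{2 \b n}{(1-\g)(1-\g')}. \]

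The remaining temporal integral for $\De_n^+$ collapses by Fubini:
\[ \int_\R \Leb\big([b, b+\ell] \cap (s, t]\big) \d b = \ell (t - s), \]
and integrating against $\mbb{P}_L$ (which has finite mean $\E{L}$ under the exponential-lifetime assumption) yields
\[ \E{\De_n^+(s, t)} = \f{2 \b \E{L}}{(1-\g)(1-\g')} n (t - s). \]
For $\De_n^-$ the $r$-integral over $[b, b+\ell]$ contributes $\ell$, while $\int_\R \1{s < b + \ell \le t} \d b = t - s$, producing the same expression. Specializing $t - s = a_n$ and dividing by $n^{1/2}$ gives the announced bound, uniform in $k$.

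This is essentially a linear-in-time first-moment estimate, so no substantial obstacle arises. The only points requiring care are bookkeeping: verifying that both the plus and minus increments reduce to the same Lebesgue intersection identity $\int_\R \Leb([b, b+\ell] \cap (s, t]) \d b = \ell (t - s)$, and confirming that the tightness parameter $\eta$ from Proposition~\ref{prop:tightness_So_n_pm} may be taken below $1$.
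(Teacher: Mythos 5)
Your proof is correct and takes essentially the same approach as the paper: both apply Mecke's formula, factor the integrand into a spatial and a temporal part, establish $\E{\De_n^\pm(t_k, t_{k+1})} \in O(n\,(t_{k+1} - t_k))$, and then substitute the mesh $t_{k+1} - t_k = n^{-1/(1+\eta)}$ with $\eta < 1$. The only difference is cosmetic: the paper bounds the two factors by invoking the pre-established estimates Lemma~\ref{lem:int_of_ns}~(\ref{lem:int_of_ns:power_p}) and Lemma~\ref{lem:int_of_nt_pm}~(\ref{lem:int_of_nt_pm:difference_p}), whereas you re-derive the temporal integral directly via the Fubini identity and thereby identify the exact constant rather than merely an upper bound.
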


The previous propositions and lemmas establish all the conditions of Theorem~\ref{thm:davydov} and hence we conclude that the sequence $\So_n^\pm(t)$ is tight, and thus the functional CLT holds for the edge count~$\So_n(t)$.

\section{Proof of Theorem~\ref{thm:functional_stable}}\label{sec:edge_count_stable}

In this section, we outline the proof of the convergence of the edge count to a stable limit when $\g > 1/2$.
We divide the proof of Theorem~\ref{thm:functional_stable} into two parts.

Before delving into the proof, recall the high-mark low-mark decomposition from~\eqref{eq:high_low_mark_decomposition} and Figure~\ref{fig:decomposition_of_edge_count}, where we set $u_n = n^{-2/3}$.
To promote consistency with the rest of the proof, we introduce the notation~$S_n^{(1)} := S_n^\le$ for the low-mark edge count.
The normalized edge counts are defined using $n^{-\g}$ in place of $n^{-1/2}$ as in the case of $\g < 1/2$, i.e.,
\[ \So_n^\ge(\any) := n^{-\g} \big( S_n^\ge(\any) - \E{S_n^\ge(\any)} \big) \qquad \text{and} \qquad \So_n^{(1)}(\any) := n^{-\g} \big( S_n^{(1)}(\any) - \E{S_n^{(1)}(\any)} \big). \]
Next, we define the Skorokhod metric $d_\msf{Sk}$ used in the proof of Theorem~\ref{thm:functional_stable}.
\begin{definition}[Skorokhod metric]\label{def:skorokhod_metric}
    Let $f, g \in D([0, 1], \R)$.
    The Skorokhod metric $d_\msf{Sk}(f, g)$ is defined as
    \[ d_\msf{Sk}(f, g) := \inf_\la \Big( \norm{\la - I} \vee \norm{f \circ \la^{-1} - g} \Big), \]
    where the infimum is over all homeomorphisms $\la$ from $[0, 1]$ to itself, $I$ is the identity map and $\norm{\any}$ is the supremum norm on $[0, 1]$.
\end{definition}

The main steps of the first part of the proof of Theorem~\ref{thm:functional_stable} are as follows.
In Step~1, we show that the normalized high-mark edge count is negligible, which is the following statement whose proof, together with the proofs of the remaining statements in this section, is given in Section~\ref{sec:proofs_functional_stable}.
\begin{proposition}[High-mark edge count is negligible]
\label{prop:high_mark_edge_count}
    Let $\g > 1/2$ and $\g' < 1/4$.
    Then,
    \[ \So_n^\ge(\any) \xlongrightarrow[n \tff]{d} 0 \]
    in the Skorokhod space $D([0, 1], \R)$,
\end{proposition}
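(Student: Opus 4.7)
The plan is to reduce the proposition to an application of Davydov's Theorem~\ref{thm:davydov}, following the blueprint of the proof of Theorem~\ref{thm:functional_normal}. Writing $S_n^\ge = S_n^{\ge,+} - S_n^{\ge,-}$ by restricting the plus-minus decomposition of Lemma~\ref{lem:plus_minus_decomposition} to $\PP$-vertices with mark $u \in [u_n, 1]$, it suffices to prove that each of $\So_n^{\ge,\pm}(\any) := n^{-\g}(S_n^{\ge,\pm}(\any) - \E{S_n^{\ge,\pm}(\any)})$ converges weakly to the zero process in $D([0,1], \R)$, since the difference of tight sequences is tight.

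For Condition~(\ref{condition:finite_dimensional}) of Theorem~\ref{thm:davydov}, finite-dimensional convergence of $\So_n^{\ge,\pm}$ to the zero process, it is enough to verify $L^2$-convergence of $\So_n^{\ge,\pm}(t)$ to $0$ for each fixed $t$. Using the independence of $\PP$ and $\PP'$ and the Mecke formula, the variance of $S_n^\ge(t)$ splits into standard diagonal and off-diagonal contributions, whose dominant term scales as $n \cdot u_n^{1-2\g}$ due to the truncation $u \ge u_n = n^{-2/3}$; dividing by $n^{2\g}$ yields a strictly negative power of $n$ precisely when $\g > 1/2$, and the identical argument applies to $\So_n^{\ge,\pm}$. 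Condition~(\ref{condition:third}) on the expected increments is then routine, since $\E{S_n^{\ge,\pm}(t) - S_n^{\ge,\pm}(s)} = O(n(t-s))$ and the normalization by $n^{-\g}$ together with the mesh $a_n := n^{-1/(1+\eta)}$ produces a negligible contribution for any $\g > 1/2$ and $\eta > 0$ sufficiently small.

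The main obstacle is therefore Condition~(\ref{condition:cumulant}), the fourth-moment increment bound
\[ \E{\abs{\So_n^{\ge,\pm}(t) - \So_n^{\ge,\pm}(s)}^4} = O(\abs{t-s}^{1+\eta}) \qquad \text{whenever } \abs{t-s} \ge a_n. \]
Here I would adapt the cumulant-based moment computation of Proposition~\ref{prop:tightness_So_n_pm} to the restricted mark domain $[u_n, 1]$. In contrast to the setting $\g < 1/4$, the integrals $\int_{u_n}^1 u^{-k \g} \d u$ for $k = 2, 3, 4$ now blow up polynomially in $n$, producing surplus factors $u_n^{1-k\g}$ that have to be absorbed by the stronger normalization $n^{-4\g}$; the constraint $\g' < 1/4$ remains essential to keep the $w$-integrals $\int_0^1 w^{-k\g'} \d w$ finite. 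The delicate part is to show that a careful bookkeeping of diagonal and off-diagonal Poisson product contributions, combined with the lower bound $\abs{t-s} \ge a_n$ which allows one to trade the surplus $n$-factors for factors of $\abs{t-s}^\eta$, produces the required estimate for some $\eta > 0$ depending on $\g$.
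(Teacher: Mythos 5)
Your proposal takes essentially the same route as the paper: apply the plus-minus decomposition restricted to the high-mark domain, then invoke Davydov's theorem, controlling the finite-dimensional marginals via a variance estimate of order $o(n^{2\g})$, the increment expectations via the cruder bound $O(n(t-s))$, and the fourth-moment increments via the same cumulant machinery (partitioning into blocks, the three proximity cases, Lemma~\ref{lem:int_of_ns}~(\ref{lem:int_of_ns:power_cap_power_minus}) for the spatial part), where the surplus powers of $u_n$ are absorbed by $n^{-4\g}$ together with $\abs{t-s} \ge a_n$. The only cosmetic difference is that you keep the light-tailed mesh $a_n = n^{-1/(1+\eta)}$ whereas the paper switches to $a_n = n^{-1/2}$ with $\eta = 1/3$; both parameter choices satisfy Davydov's conditions when $\g > 1/2$ and $\g' < 1/4$.
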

\noindent
In Step~2, after neglecting the high-mark edges, we approximate the low-mark edge count~$S_n^{(1)}$ by a sum of conditional expectations~$S_n^{(2)}$ of the neighborhoods of the points, defined as
\[ S_n^{(2)}(\any) := \sum_{P \in \PP \cap (\S^{\le u_n}_n \ti \T)} \E[\big]{\deg(P; \any) \biggiven P} \qquad \text{and} \qquad \So_n^{(2)}(\any) := n^{-\g} \big( S_n^{(2)}(\any) - \E[\big]{S_n^{(2)}(\any)} \big). \]
\begin{remark}
    We crucially note that $\So_n^{(2)}$ is devoid of the spatial correlations of the neighborhoods.
\end{remark}
\noindent
The following proposition states that $\So_n^{(2)}$ is indeed an approximation in the sense that the difference between the two edge counts is negligible.
\begin{proposition}[Approximation of the low-mark edge count]
\label{prop:low_mark_edge_count}
    Let $\g > 1/2$ and $\g' \in (0, 1)$.
    Then,
    \[ \norm[\big]{\So_n^{(1)} - \So_n^{(2)}} \xrightarrow[n \tff]{\mbb{P}} 0, \]
    in probability.
\end{proposition}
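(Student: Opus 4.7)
The plan is to exploit a per-vertex martingale decomposition: write $S_n^{(1)}(t) - S_n^{(2)}(t) = \sum_{P \in \PP \cap (\S_n^{\le u_n} \ti \T)} M_P(t)$, where $M_P(t) := \deg(P; t) - \E{\deg(P; t) \given P}$. For each $P = (X, U, B, L)$, the process $M_P$ vanishes outside the lifetime interval $[B, B + L]$, and on this interval it coincides, conditionally on $P$, with the centered Poisson integral $\PP'(\Ns(\Ps) \ti [B, t]) - \Leb(\Ns(\Ps))(t - B)$. Hence, conditional on $P$, $M_P$ is a martingale in the filtration generated by $\PP'$, with terminal second moment $\E{M_P(B + L)^2 \given P} = \Leb(\Ns(\Ps)) L = 2 \b U^{-\g} L / (1 - \g')$.

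Doob's $L^2$ maximal inequality followed by Jensen's inequality then gives
\[ \E{\sup_{t \in [0, 1]} \abs{M_P(t)} \given P} \le 2 \sqrt{\Leb(\Ns(\Ps)) L}. \]
Since convergence to zero in probability is all that is required, the crude triangle inequality $\norm{\sum_P M_P}_\ff \le \sum_P \norm{M_P}_\ff$ is enough. Taking expectation, applying Campbell's formula, and noting that the integration over $b$ is constrained to $b \in [-\ell, 1]$ for $[b, b + \ell]$ to intersect $[0, 1]$, the bound reduces to a deterministic integral of order $n \cdot u_n^{1 - \g/2} \cdot \E{(1 + L) \sqrt{L}}$, finite under the mild moment condition $\E{L^{3/2}} < \ff$ (automatic for the exponential $\mbb{P}_L$ of the model).

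With $u_n = n^{-2/3}$, this is of order $C n^{(1 + \g)/3}$, so that after normalization
\[ \E{\norm{\So_n^{(1)} - \So_n^{(2)}}_\ff} \le C \, n^{(1 - 2 \g)/3}. \]
Since $\g > 1/2$ makes this exponent strictly negative, Markov's inequality yields the stated convergence in probability. The main obstacle is correctly handling the two-phase nature of $M_P$ — a compensated Poisson martingale on $[B, B + L]$ followed by a deterministic reset to zero at death — since the reset jump breaks the martingale property outside the lifetime and therefore forces Doob's inequality to be applied only on $[B, B + L]$. Once this structure is identified, the remainder is a routine Campbell-type computation, whose slackness is absorbed by the sum-of-suprema bound.
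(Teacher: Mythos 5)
Your proposal is correct and follows essentially the same route as the paper: bound the supremum of the sum by the sum of suprema, apply the Mecke/Campbell formula, use Doob's $L^2$ maximal inequality on the per-vertex compensated-Poisson martingale restricted to its lifetime, and finish with the factorized spatial-temporal integral and Markov's inequality. The only cosmetic difference is that you take the Doob terminal time to be $B+L$ while the paper uses the slightly sharper $1\wedge(B+L)$; both yield the same order $n^{(1-2\g)/3}\to 0$.
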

\noindent
This part of the proof is concluded after approximating $\So_n$ by $\So_n^{(2)}$.

\medskip
In the second part of the proof, we show that the approximated low-mark edge count~$\So_n^{(2)}$ converges in distribution to $\So$ as $n \to \ff$, where $\So$ is defined in the statement of Theorem~\ref{thm:functional_stable}.
Before presenting the remaining steps of the proof, recall the definition of the measure~$\nu$, the Poisson point process~$\PP_\ff$, and that of~$\So(\any)$ from the statement of Theorem~\ref{thm:functional_stable}.
Also recall that
\[ \E{\deg(P; t) \given P} = \abs{N(P; t)} = \abs{\Ns(\Ps)} \abs{\Nt(\Pt; t)} = \wt{c} \, U^{-\g} (t - B) \1{B \le t \le B + L}, \]
where $\wt{c} = 2 \b / (1 - \g')$.
Then, for all window sizes~$n$ and~$\eps > 0$, we define the edge count $S_{n, \eps}^{(3)}$ together with its centered, normalized version as
\[ S_{n, \eps}^{(3)}(\any) := \sum_{P \in \PP \cap (\S_n \ti \T)} \E{\deg(P; \any) \given P} \1{U \le 1 / (\eps n)} \qquad \text{and} \qquad \So_{n, \eps}^{(3)}(\any) := n^{-\g} \big( S_{n, \eps}^{(3)}(\any) - \E{S_{n, \eps}^{(3)}(\any)} \big). \]
Furthermore, recall the definition of the edge count~$S_\eps^\ast$ and its centered version~$\So_\eps^\ast$ from~\eqref{eq:definition_S_eps_ast}.
Note that $S_{n, \eps}^{(3)}$ and $S_\eps^\ast$ are defined on different probability spaces.
The second part of the proof is divided into three steps, Step 3, 4 and 5, which are illustrated in Figure~\ref{fig:steps_of_convergence_proof}, and based on arguments presented in~\cite[Sections~5.5 and~7.2]{heavytails}.
\begin{figure} \centering
    \begin{subfigure}[b]{0.45\textwidth} \centering
        \begin{tikzpicture}[>=latex]
            \begin{axis}[
                    scale=0.8,
                    axis x line=center,
                    axis y line=center,
                    axis equal image,
                    xtick={0.01,6},
                    xticklabels={$0$, $n$},
                    ytick={0.01, 2, 5},
                    yticklabels={$0$, $(\eps n)^{-1}$, $1$},
                    xlabel={location},
                    ylabel={mark},
                    xlabel style={above left},
                    ylabel style={above right},
                    xmin=0, xmax=8.5,
                    ymin=0, ymax=5.5,
                ]
                \addplot [mark=none, dashed, domain=0:6] {2};
                \addplot [mark=none, dashed, domain=0:6] {5};
                \draw [dashed] (6, 0) -- (6, 5);
                \node [fill=white] at (3, 1) {$S_{n, \eps}^{(3)}$};
            \end{axis}
        \end{tikzpicture}
    \end{subfigure}
    \begin{subfigure}[b]{0.45\textwidth} \centering
        \begin{tikzpicture}[>=latex, scale=0.325]
            \draw[thick, dashed, ->] ( 1,  0) -- ( 9,  0);
            \node at  ( 5  ,  1)             {goal};
            \draw[thick,->] ( 0,  9) -- ( 0,  1);
            \node at  (-2.5,  5) [rotate=90] {$\eps \to 0$};
            \node at  (-1  ,  5) [rotate=90] {($\forall n$) uniformly};
            \node at  ( 1  ,  5) [rotate=90] {Step 3};
            \draw[thick,->] ( 1, 10) -- ( 9, 10);
            \node at  ( 5  , 10.75)          {$n \to \ff$};
            \node at  ( 5  ,  8.75)          {Step 4};
            \draw[thick,->] (10,  9) -- (10,  1);
            \node at  ( 9  ,  5) [rotate=90] {Step 5};
            \node at  (11  ,  5) [rotate=90] {$\eps \to 0$};

            \node at  ( 0  ,  0) [circle,fill,inner sep=1.5pt]{};
            \node at  (-1.5, -1) {$\So_n^{(2)}$};
            \node at  ( 0  , 10) [circle,fill,inner sep=1.5pt]{};
            \node at  (-2  , 11) {$\So_{n, \eps}^{(3)}$};
            \node at  (10  ,  0) [circle,fill,inner sep=1.5pt]{};
            \node at  (11  , -1) {$\So$};
            \node at  (10  , 10) [circle,fill,inner sep=1.5pt]{};
            \node at  (11.5, 11) {$\So_\eps^\ast$};
        \end{tikzpicture}
    \end{subfigure}
    \caption{
        Main steps of the second part of the proof of Theorem~\ref{thm:functional_stable}.
        Step~3 shows that $\So_{n, \eps}^{(3)}$ converges to $\So_n^{(2)}$ as $\eps \to 0$ uniformly for all~$n$.
        Step~4 shows convergence of $\So_{n, \eps}^{(3)}$ to $\So_\eps^\ast$ as $n \to \ff$.
        Finally, Step~5 shows that $\So_\eps^\ast$ converges to $\So$ as $\eps \to 0$.
    }
    \label{fig:steps_of_convergence_proof}
\end{figure}
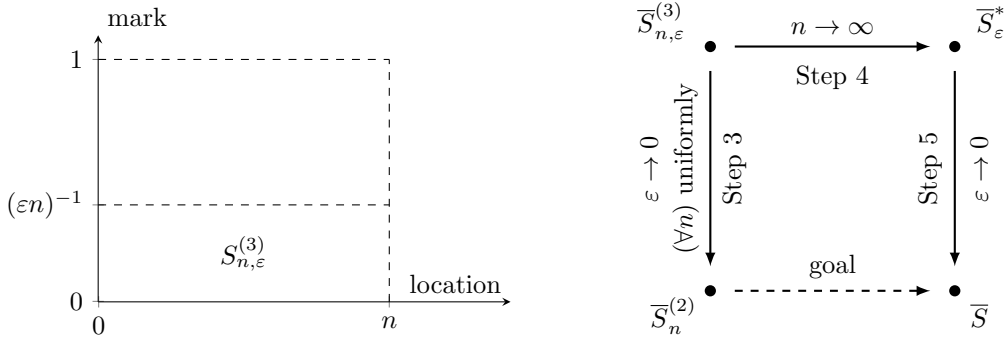
The approach is summarized as follows.
\begin{itemize}
    \item
        In Step~3, we show that $\So_{n, \eps}^{(3)}$ converges to $\So_n^{(2)}$ in $D([0, 1], \R)$ as $\eps \to 0$ uniformly for all~$n$.
        More specifically, we show that $\lim_{\eps \downarrow 0} \limsup_{n \tff} \P[\big]{d_\msf{Sk} \big( \So_{n, \eps}^{(3)}, \So_n^{(2)} \big) > \de} = 0$ for all $\de > 0$.
        After bounding the Skorokhod distance $d_\msf{Sk}$ with the supremum norm, we apply the plus-minus decomposition of the edge count~$S_{n, \eps}^{(3)}$ to show that the supremum norm of the difference converges to zero in probability.
        The main idea is to decompose of the edge count~$S_{n, \eps}^{(3)}$ into a sum of a martingale and a continuous process that can be written as a sum of integrals.
        Note that while other decompositions, such as the Doob--Meyer decomposition are possible, it is unclear if they satisfy the properties we need in the proof.
    \item
        In Step~4, we show that the edge count $\So_{n, \eps}^{(3)}$ with marks less than $(\eps n)^{-1}$ converges in distribution to $\So_\eps^\ast$ in $D([0, 1], \R)$ as $n \to \ff$.
        We follow the arguments in~\cite[Section~7.2]{heavytails} to show the above convergence.
        The main difference in our proof is that we need to show the continuity of the summation functional~$S_\eps^\ast$ with respect to the Skorokhod metric.
        This is done in Proposition~\ref{prop:continuity_of_summation}, and its proof differs from the arguments in the corresponding section~\cite[Section~7.2.3]{heavytails}.
    \item
        Finally, in Step~5, we will see that $\So_\eps^\ast \to \So$ almost surely as $\eps \to 0$ in $D([0, 1], \R)$.
        In this step, we follow the proof of~\cite[Proposition~5.7]{heavytails}.
        While the proof presented in~\cite[Proposition~5.7]{heavytails} relies on the continuous version of Kolmogorov's inequality~\cite[Lemma~5.3]{heavytails}, we need to apply Lemma~\ref{lem:So_eps_cauchy_probability}, which shows that the sequence $\So_{\eps_n}^\ast$ is Cauchy in probability with respect to the supremum norm.
\end{itemize}

\medskip
We begin with Step~3.
Since the Skorokhod metric $d_\msf{Sk}$ on $[0, 1]$ is bounded by the supremum metric, we have that
\begin{equation}
    \lim_{\eps \downarrow 0} \limsup_{n \tff} \P[\big]{d_\msf{Sk} \big( \So_{n, \eps}^{(3)}, \So_n^{(2)} \big) > \de} \le \lim_{\eps \downarrow 0} \limsup_{n \tff} \P[\big]{ \norm[\big]{\So_{n, \eps}^{(3)} - \So_n^{(2)}} > \de}.
    \label{eq:step_4_convergence_result}
\end{equation}
To show that the right-hand side is zero, we turn again to the \quote{plus-minus decomposition} $S_{n, \eps}^{(3)} = S_{n, \eps}^{(3), +} - S_{n, \eps}^{(3), -}$, i.e.,
\[ \begin{aligned}
    S_{n, \eps}^{(3), \pm}(\any) &:= \sum_{P \in \PP \cap (\S_n^{\le 1 / (\eps n)} \ti \T^{0 \le})} \E{\deg^\pm(P; \any) \given P} = \sum_{P \in \PP \cap (\S_n^{\le 1 / (\eps n)} \ti \T^{0 \le})} \abs{N^\pm(P; \any)} \text{ and} \\
    \So_n^{(2), \pm}(\any) &:= n^{-\g} \big( S_{n, \eps}^{(3), \pm}(\any) - \E{S_{n, \eps}^{(3), \pm}(\any)} \big).
\end{aligned} \]
To show that the right-hand side of~\eqref{eq:step_4_convergence_result} is zero, we verify the convergence result for the~\quote{plus} and~\quote{minus} cases separately.
\begin{proposition}[Convergence of $S_{n, \eps}^{(3), \pm}$ to $S_n^{(2), \pm}$]
    \label{lem:convergence_of_Sn_eps_to_Sn}
    Let $\g > 1/2$ and $\g' \in (0, 1)$.
    Then,
    \[ \limsup_{n \tff} \norm[\Big]{\So_{n, \eps}^{(3), \pm} - \So_n^{(2), \pm}} \xrightarrow[\eps \downarrow 0]{\mbb{P}} 0 \]
    in probability.
\end{proposition}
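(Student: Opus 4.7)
The plan is to set $Z^\pm(t) := S_n^{(2),\pm}(t) - S_{n,\eps}^{(3),\pm}(t)$ and to bound $\E[\big]{\norm{\So_n^{(2),\pm} - \So_{n,\eps}^{(3),\pm}}^2}$ uniformly in $n$ by a quantity that vanishes as $\eps \downarrow 0$, whereupon Markov's inequality concludes. For large $n$, $Z^\pm(t)$ is a Poisson functional summing $|N^\pm(P; t)|$ over the mid-mark vertices $P$ with $1/(\eps n) < U \le u_n$. Every variance estimate will rest on a Campbell-type computation whose spatial ingredient, for $\g > 1/2$, has the form $n \int_{1/(\eps n)}^{u_n} u^{-2 \g} \d u \lesssim \eps^{2 \g - 1} n^{2 \g}$, since the integral is dominated by its lower endpoint. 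Although each $Z^\pm$ is monotone in $t$, a naive monotone-envelope bound is too crude because the mean increment $\E{Z^\pm(1) - Z^\pm(0)}$ scales like $n^{(1 + 2 \g)/3}$ and dominates $n^\g$; the two signs therefore require genuinely different process-level arguments.

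For the minus sign, $Z^-(t)$ is a pure-jump increasing process with jumps $c_P L$ at death times $B + L \in [0, t]$, where $c_P := (2 \b / (1 - \g')) U^{-\g}$ is the Lebesgue measure of the spatial neighborhood of $P$. Under the filtration $\FF_t := \s(\PP \cap \{b + \ell \le t\})$ of already-expired vertices, Poisson independence on disjoint sets yields $\E{Z^-(t) - Z^-(s) \given \FF_s} = \E{Z^-(t) - Z^-(s)}$, so $M^-(t) := Z^-(t) - \E{Z^-(t)}$ is a c\`adl\`ag martingale. Doob's $L^2$ maximal inequality then gives
\[ \E[\Big]{\sup_{t \in [0, 1]} M^-(t)^2} \le 4 \Var{Z^-(1)} \le C \eps^{2 \g - 1} n^{2 \g} \E{L^2}, \]
and dividing by $n^{2 \g}$ closes the minus case.

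For the plus sign, $Z^+(t)$ is continuous in $t$ and its centered version is \emph{not} a martingale under any natural filtration, since $Z^+(t) - Z^+(s)$ depends on vertices born after $s$ whose future lifetimes reshape the increment. Instead, the pointwise identity $|N_t^+(\pt; t)| - |N_t^+(\pt; 0)| = \int_0^t \1{B \le r \le B + L} \d r$ yields the integral representation $Z^+(t) = Z^+(0) + \int_0^t A^+(r) \d r$ with $A^+(r) := \sum_P c_P \1{B \le r \le B + L}$ over the mid-mark vertices. Subtracting expectations, the triangle inequality together with Jensen delivers
\[ \E[\Big]{\sup_{t \in [0, 1]} \abs{Z^+(t) - \E{Z^+(t)}}^2} \le 2 \Var{Z^+(0)} + 2 \int_0^1 \Var{A^+(r)} \d r, \]
and routine Campbell computations produce $\Var{Z^+(0)} \lesssim \eps^{2 \g - 1} n^{2 \g} \E{L^3}$ and $\sup_{r \in [0,1]} \Var{A^+(r)} \lesssim \eps^{2 \g - 1} n^{2 \g} \E{L}$, closing the plus case by the same Markov argument.

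The main obstacle is the plus/minus asymmetry: for both signs the mean increment dominates the diffusive scale, so no crude monotone envelope will succeed. The minus process admits a natural Markovian filtration with a clean compensator-free martingale, but the plus process has no such structure and one must instead exploit the \emph{continuity} of its compensator $\int_0^t A^+(r) \d r$ and reduce the uniform-in-$t$ bound to an $L^2$ estimate of the stationary-in-$r$ random field $A^+(r)$.
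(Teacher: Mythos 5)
Your proof is correct and follows essentially the same route as the paper: a death-time martingale plus Doob's inequality for the minus part, and the integral representation $Z^+(t) = Z^+(0) + \int_0^t A^+(r) \d r$ plus a variance bound on $A^+$ for the plus part. The only deviations are cosmetic --- the $L^2$ versus $L^1$ version of Doob's inequality, bounding the second moment of the supremum at once versus splitting the tail probability, and your more careful tracking of the mid-mark range $(1/(\eps n), u_n]$.
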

\noindent
This proposition is the main result of Step~3, and it shows that the edge count of the heaviest vertices approximates the total edge count.

The proof of Step 4 begins with a lemma that shows the convergence of the scaled size $n^{-\g} \abs{\Ns(\Ps)}$ to a measure.
\begin{lemma}[Convergence to a measure]
    \label{lem:convergence_to_levy}
    Let $\g > 1/2$ and $\g' \in (0, 1)$.
    Then, we have that
    \[ n \P[\big]{n^{-\g} \abs{\Ns(\Ps)} \in \any} \xrightarrow[n \tff]{v} \nu(\any), \]
    where $\xrightarrow{v}$ denotes vague convergence, and~$\nu$ is the measure defined by $\nu([a, \ff)) := \wt{c}^{1 / \g} a^{-1 / \g}$, i.e., the right-tail probabilities are regularly varying with tail index $-1 / \g$.
\end{lemma}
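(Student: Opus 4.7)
The plan is to exploit the fact that $\abs{\Ns(\Ps)}$ is a deterministic function of the single mark $U$ alone, so that the tail of its rescaled version reduces to a one-line computation involving a uniform random variable.

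First I would compute $\abs{\Ns(\ps)}$ directly from the definition. Since $\Ns(\ps) = \set{(z, w) \in \R \ti [0, 1] \co \abs{x - z} \le \b u^{-\g} w^{-\g'}}$, Fubini yields
\[ \abs{\Ns(\ps)} = \int_0^1 2 \b \, u^{-\g} w^{-\g'} \d w = \wt{c} \, u^{-\g}, \]
where $\wt{c} = 2\b/(1-\g')$. In particular, the position $x$ drops out by translation invariance and $\abs{\Ns(\Ps)} = \wt{c} \, U^{-\g}$. Since $U$ is uniform on $[0, 1]$, for any fixed $a > 0$ and any $n$ large enough that $\wt{c} \le a n^\g$, I then compute directly
\[ \P{n^{-\g} \abs{\Ns(\Ps)} \ge a} = \P[\big]{U \le (\wt{c} / (a n^\g))^{1/\g}} = \wt{c}^{1/\g} a^{-1/\g} n^{-1}, \]
so that multiplying by $n$ yields the \emph{exact} identity $n \, \P{n^{-\g} \abs{\Ns(\Ps)} \ge a} = \wt{c}^{1/\g} a^{-1/\g} = \nu([a, \ff))$ for all sufficiently large $n$.

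To upgrade this pointwise identity to vague convergence on $(0, \ff)$, the same argument applied to any $[a, b] \su (0, \ff)$ eventually gives $n \, \P{n^{-\g} \abs{\Ns(\Ps)} \in [a, b]} = \nu([a, b])$, and since $\nu$ admits a continuous Lebesgue density on $(0, \ff)$, every point is a $\nu$-continuity point; this is enough by the standard Portmanteau-type characterization of vague convergence through relatively compact sets with null boundary. There is no real obstacle here: the claim is essentially an elementary change of variables, and its purpose in the paper is simply to register the regularly varying tail of the neighborhood sizes that drives the identification of the limiting Poisson point process $\PP_\ff$ in the subsequent steps of the stable limit proof.
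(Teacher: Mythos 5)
Your proof is correct and follows essentially the same approach as the paper: compute $\abs{\Ns(\Ps)} = \wt{c}\, U^{-\g}$ (the paper cites its Lemma~\ref{lem:size_of_ns}~(\ref{lem:size_of_ns:p}) for this, whereas you re-derive it via Fubini), then evaluate the tail $n\,\P{n^{-\g}\abs{\Ns(\Ps)} \ge a}$ using the uniformity of $U$ to obtain the exact identity $\wt{c}^{1/\g} a^{-1/\g}$ for all large $n$. Your extra remarks on why convergence of tail masses at continuity points yields vague convergence are a sound (if routine) supplement to what the paper leaves implicit.
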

We now turn to the total contributions of the points to~$S_{n, \eps}^{(3)}$.
The contributions $n^{-\g} (\abs{N(P; t)} - \E{\abs{N(P; t)}})$ of single points to the sum $S_{n, \eps}^{(3)}(t)$ are iid random variables due to the properties of the Poisson process.
By Lemma~\ref{lem:convergence_to_levy}, we have that
\[ \sum_{P \in \PP \cap (\S_n \ti \T)} \de_{(n^{-\g} \abs{\Ns(\Ps)}, B, L)} \xrightarrow[n \tff]{d} \PPP(\nu \otimes \Leb \otimes \mbb{P}_L), \]
where $\PPP(\nu \otimes \Leb \otimes \mbb{P}_L)$ is a Poisson point process on $\J \ti \T$ with intensity measure $\nu \otimes \Leb \otimes \mbb{P}_L$.
To examine the edge count $S_{n, \eps}^{(3)}$, we restrict the domain of the marks~$[0, 1]$ to $[0, 1 / (\eps n)]$, which requires that $n^{-\g} \abs{\Ns(\Ps)} \ge \wt{c} \eps^\g$.
Furthermore, we restrict the domain of the points $(J, B, L)$ to $K := \set{(J, B, L) \in \J \ti \T_{\le 1}^{0 \le}}$,
ensuring that the intervals $[B, B + L]$ intersect the interval $[0, 1]$.
The domain of the marks is visualized in Figure~\ref{fig:domain_of_marks}.
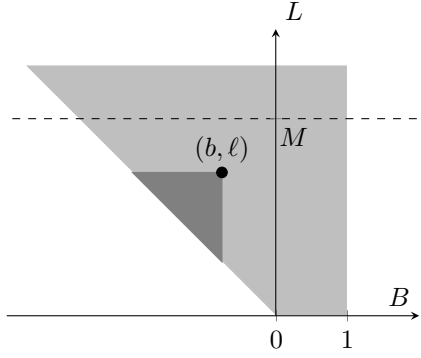
\begin{figure} \centering
    \begin{tikzpicture}[>=latex]
        \begin{axis}[
                scale=0.8,
                axis x line=center,
                axis y line=center,
                axis equal image,
                axis on top,
                xtick={0.01, 1},
                xticklabels={$0$, $1$},
                ytick={2.75},
                yticklabels={$M$},
                xlabel={$B$},
                ylabel={$L$},
                xlabel style={above left},
                ylabel style={above right},
                yticklabel style={below right},
                xmin=-3.75, xmax=2,
                ymin=0, ymax=4.0,
            ]
            \addplot[white, fill=gray!50!white] table {
              0 0
              1 0
              1 3.5
              -3.5 3.5
            } -- cycle;
            \addplot[gray, line width=0.001cm, fill=gray] table {
              -2    2
              -0.75 2
              -0.75 0.75
            } -- cycle;
            \addplot[only marks, mark=*, mark size=2pt] coordinates {(-0.75, 2)} node[above] {$(b, \ell)$};
            \addplot[dashed] {2.75};
        \end{axis}
    \end{tikzpicture}
    \caption{
        Domain of the endpoints of the intervals intersecting with the time interval~$[0, 1]$.
    }
    \label{fig:domain_of_marks}
\end{figure}
Then, we consider the summation functional
\begin{equation}
    \sum_{(J, B, L) \in \PP_\ff \cap ([\wt{c} \eps^\g, \ff) \ti \T)} \de_{(J, B, L)} \mapsto \sum_{(J, B, L) \in \PP_\ff} J (\any - B) \1{J \ge \wt{c} \eps^\g} \1{B \le \any \le B + L}.
    \label{eq:summation_functional}
\end{equation}
The next result shows the continuity of the above functional.
In order to state it, let $K_\eps := \set{(j, b, \ell) \in \J \ti \T_{\le 1}^{0 \le} \co j \ge \wt{c} \eps^\g}$ be the domain of the points contributing to the edge count, see again Figure~\ref{fig:domain_of_marks}.
As in~\cite{heavytails}, we consider $K_\eps$ to be endowed with the topology where in the first component, we use the one-point compactification at infinity.
That means, we set $\overline{\J} = (0, \ff]$, i.e., we compactify at the point $\ff$ in the first coordinate.
Then, the topology of the space $\overline{K}_\eps$ is the one of the space $\J \ti \T_{\le 1}^{0 \le}$ with $\J$ compactified at $\ff$ in the first coordinate.
Furthermore, let~$\Nl(\overline{\J} \ti \T)$ denote the family of point measures on locally finite subsets of $\overline{\J} \ti \T$ and consider the summation functional~$\chi \co \Nl(\overline{\J} \ti \T) \to D([0, 1], \R)$ defined as
\[ \begin{aligned}
    \chi(\eta)(t) &:= \chi \Big( \sum_{(j, b, \ell) \in \eta} \de_{(j, b, \ell)} \Big)(t) := \chi \Big( \sum_{(j, b, \ell) \in \eta} \de_{(j, b, \ell)} \big( \any \cap \overline{K}_\eps \big) \Big)(t) \\
    &:= \sum_{(j, b, \ell) \in \eta} j (t - b) \1{j \ge \wt{c} \eps^\g} \1{b \le t \le b + \ell}.
\end{aligned} \]

\begin{proposition}[Continuity of summation functional]
    \label{prop:continuity_of_summation}
    It holds that $\chi(\eta)(t)$ is almost surely continuous with respect to the distribution of $\PPP(\nu \otimes \Leb \otimes \mbb{P}_L)$.
\end{proposition}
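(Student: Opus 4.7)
The plan is to exhibit a Borel set $\Omega^\ast \subseteq \Nl(\overline{\J} \ti \T)$ on which $\chi$ is sequentially continuous for the vague topology on $\Nl(\overline{\J} \ti \T)$ and the Skorokhod topology on $D([0,1], \R)$, and to verify that $\PP_\ff \in \Omega^\ast$ almost surely. The crucial simplification is that $\chi(\eta)$ depends only on the restriction of $\eta$ to $\overline{K}_\eps$, and the expected number of points of $\PP_\ff$ there is finite: indeed $\nu([\wt{c}\eps^\g, \ff)) = \eps^{-1}$ is finite, while the expected number of intervals $[B, B + L]$ with $B \le 1$ and $B + L \ge 0$ is finite since $L$ is exponential. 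Hence $\eta \cap \overline{K}_\eps$ is almost surely a finite point set, and the problem reduces to showing continuity of $\chi$ at finite configurations that avoid a null boundary set.

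Concretely, I would take $\Omega^\ast$ to be the event that $\eta \cap \overline{K}_\eps$ is finite, that no point of $\eta \cap \overline{K}_\eps$ satisfies $j = \wt{c}\eps^\g$, $b \in \set{0, 1}$, or $b + \ell \in \set{0, 1}$, and that the birth times and death times of the points of $\eta \cap \overline{K}_\eps$ are each pairwise distinct. By the absolute continuity of $\nu$ on $(0, \ff)$, of Lebesgue measure on $\R$, and of $\mbb{P}_L$ on $\R_+$, together with the diffuseness of the intensity, each excluded configuration is a null event for $\PP_\ff$, so $\PP_\ff \in \Omega^\ast$ almost surely. For $\eta \in \Omega^\ast$ and $\eta_n \to \eta$ vaguely, the boundary-avoidance condition makes $\overline{K}_\eps$ a continuity set for $\eta$; for all sufficiently large $n$ we then have $\eta_n(\overline{K}_\eps) = \eta(\overline{K}_\eps) =: k$, and the points of $\eta_n \cap \overline{K}_\eps$ admit a labelling $(j_n^{(i)}, b_n^{(i)}, \ell_n^{(i)})_{i=1}^k$ such that each coordinate converges to that of the corresponding point of $\eta$.

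The final step is to construct an explicit piecewise linear time change $\lambda_n : [0, 1] \to [0, 1]$ with $\lambda_n(0) = 0$, $\lambda_n(1) = 1$, and $\lambda_n(b_n^{(i)} + \ell_n^{(i)}) = b^{(i)} + \ell^{(i)}$ for every $i$ with $b^{(i)} + \ell^{(i)} \in (0, 1)$, so that $\norm{\lambda_n - I} \to 0$. On each interval between consecutive aligned jump times, $\chi(\eta_n) \circ \lambda_n^{-1}$ and $\chi(\eta)$ are sums of finitely many continuous affine ramps $j^{(i)}(t - b^{(i)})$ whose slopes and activation endpoints converge, yielding uniform convergence of $\chi(\eta_n) \circ \lambda_n^{-1}$ to $\chi(\eta)$ and hence $d_\msf{Sk}(\chi(\eta_n), \chi(\eta)) \to 0$. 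The main obstacle I foresee is careful bookkeeping for points with $b^{(i)} + \ell^{(i)} > 1$ or $b^{(i)} < 0$: in these cases no jump appears inside $[0, 1]$ and $\lambda_n$ need not align anything for such a point, but the corresponding ramp must still be matched correctly near $t = 1$ or $t = 0$. A case split according to whether each interval $[b^{(i)}, b^{(i)} + \ell^{(i)}]$ is contained in, straddles, or contains the unit interval resolves this; each case yields an elementary uniform-convergence statement that combines into the required Skorokhod convergence.
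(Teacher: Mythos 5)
Your proposal is correct and follows the same overall strategy as the paper: restrict to a good event $\Omega^\ast$ of finite configurations that avoid the boundary and have distinct event times, verify $\PP_\ff \in \Omega^\ast$ almost surely by diffuseness of the intensity, and then prove sequential continuity by constructing an explicit piecewise linear time change and bounding the Skorokhod distance.

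The one genuine (though minor) departure is in the construction of $\la_n$: the paper aligns \emph{all} birth and death times in $[0,1]$, i.e.\ it sets $\la_n(\t_i^{(n)}) = \t_i$ for $\t_i \in \set{0,1} \cup \bigcup_i\set{b_i, b_i+\ell_i} \cap [0,1]$, whereas you align only the death times $b^{(i)}+\ell^{(i)} \in (0,1)$. Your observation that this suffices is correct, because the discontinuities of $\chi(\eta)$ occur only at death times (the ramp $j^{(i)}(t-b^{(i)})$ starts from zero at $t=b^{(i)}$, so the indicator turning on there causes no jump), and the residual mismatch near a non-aligned birth time contributes only $O(j^{(i)}\cdot\abs{\la_n(b_n^{(i)})-b^{(i)}})$, which vanishes. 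This is a mild streamlining: the paper's choice makes the indicator contribution vanish identically (as in Lemma~\ref{lem:bound_of_skorokhod_distance}), while yours requires the extra but elementary continuity-of-the-ramp argument at birth times. Correspondingly, your null conditions on $\Omega^\ast$ can be slightly weaker — you do not need to exclude coincidences of a birth time with another point's death time, whereas the paper's $\La_{\eps,M}$ does exclude them (its last condition $b_i = b_j + \ell_j$) precisely because it tries to align those times too. Both variants work; the paper's is marginally cleaner in the final estimate, yours is marginally cleaner in the definition of the time change.
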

\noindent
Note that if $j = n^{-\g} \abs{\Ns(\ps)}$ for a point $p \in \S_n \ti \T$ in Proposition~\ref{prop:continuity_of_summation}, then the first indicator is equivalent to $u \le 1 / (\eps n)$.
From Proposition~\ref{prop:continuity_of_summation}, we get the following convergence in $D([0, 1], \R)$,
\[ n^{-\g} S_{n, \eps}^{(3)}(\any) \xrightarrow[n \tff]{d} S_\eps^\ast(\any). \]
The following lemma shows that the expectation of the edge count $S_\eps^\ast$ is finite.
\begin{lemma}[Expectation of $S_\eps^\ast$]
    \label{lem:expectation_of_S_eps}
    Let $\g > 1/2$.
    Then,
    \[ \E[\big]{S_\eps^\ast} = \f{\wt{c}}{1 - \g} \eps^{-(1 - \g)}. \]
\end{lemma}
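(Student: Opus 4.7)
The plan is to compute $\E{S_\eps^\ast(t)}$ via a single direct application of Campbell's formula (the first-moment formula) to the Poisson point process $\PP_\ff$ on $\J \ti \T$, whose intensity measure is the product $\nu \otimes \Leb \otimes \mbb{P}_L$. Stationarity of this intensity in the birth-time coordinate $b$ makes the answer independent of $t$, consistent with the statement dropping the time argument.

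First, I would write out the indicators in $S_\eps^\ast(t)$ and apply Campbell's formula to obtain
\[ \E{S_\eps^\ast(t)} = \int_{\wt{c} \eps^\g}^\ff \int_\R \int_0^\ff j \, (t - b) \1{b \le t \le b + \ell} \dell \d b \, \nu(\d j). \]
Since the integrand factors as a product of a $j$-piece and a $(b, \ell)$-piece, the triple integral splits into two independent evaluations. For the $(b, \ell)$-piece, I would substitute $s := t - b \ge 0$ and integrate out $\ell$, which yields $\int_0^\ff s \, \mbb{P}(L \ge s) \d s = \E{L^2}/2$; this equals $1$ for unit-rate exponential lifetimes, as in the underlying birth-death dynamics. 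For the $j$-piece, I would read off the density $\nu(\d j) = \wt{c}^{1/\g} \g^{-1} j^{-1 - 1/\g} \d j$ from the tail $\nu([a, \ff)) = \wt{c}^{1/\g} a^{-1/\g}$, and since $\g < 1$ guarantees integrability at infinity, a one-line primitive gives $\int_{\wt{c}\eps^\g}^\ff j \, \nu(\d j) = \wt{c} \eps^{-(1 - \g)}/(1 - \g)$.

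Multiplying the two factors yields the asserted formula. There is no genuine obstacle beyond careful bookkeeping; the only conceptual point worth noting is that the lower cutoff $j \ge \wt{c}\eps^\g$ is essential, since without it the $j$-integral diverges at the origin precisely in the heavy-tailed regime $\g > 1/2$ considered here. This divergence is the analytic reason that the limit process $\So$ in Theorem~\ref{thm:functional_stable} must be constructed through a centering-and-limit procedure from $\So_\eps^\ast$ rather than as a direct unrestricted sum.
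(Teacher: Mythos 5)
Your proposal is correct and takes essentially the same approach as the paper: apply the Mecke/Campbell formula to $\PP_\ff$, factor the intensity $\nu \otimes \Leb \otimes \mbb{P}_L$, and compute the $j$-integral and the temporal integral separately. The only cosmetic difference is that the paper invokes its Lemma~\ref{lem:int_of_nt}~(\ref{lem:int_of_nt:power_p}) to see that the temporal factor equals $1$, whereas you evaluate it directly as $\int_0^\ff s\,\mbb{P}(L \ge s)\,\d s = \E{L^2}/2 = 1$ via the substitution $s = t - b$.
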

\noindent
Then, for the centered and scaled edge count $\So_{n, \eps}^{(3)}$, we have the convergence in $D([0, 1], \R)$,
\[ \So_{n, \eps}^{(3)}(\any) \xrightarrow[n \tff]{d} \So_\eps^\ast(\any).\]


Finally, in Step 5, we show that $\So_\eps^\ast \to \So$ in $D([0, 1], \R)$ as $\eps \to 0$ almost surely.
As a preliminary result, we first follow closely~\cite[Section~5.5.1]{heavytails} to show that almost sure convergence happens for all fixed time points~$t \in [0, 1]$ as $\eps \to 0$.
\begin{lemma}[Convergence of $\So_\eps^\ast(t)$ for fixed time points]\label{lem:convergence_of_So_eps_fixed_time}
    For a fixed $t \in [0, 1]$, we have almost surely that
    \[ \So_\eps^\ast(t) \xrightarrow[\eps \downarrow 0]{\msf{a.s.}} \So(t).\]
\end{lemma}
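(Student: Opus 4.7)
The plan is to exploit the independent-increment structure of the family $\set{\So_\eps^\ast(t)}_{\eps > 0}$ in the truncation parameter~$\eps$. For $0 < \eps_2 < \eps_1$, the difference $\So_{\eps_2}^\ast(t) - \So_{\eps_1}^\ast(t)$ is a compensated Poisson integral over the disjoint mark-strip $\set{(j, b, \ell) \co \wt c \eps_2^\g \le j < \wt c \eps_1^\g}$ of~$\PP_\ff$, hence mean-zero and independent of $\So_{\eps_1}^\ast(t)$. With respect to the filtration $\FF_\eps := \s(\PP_\ff \cap ([\wt c \eps^\g, \ff) \ti \T))$, the process $(\So_\eps^\ast(t))_{\eps > 0}$ therefore becomes a c\`adl\`ag martingale with independent increments when time is parametrized by~$1/\eps$.

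The key quantitative input is a variance bound on these increments. By the isometry for compensated Poisson integrals,
\[ \Var{\So_{\eps_2}^\ast(t) - \So_{\eps_1}^\ast(t)} = \int_{\wt c \eps_2^\g}^{\wt c \eps_1^\g} j^2 \, \nu(\d j) \cdot \int_\T (t - b)^2 \1{b \le t \le b + \ell} \dbdell, \]
where the temporal factor is a finite moment of~$L$. Using the density $\nu(\d j) = (1 / \g) \wt c^{1/\g} j^{-1/\g - 1} \d j$, the intensity integral is proportional to $\int_{\wt c \eps_2^\g}^{\wt c \eps_1^\g} j^{1 - 1/\g} \d j$, which stays bounded as $\eps_2 \downarrow 0$ precisely because $\g > 1/2$ forces $1 - 1/\g > -1$; a direct computation yields the bound $C \eps_1^{2 \g - 1}$.

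To obtain the almost sure limit, I would first establish it along the dyadic sequence $\eps_n := 2^{-n}$. The variances $\s_n^2 := \Var{\So_{\eps_{n + 1}}^\ast(t) - \So_{\eps_n}^\ast(t)} \le C 2^{-n(2 \g - 1)}$ are summable since $2 \g - 1 > 0$, so Kolmogorov's convergence theorem for sums of independent centered random variables with summable variances yields almost sure convergence of the telescoping sum $\sum_n (\So_{\eps_{n + 1}}^\ast(t) - \So_{\eps_n}^\ast(t))$, and hence of the sequence $\So_{\eps_n}^\ast(t)$, to a limit that I identify with~$\So(t)$. To pass to the continuous parameter, I apply Doob's maximal inequality to the martingale on each interval $(\eps_{n + 1}, \eps_n]$ with threshold $\la_n := 2^{-n(2 \g - 1)/4}$: the probabilities $\s_n^2 / \la_n^2 = C 2^{-n(2 \g - 1)/2}$ are summable in~$n$, so Borel--Cantelli gives $\sup_{\eps \in (\eps_{n + 1}, \eps_n]} \abs{\So_\eps^\ast(t) - \So_{\eps_n}^\ast(t)} \le \la_n \downarrow 0$ almost surely for all but finitely many~$n$, which, combined with the subsequential convergence, completes the proof.

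The main subtlety to be aware of is that, in this heavy-tailed regime, $\int_{\wt c \eps^\g}^\ff j^2 \, \nu(\d j) = \ff$, so $\So_\eps^\ast(t)$ itself has infinite variance and lies only in~$L^1$, not in~$L^2$; the whole argument must therefore be run on variance \emph{increments} over disjoint mark-strips, and the single margin $2 \g - 1 > 0$ simultaneously delivers the Cauchy property, the summability needed for Kolmogorov's theorem along the dyadic subsequence, and the decay rate required for the Borel--Cantelli step extending the conclusion to the continuous parameter.
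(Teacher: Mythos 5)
Your proposal is correct and shares the core mechanism with the paper's proof: decompose the jump sizes into disjoint mark-strips, observe that the resulting compensated Poisson contributions are independent and centered, compute their variances via the Mecke isometry, and invoke Kolmogorov's three-series/convergence criterion, with the margin $2\g - 1 > 0$ delivering the summability. The differences are organizational and one is a genuine refinement. The paper works with an arbitrary decreasing sequence $\eps_0 = 1 > \eps_1 > \dots \to 0$ and splits off the large-jump part $S_0^\ast$ (with $J \ge 1$) separately, controlling it by its finite mean rather than its (infinite) variance; you instead phrase everything in terms of variance \emph{increments} along a dyadic mesh, which sidesteps the $L^2$-failure of $\So_\eps^\ast(t)$ without a separate large-jump argument and is slightly cleaner. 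The more substantive addition on your side is the Doob-plus-Borel--Cantelli interpolation step from the dyadic subsequence to the full continuum $\eps \downarrow 0$: the paper's proof establishes almost sure convergence only along the fixed countable sequence and then identifies the series with $\lim_{\eps \downarrow 0}$ without spelling out why nothing goes wrong for intermediate $\eps$, so your explicit maximal-inequality estimate over $(\eps_{n+1}, \eps_n]$ closes a gap the paper leaves implicit (consistently with its sequential treatment in Lemma~\ref{lem:So_eps_cauchy_probability} and Proposition~\ref{prop:So_cauchy_uniform}). One small thing worth flagging: your reversed filtration $\FF_\eps = \s(\PP_\ff \cap ([\wt c \eps^\g, \ff) \ti \T))$ and the claim that $\eps \mapsto \So_\eps^\ast(t)$ is a c\`adl\`ag $L^2$-martingale after reparametrization are correct as stated only for the \emph{increments} relative to a reference level $\eps_0$, precisely because of the infinite-variance issue you identify; your Doob step applies it this way, so the argument is sound, but the wording could be tightened to make that explicit.
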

\noindent
Following the proof of~\cite[Proposition~5.7]{heavytails}, we aim to show that the convergence is almost surely uniform.
Before we do that, we need to show that the sequence $\So_{\eps_n}^\ast$ is Cauchy in probability with respect to the supremum norm in the sense stated below.
\begin{lemma}[$\So_{\eps_n}^\ast$ is Cauchy in probability]\label{lem:So_eps_cauchy_probability}
    Let~$\eps_k \to 0$ be a decreasing sequence as $k \to \ff$.
    Then,
    \[ \sup_{n, m \ge N} \norm[\big]{\So_{\eps_n}^\ast - \So_{\eps_m}^\ast} \xrightarrow[N \tff]{\mbb{P}} 0.\]
\end{lemma}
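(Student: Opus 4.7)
The plan is to combine the plus-minus decomposition from Section~\ref{sec:edge_count_gaussian}, adapted to the limiting process, with Doob-type maximal inequalities applied in the scale parameter~$\eps$. Writing $\So_\eps^\ast = \So_\eps^{\ast, +} - \So_\eps^{\ast, -}$ by splitting each summand $J (t - B) \1{B \le t \le B + L}$ into $J ((t - B)^+ \w L)$ and $J L \1{B + L \le t}$, both components are centered versions of processes non-decreasing in~$t$. By the triangle inequality, $\sup_{n, m \ge N} \norm[\big]{\So_{\eps_n}^\ast - \So_{\eps_m}^\ast} \le 2 \sup_{n \ge N} \norm[\big]{\So_{\eps_n}^\ast - \So_{\eps_N}^\ast}$, so it suffices to establish the Cauchy property for $\So_\eps^{\ast, +}$ and $\So_\eps^{\ast, -}$ separately.

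The central observation is a martingale structure in~$\eps$. Setting $\FF_n := \s(\PP_\ff \cap ([\wt{c} \eps_n^\g, \ff) \ti \T))$, the sequence $n \mapsto \So_{\eps_n}^{\ast, \pm}(t) - \So_{\eps_N}^{\ast, \pm}(t)$, $n \ge N$, is an $\FF_n$-martingale, since successive increments correspond to independent centered Poisson integrals over disjoint $J$-slabs. A direct Campbell computation using $\g \in (1/2, 1)$ and $\E{L^3} < \ff$ gives the uniform variance bound $\sup_{t \in [0, 1]} \sup_{n \ge N} \Var[\big]{\So_{\eps_n}^{\ast, \pm}(t) - \So_{\eps_N}^{\ast, \pm}(t)} \le C \eps_N^{2\g - 1}$, which vanishes as $N \tff$.

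For the minus component, the process $t \mapsto S_\eps^{\ast, -}(t)$ additionally has independent increments in~$t$, since its change over $(s, t]$ depends only on particles with $B + L \in (s, t]$, a disjoint region of~$\PP_\ff$. After centering it is a c\`adl\`ag $t$-martingale, and Doob's $L^2$-inequality in~$t$, combined with Doob's inequality in~$n$ applied to the real-valued submartingale $n \mapsto \norm[\big]{\So_{\eps_n}^{\ast,-} - \So_{\eps_N}^{\ast, -}}$, yields $\E[\big]{\sup_{n \ge N} \norm[\big]{\So_{\eps_n}^{\ast, -} - \So_{\eps_N}^{\ast, -}}^2} \le C \eps_N^{2\g - 1}$. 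For the plus component, we exploit the integrated representation $\So_\eps^{\ast, +}(t) - \So_\eps^{\ast, +}(0) = \int_0^t \wt{A}_\eps(u) \d u$, where $\wt{A}_\eps(u)$ denotes the centered sum of weights~$J$ over particles active at time~$u$. The exponential decay of the autocovariance of $\wt{A}_\eps$, inherited from the exponential lifetime distribution, yields $\Var[\big]{\int_s^t \wt{A}_\eps(u) \d u} \le C \eps^{2\g-1}(t-s)^2$ for $\abs{t - s} \le 1$. A Kolmogorov--Chentsov-type argument produces an $L^2$ bound of order $O(\eps^{(2\g-1)/2})$ on the $C^\theta$-H\"older seminorm (for any $\theta < 1/2$) of the integrated process, and Doob's inequality in~$n$ applied to the $C^\theta$-valued martingale upgrades this to a bound uniform in $n \ge N$. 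Combining with the pointwise bound at $t = 0$ via a grid argument and Markov's inequality completes the proof for the plus component.

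The main technical obstacle lies in the plus component: the expected drift $\E{\So_\eps^{\ast,+}(t)}$ diverges as $\eps \downarrow 0$, so the naive grid-based monotonicity argument used for the thin-tailed regime in Section~\ref{sec:edge_count_gaussian} is not directly applicable. The integrated representation, together with the exponential correlation decay of the active-weight process, is the crucial ingredient that bypasses this divergence and enables uniform-in-$n$ control.
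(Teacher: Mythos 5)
Your proposal is correct in spirit but takes a genuinely different route from what the paper actually does to prove this lemma, and it is also more elaborate than necessary in one place. The paper's proof of Lemma~\ref{lem:So_eps_cauchy_probability} in fact only establishes the \emph{pointwise}-in-$(n,m)$ bound $\lim_{N\tff}\sup_{N \le n \le m}\P{\norm{\So_{\eps_n}^\ast - \So_{\eps_m}^\ast}\ge 2\de} = 0$ (the supremum is taken of the probabilities, not inside), and it is Proposition~\ref{prop:So_cauchy_uniform} that upgrades this to a bound uniform in $(n,m)$ via an Ottaviani--L\'evy-type chaining argument that exploits independence of the $J$-slabs. You instead aim directly at the uniform version: after the plus-minus split and the triangle-inequality reduction to $\sup_{n \ge N}\norm{\So_{\eps_n}^{\ast,\pm}-\So_{\eps_N}^{\ast,\pm}}$, you use that $n \mapsto \So_{\eps_n}^{\ast,\pm}-\So_{\eps_N}^{\ast,\pm}$ is a Banach-valued martingale in the slab filtration $\FF_n$ and apply Doob's maximal inequality to the submartingale $n \mapsto \norm{\So_{\eps_n}^{\ast,\pm}-\So_{\eps_N}^{\ast,\pm}}$. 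This is a clean alternative that absorbs the content of Proposition~\ref{prop:So_cauchy_uniform} into the lemma itself (Doob in $n$ versus Ottaviani in $n$; both rely on the same independent-increment structure in the scale parameter). For the minus component, your Doob-in-$t$-then-Doob-in-$n$ scheme matches the paper's use of the $t$-martingale property and works.

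Where you overcomplicate is the plus component. You do not need the Kolmogorov--Chentsov bound on the $C^\theta$-H\"older seminorm, and the exponential autocovariance decay plays no role (the trivial bound $\Cov{\wt A_\eps(u),\wt A_\eps(v)} \le \Var{\wt A_\eps(u)} \le C\eps^{2\g-1}$ already gives $(t-s)^2$ for the integrated increment, and in any case the exponential tail only improves the bound for $\abs{t-s} \gg 1$, not on $[0,1]$). The much shorter route, which is exactly what the paper does per-increment, is $\sup_{t\in[0,1]}\abs{\int_0^t \wt A_{\eps_n}(u)\,\d u} \le \int_0^1 \abs{\wt A_{\eps_n}(u)}\,\d u$, whence $\E{\norm{\int_0^\cdot \wt A_{\eps_n}}} \le \int_0^1 \Var{\wt A_{\eps_n}(u)}^{1/2}\,\d u \le C\eps_N^{(2\g-1)/2}$ by Jensen; combined with a Chebyshev bound for the $t=0$ value, this yields an $L^1$ bound on $\norm{\So_{\eps_n}^{\ast,+}-\So_{\eps_N}^{\ast,+}}$ to feed into Doob's maximal inequality in $n$. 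Two small further remarks: (i) $\E{L^2}<\ff$ suffices for the variance computation (not $\E{L^3}$), and this is automatic here since $L$ is exponential; (ii) when you invoke Doob for the Banach-valued martingale, you should be slightly careful that the process lives in a separable subspace of $D([0,1])$ (a.s.\ finitely many jumps for each $n$), so the conditional expectations are well defined, though this is standard.
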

\noindent
Next, in Proposition~\ref{prop:So_cauchy_uniform}, we present the main result of this step.
We show that there exists~$\So$ with almost all paths in $D([0, 1], \R)$ such that $\lim_{\eps \downarrow 0} \norm{\So_\eps^\ast - \So} = 0$ almost surely, which is done by showing that the sequence $\So_{\eps_n}^\ast$ is almost surely Cauchy with respect to the supremum norm on $[0, 1]$.
\begin{proposition}[$\So_{\eps_n}^\ast$ is Cauchy almost surely]\label{prop:So_cauchy_uniform}
    Let~$\eps_k \to 0$ be a decreasing sequence as $k \to \ff$.
    Then,
    \[\sup_{n, m \ge N} \norm[\big]{\So_{\eps_n}^\ast - \So_{\eps_m}^\ast} \xrightarrow[N \tff]{\msf{a.s.}} 0. \]
\end{proposition}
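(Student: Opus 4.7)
The plan is to upgrade the convergence in probability supplied by Lemma~\ref{lem:So_eps_cauchy_probability} to almost sure convergence by exploiting a simple monotonicity observation. Set
\[ Y_N := \sup_{n, m \ge N} \norm[\big]{\So_{\eps_n}^\ast - \So_{\eps_m}^\ast}. \]
Because the index set $\set{(n, m) \co n, m \ge N}$ shrinks as $N$ grows, the nonnegative random variables $Y_N$ are non-increasing in $N$; hence $Y_N \downarrow Y_\infty$ almost surely for some measurable $Y_\infty \in [0, \ff]$.

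By Lemma~\ref{lem:So_eps_cauchy_probability}, we have $Y_N \to 0$ in probability. For any $\de > 0$, the monotonicity gives $Y_\infty \le Y_N$, hence $\P{Y_\infty > \de} \le \P{Y_N > \de} \to 0$, so that $\P{Y_\infty > \de} = 0$ for every $\de > 0$. This yields $Y_\infty = 0$ almost surely, which is exactly the claim of the proposition.

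Having established the almost sure Cauchy property in the supremum norm, the existence of the limit process $\So$ with paths in $D([0, 1], \R)$ follows at once: on a full-measure event, $\So_{\eps_n}^\ast$ converges uniformly on $[0, 1]$, and a uniform limit of c\`adl\`ag functions is c\`adl\`ag. The genuine obstacle for this step therefore sits entirely in the prior Lemma~\ref{lem:So_eps_cauchy_probability}, which substitutes for the continuous Kolmogorov inequality used in~\cite[Proposition~5.7]{heavytails}; once that lemma is in hand, the proposition reduces to the short monotonicity argument above.
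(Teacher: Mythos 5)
Your monotonicity reduction is correct and is precisely the opening move of the paper's own proof: $Y_N := \sup_{n,m\ge N} \norm{\So_{\eps_n}^\ast - \So_{\eps_m}^\ast}$ is non-increasing in $N$, so convergence of $Y_N$ to zero in probability upgrades automatically to almost sure convergence. The gap is in the step where you invoke Lemma~\ref{lem:So_eps_cauchy_probability} to conclude that $Y_N \to 0$ in probability. The displayed formula of that lemma indeed reads as the claim $\P{\sup_{n,m\ge N}\norm{\cdot}\ge\de}\to 0$, but its proof controls only $\sup_{N\le n\le m}\P{\norm{\So_{\eps_n}^\ast-\So_{\eps_m}^\ast}\ge\de}$, with the supremum \emph{outside} the probability. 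That is the ordinary pairwise Cauchy-in-probability condition (as the lemma's title suggests), which is strictly weaker than what your two-line argument requires. If the lemma genuinely delivered the in-probability convergence of the running supremum, the proposition would indeed be a triviality and the paper would not devote a substantial separate argument to it.

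The missing and substantive step is a L\'evy--Ottaviani-type maximal inequality that promotes the pairwise tail control to control of the supremum. The key structural input is that the increments $\So_{\eps_i}^\ast-\So_{\eps_{i-1}}^\ast$ are built from points of $\PP_\ff$ with jump sizes $J$ lying in disjoint intervals $[\wt{c}\,\eps_i^\g,\wt{c}\,\eps_{i-1}^\g)$, hence are independent. Decomposing according to the first index $i\in\set{N+1,\dots,M}$ at which $\norm{\So_{\eps_i}^\ast-\So_{\eps_N}^\ast}$ exceeds $2\de'$ and using that $\P{\norm{\So_{\eps_M}^\ast-\So_{\eps_i}^\ast}\le\de'}\ge 1/2$ once $N$ is large (this is where Lemma~\ref{lem:So_eps_cauchy_probability} actually enters), one obtains $\P{\sup_{n\in\set{N+1,\dots,M}}\norm{\So_{\eps_n}^\ast-\So_{\eps_N}^\ast}>2\de'}\le 2\,\P{\norm{\So_{\eps_M}^\ast-\So_{\eps_N}^\ast}>\de'}$, and the right-hand side vanishes by a second application of the lemma. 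Your monotonicity observation is the easy half; the maximal inequality supplying the in-probability convergence of $Y_N$ is the half your proposal omits.
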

\noindent
By Proposition~\ref{prop:So_cauchy_uniform}, there exists~$\So$ such that $\lim_{\eps \downarrow 0} \norm{\So_\eps^\ast - \So} = 0$ almost surely.
Furthermore, the limit $\So$ is in $D([0, 1], \R)$ by~\cite[Lemma~5.2]{heavytails}.


\section{Preliminary lemmas}\label{sec:minor_lemmas}

Before presenting the proofs of the main propositions and lemmas, we collect some minor results that are used frequently in the proofs later.
We begin with a lemma which is about the size of the spatial neighborhoods of points.
\begin{lemma}[Size of spatial neighborhoods]\label{lem:size_of_ns}
 Let $\g, \g' \in (0, 1)$.
    \begin{enumerate}
        \item
            For all $\ps \in \S$, we have that
            \[ \abs{\Ns(\ps)} = \f{2 \b}{1 - \g'} u^{-\g}. \]
            \label{lem:size_of_ns:p}
        \item
            For all $\ps[1], \ps[2] \in \S$, we have that
            \[ \abs{\Ns(\ps[1], \ps[2])} \le 2 \f{(2 \b)^{1 / \g'}}{1 - \g'} \abs{x_1 - x_2}^{-  (1 / \g' - 1)} u_1^{- \g / \g'} u_2^{- \g / \g'}. \]
            \label{lem:size_of_ns:common_p}
        \item
            For all $u_- \in [0, 1]$, the size of the spatial neighborhood of a point $p'$ is given by
            \[ \int_{\S^{u_- \le}} \1{\ps' \in \Ns(\ps)} \d \ps = \f{2 \b}{1 - \g} w^{-\g'} \big( 1 - u_-^{1 - \g} \big). \]
            \label{lem:size_of_ns:p_prime}
    \end{enumerate}
\end{lemma}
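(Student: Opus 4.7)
All three parts reduce to elementary integrals over the strip $\S = \R \ti [0, 1]$, once the defining inequality $\abs{x - z} \le \b u^{-\g} w^{-\g'}$ is exploited. The main conceptual ingredient is only needed for part~\eqref{lem:size_of_ns:common_p}, where the bound must be made symmetric in $(u_1, u_2)$.

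For part~\eqref{lem:size_of_ns:p}, I would integrate the indicator $\1{\abs{x - z} \le \b u^{-\g} w^{-\g'}}$ over $(z, w) \in \S$. The $z$-integral gives length $2 \b u^{-\g} w^{-\g'}$, and then $\int_0^1 w^{-\g'} \d w = 1 / (1 - \g')$, yielding the claimed formula. Part~\eqref{lem:size_of_ns:p_prime} is completely analogous: integrate over $(x, u) \in \R \ti [u_-, 1]$; the $x$-integral again gives $2 \b u^{-\g} w^{-\g'}$, and the $u$-integral is $\int_{u_-}^1 u^{-\g} \d u = (1 - u_-^{1 - \g}) / (1 - \g)$.

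The substantive part is~\eqref{lem:size_of_ns:common_p}. For each fixed $w$, let $r_i := \b u_i^{-\g} w^{-\g'}$, so that $\Ns(\ps[i])$ cut at height $w$ is the interval of radius $r_i$ around $x_i$. The intersection length $L(w)$ satisfies $L(w) \le 2 \min(r_1, r_2)$ and is nonzero only when $r_1 + r_2 \ge \abs{x_1 - x_2}$. Solving for $w$, the latter gives $w \le w_* := 1 \wedge \big( \b (u_1^{-\g} + u_2^{-\g}) / \abs{x_1 - x_2} \big)^{1/\g'}$. Integrating,
\[
    \abs{\Ns(\ps[1], \ps[2])} \le \int_0^{w_*} 2 \b \min(u_1^{-\g}, u_2^{-\g}) w^{-\g'} \d w = \f{2 \b \min(u_1^{-\g}, u_2^{-\g})}{1 - \g'} w_*^{1 - \g'}.
\]
Next, since $w_* \le \big( \b (u_1^{-\g} + u_2^{-\g}) / \abs{x_1 - x_2} \big)^{1/\g'}$ in both cases (when $w_* = 1$, this is equivalent to $\abs{x_1 - x_2} \le \b (u_1^{-\g} + u_2^{-\g})$, which is forced there), I would bound $w_*^{1 - \g'}$ by this expression raised to $1/\g' - 1$, and estimate $u_1^{-\g} + u_2^{-\g} \le 2 \max(u_1^{-\g}, u_2^{-\g})$.

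The remaining task is the elementary inequality
\[
    \min(u_1^{-\g}, u_2^{-\g}) \cdot \max(u_1^{-\g}, u_2^{-\g})^{1/\g' - 1} \le u_1^{-\g/\g'} u_2^{-\g/\g'},
\]
which I would verify by assuming WLOG $u_1 \le u_2$; after cancellation the claim reduces to $u_1^\g \cdot u_2^{\g(1 - \g')/\g'} \le 1$, and this is immediate because both $u_i \in (0, 1]$ and both exponents are positive for $\g, \g' \in (0, 1)$. Combining these estimates assembles the constant $(2 \b)^{1/\g'} / (1 - \g')$, with room to spare against the extra factor of $2$ in the statement. The main (mild) obstacle is keeping track of this exponent-juggling cleanly; the geometry itself is entirely elementary.
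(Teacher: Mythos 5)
Your computations for parts~(\ref{lem:size_of_ns:p}) and~(\ref{lem:size_of_ns:p_prime}) match the paper's exactly. For part~(\ref{lem:size_of_ns:common_p}), you take a slightly different route: you fix $w$ and bound the overlap length of the two $z$-intervals by $2 \min(r_1, r_2)$, restricting the $w$-integration to where the overlap is nonempty (i.e.\ $r_1 + r_2 \ge \abs{x_1 - x_2}$), whereas the paper partitions the $z$-axis using the observation that for every $z$ at least one of $\abs{x_i - z} \ge \abs{x_1 - x_2}/2$ must hold, bounds each of the two resulting integrals separately, and sums; that sum is the source of the leading factor $2$ in the stated bound. Your version is geometrically a bit cleaner and, as you correctly observe, actually beats the stated bound by a factor of $2$ (the $2 \cdot 2^{1/\g' - 1}$ from $2\min$ together with $u_1^{-\g} + u_2^{-\g} \le 2\max$ recombines into exactly $(2\b)^{1/\g'}/(1-\g')$). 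Both routes then finish with an exponent bump exploiting $u_i \in (0,1]$: your reduction of $\min \cdot \max^{1/\g'-1} \le u_1^{-\g/\g'}u_2^{-\g/\g'}$ to $u_1^\g u_2^{\g(1-\g')/\g'} \le 1$ is correct and plays the same role as the paper's inequalities $u_1^{-(1/\g'-1)\g} \le u_1^{-\g/\g'}$ and $u_2^{-\g} \le u_2^{-\g/\g'}$. The proposal is correct; the difference is a minor stylistic variant, not a conceptual one.
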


The following lemma is about the integrals of the spatial neighborhoods.
\begin{lemma}[Integrals of spatial neighborhoods $\Ns(\ps)$]\label{lem:int_of_ns}
We have the following.
    \begin{enumerate}
        \item
            Let $\g, \g' \in (0, 1)$, $A \su \R$ a Borel set and $\a \ge 0$.
            Then, if $\g < 1/\a$,
            \[ \int_{\S_A} \abs{\Ns(\ps)}^\a \d \ps = \Big( \f{2 \b}{1 - \g'} \Big)^\a \f{\abs{A}}{1 - \a \g}. \]
            If $\g \in (0, 1)$ and $u_- \in (0, 1]$, then
            \[ \int_{\S_A^{u_- \le}} \abs{\Ns(\ps)}^\a \d \ps = \left\{ \begin{array}{ll} \Big( \f{2 \b}{1 - \g'} \Big)^\a \f{\abs{A}}{1 - \a \g} \Big( 1 - u_-^{1 - \a \g} \Big) & \text{if } \g \ne 1 / \a \\ \Big( \f{2 \b}{1 - \g'} \Big)^\a \abs{A} \log[\big]{u_-^{-1}} & \text{if } \g = 1 / \a. \end{array} \right. \]
            \label{lem:int_of_ns:power_p}
        \item
            Let $\g \in (0, 1)$ and $\g' < 1 / m$.
            Then,
            \[ \f{1}{n} \int_{\S_n^m} \abs{\Ns(\bsps[m])} \d \bsps[m] \nearrow \f{(2 \b)^m}{(1 - \g)^m (1 - m \g')} \qquad \text{as } n \tff, \]
            where $\nearrow$ denotes convergence from below.
            \label{lem:int_of_ns:common_p_0}
        \item
            Let $u_n = n^{-b}$ denote an $n$-dependent mark with $b \in (0, 1)$.
            Furthermore, let $\g \in (0, 1)$ and $\g' < 1 / m$.
            Then,
            \[ \f{1}{n} \int_{\big( \S_n^{u_n \le} \big)^m} \abs{\Ns(\bsps[m])} \d \bsps[m] \nearrow \f{(2 \b)^m}{(1 - \g)^m (1 - m \g')} \qquad \text{as } n \tff. \]
            \label{lem:int_of_ns:common_p_minus}
        \item
            Let $m > 0$ be a positive integer, let $\g \in (0, 1)$ and $\g' < 1 / m$.
            Then,
            \[ \int_\S \bigg( \int_{\S_n} \1{\ps' \in \Ns(\ps)} \d \ps \bigg)^m \d \ps' \le \bigg( \f{2 \b}{1 - \g} \bigg)^m \f{n}{1 - m \g'}. \]
            \label{lem:int_of_ns:power_p_prime}
        \item
            We set $m_1, m_2, m_3 \in \set{0, 1, 2, \dots}$, $\g < (1 + (m_1 \vee m_2) + m_3)^{-1}$ and $\g' < (2 + m_3)^{-1}$.
            Then,
            \[ \begin{aligned}
                &\iint_{\S_n^2} \abs{\Ns(\ps[1])}^{m_1} \abs{\Ns(\ps[1], \ps[2])} \abs{\Ns(\ps[2])}^{m_2} \abs{x_1 - x_2}^{m_3} \d (x_1, u_1) \d (x_2, u_2) \le c \big( c' + c'' \big) n, \text{ where} \\
                &\qquad c   = \f{(2 \b)^{2 + m_1 + m_2 + m_3}}{(1 + m_3) (1 - \g')^{m_1 + m_2} (1 - (2 + m_3) \g')} \\
                &\qquad c'  = \f{1}{(1 - (1 + m_1) \g) (1 - (1 + m_2 + m_3) \g)} \\
                &\qquad c'' = \f{1}{(1 - (1 + m_2) \g) (1 - (1 + m_1 + m_3) \g)}.
            \end{aligned} \]
            Note that the bound is always nonnegative.
            \label{lem:int_of_ns:power_cap_power_0}
        \item
            We set $m_1, m_2, m_3 \in \set{0, 1, 2, \dots}$, $\g > 1/2$, $\g' < (2 + m_3)^{-1}$ and $u_- \in (0, 1]$ a mark.
            Then,
            \[ \begin{aligned}
                &\iint_{\big( \S_n^{u_- \le} \big)^2} \abs{\Ns(\ps[1])}^{m_1} \abs{\Ns((x_1, u_1), (x_2, u_2))} \abs{\Ns((x_2, u_2))}^{m_2} \abs{x_1 - x_2}^{m_3} \d (x_1, u_1)\d (x_2, u_2) \\
                &\qquad \le c \Big( \abs{c'} u_-^{-((1 + m_2 + m_3) \g - 1)_+ - ((1 + m_1) \g - 1)_+} + \abs{c''} u_-^{- ((1 + m_2) \g - 1)_+ -((1 + m_1 + m_3) \g - 1)_+} \Big) n,
            \end{aligned} \]
            where $(\any)_+ := \any \vee 0$, the constants $c$, $c'$ and $c''$ are defined as in Part~(\ref{lem:int_of_ns:power_cap_power_0}) of this lemma.
            \label{lem:int_of_ns:power_cap_power_minus}
    \end{enumerate}
\end{lemma}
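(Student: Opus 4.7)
The lemma has six parts of escalating complexity. Parts~(1)--(4) are routine Fubini-type computations based on Lemma~\ref{lem:size_of_ns}, while Parts~(5)--(6) contain the main technical content.

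For Part~(1), I would substitute the closed form $\abs{\Ns(\ps)} = \f{2\b}{1 - \g'} u^{-\g}$ and perform the elementary integral in $u$, splitting into the standard cases $\a\g < 1$, $\a\g = 1$, and $\a\g > 1$ on the range $[u_-, 1]$. For Parts~(2)--(3), I would write the common neighborhood as $\abs{\Ns(\bsps[m])} = \int_\S \prod_{i=1}^m \1{(z, w) \in \Ns(\ps[i])} \d z \d w$ and swap integration orders. The translation substitution $\Delta_i := x_i - x_1$ followed by the rescaling $x_1 = n s$ shows that the $\Delta$-integration domain is $[-n s, n(1 - s)]^{m - 1}$, which increases monotonically to $\R^{m - 1}$ as $n \tff$ for each fixed $s \in (0, 1)$. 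Applying monotone convergence then produces the sharp limit $\f{(2 \b)^m}{(1 - \g)^m (1 - m \g')}$ from below. Part~(3) is handled identically since $\int_{u_n}^1 u^{-\g} \d u \nearrow \f{1}{1 - \g}$ whenever $u_n = n^{-b}$ with $b \in (0, 1)$. Part~(4) is the Fubini dual of Part~(2): expanding the $m$-th power as an $m$-fold product of indicators and integrating the $x_i$ first reduces the estimate to precisely the spatial integral already bounded.

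For Parts~(5)--(6), the core ingredient is the Fubini identity
\[ \abs{\Ns(\ps[1], \ps[2])} = \int_\S \1{\abs{x_1 - z} \le r_1(w)} \1{\abs{x_2 - z} \le r_2(w)} \d z \d w, \]
where $r_i(w) := \b u_i^{-\g} w^{-\g'}$. After substituting the explicit forms of $\abs{\Ns(\ps[i])}^{m_i}$ and using translation invariance in the form $\int_{[0, n]^2} f(\abs{x_1 - x_2}) \d x_1 \d x_2 \le 2 n \int_0^\ff f(\Delta) \d \Delta$, the essential step is the closed-form evaluation
\[ \int_0^\ff \Delta^{m_3} \Leb\bigl( [-r_1, r_1] \cap [\Delta - r_2, \Delta + r_2] \bigr) \d \Delta = \f{(r_1 + r_2)^{m_3 + 2} - \abs{r_1 - r_2}^{m_3 + 2}}{(m_3 + 1)(m_3 + 2)}, \]
which is precisely the source of the factor $1/(1 + m_3)$ in the stated $c$. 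The mean-value bound $(a + b)^p - \abs{a - b}^p \le 2 p \min(a, b) (a + b)^{p - 1}$ with $p = m_3 + 2$, combined with $\min(r_1, r_2)(r_1 + r_2)^{m_3 + 1} \le 2^{m_3}(r_2 r_1^{m_3 + 1} + r_1 r_2^{m_3 + 1})$ (which exploits $\min(r_1, r_2) \le r_j$ for $j = 1, 2$), produces the asymmetric splitting that yields the two summands $c'$ and $c''$. Separable integration of the remaining terms in $u_1, u_2, w$ then assembles exactly the prefactor $c$.

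Part~(6) proceeds along the same lines with $\int_{u_-}^1 u^{-\a\g} \d u$ in place of $\int_0^1 u^{-\a\g} \d u$. Since $\g > 1/2$ forces $(1 + m_i) \g > 1$ whenever $m_i \ge 1$, these truncated integrals pick up factors of size $u_-^{-(\a\g - 1)_+}$, and matching the exponents $(1 + m_1, 1 + m_2 + m_3)$ and $(1 + m_2, 1 + m_1 + m_3)$ in the two asymmetric summands yields the stated form. The main obstacle throughout will be this asymmetric bookkeeping in Parts~(5)--(6): the cruder symmetric bound $(r_1 + r_2)^{m_3 + 2} \le 2^{m_3 + 1}(r_1^{m_3 + 2} + r_2^{m_3 + 2})$ would load the entire power $m_3$ onto one index at a time and fail to reproduce the correct denominator pairs $(1 - (1 + m_1) \g)(1 - (1 + m_2 + m_3) \g)$ and $(1 - (1 + m_2) \g)(1 - (1 + m_1 + m_3) \g)$ demanded by the statement, so the careful use of the $\min$-weighted splitting is essential.
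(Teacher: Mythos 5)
Your proposal is correct throughout, and in Parts~(2), (3), and~(5) it takes a genuinely different route from the paper. For Parts~(2)--(3) the paper proves $\frac{1}{n}I_n \to c$ by first computing an $n$-independent upper bound (extending $\S_n$ to $\S$ in the inner integral), and then controlling a lower-bound error term $E$ by splitting $[0,n]$ into an $O(\eps n)$-sized boundary strip and an interior region, with a separate triangle-inequality argument for the interior. Your rescaling $x_1 = ns$ followed by translation to $\Delta_i := x_i - x_1$ reduces this to nested-domain monotone convergence, which is shorter and in fact proves strict monotonicity of $n \mapsto \frac{1}{n}I_n$ rather than merely convergence from below, so it buys you a slightly stronger conclusion for the same constraints. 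For Part~(5) the paper never evaluates the $z$-integral in closed form; instead it splits $\abs{x_1-x_2}^{m_3} \le 2^{m_3}(\abs{z-x_1}^{m_3} + \abs{z-x_2}^{m_3})$ at the level of indicators, which mechanically produces the two asymmetric denominator pairs after separable integration. Your alternative derives the exact identity $\int_0^\ff \Delta^{m_3}\Leb([-r_1,r_1]\cap[\Delta-r_2,\Delta+r_2])\,\d\Delta = \bigl((r_1+r_2)^{m_3+2}-\abs{r_1-r_2}^{m_3+2}\bigr)/\bigl((m_3+1)(m_3+2)\bigr)$ and then applies the mean-value bound together with $\min(r_1,r_2)(r_1+r_2)^{m_3+1} \le 2^{m_3}(r_2 r_1^{m_3+1}+r_1 r_2^{m_3+1})$; I checked that this assembles precisely the stated prefactor $c = (2\b)^{2+m_1+m_2+m_3}/\bigl((1+m_3)(1-\g')^{m_1+m_2}(1-(2+m_3)\g')\bigr)$ and both summands $c'$ and $c''$ with the correct exponent pairing, and your closing observation that the symmetric bound $(r_1+r_2)^{m_3+2}\le 2^{m_3+1}(r_1^{m_3+2}+r_2^{m_3+2})$ would overload a single index with the full power $m_3+2$ (hence requiring the strictly tighter constraint $\g < (2+m_1+m_3)^{-1}$) is exactly right and explains why the asymmetric split is not just cosmetic but necessary to match the stated hypotheses. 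Part~(6) then follows from either route by truncating the $u$-integrals at $u_-$, picking up the stated $(\cdot)_+$-exponents when $\g>1/2$.
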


The next lemma is about the size of the temporal neighborhood of points.
\begin{lemma}[Size of temporal neighborhoods]\label{lem:size_of_nt}
    Let $t \in \R$.
    \begin{enumerate}
        \item
            For all $\pt = (b, \ell) \in \T$, we have that
            \[ \abs{\Nt(\pt; t)} = (t - b) \1{b \le t \le b + \ell}. \]
            \label{lem:size_of_nt:p}
        \item
            For all $\pt = (b, \ell) \in \T$ and $r \in \R$, the size of the temporal neighborhood of a point $p'$ is given by
            \[ \int_\T \1{r \in \Nt(\pt; t)} \dpt = \1{r \le t} \e^{-(t - r)}. \]
            \label{lem:size_of_nt:p_prime}
    \end{enumerate}
\end{lemma}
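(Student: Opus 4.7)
\medskip
\noindent
\textbf{Proof plan for Lemma~\ref{lem:size_of_nt}.}
Both parts are direct calculations from the definition $\Nt(\pt; t) = \set{r \in \R \co b \le r \le t \le b + \ell}$, together with the fact that the lifetime distribution $\mbb{P}_L$ is the unit-rate exponential measure $\mbb{P}_L(\d \ell) = \e^{-\ell} \1{\ell \ge 0} \d \ell$, which is implicit in the model via the identity $\E{S_\eps^\ast} = \wt{c}^{1/\g} \eps^{-(1/\g - 1)}/(1-\g)$ appearing before Theorem~\ref{thm:functional_stable}.

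For Part~(\ref{lem:size_of_nt:p}), I observe that the defining conditions $b \le r \le t$ and $t \le b + \ell$ decouple in $r$: the second is a condition on $p$ alone, while the first constrains $r$ to the interval $[b, t]$ (which is non-empty exactly when $b \le t$). Hence the Lebesgue measure of $\Nt(\pt; t)$ equals $(t - b)$ on the event $\set{b \le t \le b + \ell}$ and is zero otherwise, giving the formula
\[ \abs{\Nt(\pt; t)} = (t - b) \1{b \le t \le b + \ell}. \]

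For Part~(\ref{lem:size_of_nt:p_prime}), I unfold $\dpt = \d b \, \dell$ and apply Fubini. On the event $\set{r \le t}$, the indicator $\1{r \in \Nt(\pt; t)}$ equals $\1{b \le r} \1{\ell \ge t - b}$, so
\[ \int_\T \1{r \in \Nt(\pt; t)} \dpt = \1{r \le t} \int_{-\ff}^{r} \int_{t - b}^{\ff} \e^{-\ell} \d \ell \d b = \1{r \le t} \int_{-\ff}^{r} \e^{-(t - b)} \d b = \1{r \le t} \e^{-(t - r)}, \]
where the outer integral over $b$ converges because the integrand decays as $b \to -\ff$. On the complementary event $\set{r > t}$ the indicator is identically zero, so the formula holds in general.

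The calculation contains no genuine obstacle; the only point requiring a moment of care is the order of integration, which is justified by non-negativity (Tonelli). I would include the lemma essentially as a bookkeeping device, since it will be invoked repeatedly throughout the later sections to convert temporal integrals involving $\Nt$ into the explicit exponential factor $\e^{-(t-r)}$ that drives the Mat\'ern/AR(2) structure appearing in Proposition~\ref{prop:covariance_function_of_Snt}.
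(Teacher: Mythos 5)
Your proof is correct and follows the same route as the paper: Part~(\ref{lem:size_of_nt:p}) is read off directly from the definition, and Part~(\ref{lem:size_of_nt:p_prime}) integrates first in $\ell$ (using the unit-rate exponential tail $\int_{t-b}^\ff \dell = \e^{-(t-b)}$) and then in $b$ over $(-\ff, r]$. Your side remark pinning down $\mbb{P}_L = \mathrm{Exp}(1)$ from the explicit constants elsewhere in the paper is a reasonable observation, since the model section leaves this assumption implicit.
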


The next lemma characterizes some of the temporal integrals that will be used in the proof.
\begin{lemma}[Integrals of temporal neighborhoods $\Nt(\pt; t)$]\label{lem:int_of_nt}
We have the following.
    \begin{enumerate}
        \item
           For all $\a > 0$ we have that
            \[ \int_\T \abs{\Nt(\pt; t)}^\a \dpt = \Ga(\a + 1). \]
            \label{lem:int_of_nt:power_p}
        \item
            For all $t \in \R$, the integral of the common temporal neighborhood of the points $\pp_m$ is given by
            \[ \int_{\otimes_{i = 1}^m \T_i} \abs[\Big]{\bigcap_{i = 1}^m N_\msf{t} (p_i; t_i)} \big( \d \bs{b}_m \mbb{P}_L^{\otimes m} (\d \bs{\ell}_m) \big) = \int_\R \prod_{i = 1}^m \bigg( \int_{\T_i} \1{r \in \Nt(\pt; t)} \dpt \bigg) \d r. \]
            Furthermore, if $\T_i = \T$ and $t_i = t$ for all indices $i \in \set{1, \dots, m}$, then the right-hand side equals $1 / m$.
            \label{lem:int_of_nt:common_p}
        \item
            For all $\a_1, \a_2 > 0$, we have that
            \[ \int_\T \int_\T \abs{\Nt(\pt[1]; t_1)}^{\a_1} \abs{\Nt(\pt[1]; t_1) \cap \Nt(\pt[2]; t_2)} \abs{\Nt(\pt[2]; t_2)}^{\a_2} \pt[2] \dpt[1] \le \Ga(\a_1 + 1) \Ga(\a_2 + 2). \]
            \label{lem:int_of_nt:power_cap_power}
        \item
            For all $\a > 0$, we have that
            \[ \int_\R \bigg( \int_\T \1{r \in \Nt(\pt; t)} \dpt \bigg)^\a \d r = \f{1}{\a}. \]
            \label{lem:int_of_nt:power_prime}
    \end{enumerate}
\end{lemma}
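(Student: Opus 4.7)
}

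All four parts reduce to direct computations based on two facts: the shape $\Nt(\pt;t) = [b,t]$ when $b \le t \le b+\ell$ (and $\es$ otherwise), so that $|\Nt(\pt;t)| = (t-b)\1{b \le t \le b+\ell}$ by Lemma~\ref{lem:size_of_nt}(\ref{lem:size_of_nt:p}), and the product form $\dpt = \d b \, \dell$ with exponential lifetime law $\mbb{P}_L(\d\ell) = \e^{-\ell}\d\ell$. For Part~(\ref{lem:int_of_nt:power_p}), I would substitute $s = t - b$ in the outer integral, then use $\ell \ge s$ coming from the indicator to integrate out $\ell$: the $\ell$-integral produces $\e^{-s}$, leaving $\int_0^\ff s^\a \e^{-s}\d s = \Ga(\a+1)$.

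For Part~(\ref{lem:int_of_nt:common_p}), the layer-cake identity $|\bigcap_{i=1}^m \Nt(p_i;t_i)| = \int_\R \prod_{i=1}^m \1{r \in \Nt(\pt[i];t_i)} \d r$ together with Fubini immediately gives the claimed factorization. For the specialization $\T_i = \T$, $t_i = t$, I would apply Lemma~\ref{lem:size_of_nt}(\ref{lem:size_of_nt:p_prime}) to obtain that each inner factor equals $\1{r \le t} \e^{-(t-r)}$, and then evaluate $\int_{-\ff}^t \e^{-m(t-r)}\d r = 1/m$.

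Part~(\ref{lem:int_of_nt:power_cap_power}) is the main point requiring care. I would again rewrite the intersection size as $\int_\R \1{r \in \Nt(\pt[1];t_1)}\1{r \in \Nt(\pt[2];t_2)} \d r$ and apply Fubini to split the $\pt[1]$- and $\pt[2]$-integrals. For each factor, the calculation
\[
    \int_\T |\Nt(\pt;t)|^\a \1{r \in \Nt(\pt;t)} \dpt = \1{r \le t}\int_{t-r}^\ff s^\a \e^{-s}\d s
\]
follows from the substitution $s = t-b$ and integration over $\ell \ge s$. Assuming WLOG $t_1 \le t_2$ and substituting $u = t_1 - r \ge 0$, the outer integral becomes $\int_0^\ff \Ga(\a_1+1,u)\, \Ga(\a_2+1, u + (t_2-t_1))\d u$ in terms of upper incomplete gamma functions. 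I would bound the first factor by the full $\Ga(\a_1+1)$ and the second by $\Ga(\a_2+1, u)$ (monotonicity in the lower limit), then swap the order of integration via Fubini: $\int_0^\ff \int_u^\ff s^{\a_2}\e^{-s}\d s\, \d u = \int_0^\ff s^{\a_2 + 1}\e^{-s}\d s = \Ga(\a_2+2)$, which yields the claimed bound $\Ga(\a_1+1)\Ga(\a_2+2)$.

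Part~(\ref{lem:int_of_nt:power_prime}) is immediate from Lemma~\ref{lem:size_of_nt}(\ref{lem:size_of_nt:p_prime}): raising $\1{r \le t}\e^{-(t-r)}$ to the $\a$-th power and integrating over $r \in (-\ff,t]$ yields $\int_0^\ff \e^{-\a s}\d s = 1/\a$. The only nonroutine step is the WLOG reduction and the monotonicity bound in Part~(\ref{lem:int_of_nt:power_cap_power}); the other three parts are one-line substitutions once the exponential lifetime law is used to integrate out $\ell$.
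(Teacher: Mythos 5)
Your proof of Parts~(\ref{lem:int_of_nt:power_p}), (\ref{lem:int_of_nt:common_p}), and (\ref{lem:int_of_nt:power_prime}) matches the paper's computation essentially line by line: substitute $s = t - b$, integrate the exponential tail in $\ell$, and evaluate the resulting Gamma integrals; for Part~(\ref{lem:int_of_nt:common_p}) the layer-cake identity plus Fubini is exactly the paper's route, and the evaluation $\int_{-\ff}^t \e^{-m(t-r)}\d r = 1/m$ is the same.

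For Part~(\ref{lem:int_of_nt:power_cap_power}) you take a genuinely different path. The paper uses the one-line bound $\abs{\Nt(\pt[1];t_1) \cap \Nt(\pt[2];t_2)} \le \abs{\Nt(\pt[2];t_2)}$, which turns the double integral directly into a product $\int_\T \abs{\Nt(\pt[1];t_1)}^{\a_1}\dpt[1]\cdot \int_\T \abs{\Nt(\pt[2];t_2)}^{\a_2+1}\dpt[2]$ and then invokes Part~(\ref{lem:int_of_nt:power_p}) twice with exponents $\a_1$ and $\a_2+1$; no WLOG ordering of $t_1, t_2$ is required. Your route again uses the layer-cake identity, arrives at the $r$-integral of a product of upper incomplete gamma functions $\Ga(\a_1+1, u)\,\Ga(\a_2+1, u+(t_2-t_1))$, bounds one by the full gamma and the other by $\Ga(\a_2+1,u)$ via monotonicity and the WLOG assumption $t_1 \le t_2$, then applies a second Fubini swap to recover $\Ga(\a_2+2)$. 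Both proofs are correct and give the same constant, but the paper's domination $\abs{A \cap B} \le \abs{B}$ does the same job with fewer moving parts; the incomplete-gamma route you chose is more informative (it keeps track of how the bound degrades as $\abs{t_2 - t_1}$ grows) but none of that extra information is used here.
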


Next, we introduce notation for the differences of temporal neighborhoods of a point $p$ between two times $t_1 < t_2$,
\begin{equation} \begin{aligned}
    \de_{t_1, t_2}(\Nt^\pm(\pt)) &:= \Nt^\pm(\pt; t_2) \sm \Nt^\pm(\pt; t_1) \\
    \de_{t_1, t_2}(N^\pm(p)) &:= N^\pm(p; t_2) \sm N^\pm(p; t_1) = \Ns(\ps) \ti \de_{t_1, t_2}(\Nt^\pm(\pt; t)).
\end{aligned} \label{eq:def_of_de_Nt} \end{equation}
Then, the size of the temporal \quote{plus-minus neighborhoods} is given in the next lemma.
\begin{lemma}[Size of $\Nt^\pm(\pt; t)$]\label{lem:size_of_Nt_pm}
    Let $t, t_1, t_2 \in [0, 1]$ be fixed.
    \begin{enumerate}
        \item
            The size of the temporal neighborhoods is given by
            \[ \abs{\Nt^+(\pt; t)} = (((b + \ell) \w t) - b) \1{b \le t} \qquad \text{and} \qquad \abs{\Nt^-(\pt; t)} = \ell \1{b + \ell \le t}. \]
            \label{lem:size_of_Nt_pm:size}
        \item
            The change of the temporal neighborhood of a point $p$ between $t_1 < t_2$ is given by
            \[ \begin{aligned}
                \abs[\big]{\de_{t_1, t_2}(\Nt^+(\pt; t_2))} &= \big( ((b + \ell) \w t) - (b \vee t_1) \big) \1{b \le t_2} \1{t_1 \le b + \ell} \text{ and} \\
                \abs[\big]{\de_{t_1, t_2}(\Nt^-(\pt; t_2))} &= (t_1 - b) \1{b \le t_1 \le b + \ell \le t_2}.
            \end{aligned} \]
            \label{lem:size_of_Nt_pm:size_difference}
        \item
            The size of the temporal neighborhood of a point $p'$ is given by
            \[ \begin{aligned}
                \int_{\T^{0 \le}} \1{r \in \Nt^+(\pt; t)} \dpt &= \1{r \le 0} \e^{- (- r)} + \1{0 \le r \le t} \text{ and} \\
                \int_{\T^{0 \le}} \1{r \in \Nt^-(\pt; t)} \dpt &= \1{r \le 0} \big( \e^r - \e^{- (t - r)} \big) + \1{0 \le r \le t} \big( 1 - \e^{- (t - r)} \big).
            \end{aligned} \]
            \label{lem:size_of_Nt_pm:size_p_prime}
    \end{enumerate}
\end{lemma}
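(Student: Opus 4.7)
Each of the three parts is a direct calculation from the piecewise definitions of $\Nt^\pm(\pt;t)$, relying on nothing beyond careful case analysis and, for Part~(3), Fubini's theorem together with the fact that $L$ is exponential with rate one.

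For Part~(1), I would read off the sizes from the definition: the set $\Nt^-(\pt;t)$ is either the entire lifetime interval $[b,b+\ell]$ of length $\ell$ (when $b+\ell \le t$) or empty, while both branches of the definition of $\Nt^+(\pt;t)$ produce the interval $[b, (b+\ell) \w t]$, which has nonnegative length exactly on $\set{b \le t}$.

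For Part~(2), the decisive observation is that $t \mapsto \Nt^\pm(\pt;t)$ is monotone nondecreasing as a family of subsets of $\R$: for $\Nt^+$, the right endpoint $(b+\ell) \w t$ grows with $t$ once $b \le t$, and for $\Nt^-$ the set jumps from $\es$ to the full interval $[b,b+\ell]$ precisely at $t=b+\ell$. Consequently, for $t_1<t_2$, the Lebesgue measure of $\Nt^\pm(\pt;t_2)\sm\Nt^\pm(\pt;t_1)$ equals the difference of the two sizes obtained in Part~(1). Substituting those formulas and splitting into the few subcases determined by the relative positions of $t_1, t_2, b, b+\ell$ then yields the stated expressions after elementary algebra.

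For Part~(3), I would apply Fubini and integrate first in $\ell$ against the indicator constraints. For $\Nt^+$ the relevant constraints on $(b,\ell)$ are $b \le r$, $r \le t$, $\ell \ge r-b$, and $\ell \ge -b$ (from the restriction to $\T^{0\le}$); integrating out $\ell$ using $\mbb{P}_L(L\ge x)=\e^{-x\vee 0}$ produces an exponential in $\max(r-b,-b)$, and the remaining $b$-integration splits according to whether $r\le 0$ (so the constraint $b+\ell\ge 0$ is binding and yields the $\e^{r}$ factor) or $0\le r\le t$ (when it is trivial and the integral equals $1$). The computation for $\Nt^-$ is analogous but carries the additional upper bound $\ell\le t-b$, whose contribution accounts for the extra $1-\e^{-(t-r)}$ factor. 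The main obstacle throughout is purely bookkeeping---keeping the regimes of $b$ and the activation of each constraint consistent---but there is no conceptual subtlety and the calculation is otherwise routine.
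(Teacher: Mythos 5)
Your plan for Parts~(1) and~(3) matches the paper's argument exactly: Part~(1) is indeed a direct read-off from the definitions, and for Part~(3) the paper likewise integrates out~$\ell$ first and splits the $b$-integral according to the sign of $r$, obtaining the same exponential factors you describe.

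For Part~(2), however, your plan contains a claim that would not survive actually doing the algebra. You correctly observe that $t \mapsto \Nt^\pm(\pt;t)$ is nondecreasing, so $\abs{\de_{t_1,t_2}(\Nt^\pm(\pt))} = \abs{\Nt^\pm(\pt;t_2)} - \abs{\Nt^\pm(\pt;t_1)}$, and you then assert that substituting the Part~(1) formulas ``yields the stated expressions.'' For the plus case this works out. For the minus case it does not: Part~(1) gives $\abs{\Nt^-(\pt;t)} = \ell\,\1{b+\ell \le t}$, so the difference is
\[
\ell\,\1{b+\ell \le t_2} - \ell\,\1{b+\ell \le t_1} = \ell\,\1{t_1 < b+\ell \le t_2},
\]
whereas the lemma states $(t_1 - b)\,\1{b \le t_1 \le b+\ell \le t_2}$. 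These are genuinely different (take $b = 0$, $\ell = 1/2$, $t_1 = 0.3$, $t_2 = 0.8$: the set difference has measure $1/2$, the stated formula gives $0.3$; or take $t_1 < b \le b+\ell \le t_2$, where the stated formula vanishes but the set difference has measure $\ell > 0$). The paper's own proof of this item runs into the same problem: it rewrites $\1{b \le r \le b+\ell \le t_2} - \1{b \le r \le b+\ell \le t_1}$ as $\1{b \le r \le t_1 \le b+\ell \le t_2}$, but this is a lower bound and not an equality, since it discards the portion $r \in (t_1, b+\ell]$. So your method is sound and would in fact expose a discrepancy in the statement; you should not have claimed without checking that it reproduces the printed formula, and you should flag (rather than silently reproduce) the apparent error in the minus case.
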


The next lemma is about the integrals of the temporal \quote{plus-minus neighborhoods}.
\begin{lemma}[Integrals of $\Nt^\pm(\pt; t)$]\label{lem:int_of_nt_pm}
    Let $t, t_1, t_2 \in [0, 1]$ be fixed.
    \begin{enumerate}
        \item
            For all integers $m \ge 1$, we have that
            \[ \int_{\T^{0 \le}} \abs{\Nt^+(\pt; t)}^m \dpt = m! (t + 1) \qquad \text{and} \qquad \int_{\T^{0 \le}} \abs{\Nt^-(\pt; t)}^m \dpt = m! t. \]
            \label{lem:int_of_nt_pm:power_p_specific}
        \item
            For all $\a \ge 0$, we have that $\int_{\T^{0 \le}} \abs{\Nt^\pm(\pt; t)}^\a \dpt \le 2 c t + \Ga(\a + 1)$, where $c := (2 \a)^\a \e^{-\a}$.
            \label{lem:int_of_nt_pm:power_p_bound}
        \item
            For all integers $m \ge 1$, we have that $\int_{\T^{0 \le}} \abs[\big]{\de_{t_1, t_2}(\Nt^\pm(\pt))}^m \dpt \le (t_2 - t_1)^m$.
            \label{lem:int_of_nt_pm:difference_p}
        \item
            For all integers $m \ge 1$, we have that $\int_\R \big( \int_{\T^{0 \le}} \1{r \in \de_{t_1, t_2}(\Nt^\pm(\pt))} \dpt \big)^m \d r = t_2 - t_1$.
            \label{lem:int_of_nt_pm:difference_p_prime}
        \item
            For all integers $m \ge 1$, the integral of the intersection of the neighborhoods is given by
            \[ \iint_{\big( \T^{0 \le} \big)^m} \abs{\Nt^+(\bspt[m]; t)} \, \m_\msf{t}^{\otimes m} (\d \bspt[m]) = \int_\R \bigg( \int_{\T^{0 \le}} \1{r \in \Nt^\pm(\pt; t)} \dpt \bigg)^m \d r \le 1 / m + t. \]
            \label{lem:int_of_nt_pm:cap}
        \item
            For all $\a_1, \a_2 \ge 0$, we have that
            \[ \iint_{\big( \T_{\le 1}^{0 \le} \big)^2} \abs{\Nt^\pm(\pt[1]; t_1)}^{\a_1} \abs{\Nt^\pm(p_1; t_1) \cap \Nt^\pm(\pt[2]; t_2)} \abs{\Nt^\pm(\pt[2]; t_2)}^{\a_2} \dpt[1] \dpt[2] < \ff. \]
            \label{lem:int_of_nt_pm:power_cap_power}
    \end{enumerate}
\end{lemma}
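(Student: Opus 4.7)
The strategy is to deduce each of the six parts directly from the explicit formulas in Lemma~\ref{lem:size_of_Nt_pm}, combined with the exponential law of the lifetime~$L$ and Fubini's theorem. I take the parts in order and indicate the key step in each.

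For Parts~(\ref{lem:int_of_nt_pm:power_p_specific}) and~(\ref{lem:int_of_nt_pm:power_p_bound}), the plan is to split $\T^{0\le}$ according to whether $b+\ell \le t$ or $b+\ell > t$. In the first region, $\abs{\Nt^+(\pt;t)} = \ell$, and after the constraint $b+\ell \ge 0$ the $b$-integral runs over $[-\ell, t-\ell]$ of length~$t$, giving a contribution of $t \int_0^\infty \ell^\a e^{-\ell} d\ell = t \, \Ga(\a+1)$. In the second region, $\abs{\Nt^+(\pt;t)} = t-b$ on $b \in (t-\ell, t]$, so the inner $b$-integral evaluates to $\ell^{\a+1}/(\a+1)$, which against $e^{-\ell} d\ell$ contributes $\Ga(\a+1)$. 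Summing gives $(t+1) \, \Ga(\a+1)$ for the ``$+$'' case; the ``$-$'' case only sees the first region, producing the stated $m! \, t$. For Part~(\ref{lem:int_of_nt_pm:power_p_bound}) I then upgrade $\Ga(\a+1) \le 2c$ via the bound $\ell^\a e^{-\ell} \le \big( \max_{\ell \ge 0} \ell^\a e^{-\ell/2} \big) e^{-\ell/2} = c \, e^{-\ell/2}$ with $c = (2\a)^\a e^{-\a}$, so that $\Ga(\a+1) \le c \int_0^\infty e^{-\ell/2} d\ell = 2c$.

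For Parts~(\ref{lem:int_of_nt_pm:difference_p}) and~(\ref{lem:int_of_nt_pm:difference_p_prime}), I exploit the monotonicity $\Nt^\pm(\pt;t_1) \su \Nt^\pm(\pt;t_2)$, so that $\abs{\de_{t_1,t_2}(\Nt^\pm(\pt))} = \abs{\Nt^\pm(\pt;t_2)} - \abs{\Nt^\pm(\pt;t_1)}$ and $\int \abs{\de} \dpt = t_2-t_1$ by Part~(\ref{lem:int_of_nt_pm:power_p_specific}) with $m=1$. For the ``$+$'' case of Part~(\ref{lem:int_of_nt_pm:difference_p}) with $m \ge 2$, Lemma~\ref{lem:size_of_Nt_pm}(\ref{lem:size_of_Nt_pm:size_difference}) shows that the added interval is contained in $(t_1, t_2]$, giving the pointwise bound $\abs{\de} \le t_2 - t_1$ and hence $\abs{\de}^m \le (t_2-t_1)^{m-1} \abs{\de}$, whose integration finishes the estimate. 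For Part~(\ref{lem:int_of_nt_pm:difference_p_prime}), the same monotonicity writes $\1{r \in \de}$ as a difference of two indicators, and Lemma~\ref{lem:size_of_Nt_pm}(\ref{lem:size_of_Nt_pm:size_p_prime}) collapses the $\dpt$-integral to (essentially) the indicator $\1{t_1 < r \le t_2}$, whose $m$-th power integrates over $r$ to $t_2 - t_1$.

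Finally, for Parts~(\ref{lem:int_of_nt_pm:cap}) and~(\ref{lem:int_of_nt_pm:power_cap_power}), the Fubini identity $\abs{A_1 \cap \cdots \cap A_m} = \int_\R \prod_i \1{r \in A_i} dr$ applied to $A_i = \Nt^\pm(\pt[i];t)$ gives the first equality in Part~(\ref{lem:int_of_nt_pm:cap}); interchanging $\int_\R$ with the $m$-fold $\dpt$-integral factorizes the integrand, and Lemma~\ref{lem:size_of_Nt_pm}(\ref{lem:size_of_Nt_pm:size_p_prime}) evaluates the inner $\dpt$-integral explicitly. The resulting one-dimensional $r$-integral splits into $\int_{-\infty}^0 e^{mr} dr + \int_0^t 1 \, dr = 1/m + t$, and the corresponding estimate for the ``$-$'' variant is dominated by this. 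For Part~(\ref{lem:int_of_nt_pm:power_cap_power}), the bound $\abs{\Nt^\pm(\pt[1];t_1) \cap \Nt^\pm(\pt[2];t_2)} \le \abs{\Nt^\pm(\pt[1];t_1)} \w \abs{\Nt^\pm(\pt[2];t_2)}$ factorizes the product integral into two one-point integrals, each finite by Part~(\ref{lem:int_of_nt_pm:power_p_bound}) since $t_1, t_2 \in [0,1]$.

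The step I expect to be the main obstacle is the sharp $(t_2-t_1)^m$ bound in Part~(\ref{lem:int_of_nt_pm:difference_p}) for the ``$-$'' variant: on the event $\{b \le t_1 \le b+\ell \le t_2\}$ the size $\abs{\de_{t_1,t_2}(\Nt^-(\pt))}$ is no longer directly bounded by $t_2 - t_1$ (the added set is the entire lifetime interval, of length $\ell$, which can be large), and a direct estimate yields only $m!(t_2 - t_1)$ rather than $(t_2-t_1)^m$. Overcoming this likely requires partitioning the $(b,\ell)$-region according to whether $t_1 - b \le t_2 - t_1$ or not, and using the exponential weight $e^{-\ell}$ to absorb the tail contribution where $\ell$ exceeds $t_2 - t_1$. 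All other parts are essentially computational and pose no conceptual difficulty.
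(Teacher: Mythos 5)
Your approach lines up well with the paper's and is in places cleaner. For Parts~(\ref{lem:int_of_nt_pm:power_p_specific})--(\ref{lem:int_of_nt_pm:power_p_bound}), your Fubini-first computation (integrate over~$b$ within each of the two regions $\{b+\ell\le t\}$ and $\{b+\ell>t\}$) bypasses the antiderivative-and-dominated-convergence bookkeeping of the paper's proof and delivers the values $m!(t+1)$ and $m!\,t$ directly; the estimate $\Ga(\a+1)\le 2c$ you use to upgrade Part~(\ref{lem:int_of_nt_pm:power_p_bound}) is correct. Parts~(\ref{lem:int_of_nt_pm:difference_p_prime}), (\ref{lem:int_of_nt_pm:cap}), and (\ref{lem:int_of_nt_pm:power_cap_power}) are handled in substance as in the paper; for Part~(\ref{lem:int_of_nt_pm:power_cap_power}) your factorization after bounding the intersection by the smaller factor is the cleanest argument.

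You are right that the minus case of Part~(\ref{lem:int_of_nt_pm:difference_p}) is the genuine obstacle, and in fact it is worse than you suspect: no repartitioning of the $(b,\ell)$-region will rescue the stated bound, because the integral evaluates exactly to $m!\,(t_2-t_1)$, which exceeds $(t_2-t_1)^m$ for $m\ge 2$ and $t_2-t_1<1$. The correct expression for the increment, as your own difference-of-sizes argument gives, is $\abs{\de_{t_1,t_2}(\Nt^-(\pt))}=\ell\,\1{t_1<b+\ell\le t_2}$: the whole lifetime interval $[b,b+\ell]$ enters at once when the death time crosses~$t_1$, and integrating $\ell^m$ against $\d b\,\e^{-\ell}\d\ell$ over the strip $\{t_1<b+\ell\le t_2\}$ gives $m!\,(t_2-t_1)$. (This contradicts the formula $(t_1-b)\1{b\le t_1\le b+\ell\le t_2}$ recorded in Lemma~\ref{lem:size_of_Nt_pm}(\ref{lem:size_of_Nt_pm:size_difference}), which comes from an incorrect indicator manipulation, but the conclusion is the same either way since that formula leads to $\Ga(m+1)\big(1-\e^{-(t_2-t_1)}\big)$, again of order $t_2-t_1$.) Your pointwise bound $\abs{\de}\le t_2-t_1$ does give the sharp $(t_2-t_1)^m$ bound for the plus case; for the minus case only $\int_{\T^{0\le}}\abs{\de_{t_1,t_2}(\Nt^-(\pt))}^m\dpt\in O(t_2-t_1)$ holds, which is what the paper's own proof of this part actually establishes and what the downstream application (Lemma~\ref{lem:cumulant_fourth_moment_bound}, whose conclusion is $O(t-s)$) needs. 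A parallel caveat applies to the claimed equality in Part~(\ref{lem:int_of_nt_pm:difference_p_prime}) for the minus case; there the clean observation is that the inner $\dpt$-integral is at most~$1$, so its $m$-th power still integrates to at most~$t_2-t_1$.
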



\section{Proofs of the lemmas used for Propositions~\ref{prop:univariate_normal},~\ref{prop:covariance_function_of_Snt} and~\ref{prop:multivariate_normal}}\label{sec:proofs_univariate_multivariate}

In this section, we assume that $\g < 1/2$, and we do not explicitly mention this condition in the proofs.

\subsection{Proofs of Lemmas \ref{lem:mean_variance_of_Snt} and \ref{lem:covariance_of_Snt}}

First, we present the proofs of the lemmas used in the proofs of Propositions~\ref{prop:univariate_normal}.
The proofs are based on the application of the Mecke formula \cite[Theorem~4.1]{poisBook} and the independence property \cite[Definition~3.1~(ii)]{poisBook} of the Poisson process.
\begin{proof}[Proof of Lemma~\ref{lem:mean_variance_of_Snt}]
    The mean of the edge count $S_n(t)$ is given by
    \[ \E{S_n(t)} = \E[\Big]{\sum_{P \in \PP \cap (\S_n \ti \T)} \deg(P)} = \int_{\S_n \ti \T} \abs{N(p)} \mdp = \f{2 \b}{(1 - \g)(1 - \g')} n, \]
    where we used the Mecke formula in the second step and Lemmas~\ref{lem:int_of_ns}~(\ref{lem:int_of_ns:power_p}) and~\ref{lem:int_of_nt}~(\ref{lem:int_of_nt:power_p}) with $\a = 1$ in the third step.
    Next, we calculate the variance using the Mecke formula:
    \begin{equation} \begin{aligned}
        \Var{S_n(t)} &= \E[\Big]{\sum_{(P_1, P_2) \in (\PP \cap (\S_n \ti \T))_{\ne}^2} \deg(P_1) \deg(P_2)} - \E[\Big]{\sum_{P \in \PP \cap (\S_n \ti \T)} \deg(P)}^2 + \E[\Big]{\sum_{P \in \PP \cap (\S_n \ti \T)} \deg(P)^2} \\
        &= \int_{\S_n \ti \T} \E{\deg(p)^2} \mdp + \iint_{(\S_n \ti \T)^2} \Cov{\deg(p_1), \deg(p_2)} \mdp[1] \mdp[2].
    \end{aligned} \label{eq:variance_Snt} \end{equation}
    The above formula is the variance of a $U$-statistic, which was also calculated in a more general setting in~\cite[Lemma~3.5]{reitzner}.
    We can see that in the first step, the variance is determined by the last term, where $P = P_1 = P_2$, since the first two terms cancel.
    For the first term, we have
    \begin{equation} \begin{aligned}
        \E{\deg(p)^2} &= \E[\Big]{\Big( \sum_{P' \in \PP'} \1{P' \in N(p)} \Big)^2} \\
        &= \E[\Big]{\sum_{P' \in \PP'} \1{P' \in N(p)}} + \E[\Big]{\sum_{\set{P'_1, P'_2} \in \PP'^2_{\ne}} \1{P'_1 \in N(p)} \1{P'_2 \in N(p)}} \\
        &= \int_{\S \ti \R} \1{p' \in N(p)} \d p' + \bigg( \int_{\S \ti \R} \1{p' \in N(p)} \d p' \bigg)^2 = \abs{N(p)} + \abs{N(p)}^2.
    \end{aligned} \label{eq:expectation_of_degree_square} \end{equation}
    The first term is the integral of the above expression:
    \[ \begin{aligned}
        \lim_{n \tff} \f{1}{n} \int_{\S_n \ti \T} \E{\deg(p)^2} \mdp &= \sum_{k = 1}^2 \lim_{n \tff} \f{1}{n} \int_{\S_n} \abs{\Ns(\ps)}^k \d \ps \int_\T \abs{\Nt(\pt)}^k \dpt \\
        &= \f{2 \b}{(1 - \g)(1 - \g')} + \f{(2 \b)^2}{(1 - 2 \g)(1 - \g')^2},
    \end{aligned} \]
    where we applied Lemmas~\ref{lem:int_of_ns}~(\ref{lem:int_of_ns:power_p}) and~\ref{lem:int_of_nt}~(\ref{lem:int_of_nt:power_p}), and it is required that $\g < 1/2$.
    For the second term, we have
    \[ \begin{aligned}
        &\Cov{\deg(p_1), \deg(p_2)} = \E[\Big]{\sum_{P' \in \PP'} \1{P' \in N(p_1, p_2)}} + \E[\Big]{\sum_{(P'_1, P'_2) \in \PP_{\ne}'^2} \1{P'_1 \in N(p_1), P'_2 \in N(p_2)}} \\
        &\qquad - \E[\Big]{\sum_{P' \in \PP'} \1{P' \in N(p_1)}} \E[\Big]{\sum_{P' \in \PP'} \1{P' \in N(p_2)}} = \int_{\S \ti \R} \1{p' \in N(p_1, p_2)} \d p' = \abs{N(p_1, p_2)}.
    \end{aligned} \]
    Note that just as in the case of the variance, the covariance is determined by the term in which $P' = P'_1 = P'_2$.
    This insight will be used many times in the following calculations.
    As $\abs{N(p_1, p_2)}$, factorizes to the spatial and temporal parts, applying Lemmas~\ref{lem:int_of_ns}~(\ref{lem:int_of_ns:common_p_0}) and~\ref{lem:int_of_nt}~(\ref{lem:int_of_nt:common_p}) with $m = 2$, we have for the second term that
    \[ \begin{aligned}
        \lim_{n \tff} \f{1}{n} \iint_{(\S_n \ti \T)^2} \Cov{\deg(p_1), \deg(p_2)} \mdp[1] \mdp[2] &= \lim_{n \tff} \f{1}{n} \iint_{(\S_n \ti \T)^2} \abs{N(p_1, p_2)} \mdp[1] \mdp[2] \\
        &= \f{2 \b^2}{(1 - \g)^2 (1 - 2 \g')},
    \end{aligned} \]
    and we require that $\g' < 1/2$.
    Combining the two terms, we have that the variance is asymptotically equal to~$n$, more precisely,
    \[ \lim_{n \tff} \f{1}{n} \Var{S_n(t)} = \f{2 \b}{(1 - \g)(1 - \g')} + \f{(2 \b)^2}{(1 - 2 \g)(1 - \g')^2} + \f{2 \b^2}{(1 - \g)^2 (1 - 2 \g')}, \]
    as desired.
\end{proof}

\begin{proof}[Proof of Lemma~\ref{lem:covariance_of_Snt}]
    By the definition of the covariance,
    \[ \begin{aligned}
        &\Cov{S_{A_1}(t), S_{A_2}(t)} \\
        &\qquad = \E[\Big]{\sum_{\substack{
        P_1 \in \PP \cap (\S_{A_1} \ti \T), \\ P_2 \in \PP \cap (\S_{A_2} \ti \T)}} \deg(P_1; t) \deg(P_2; t)} - \E[\Big]{\sum_{P_1 \in \PP \cap (\S_{A_1} \ti \T)} \deg(P_1; t)} \E[\Big]{\sum_{P_2 \in \PP \cap (\S_{A_2} \ti \T)} \deg(P_2; t)}.
    \end{aligned} \]
    As $A_1, A_2$ are disjoint, $P_1 \ne P_2$, and thus an application of the Mecke formula yields
    \[ \Cov{S_{A_1}(t), S_{A_2}(t)} = \int_{\S_{A_1} \ti \T} \int_{\S_{A_2} \ti \T} \Cov{\deg(p_1; t), \deg(p_2; t)} \d p_2 \d p_1. \]
    For the integrand, by the definition of $\deg(p)$, we have that
    \[ \begin{aligned}
        \Cov{\deg(p_1; t), \deg(p_2; t)} &= \E[\Big]{\sum_{P' \in \PP'} \1{P' \in N(p_1, p_2; t)}} \\
        &\phantom{=} + \E[\Big]{\sum_{(P_1', P_2') \in (\PP')_{\ne}^2} \1{P_1' \in N(p_1; t)} \1{P_2' \in N(p_2; t)}} \\
        &\phantom{=} - \E[\Big]{\sum_{P_1' \in \PP'} \1{P_1' \in N(p_1; t)}} \E[\Big]{\sum_{P_2' \in \PP'} \1{P_2' \in N(p_2; t)}}.
    \end{aligned} \]
    The last two terms cancel by the independence of the Poisson process $\PP'$, and the Mecke formula gives
    \[ \Cov{\deg(p_1; t), \deg(p_2; t)} = \int_{\S \ti \R} \1{p' \in N(p_1, p_2; t)} \d p' = \abs{N(p_1, p_2; t)}. \]
    Then,
    \[ \begin{aligned}
        \Cov{S_{A_1}(t), S_{A_2}(t)} &= \int_{\S_{A_1} \ti \T} \int_{\S_{A_2} \ti \T} \abs{N(p_1, p_2; t)} \d p_2 \d p_1 \\
        &\le \int_{\S_{A_1} \ti \T} \int_{\S \ti \T} \abs{N(p_1, p_2; t)} \d p_2 \d p_1 \le \f{\abs{A_1}}{2 (1 - 2 \g')} \Big( \f{2 \b}{1 - \g} \Big)^2,
    \end{aligned} \]
    where we applied Lemmas~\ref{lem:int_of_ns}~(\ref{lem:int_of_ns:common_p_0}) and~\ref{lem:int_of_nt}~(\ref{lem:int_of_nt:common_p}).
\end{proof}

\subsection{Proofs of Lemmas \ref{lem:low_mark_edge_count_negligible}, \ref{lem:covariance_function_of_Snt_ge}, \ref{lem:covariance_function_terms} and \ref{lem:bounds_of_error_terms}}

Finally, we present the proofs of the lemmas used in the proof of Proposition~\ref{prop:multivariate_normal}.
In Lemma~\ref{lem:low_mark_edge_count_negligible}, we show that the low-mark edge count converges to $0$ in probability.
\begin{proof}[Proof of Lemma~\ref{lem:low_mark_edge_count_negligible}]
    Our proof strategy is applying Chebyshev's inequality and bounding the variance of $S_n^\le(t)$.
    Let $\de > 0$ arbitrary.
    Then, applying Chebyshev's inequality yields
    \[ \P[\big]{\So_n^\le(t) > \de} \le \de^{-2} n^{-1} \Var[\big]{S_n^\le(t)}. \]
    Thus, it is enough to show that $\Var{S_n^\le(t)} = o(n)$.
    We bound the variance by applying the Mecke formula, which, similarly to the proof of Lemma~\ref{lem:mean_variance_of_Snt}, gives
    \[ \Var{S_n^\le(t)} = \int_{\S_n^{\le u_n} \ti \T} \E{\deg(p; t)^2} \mdp + \iint_{(\S_n^{\le u_n} \ti \T)^2} \Cov{\deg(p_1; t), \deg(p_2; t)} \mdp[1] \mdp[2]. \]
    Using~\eqref{eq:expectation_of_degree_square}, we have for the first term
    \[ \int_{\S_n^{\le u_n} \ti \T} \E{\deg(p; t)^2} \mdp = \int_{\S_n^{\le u_n} \ti \T} \abs{N(p; t)}^2 + \abs{N(p; t)} \mdp = c_1 n u_n^{1 - 2\g} + c_2 n u_n^{1 - \g}, \]
    where $c_1 = (2 \b)^2 / ((1 - 2\g) (1 - \g')^2)$ and $c_2 = 2 \b /((1 - \g) (1 - \g'))$.
    In the second step, we used Lemma~\ref{lem:int_of_nt}~(\ref{lem:int_of_nt:power_p}) to bound the temporal part by $1$ for both terms and used Lemma~\ref{lem:int_of_ns}~(\ref{lem:int_of_ns:power_p}) twice for the spatial parts, once with $u_- = 0$ and once with $u_- = u_n$.
    Substituting $u_n = n^{-2/3}$, we have that the first term is of order $O(n^{1 - 2/3 (1 - 2 \g)}) \subset o(n)$, and the second term is of order $O(n^{1 - 2/3 (1 - \g)}) \subset o(n)$ as well.

    Next, we calculate the covariance term by following the proof of Lemma~\ref{lem:mean_variance_of_Snt}:
    \[ \iint_{(\S_n^{\le u_n} \ti \T)^2} \Cov{\deg(p_1; t), \deg(p_2; t)} \mdp[1] \mdp[2] = \iint_{(\S_n^{\le u_n} \ti \T)^2} \abs{N(p_1, p_2; t)} \mdp[1] \mdp[2]. \]
    We bound the temporal part by $2$ using Lemma~\ref{lem:int_of_nt}~(\ref{lem:int_of_nt:common_p}).
    Requiring $\g' < 1/2$, we bound the spatial part using Lemma~\ref{lem:int_of_ns}~(\ref{lem:int_of_ns:common_p_minus}) and~(\ref{lem:int_of_ns:common_p_0}).
    Then,
    \[ \iint_{(\S_n^{\le u_n} \ti \T)^2} \Cov{\deg(p_1; t), \deg(p_2; t)} \mdp[1] \mdp[2] \le c_3 n u_n^{2(1 - \g)} + c_4 n u_n^{1 - \g}, \]
    where $c_3 = 2 (2 \b)^2 / ((1 - \g)^2 (1 - 2 \g'))$ and $c_4 = 4 \b / ((1 - \g) (1 - \g'))$.
    The first term is of order $O(n^{1 - 4/3 (1 - \g)}) \subset o(n)$, and the second term is of order $O(n^{1 - 2/3 (1 - \g)}) \subset o(n)$ as well.
    Thus, the covariance term is of order $O(n)$.
\end{proof}

The covariance function of the high-mark edge count $\So_n^\ge$ was calculated in Lemma~\ref{lem:covariance_function_of_Snt_ge}, whose proof is similar to the proof of Proposition~\ref{prop:covariance_function_of_Snt}.
\begin{proof}[Proof of Lemma~\ref{lem:covariance_function_of_Snt_ge}]
    The statement and the proof of Lemma~\ref{lem:covariance_function_of_Snt_ge} is almost identical to the proof of Proposition~\ref{prop:covariance_function_of_Snt}.
    The only difference is that we consider the high-mark edge count $\So_n^\ge$ instead of the total edge count $\So_n$, which entails that we need to consider $\S_n^{u_n \le}$ in place of $\S$.

    Using the same time interval-based decomposition of the edge count $\So_n^\ge$ as we defined in the proof of Proposition~\ref{prop:covariance_function_of_Snt}, the covariance function can again be decomposed to three terms.
    Employing Lemma~\ref{lem:covariance_function_terms} finishes the proof of Lemma~\ref{lem:covariance_function_of_Snt_ge}.
\end{proof}
Next, we show the proof of the lemma used in the proofs of Proposition~\ref{prop:covariance_function_of_Snt} and Lemma~\ref{lem:covariance_function_of_Snt_ge}, which determined the terms of the limiting covariance functions of the edge counts $\So_n(t)$ and~$\So_n^\ge(t)$.
\begin{proof}[Proof of Lemma~\ref{lem:covariance_function_terms}]
    We exhibit the terms of the covariance function of the high-mark edge count $\So_n^{\ge}$ when $\g < 1/2$.
    However, using Lemma~\ref{lem:int_of_ns}~(\ref{lem:int_of_ns:common_p_0}) instead of Lemma~\ref{lem:int_of_ns}~(\ref{lem:int_of_ns:common_p_minus}), the below calculations are also valid for the limiting covariance function of the total edge count $\So_n$.
    As Lemma~\ref{lem:int_of_ns}~(\ref{lem:int_of_ns:common_p_0}) gives a specific limit instead of an upper bound, the results of this proof are also accurate when we apply the steps for $\So_n$.

    \medskip
    \noindent
    \textbf{Term $\Var{\So_n^\msf{A}(t_1, t_2)}$.}
    First, note that $\Var{\So_n^\msf{A}(t_1, t_2)} = n^{-1} \Var{S_n^\msf{A}(t_1, t_2)}$.
    Then,
    \[ \begin{aligned}
        \Var{S_n^\msf{A}(t_1, t_2)} &= \int_{\S_n^{u_n \le} \ti \T_{\le t_1}^{t_2 \le}} \E[\big]{\deg(p; t_1)^2} \mdp \\
        &\phantom{=} + \iint_{\big( \S_n^{u_n \le} \ti \T_{\le t_1}^{t_2 \le} \big)^2} \Cov[\big]{\deg(p_1; t_1), \deg(p_2; t_1)} \mdp[1] \mdp[2],
    \end{aligned} \]
    where, similarly as for the variance term in Section~\ref{sec:proof_univariate_normal}, we kept only the term in which the points are the same in the expansion of the variance, and used Mecke's formula.
    Using~\eqref{eq:expectation_of_degree_square}, we have that
    \[ \begin{aligned}
        &\int_{\S_n^{u_n \le} \ti \T_{\le t_1}^{t_2 \le}} \E[\big]{\deg(p; t_1)^2} \mdp = \int_{\S_n^{u_n \le} \ti \T_{\le t_1}^{t_2 \le}} \sum_{k = 1}^2 \abs{N(p; t_1)}^k \mdp \\
        &\qquad = \sum_{k = 1}^2 \int_{\S_n^{u_n \le}} \abs{\Ns(\ps)}^k \d \ps \int_{\T_{\le t_1}^{t_2 \le}} \abs{\Nt(\pt; t_1)}^k \dpt \\
        &\qquad = c_1 n \big( 1 - u_n^{1 - \g} \big) \int_{-\ff}^{t_1} \int_{t_2 - b}^\ff (t_1 - b) \dell \d b + c_2 n \big( 1 - u_n^{1 - 2 \g} \big) \int_{-\ff}^{t_1} \int_{t_2 - b}^\ff (t_1 - b)^2 \dell \d b \\
        &\qquad = n \big( c_1 (1 - u_n^{1 - \g}) + 2 c_2 (1 - u_n^{1 - 2 \g}) \big) \e^{-(t_2 - t_1)},
    \end{aligned} \]
    where $c_1 = 2 \b / ((1 - \g) (1 - \g'))$, $c_2 = (2 \b)^2 / ((1 - 2 \g)(1 - \g')^2)$ are positive constants.
    Substituting $u_n = n^{-2/3}$, we have that
    \[ \lim_{n \tff} \f{1}{n} \int_{\S_n^{u_n \le} \ti \T_{\le t_1}^{t_2 \le}} \E[\big]{\deg(p; t_1)^2} \mdp = \big( c_1 + 2 c_2 \big) \e^{-(t_2 - t_1)}. \]
    The covariance term is calculated the same as in the variance calculation of the edge count $\Cov[\big]{\deg(p_1; t_1), \deg(p_2; t_1)} = \abs{N(p_1, p_2; t_1)}$.
    After applying Lemma~\ref{lem:int_of_ns}~(\ref{lem:int_of_ns:common_p_minus}) and Lemma~\ref{lem:int_of_nt}~(\ref{lem:int_of_nt:common_p}) with $m = 2$, we have that
    \[ \begin{aligned}
        &\lim_{n \tff} \f{1}{n} \iint_{\big( \S_n^{u_n \le} \ti \T_{\le t_1}^{t_2 \le} \big)^2} \Cov[\big]{\deg(p_1; t_1), \deg(p_2; t_1)} \mdp[1] \mdp[2] = I_\msf{s} \ti I_\msf{t} \\
        &\qquad I_\msf{s} := \lim_{n \tff} \f{1}{n} \iint_{(\S_n^{u_n \le})^2} \abs{\Ns(\ps[1], \ps[2])} \d \ps[1] \d \ps[2] = \lim_{n \tff} c_3 \big( 1 - u_n^{1 - \g} \big)^2 = c_3 \\
        &\qquad I_\msf{t} := \int_\R \bigg( \int_{\T_{\le t_1}^{t_2 \le}} \1{r \in \Nt(\pt; t_1)} \dpt \bigg)^2 \d r = \f{1}{2} \e^{-2 (t_2 - t_1)},
    \end{aligned} \]
    where $c_3 = (2 \b)^2 / ((1 - \g)^2 (1 - 2 \g'))$ as defined in the statement of the lemma, and the application of Lemma~\ref{lem:int_of_ns}~(\ref{lem:int_of_ns:common_p_minus}) requires that $\g' < 1/2$.

    \medskip
    \noindent
    \textbf{Term $\Cov{\So_n^\msf{A}(t_1, t_2), \So_n^\msf{B}(t_1, t_2)}$.}
    First, note that $\Cov{\So_n^\msf{A}(t_1, t_2), \So_n^\msf{B}(t_1, t_2)} = n^{-1} \Cov{S_n^\msf{A}(t_1, t_2), S_n^\msf{B}(t_1, t_2)}$.
    Then,
    \[ \begin{aligned}
        \Cov[\big]{S_n^\msf{A}(t_1, t_2), S_n^\msf{B}(t_1, t_2)} &= \E[\big]{S_n^\msf{A}(t_1, t_2) S_n^\msf{B}(t_1, t_2)} - \E[\big]{S_n^\msf{A}(t_1, t_2)} \E[\big]{S_n^\msf{B}(t_1, t_2)} \\
        &= \E[\bigg]{\sum_{\substack{P_1 \in \PP \cap \big( \S_n^{u_n \le} \ti \T_{\le t_1}^{t_2 \le} \big) \\ P_2 \in \PP \cap \big( \S_n^{u_n \le} \ti \T_{\le t_1}^{[t_1, t_2]} \big)}} \deg(\set{P_1, P_2}; t_1)},
    \end{aligned} \]
    where we have used that $P_1 \ne P_2$ as they appear in disjoint sets.
    Next, we apply Mecke's formula to the above expression, and then use Lemmas~\ref{lem:int_of_ns}~(\ref{lem:int_of_ns:common_p_minus}) and~\ref{lem:int_of_nt}~(\ref{lem:int_of_nt:common_p}) with $m = 2$ to calculate the covariance term:
    \[ \begin{aligned}
        &\lim_{n \tff} \f{1}{n} \Cov[\big]{S_n^\msf{A}(t_1, t_2), S_n^\msf{B}(t_1, t_2)} = \lim_{n \tff} \f{1}{n} \int_{\S_n^{u_n \le} \ti \T_{\le t_1}^{t_2 \le}} \int_{\S_n^{u_n \le} \ti \T_{\le t_1}^{[t_1, t_2]}} \abs{N(p_1, p_2; t_1)} \mdp[2] \mdp[1] = I_\msf{s} \ti I_\msf{t} \\
        &\qquad I_\msf{s} := \lim_{n \tff} \f{1}{n} \int_{\S_n^{u_n \le}} \int_{\S_n^{u_n \le}} \abs{\Ns(\ps[1], \ps[2])} \d \ps[2] \d \ps[1] = \lim_{n \tff} c (1 - u_n^{1 - \g})^2 = c_3 \\
        &\qquad I_\msf{t} := \int_\R \bigg( \int_{\T_{\le t_1}^{t_2 \le}} \1{r \in \Nt(\pt; t_1)} \dpt \int_{\T_{\le s}^{[t_1, t_2]}} \1{r \in \Nt(\pt; t_1)} \dpt \bigg) \d r \\
        &\qquad \phantom{I_\msf{t}} = \int_{-\ff}^{t_1} \e^{-(t_2 - r)} \big( \e^{-(t_1 - r)} - \e^{-(t_2 - r)} \big) \d r = \e^{-(t_2 - t_1)} \big( 1 - \e^{-(t_2 - t_1)} \big),
    \end{aligned} \]
    where the finiteness of the integral $I_\msf{s}$ requires that $\g' < 1/2$, and we substituted $u_n = n^{-2/3}$.

    \medskip
    \noindent
    \textbf{Term $\Cov{S_n^\msf{A}(t_1, t_2), S_n^\msf{C}(t_1, t_2)}$.}
    In this part of the proof, we use that the covariance term $\Cov{S_n^\msf{A}(t_1, t_2), S_n^\msf{C}(t_1, t_2)}$ is determined by the common $\PP$-points, as the $\PP'$-points in $S_n^\msf{A}(t_1, t_2)$ and $S_n^\msf{C}(t_1, t_2)$ cannot be identical.
    Note that $\Cov{S_n^\msf{A}(t_1, t_2), S_n^\msf{C}(t_1, t_2)} = n^{-1} \Cov{S_n^\msf{A}(t_1, t_2), S_n^\msf{C}(t_1, t_2)}$.
    Then, we have
    \[ \begin{aligned}
        \Cov[\big]{S_n^\msf{A}(t_1, t_2), S_n^\msf{C}(t_1, t_2)} &= \E[\big]{S_n^\msf{A}(t_1, t_2) S_n^\msf{C}(t_1, t_2)} - \E[\big]{S_n^\msf{A}(t_1, t_2)} \E[\big]{S_n^\msf{C}(t_1, t_2)} \\
        &= \E[\bigg]{\sum_{P \in \PP \cap \big( \S_n^{u_n \le} \ti \T_{\le t_1}^{t_2 \le} \big)} \deg(P; t_1) \sum_{P' \in \PP'} \1{P' \in N(P; t_2)} \1{t_1 \le R}},
    \end{aligned} \]
    where we have used that $P'_1 \ne P'_2$ as they appear in disjoint sets.
    As earlier, we apply Mecke's formula to the above expression, and, after factorizing the spatial and the temporal parts, we use Lemma~\ref{lem:int_of_ns}~(\ref{lem:int_of_ns:power_p}) for the spatial part:
    \[ \begin{aligned}
        &\Cov[\big]{S_n^\msf{A}(t_1, t_2), S_n^\msf{C}(t_1, t_2)} = \int_{\S_n^{u_n \le} \ti \T_{\le t_1}^{t_2 \le}} \abs{N(p; t_1)} \int_{\S \ti [t_1, \ff]} \1{p' \in N(p; t_2)} \d p' \mdp = I_\msf{s} \ti I_\msf{t} \\
        &\qquad I_\msf{s} := \int_{\S_n^{u_n \le}} \abs{\Ns(\ps)}^2 \d \ps = c_2 n \big( 1 - u_n^{1 - 2 \g} \big) \\
        &\qquad I_\msf{t} := \int_{\T_{\le t_1}^{t_2 \le}} \abs{\Nt(\pt; t_1)} \int_{[t_1, \ff]} \1{r \in \Nt(p; t_2)} \d r \dpt = \int_{\T_{\le t_1}^{t_2 \le}} (t_1 - b) \int_{[t_1, t_2]} \d r \dpt \\
        &\qquad \quad \phantom{I_\msf{t}} = (t_2 - t_1) \int_{-\ff}^{t_1} (t_1 - b) \e^{-(t_2 - b)} \d b = (t_2 - t_1) \e^{-(t_2 - t_1)}.
    \end{aligned} \]
    Then, we have that $\Cov[\big]{S_n^\msf{A}(t_1, t_2), S_n^\msf{C}(t_1, t_2)} = c_2 n (t_2 - t_1) \e^{-(t_2 - t_1)}$ and thus
    \[ \lim_{n \tff} \f{1}{n} \Cov[\big]{S_n^\msf{A}(t_1, t_2), S_n^\msf{C}(t_1, t_2)} = c_2 (t_2 - t_1) \e^{-(t_2 - t_1)}, \]
    where we substituted $u_n = n^{-2/3}$.
\end{proof}

In Lemma~\ref{lem:bounds_of_error_terms}, we bound the error terms to bound the $d_3$~distance in Proposition~\ref{prop:multivariate_normal_limit}.
We examine the cost operators, and then manipulate the error terms to show their bounds.
\begin{proof}[Proof of Lemma~\ref{lem:bounds_of_error_terms}]
    First, let us recall from the statement of Proposition~\ref{prop:multivariate_normal_limit} that $\pw$ is a point of the Poisson process~$\wt{\PP} = \PP \sqcup \PP'$ with intensity measure $\wt{\m}$.
    Note that
    \[ D_{\pw} \So_n^\ge(t) = n^{-1/2} D_{\pw} S_n^\ge(t) \qquad \text{and} \qquad D^2_{\pw_1, \pw_2} \So_n^\ge(t) = n^{-1/2} D^2_{\pw_1, \pw_2} S_n^\ge(t). \]
    The expectations $\E{D_p \So_n^\ge(t)}$ and $\E{D_{p'} \So_n^\ge(t)}$ can be calculated by the application of Mecke's formula:
    \[ \E[\big]{D_p S_n^\ge(t)} = \abs{N(p; t)} \qquad \text{and} \qquad \E[\big]{D_{p'} S_n^\ge(t)} = \int_{\S_n^{u_n \le} \ti \T} \1{p' \in N(p; t)} \mdp. \]
    In the following proof, we will need the third and fourth moments of the cost operators.
    As $D_{p} S_n^\ge(t)$ and $D_{p'} S_n^\ge(t)$ are Poisson distributed, we can express their moments using the Touchard polynomials $\t_k(x)$~\cite{touchard} defined by:
    \[ \t_k(x) = \sum_{i = 0}^k \Big\{ \hspace{-0.15cm} \begin{array}{c} k \\ i \end{array} \hspace{-0.15cm} \Big\} x^i, \]
    where the curly brackets denote the Stirling numbers of the second kind.
    Then, the moments of the cost operators can be upper bounded by
    \begin{equation}
        \E[\big]{D_{\pw} S_n^\ge(t)^3} = \t_3 \big( \E[\big]{D_{\pw} S_n^\ge(t)} \big) \qquad \text{and} \qquad \E[\big]{D_{\pw} S_n^\ge(t)^4} = \t_4 \big( \E[\big]{D_{\pw} S_n^\ge(t)} \big) \le 16 \max \big( \E[\big]{D_{\pw} S_n^\ge(t)}^4, 1 \big). \label{eq:poisson_moment_bounds}
    \end{equation}

    \medskip
    \noindent
    \textbf{Error term $E_1(n)$.}
    We substitute $\So_n^\ge(t_{\any}) = n^{-1/2} S_n^\ge(t_{\any})$ and apply the Cauchy--Schwarz inequality:
    \[ \begin{aligned}
        E_1(n)^2 \le n^{-2} \int \Big( &\E[\big]{\big( D^2_{\pw_1, \pw_3} S_n^\ge(t_1) \big)^4} \E[\big]{\big( D^2_{\pw_2, \pw_3} S_n^\ge(t_1) \big)^4} \\
        &\ti \E[\big]{\big( D_{\pw_1} S_n^\ge(t_2) \big)^4} \E[\big]{\big( D_{\pw_2} S_n^\ge(t_2) \big)^4} \Big)^{1/4} \, \wt{\m} (\d (\pw_1, \pw_2, \pw_3)).
    \end{aligned} \]
    Note that the integrand is nonzero only if either $\pw_1, \pw_2 \in \S_n^{u_n \le} \ti \T$ and $\pw_3 \in \S \ti \R$, or $\pw_1, \pw_2 \in \S \ti \R$ and $\pw_3 \in \S_n^{u_n \le} \ti \T$, as otherwise at least one of the cost operators $D^2_{\pw_1, \pw_3}$ and $D^2_{\pw_2, \pw_3}$ is zero.
    In the first case, we have that
    \[ E_1(n)^2 \le n^{-2} \iint_{(\S_n^{u_n \le} \ti \T)^2} \Big( \E[\big]{D_{p_1} S_n^\ge(t_2)^4} \E[\big]{D_{p_2} S_n^\ge(t_2)^4} \Big)^{1/4} \abs{N(p_1, p_2; t_1)} \mdp[1] \mdp[2]. \]
    We bound the fourth moments of $D_{p_1} S_n^\ge(t_2)$ and $D_{p_2} S_n^\ge(t_2)$ using~\eqref{eq:poisson_moment_bounds}, and we extend the integral over $\S_n^{u_n \le} \ti \T$ to $\S_n \ti \T$.
    Then, the integral can be upper bounded by a sum of terms of the form
    \[ \begin{aligned}
        &4 \iint_{(\S_n \ti \T)^2} \abs{N(p_1; t_2)}^{m_1} \abs{N(p_1, p_2; t_1)} \abs{N(p_2; t_2)}^{m_2} \mdp[1] \mdp[2] = 4 I_\msf{s} \ti I_\msf{t} \\
        &\qquad I_\msf{s} := \iint_{(\S_n)^2} \abs{\Ns(\ps[1])}^{m_1} \abs{\Ns(\ps[1], \ps[2])} \abs{\Ns(\ps[2])}^{m_2} \d \ps[1] \d \ps[2] \\
        &\qquad I_\msf{t} := \iint_{\T^2} \abs{\Nt(\pt[1]; t_2)}^{m_1} \abs{\Nt(\pt[1], \pt[2]; t_1)} \abs{\Nt(\pt[2]; t_2)}^{m_2} \dpt[1] \dpt[2],
    \end{aligned} \]
    where $m_1, m_2 \in \set{0, 1}$.
    For the spatial part $I_\msf{s}$, we use Lemma~\ref{lem:int_of_ns}~(\ref{lem:int_of_ns:power_cap_power_0}) to see that $I_\msf{s} \in O(n)$ if $\g, \g' < 1/2$.
    We bound the temporal part~$I_\msf{t} \le 1$ using Lemma~\ref{lem:int_of_nt}~(\ref{lem:int_of_nt:power_cap_power}), thus $E_1(n) \in O(n^{-1/2})$.

    In the second case, when $\pw_1, \pw_2 \in \S \ti \R$ and $\pw_3 \in \S_n^{u_n \le} \ti \T$, we have that
    \[ \begin{aligned}
        E_1(n)^2 &\le n^{-2} \int_{\S_n^{u_n \le} \ti \T} \iint_{(\S \ti \R)^2} \1{\set{p'_1, p'_2} \su N(p; t_1)} \Big( \E[\big]{\big( D_{p'_1} S_n^\ge(t_2) \big)^4} \E[\big]{\big( D_{p'_2} S_n^\ge(t_2) \big)^4} \Big)^{1/4} \d p'_1, \d p'_2 \mdp \\
        &\le n^{-2} \int_{\S_n \ti \T} \bigg( \int_{\S \ti \R} \1{p' \in N(p; t_1)} \E[\big]{\big( D_{p'} S_n^\ge(t_2) \big)^4}^{1/4} \d p' \bigg)^2 \mdp,
    \end{aligned} \]
    where we extended the integral over $\S_n^{u_n \le} \ti \T$ to $\S_n \ti \T$ in the last step.
    We use~\eqref{eq:poisson_moment_bounds} again to bound $\E{D_{p'} S_n^\ge(t_2)^4}$:
    \[ E_1(n)^2 \le 4 n^{-2} \int_{\S_n \ti \T} \bigg( \int_{\S \ti \R} \1{p' \in N(p_1; t_1)} \max \bigg( \int_{\S_n \ti \T} \1{p' \in N(p_2; t_2)} \mdp[2], 1 \bigg) \d p' \bigg)^2 \mdp[1], \]
    the integration domain of the inner integral was extended to $\S_n \ti \T$ again in the last step.
    If the maximum is equal to one, the integral can be upper bounded by $\int_{\S_n \ti \T} \abs{N(p; t_1)}^2 \mdp \in O(n)$ by Lemmas~\ref{lem:int_of_ns}~(\ref{lem:int_of_ns:power_p}) and~\ref{lem:int_of_nt}~(\ref{lem:int_of_nt:power_p}). 
    On the other hand, if the maximum is greater than one, the integral is bounded by
    \[ \begin{aligned}
        \int_{\S_n \ti \T} &\bigg( \int_{\S \ti \R} \1{p' \in N(p_1; t_1)} \int_{\S_n \ti \T} \1{p' \in N(p_2; t_2)} \mdp[2] \d p' \bigg)^2 \mdp[1] = I_\msf{s} \ti I_\msf{t} \\
        I_\msf{s} &:= \int_{\S_n} \bigg( \int_\S \1{\ps' \in \Ns(\ps[1])} \int_{\S_n} \1{\ps' \in \Ns(\ps[2])} \d \ps[2] \d \ps' \bigg)^2 \d \ps[1] \\
        I_\msf{t} &:= \int_\T \bigg( \int_\R \1{r \in \Nt(\pt[1]; t_1)} \int_\T \1{r \in \Nt(\pt[2]; t_2)} \dpt[2] \d r \bigg)^2 \dpt[1].
    \end{aligned} \]
    We begin with the spatial part $I_\msf{s}$:
    \[ \begin{aligned}
        I_\msf{s} &\le \Big( \f{2 \b}{1 - \g} \Big)^2 \int_{\S_n} \bigg( \int_\S \1{\ps' \in \Ns(\ps)} w^{-\g'} \d \ps' \bigg)^2 \d \ps \\
        &= \f{(2 \b)^4}{(1 - \g)^2 (1 - 2 \g')^2} \int_{\S_n} u^{- 2 \g} \d (x, u) = \f{(2 \b)^4 n}{(1 - \g)^2 (1 - 2 \g) (1 - 2 \g')^2} \in O(n),
    \end{aligned} \]
    where we extended the innermost integral to the whole space $\S \ti \R$ and applied Lemmas~\ref{lem:size_of_ns}~(\ref{lem:size_of_ns:p_prime}) and~\ref{lem:size_of_nt}~(\ref{lem:size_of_nt:p_prime}) in the first step.
    Next, we move on to the temporal part $I_\msf{t}$:
    \[ \begin{aligned}
        I_\msf{t} &= \int_\T \bigg( \int_\R \1{r \in \Nt(\pt; t_1)} \e^{-(t_2 - r)} \d r \bigg)^2 \dpt = \int_\T (\e^{-(t_2 - t_1)} - \e^{-(t_2 - b)})^2 \1{b \le t_1 \le b + \ell} \dbdell \\
        &= \int_{-\ff}^{t_1} \int_{t_1 - b}^\ff (\e^{-(t_2 - t_1)} - \e^{-(t_2 - b)})^2 \dell \d b = \f{1}{3} \e^{-(t_2 - t_1)},
    \end{aligned} \]
    Thus, in both cases, we have that $E_1(n) \in O(n^{-1/2})$.

    \medskip
    \noindent
    \textbf{Error term $E_2(n)$.}
    We substitute $\So_n^\ge(t_{\any}) = n^{-1/2} S_n^\ge(t_{\any})$ and apply the Cauchy--Schwarz inequality:
    \[ \begin{aligned}
        E_2(n)^2 \le n^{-2} \int \Big( &\E[\big]{\big( D^2_{\pw_1, \pw_3} S_n^\ge(t_1) \big)^4} \E[\big]{\big( D^2_{\pw_2, \pw_3} S_n^\ge(t_1) \big)^4} \\
        &\ti \E[\big]{\big( D^2_{\pw_1, \pw_3} S_n^\ge(t_2) \big)^4} \E[\big]{\big( D^2_{\pw_2, \pw_3} S_n^\ge(t_2) \big)^4} \Big)^{1/4} \, \wt{\m} (\d (\pw_1, \pw_2, \pw_3)).
    \end{aligned} \]
    As in the case of $E_1$, the integrand is nonzero only if either $\pw_1, \pw_2 \in \S_n^{u_n \le} \ti \T$ and $\pw_3 \in \S \ti \R$, or if $\pw_1, \pw_2 \in \S \ti \R$ and $\pw_3 \in \S_n^{u_n \le} \ti \T$.
    In the first case,
    \[ \begin{aligned}
        E_2(n)^2 &\le n^{-2} \int_{(\S_n^{u_n \le} \ti \T)^2} \int_{\S \ti \R} \1{p' \in N(p_1, p_2; t_1)} \1{p' \in N(p_1, p_2; t_2)} \d p' \, \m^{\otimes 2} (\d (p_1, p_2)) \\
        &\le n^{-2} \iint_{\S_n^2} \abs{\Ns(\ps[1], \ps[2])} \d \ps[2] \d \ps[1] \iint_{\T^2} \abs{\Nt(\pt[1], \pt[2]; t_1)} \dpt[1] \dpt[2],
    \end{aligned} \]
    where we extended the integration domain from $\S_n^{u_n \le} \ti \T$ to $\S_n \ti \T$, and we used that $\abs{\Nt(\pt[1], \pt[2]; t_1) \cap \Nt(\pt[1], \pt[2]; t_2)} \le \abs{\Nt(\pt[1], \pt[2]; t_1)}$.
    To upper bound the spatial and the temporal parts, we use Lemmas~\ref{lem:int_of_ns}~(\ref{lem:int_of_ns:common_p_minus}) and~\ref{lem:int_of_nt}~(\ref{lem:int_of_nt:common_p}) with $m = 2$, respectively:
    \[ E_2(n)^2 \le \f{2 \b^2}{(1 - \g)^2 (1 - 2 \g')} n^{-1}, \]
    and thus $E_2(n) \in O(n^{-1/2})$.

    In the second case, when $\pw_1, \pw_2 \in \S \ti \R$ and $\pw_3 \in \S_n^{u_n \le} \ti \T$, we have that
    \[ \begin{aligned}
        E_2(n)^2 &\le n^{-2} \int_{\S_n^{u_n \le} \ti \T} \iint_{(\S \ti \R)^2} \1{p'_1 \in N(p; t_1) \cap N(p; t_2)} \1{p'_2 \in N(p; t_1) \cap N(p; t_2)} \d (p'_1, p'_2) \mdp \\
        &\le n^{-2} \int_{\S_n \ti \T} \abs{N(p; t_1) \cap N(p; t_2)}^2 \mdp \le n^{-2} \int_{\S_n \ti \T} \abs{N(p; t_1)}^2 \mdp \le \f{2 (2 \b)^2 n^{-1}}{(1 - 2 \g) (1 - \g')^2} \in O(n^{-1}),
    \end{aligned} \]
    where we used Lemma~\ref{lem:int_of_ns}~(\ref{lem:int_of_ns:power_p}) with $\a = 2$, $A = [0, n]$ and $u_- = 0$. 
    Thus, in both cases, we have that $E_2(n) \in O(n^{-1/2})$.

    \medskip
    \noindent
    \textbf{Error term $E_3(n)$.}
    If $p := \pw \in \S_n^{u_n \le} \ti \T$, then
    \[ E_3(n) = n^{-3/2} \int_{\S_n^{u_n \le} \ti \T} \E[\big]{\big( D_p S_n^\ge(t) \big)^3} \mdp = n^{-3/2} \int_{\S_n^{u_n \le} \ti \T} \abs{N(p; t)}^3 + 3 \abs{N(p; t)}^2 + \abs{N(p; t)} \mdp, \]
    where we expressed the third moment of the cost operator with the third Touchard polynomial.
    Next, we apply Lemma~\ref{lem:int_of_nt}~(\ref{lem:int_of_nt:power_p}) for the temporal parts to show that they can be bounded by $\Ga(4)$:
    \[ E_3(n) \le \Ga(4) n^{-3/2} \int_{\S_n^{u_n \le}} \abs{\Ns(\ps)}^3 + 3 \abs{\Ns(\ps)}^2 + \abs{\Ns(\ps)} \d \ps. \]
    For the second and third terms, we extend the integration domain from $\S_n^{u_n \le}$ to $\S_n$, and apply Lemma~\ref{lem:int_of_ns}~(\ref{lem:int_of_ns:power_p}) with $A = [0, n]$ and $u_- = 0$, requiring that $\g < 1/2$, to show that their orders are $O(n^{-1/2})$.
    For the first term, we apply Lemma~\ref{lem:int_of_ns}~(\ref{lem:int_of_ns:power_p}) to get
    \[ \int_{\S_n^{u_n \le}} \abs{N(p; t)}^3 \d (x, u) = \left\{ \begin{array}{ll} \Big( \f{2 \b}{1 - \g'} \Big)^3 \f{n}{1 - 3 \g} (1 - u_n^{1 - 3 \g}) & \text{if } \g \ne 1/3 \\ \Big( \f{2 \b}{1 - \g'} \Big)^3 n \log{u_n^{-1}} & \text{if } \g = 1/3, \end{array} \right. \]
    which is positive regardless of the value of $\g \in (0, 1/2)$, since when the constant $\g > 1/3$, then $(1 - u_n^{1 - 3 \g}) < 0$ as well.
    Substituting $u_n = n^{-2/3}$, we have that the order of the integral is $o(n^{3/2})$, thus $E_3(n) \to 0$ as $n \to \ff$.

    If $p' := \pw \in \S \ti \R$, then
    \[ E_3(n) = n^{-3/2} \int_{\S \ti \R} \E[\big]{\big( D_{p'} S_n^\ge(t) \big)^3} \d p' = n^{-3/2} \int_{\S \ti \R} \E[\big]{D_{p'} S_n^\ge(t)}^3 + 3 \E[\big]{D_{p'} S_n^\ge(t)}^2 + \E[\big]{D_{p'} S_n^\ge(t)} \d p'. \]
    To see that $\lim_{n \tff} E_3(n) = 0$, it is enough to show that $\int_{\S \ti \R} \E{D_{p'} S_n^\ge(t)}^m \d p' \in O(n)$ where $m \in \set{1, 2, 3}$.
    Thus, factorizing the integral as usual to the spatial and the temporal parts,
    \[ \begin{aligned}
        \int_{\S \ti \R} \E[\big]{D_{p'} S_n^\ge(t)}^m \d p' &= \int_{\S \ti \R} \bigg( \int_{\S_n^{u_n \le} \ti \T} \1{p' \in N(p; t)} \mdp \bigg)^m \d p' = I_\msf{s} \ti I_\msf{t} \\
        I_\msf{s} &:= \int_\S \bigg( \int_{\S_n^{u_n \le}} \1{\ps' \in \Ns(\ps)} \d \ps \bigg)^m \d \ps' \in O(n) \\
        I_\msf{t} &:= \int_\R \bigg( \int_\T \1{r \in \Nt(\pt; t)} \dpt \bigg)^m \d r = \f{1}{m},
    \end{aligned} \]
    where, after extending the integration domain from $\S_n^{u_n \le}$ to $\S_n$, we applied Lemma~\ref{lem:int_of_ns}~(\ref{lem:int_of_ns:power_p_prime}) to the spatial part, 
    and we used Lemma~\ref{lem:int_of_nt}~(\ref{lem:int_of_nt:power_prime}) for the temporal part.
    Thus, we have that $\lim_{n \tff} E_3(n) = 0$.
\end{proof}


\section{Proofs of the lemmas used to prove Theorem~\ref{thm:functional_normal}}\label{sec:proofs_functional_normal}

This section presents the proofs of the lemmas used in the proof of Theorem~\ref{thm:functional_normal}.
In the proof of Theorem~\ref{thm:functional_normal}, we introduced the \quote{plus-minus decomposition} in the beginning of Section~\ref{sec:edge_count_gaussian} to decompose $S_n$ to the difference of two monotone increasing functions.
The first proof shows that the difference of the \quote{plus} and \quote{minus} parts of the edge count is indeed the total edge count.
\begin{proof}[Proof of Lemma~\ref{lem:plus_minus_decomposition}]
We have that
    \[ \begin{aligned}
        S_n^+(t) - S_n^-(t) &= \sum_{P \in \PP \cap (\S_n \ti \T^{0 \le})} \sum_{P' \in \PP'} \1{P' \in N^+(P; t)} - \sum_{P \in \PP \cap (\S_n \ti \T^{0 \le})} \sum_{P' \in \PP'} \1{P' \in N^-(P; t)} \\
        &= \sum_{P \in \PP \cap (\S_n \ti \T^{0 \le})} \sum_{P' \in \PP'} \1{\Ps' \in \Ns(\Ps)} \Big( \1{0 \le B + L} \1{B \le R \le (B + L) \w t} \\
        &\phantom{= \sum_{P \in \PP \cap (\S_n \ti \T)} \sum_{P' \in \PP'} \1{\Ps' \in \Ns(\Ps)}} - \1{B \le R \le B + L \le t} \Big) \\
        &= \sum_{P \in \PP \cap (\S_n \ti \T^{0 \le})} \sum_{P' \in \PP'} \1{\Ps' \in \Ns(\Ps)} \1{R \in \Nt(\Pt; t)} = S_n(t),
    \end{aligned} \]
    as desired.
\end{proof}

We continue by showing that the conditions of Theorem~\ref{thm:davydov} hold for the processes $\So_n^\pm(\any)$.
The details follow after an overview of the proof of the conditions.
Condition~\ref{condition:finite_dimensional} of Davydov's theorem is about the convergence of the finite-dimensional distributions of $\So_n^\pm$, which we show in the following proof.
\begin{proof}[Proof of Proposition~\ref{prop:davydov_condition_1}]
    To show Condition~(\ref{condition:finite_dimensional}), we follow the same arguments for $\So_n^\pm(t)$ as in Section~\ref{sec:proof_multivariate_normal} for~$S_n(t)$, i.e., we employ~\cite[Theorem~1.1]{normalapproximation2}.
    Following the same steps, we first show that the limiting covariance function of $\So_n^\pm(t)$ is bounded.
    \begin{lemma}[Covariance function of $\So_n^\pm(t)$]\label{lem:limiting_covariance_function_Snt_pm}
        Let $\g, \g' < 1/2$ and $0 \le s \le t \le 1$.
        Then the limit $\lim_{n \tff} \Cov{\So_n^\pm(s), \So_n^\pm(t)} < \ff$, exists.
    \end{lemma}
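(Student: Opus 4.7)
The plan is to mirror the computation in Proposition~\ref{prop:covariance_function_of_Snt} but now for the monotone processes $S_n^\pm$, with all $n$-dependence isolated in a single spatial integral over $\S_n$. First I would apply the Mecke formula to obtain the decomposition
\[
\Cov{S_n^\pm(s), S_n^\pm(t)} = \int_{\S_n \ti \T^{0 \le}} \E[\big]{\deg^\pm(p; s)\, \deg^\pm(p; t)} \mdp + \iint_{(\S_n \ti \T^{0 \le})^2} \Cov[\big]{\deg^\pm(p_1; s), \deg^\pm(p_2; t)} \mdp[1] \mdp[2].
\]
By the independence of $\PP'$ from $\PP$ and the same two-term Mecke expansion used in~\eqref{eq:expectation_of_degree_square}, the integrands simplify to $\abs{N^\pm(p; s) \cap N^\pm(p; t)} + \abs{N^\pm(p; s)} \abs{N^\pm(p; t)}$ and $\abs{N^\pm(p_1; s) \cap N^\pm(p_2; t)}$, respectively. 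Since $N^\pm(p; t) = \Ns(\ps) \ti \Nt^\pm(\pt; t)$, each of the three resulting integrals factorizes into a spatial integral over $\S_n$ (or $\S_n^2$) and an $n$-independent temporal integral over $\T^{0 \le}$.

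The spatial integrals are handled directly by Lemma~\ref{lem:int_of_ns}. For the diagonal term, Lemma~\ref{lem:int_of_ns}~(\ref{lem:int_of_ns:power_p}) with $u_- = 0$ and $\a \in \set{1, 2}$ yields exact values of the form $c\, n$, convergent under the assumption $\g < 1/2$; for the off-diagonal term, Lemma~\ref{lem:int_of_ns}~(\ref{lem:int_of_ns:common_p_0}) with $m = 2$ gives $n^{-1} \iint_{\S_n^2} \abs{\Ns(\ps[1], \ps[2])} \d \ps[1] \d \ps[2] \nearrow (2\b)^2/((1-\g)^2(1-2\g'))$ under $\g' < 1/2$. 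The temporal integrals are bounded uniformly for $s, t \in [0, 1]$: for the diagonal term one combines Lemma~\ref{lem:int_of_nt_pm}~(\ref{lem:int_of_nt_pm:power_p_specific}) for the product $\abs{\Nt^\pm(\pt; s)}\abs{\Nt^\pm(\pt; t)}$ with the monotonicity $\Nt^\pm(\pt; s) \su \Nt^\pm(\pt; t)$ for $s \le t$ to control the intersection, while the off-diagonal temporal integral is bounded via Lemma~\ref{lem:int_of_nt_pm}~(\ref{lem:int_of_nt_pm:cap}) after writing
\[
\iint_{(\T^{0 \le})^2} \abs{\Nt^\pm(\pt[1]; s) \cap \Nt^\pm(\pt[2]; t)} \dpt[1] \dpt[2] \le \int_\R \bigg( \int_{\T^{0 \le}} \1{r \in \Nt^\pm(\pt; t)} \dpt \bigg)^{\!\!2} \d r.
\]

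Since $\Cov{\So_n^\pm(s), \So_n^\pm(t)} = n^{-1} \Cov{S_n^\pm(s), S_n^\pm(t)}$, combining the explicit spatial limits (each proportional to $n$) with the finite $n$-independent temporal factors yields convergence to a finite limit as $n \tff$, which proves the lemma. I do not anticipate any serious obstacle: the monotone nature of $S_n^\pm$ and the product structure $N^\pm = \Ns \ti \Nt^\pm$ make the computation strictly parallel to the one in the proof of Proposition~\ref{prop:covariance_function_of_Snt}, the only extra care being the need to invoke the correct temporal lemma from Lemma~\ref{lem:int_of_nt_pm} rather than Lemma~\ref{lem:int_of_nt}. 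The mild technical point is to keep both signs $\pm$ uniform in the notation, which is painless since the spatial factor and the assumptions $\g, \g' < 1/2$ are identical in the two cases.
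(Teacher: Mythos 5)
Your argument is correct and reaches the same conclusion, but by a noticeably more direct route than the paper. The paper proves Lemma~\ref{lem:limiting_covariance_function_Snt_pm} by importing the $A/B/C$ time-interval decomposition from the proof of Proposition~\ref{prop:covariance_function_of_Snt}: it writes $S_n^\pm(s) = S_n^{A\pm}(s)$ and $S_n^\pm(t) = S_n^{A\pm}(s) + S_n^{B\pm}(s,t) + S_n^{C\pm}(s,t)$, uses independence to drop the cross-$C$ covariance, and then computes $\Var{S_n^{A\pm}(s)}$ and $\Cov{S_n^{A\pm}(s), S_n^{B\pm}(s,t)}$ separately. That decomposition confines each term to a restricted temporal domain, so the intersections of temporal neighborhoods at different times never have to be manipulated directly. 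You instead apply Mecke to the undecomposed covariance and arrive at three integrands: $\abs{N^\pm(p;s)\cap N^\pm(p;t)}$, $\abs{N^\pm(p;s)}\abs{N^\pm(p;t)}$, and $\abs{N^\pm(p_1;s)\cap N^\pm(p_2;t)}$. The new ingredient you need --- and correctly supply --- is the monotonicity $\Nt^\pm(\pt;s) \subseteq \Nt^\pm(\pt;t)$ for $s \le t$, which collapses the diagonal intersection to $\abs{N^\pm(p;s)}$ and dominates the off-diagonal temporal integral by $\int_\R (\int_{\T^{0\le}} \1{r \in \Nt^\pm(\pt;t)} \dpt)^2 \d r$, handled by Lemma~\ref{lem:int_of_nt_pm}~(\ref{lem:int_of_nt_pm:cap}). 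The spatial factorization and its convergence are identical in both proofs (Lemma~\ref{lem:int_of_ns} Parts~(\ref{lem:int_of_ns:power_p}) and~(\ref{lem:int_of_ns:common_p_0}) under $\g,\g' < 1/2$), and since the temporal integrals are $n$-independent and finite, the limit exists by the monotone convergence in the spatial factor. Your route is shorter and self-contained, at the mild cost of the monotonicity lemma being invoked implicitly rather than via the paper's cleaner but heavier bookkeeping; the paper's route has the advantage of directly reusing the structure already set up for Proposition~\ref{prop:covariance_function_of_Snt}.
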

    \noindent
    The proof of Lemma~\ref{lem:limiting_covariance_function_Snt_pm} and all proofs of subsequent lemmas in this section are given in Section~\ref{sec:proofs_functional_normal}.
    The next step is to bound the error terms $E_1(n)$, $E_2(n)$, $E_3(n)$.
    As in the proof of Lemma~\ref{lem:bounds_of_error_terms}, the cost operators $D_p S^\pm_n(t)$, $D_{p'} S^\pm_n(t)$ and $D^2_{p, p'} S^\pm_n(t)$ are given by
    \[ \begin{aligned}
        D_p S^\pm_n(t) &:= \sum_{P' \in \PP'} \1{P' \in N^\pm(p; t)} \\
        D_{p'} S^\pm_n(t) &:= \sum_{P \in \PP \cap (\S_n \ti \T)} \1{p' \in N^\pm(P; t)} \\
        D^2_{p, p'} S^\pm_n(t) &:= \1{p' \in N^\pm(p; t)},
    \end{aligned} \]
    where $D_p S^\pm_n(t)$ and $D_{p'} S^\pm_n(t)$ are Poisson distributed, and $D^2_{p, p'} S^\pm_n(t)$ is deterministic.
    \begin{lemma}[Bounds of the error terms for $\So_n^\pm(t)$]\label{lem:bounds_of_error_terms_pm}
        Let $\g < 1/2$, $\g' < 1/3$.
        Then, the error terms defined in~\eqref{eq:d3_distance} with $\So_n^\pm$ in place of $\So_n$ satisfy $\lim_{n \tff} (E_1(n) + E_2(n) + E_3(n)) = 0$.
    \end{lemma}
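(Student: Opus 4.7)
The plan is to mirror the proof of Lemma~\ref{lem:bounds_of_error_terms} almost verbatim, with three guiding substitutions: the spatial neighborhood $\Ns(\ps)$ appears identically, so every spatial estimate is reused unchanged; the temporal neighborhood $\Nt$ is replaced by $\Nt^\pm$ throughout, so that Lemma~\ref{lem:int_of_nt_pm} takes the role of Lemma~\ref{lem:int_of_nt}; and, since $t \in [0, 1]$, the additive $t$-terms appearing in Lemma~\ref{lem:int_of_nt_pm}~(\ref{lem:int_of_nt_pm:power_p_bound}) and~(\ref{lem:int_of_nt_pm:cap}) are absorbed into universal $O(1)$ constants. The cost operators $D_p S_n^\pm(t)$ and $D_{p'} S_n^\pm(t)$ are Poisson with means $\abs{N^\pm(p; t)}$ and $\int_{\S_n \ti \T^{0 \le}} \1{p' \in N^\pm(P; t)} \mdp$, respectively, so their third and fourth moments are controlled by Touchard polynomials exactly as in~\eqref{eq:poisson_moment_bounds}, while $D^2_{p, p'} S_n^\pm(t) = \1{p' \in N^\pm(p; t)}$ is deterministic.

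For $E_1(n)$ and $E_2(n)$, I would apply Cauchy--Schwarz and split the integration into two regimes according to whether $\pw_3$ lies in $\S_n \ti \T^{0 \le}$ or in $\S \ti \R$. In the first regime the integrand factorizes into a spatial double integral of the form $\iint_{\S_n^2} \abs{\Ns(\ps[1])}^{m_1} \abs{\Ns(\ps[1], \ps[2])} \abs{\Ns(\ps[2])}^{m_2} \d \ps[1] \d \ps[2]$ with $m_1, m_2 \in \set{0, 1}$, which is $O(n)$ under $\g, \g' < 1/2$ by Lemma~\ref{lem:int_of_ns}~(\ref{lem:int_of_ns:power_cap_power_0}), multiplied by a temporal factor bounded via Lemma~\ref{lem:int_of_nt_pm}~(\ref{lem:int_of_nt_pm:power_cap_power}). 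In the second regime the inner integral $\int_{\S \ti \R} \1{p' \in N^\pm(p; t)} \d p'$ again factorizes, with its spatial part handled by Lemma~\ref{lem:size_of_ns}~(\ref{lem:size_of_ns:p_prime}) and its temporal part by Lemma~\ref{lem:size_of_Nt_pm}~(\ref{lem:size_of_Nt_pm:size_p_prime}); iterating over $p \in \S_n \ti \T^{0 \le}$ then yields an $O(n)$ spatial integral via Lemma~\ref{lem:int_of_ns}~(\ref{lem:int_of_ns:power_p}) times an $O(1)$ temporal integral, and after squaring one obtains $E_j(n)^2 \in O(n^{-1})$ for $j = 1, 2$.

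The main obstacle lies in $E_3(n)$. Expanding $\E{D_{\pw} S_n^\pm(t)^3}$ via the third Touchard polynomial produces spatial integrals of the form $\int_{\S_n} \abs{\Ns(\ps)}^k \d \ps$ for $k \in \set{1, 2, 3}$, multiplied by temporal integrals that are $O(1)$ thanks to Lemma~\ref{lem:int_of_nt_pm}~(\ref{lem:int_of_nt_pm:power_p_bound}). By Lemma~\ref{lem:int_of_ns}~(\ref{lem:int_of_ns:power_p}) the $k = 3$ integral converges only when $\g < 1/3$, whereas the hypothesis grants only $\g < 1/2$. To bridge this gap I would emulate the strategy of Section~\ref{sec:proof_multivariate_normal} and decompose $\So_n^\pm = \So_n^{\pm, \le} + \So_n^{\pm, \ge}$ with cutoff $u_n = n^{-2/3}$; arguments analogous to Lemma~\ref{lem:low_mark_edge_count_negligible} show that $\So_n^{\pm, \le}$ tends to zero in probability, and the Lipschitz continuity of every $h \in \HH_m^{(3)}$ allows the $d_3$ bound to be transferred to the high-mark process. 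On the high-mark domain $\S_n^{u_n \le}$, Lemma~\ref{lem:int_of_ns}~(\ref{lem:int_of_ns:power_p}) yields $\int_{\S_n^{u_n \le}} \abs{\Ns(\ps)}^3 \d \ps \in O(n \, u_n^{1 - 3 \g}) \su O(n^{4/3})$ even for $\g \in (1/3, 1/2)$, and dividing by $n^{3/2}$ gives $O(n^{-1/6}) \to 0$. The complementary subcase $\pw = p' \in \S \ti \R$ factorizes through Lemma~\ref{lem:int_of_ns}~(\ref{lem:int_of_ns:power_p_prime}) with $m = 3$, which is precisely where the hypothesis $\g' < 1/3$ enters.
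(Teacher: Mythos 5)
Your proposal is correct and follows essentially the same route as the paper. The paper's own proof is deliberately terse: it simply records that the spatial factors are unchanged from Lemma~\ref{lem:bounds_of_error_terms} and verifies only the new temporal integrals via Lemma~\ref{lem:int_of_nt_pm}~(\ref{lem:int_of_nt_pm:power_p_bound}), (\ref{lem:int_of_nt_pm:cap}), and (\ref{lem:int_of_nt_pm:power_cap_power}). You reach the same endpoint, but you spell out the one step the paper leaves implicit and which is in fact essential: the spatial factor $\int \abs{\Ns(\ps)}^3$ appearing in $E_3$ diverges on $\S_n$ once $\g\ge 1/3$, so the statement that the spatial parts are ``identical'' tacitly presupposes that the cost operators are taken for a high-mark version $\So_n^{\pm,\ge}$ on $\S_n^{u_n\le}\ti\T^{0\le}$, precisely as in Section~\ref{sec:proof_multivariate_normal}. (The cost-operator formulas displayed in the proof of Proposition~\ref{prop:davydov_condition_1} are written over $\S_n\ti\T$ without the mark cutoff, which is a small imprecision.) You recover the needed domain by importing the low-mark/high-mark decomposition, invoking an analogue of Lemma~\ref{lem:low_mark_edge_count_negligible} for $\So_n^{\pm,\le}$, and using the $1$-Lipschitz property of $h\in\HH_m^{(3)}$ to transfer the $d_3$ estimate from $\So_n^{\pm,\ge}$ to $\So_n^\pm$ — this is logically equivalent to what the paper's chain of references accomplishes, just made explicit. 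Your identification of where $\g'<1/3$ enters (Lemma~\ref{lem:int_of_ns}~(\ref{lem:int_of_ns:power_p_prime}) with $m=3$ in the $p'\in\S\ti\R$ subcase of $E_3$) also matches the paper. In short, no gap — you have written the expanded version of the paper's compressed argument.
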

    \noindent
    This shows that each finite-dimensional distribution of $\So_n^\pm$ converge to a multivariate normal distribution.
\end{proof}

Condition~\ref{condition:cumulant} of Davydov's theorem is about the tightness of the processes $\So_n^\pm(\any)$, which is shown next.
\begin{proof}[Proof of Proposition~\ref{prop:tightness_So_n_pm}]
    We have that
    \[ \E[\big]{\big( \So_n^\pm(t) - \So_n^\pm(s) \big)^4} = n^{-2} \E[\big]{\big( \De_n^\pm(s, t) - \E{\De_n^\pm(s, t)} \big)^4}, \]
    where we wrote $\De_n^\pm(s, t) := S^\pm_n(t) - S^\pm_n(s)$, which is nonnegative for $s \le t$.
    The fourth moment can be expressed with the fourth cumulant $\k_4$~\cite[Appendix~B]{wiener_chaos} as
    \[ \E[\big]{\big( \De_n^\pm(s, t) - \E{\De_n^\pm(s, t)} \big)^4} = \k_4(\De_n^\pm(s, t)) + 3 \Var{\De_n^\pm(s, t)}^2. \]
    Beginning with the variance term, we have the following.
    \begin{lemma}[Variance of $\De_n^\pm(s, t)$]\label{lem:variance_of_De}
        Let $\g, \g' < 1/2$, then, we have $\Var{\De_n^\pm(s, t)} \in O(n(t - s))$.
    \end{lemma}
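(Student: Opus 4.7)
The plan is to compute $\Var{\De_n^\pm(s,t)}$ by exactly the Mecke expansion already used in the proof of Lemma~\ref{lem:mean_variance_of_Snt}, then bound each resulting spatial and temporal factor with the preliminary lemmas from Section~\ref{sec:minor_lemmas}. The argument is identical for the $+$ and $-$ superscripts, so I would suppress them in the notation and let the uniform bounds in Lemma~\ref{lem:int_of_nt_pm} absorb the case distinction.

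First I would rewrite the increment as a sum over $\PP$-vertices of an increment in $\PP'$-counts. Using the notation from~\eqref{eq:def_of_de_Nt}, monotonicity of $\deg^\pm(P;\any)$ gives
\[ \De_n^\pm(s,t) = \sum_{P \in \PP \cap (\S_n \ti \T^{0 \le})} \sum_{P' \in \PP'} \1{P' \in \de_{s,t}(N^\pm(P))}, \]
and $\de_{s,t}(N^\pm(P)) = \Ns(\Ps) \ti \de_{s,t}(\Nt^\pm(\Pt))$. Conditional on $P$, the inner sum is Poisson with mean $|\Ns(\Ps)|\,|\de_{s,t}(\Nt^\pm(\Pt))|$. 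Expanding the square and applying the Mecke formula twice (first for $\PP$, then inside each expectation for $\PP'$) just as in~\eqref{eq:variance_Snt} yields
\[ \Var{\De_n^\pm(s,t)} = \int_{\S_n \ti \T^{0 \le}} \big(|\de_{s,t}(N^\pm(p))| + |\de_{s,t}(N^\pm(p))|^2\big) \mdp + \iint_{(\S_n \ti \T^{0 \le})^2} |\de_{s,t}(N^\pm(p_1)) \cap \de_{s,t}(N^\pm(p_2))| \mdp[1]\mdp[2]. \]

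For the diagonal term, the integrand factorizes into spatial and temporal pieces. The spatial integrals $\int_{\S_n} |\Ns(\ps)|^k \d \ps$ are $O(n)$ for $k=1,2$ by Lemma~\ref{lem:int_of_ns}~(\ref{lem:int_of_ns:power_p}), where the hypothesis $\g<1/2$ is needed in the $k=2$ case. The temporal integrals $\int_{\T^{0\le}} |\de_{s,t}(\Nt^\pm(\pt))|^k \dpt$ are bounded by $(t-s)^k$ by Lemma~\ref{lem:int_of_nt_pm}~(\ref{lem:int_of_nt_pm:difference_p}). Since $s,t\in[0,1]$, the $k=2$ contribution is absorbed into the $k=1$ contribution, giving a total of $O(n(t-s))$.

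For the off-diagonal term I would factorize again,
\[ |\de_{s,t}(N^\pm(p_1)) \cap \de_{s,t}(N^\pm(p_2))| = |\Ns(\ps[1], \ps[2])|\cdot |\de_{s,t}(\Nt^\pm(\pt[1])) \cap \de_{s,t}(\Nt^\pm(\pt[2]))|. \]
The spatial double integral is $O(n)$ by Lemma~\ref{lem:int_of_ns}~(\ref{lem:int_of_ns:common_p_0}), which is where $\g'<1/2$ enters. For the temporal double integral I would apply Lemma~\ref{lem:int_of_nt}~(\ref{lem:int_of_nt:common_p}) to rewrite it as $\int_\R \big(\int_{\T^{0\le}} \1{r \in \de_{s,t}(\Nt^\pm(\pt))} \dpt\big)^2 \d r$, which equals $t-s$ by Lemma~\ref{lem:int_of_nt_pm}~(\ref{lem:int_of_nt_pm:difference_p_prime}). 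Combining, the off-diagonal term is also $O(n(t-s))$, so $\Var{\De_n^\pm(s,t)} \in O(n(t-s))$. I do not anticipate any real obstacle here; the computation is a direct template of Lemmas~\ref{lem:mean_variance_of_Snt} and~\ref{lem:covariance_of_Snt}, and the only mild care is to ensure that the temporal bounds in Lemma~\ref{lem:int_of_nt_pm} are applied in the $(t-s)^k$ form rather than the cruder absolute bounds, so that the linear-in-$(t-s)$ scaling actually appears.
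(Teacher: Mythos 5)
Your proof is correct, but you take a different (equally standard) route from the paper. The paper bounds $\Var{\De_n^\pm(s,t)}$ via the Poincar\'e inequality for Poisson functionals, which gives
\[ \Var{\De_n^\pm(s, t)} \le \int_{\S_n \ti \T^{0 \le}} \E[\big]{D_p(\De_n^\pm)^2} \mdp + \int_{\S \ti \R} \E[\big]{D_{p'}(\De_n^\pm)^2} \d p', \]
and then exploits that both add-one cost operators are Poisson distributed. You instead reproduce the exact variance identity by the two-step Mecke expansion of Lemma~\ref{lem:mean_variance_of_Snt}, splitting into a diagonal term $\int (\abs{\de_{s,t}(N^\pm(p))} + \abs{\de_{s,t}(N^\pm(p))}^2)$ and an off-diagonal covariance term $\iint \abs{\de_{s,t}(N^\pm(p_1)) \cap \de_{s,t}(N^\pm(p_2))}$. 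The two approaches are dual: the off-diagonal covariance you keep explicitly is precisely what appears (after Fubini) as $\int_{\S\ti\R} \E{D_{p'}(\De_n^\pm)}^2 \d p'$ inside the Poincar\'e bound, and your diagonal term matches the $D_p$ contribution plus the linear part of the $D_{p'}$ contribution. Your version buys an exact identity rather than an inequality, at the cost of one more factorisation step; the paper's version is marginally shorter but only yields an upper bound, which is all that is needed here. The hypotheses enter identically: $\g<1/2$ for $\int_{\S_n}\abs{\Ns}^2$ and $\g'<1/2$ for $\iint_{\S_n^2}\abs{\Ns(\ps[1],\ps[2])}$, and in both arguments the $(t-s)$-linear scaling comes from Lemma~\ref{lem:int_of_nt_pm}~(\ref{lem:int_of_nt_pm:difference_p})/(\ref{lem:int_of_nt_pm:difference_p_prime}). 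One small citation nit: your Fubini rewriting of $\iint \abs{\de_{s,t}(\Nt^\pm(\pt[1])) \cap \de_{s,t}(\Nt^\pm(\pt[2]))}$ as $\int_\R (\int \1{r\in\de_{s,t}(\Nt^\pm(\pt))}\dpt)^2\d r$ is of course valid, but Lemma~\ref{lem:int_of_nt}~(\ref{lem:int_of_nt:common_p}) as stated is for the unsigned $\Nt(\pt;t)$; you are reusing its proof idea rather than its statement.
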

    \noindent
    Next, we show for the cumulant term that $\abs{\k_4(\De_n^\pm(s, t))} \in O(n (t - s)) \su O(n^2 (t - s)^{1 + \eta})$ for all $s, t \in (0, 1)$ if $t - s \ge a_n$.
    \begin{lemma}[Order of the cumulant term $\k_4(\De_n^\pm(s, t))$]\label{lem:cumulant_term_cases}
        Let $\g, \g' < 1/4$.
        We have for $0 \le s < t \le 1$ the orders of the cumulant term $\k_4(\De_n^\pm(s, t)) \in O(n (t - s))$.
    \end{lemma}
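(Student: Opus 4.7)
My plan is to expand the fourth cumulant $\k_4(\De_n^\pm(s, t))$ via the diagram formula obtained from iterated application of the multivariate Mecke formula to the two independent Poisson processes $\PP$ and $\PP'$. Writing
\[
\De_n^\pm(s, t) = \sum_{P \in \PP \cap (\S_n \ti \T^{0 \le})} \sum_{P' \in \PP'} \1{P' \in \de_{s, t}(N^\pm(P))},
\]
repeated Mecke expresses $\E{\De_n^\pm(s, t)^4}$ as a sum over pairs $(\pi_1, \pi_2)$ of set partitions of $\{1, 2, 3, 4\}$, where $\pi_1$ and $\pi_2$ record which of the four $P_i$ and of the four $P_i'$ are identified, respectively. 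The standard moment-to-cumulant inversion identifies $\k_4(\De_n^\pm(s, t))$ with the sum over only the \emph{connected} pairs $(\pi_1, \pi_2)$, i.e., those for which the graph on $\{1, \ldots, 4\}$ obtained by joining $i \sim j$ whenever they share a block in $\pi_1$ or $\pi_2$ is connected; this leaves a finite, combinatorially bounded family of integrals to estimate, and since $\Var{\De_n^\pm(s, t)} \in O(n(t-s))$ by Lemma~\ref{lem:variance_of_De}, only the connected part remains after subtracting $3 \Var{\De_n^\pm(s, t)}^2$ via the disconnected diagrams.

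Each connected-diagram integrand is a product of four indicators $\1{p'_{\pi_2(i)} \in \de_{s, t}(N^\pm(p_{\pi_1(i)}))}$, and since $N^\pm(p) = \Ns(\ps) \ti \Nt^\pm(\pt)$, every integral factorizes into a spatial factor involving $\Ns$-sets and a temporal factor involving $\de_{s, t}(\Nt^\pm)$-sets. The spatial factors are controlled by Lemma~\ref{lem:int_of_ns}: the extreme diagram with $|\pi_1| = 1$ produces $\int_{\S_n} \abs{\Ns(\ps)}^4 \d \ps \in O(n)$ via part~(\ref{lem:int_of_ns:power_p}), which forces $\g < 1/4$; the extreme diagram with $|\pi_2| = 1$ produces either $\int_{\S_n^4} \abs{\Ns(\bsps[4])} \d \bsps[4] \in O(n)$ via part~(\ref{lem:int_of_ns:common_p_0}) or an integral of the type in part~(\ref{lem:int_of_ns:power_p_prime}), both of which force $\g' < 1/4$; and mixed diagrams are handled by part~(\ref{lem:int_of_ns:power_cap_power_0}) under the same constraints. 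The temporal factors always contribute a $(t - s)$-factor: Lemma~\ref{lem:int_of_nt_pm}~(\ref{lem:int_of_nt_pm:difference_p}) bounds $\int \abs{\de_{s, t}(\Nt^\pm(\pt))}^m \dpt \le (t - s)^m \le (t - s)$ using $t - s \le 1$, while Lemma~\ref{lem:int_of_nt_pm}~(\ref{lem:int_of_nt_pm:difference_p_prime}) yields $\int_\R \big( \int_{\T^{0 \le}} \1{r \in \de_{s, t}(\Nt^\pm(\pt))} \dpt \big)^m \d r = t - s$, and any intersection of several $\de_{s, t}(\Nt^\pm)$-sets is supported in a time window of length at most $t - s$. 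Summing over the finitely many connected diagrams therefore yields $\k_4(\De_n^\pm(s, t)) \in O(n (t - s))$.

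The main obstacle lies in the mixed diagrams with $|\pi_1|, |\pi_2| \in \{2, 3\}$: a naive factorization such as $\abs{\Ns(\ps_1, \ps_2)}^2 \le \abs{\Ns(\ps_1)} \abs{\Ns(\ps_2)}$ would give the wrong order $O(n^2)$, so one must instead re-express $\abs{\Ns(\ps_1, \ps_2)}^2 = \iint \1{y, z \in \Ns(\ps_1) \cap \Ns(\ps_2)} \d y \d z$ and exchange the order of integration to recover a bound of the form controlled by Lemma~\ref{lem:int_of_ns}~(\ref{lem:int_of_ns:power_p_prime}) or~(\ref{lem:int_of_ns:power_cap_power_0}). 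Under the hypothesis $\g, \g' < 1/4$, this re-integration trick yields $O(n)$ for every spatial factor, and since every connected diagram carries at least one $\de_{s, t}(\Nt^\pm)$-indicator contributing a factor $(t - s)$, the desired estimate follows.
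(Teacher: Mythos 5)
Your proposal takes a genuinely different route from the paper. The paper follows \cite{cipriani}: it tiles the spatial window into $n$ unit blocks, uses multilinearity of the fourth cumulant to reduce to joint cumulants $\k_4(\De_{i,n}^\pm,\De_{j,n}^\pm,\De_{k,n}^\pm,\De_{\ell,n}^\pm)$, and then distinguishes three cases by the geometry of the blocks (all coincident; one isolated versus three clustered; two clustered pairs). Case~1 is handled by a fourth-moment bound (Lemma~\ref{lem:cumulant_fourth_moment_bound}), while Cases~2 and~3 invoke Baryshnikov's covariance--cumulant decomposition \cite[Lemma~5.1]{baryshnikov} together with conditional-covariance arguments and explicit decay estimates for common spatial neighborhoods. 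Your proposal bypasses the block tiling entirely: you expand the fourth cumulant directly via iterated Mecke over $\PP$ and $\PP'$ and keep only the connected pairs of partitions. Conceptually this is cleaner --- the spatial decay that the paper encodes through block separation and covariance bounds is automatically encoded in the connected diagram integrals --- and it reaches the same exponent constraints $\g,\g'<1/4$ through the same two extreme diagrams ($|\pi_1|=1$ and $|\pi_2|=1$).

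That said, a few points would need to be tightened before this is a complete substitute. First, the statement that ``the standard moment-to-cumulant inversion identifies $\k_4$ with the sum over connected pairs $(\pi_1,\pi_2)$'' is plausible but not off-the-shelf for a bilinear functional $\sum_{P\in\PP}\sum_{P'\in\PP'}f(P,P')$ over two independent Poisson processes; you should either cite a precise diagram formula covering this mixed-chaos situation or derive the connectedness reduction explicitly from the iterated Mecke expansion and moment--cumulant inversion. Second, your temporal justification is not correct in the minus case: $\de_{s,t}(\Nt^-(\pt)) = \{r : b \le r \le b+\ell\}\1{s < b+\ell \le t}$ is \emph{not} a subset of $[s,t]$ (the left endpoint $b$ can lie far below $s$), so ``any intersection of several $\de_{s,t}(\Nt^\pm)$-sets is supported in a time window of length at most $t-s$'' fails; the needed $O(t-s)$ bound still holds because of the constraint $s < b+\ell \le t$, but it must be extracted via the direct computation in Lemma~\ref{lem:int_of_nt_pm}~(\ref{lem:int_of_nt_pm:difference_p}) and~(\ref{lem:int_of_nt_pm:difference_p_prime}) rather than a support argument. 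Third, the ``re-integration trick'' for mixed diagrams is sketched but not executed; for each of the remaining connected $(\pi_1,\pi_2)$ you would need to exhibit the bound explicitly (e.g., $\abs{\Ns(\ps_1,\ps_2)}^2 \le \abs{\Ns(\ps_1)}\abs{\Ns(\ps_1,\ps_2)}$ followed by Lemma~\ref{lem:int_of_ns}~(\ref{lem:int_of_ns:power_cap_power_0}) with $m_1=1$, $m_2=m_3=0$), and verify that no diagram forces an exponent constraint stronger than $\g,\g'<1/4$. These are all fixable, so the approach works; it is a more systematic alternative to the paper's blocking argument, at the cost of requiring a less standard cumulant formula.
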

    \noindent
    Before we present the proof of Lemma~\ref{lem:cumulant_term_cases} in Section~\ref{sec:proofs_functional_normal}, let us here provide a rough outline.

    We follow the arguments in~\cite[Lemma~C.3]{cipriani} and partition the window $[0, n]$ into $n$ disjoint intervals $\set{[i - 1, i] \co i \in \set{1, \dots, n}}$ of size $1$, and write $\V_i := \S_{[i - 1, i]}$.
    For $t \in [0, 1]$, analogously to the definition of $S_n^\pm(t)$, we write $V_i^\pm(t)$ for the number of edges whose \quote{$\PP$-endpoint} is in~$\V_i$, i.e.,
    \[ V_i^\pm(t) := \sum_{P \in \PP \cap (\V_i \ti \T^{0 \le})} \deg^\pm(P; t) \qquad \text{and} \qquad S_n^\pm(t) = \sum_{i = 1}^n V_i^\pm(t). \]
    Note that the mappings $t\mapsto V_i^\pm(t)$ are monotone.
    Furthermore, we write
    \[ \De_{i, n}^\pm(s, t) := V_i^\pm(t) - V_i^\pm(s) \qquad \text{and} \qquad \De_n^\pm(s, t) = \sum_{i = 1}^n \De_{i, n}^\pm(s, t) \]
    to denote the change of $V_i^\pm(t)$ in the time interval $[s, t]$.
    Then, using multilinearity of the cumulant,
    \[ \begin{aligned}
        \abs{\k_4(\De_n^\pm(s, t))} &= \abs[\Big]{\sum_{i, j, k, \ell = 1}^n c_{i, j, k, \ell} \k_4(\De_{i, n}^\pm(s, t), \De_{j, n}^\pm(s, t), \De_{k, n}^\pm(s, t), \De_{\ell, n}^\pm(s, t))} \\
        &\le c_1 \sum_{i, j, k, \ell = 1}^n \abs[\big]{\k_4(\De_{i, n}^\pm(s, t), \De_{j, n}^\pm(s, t), \De_{k, n}^\pm(s, t), \De_{\ell, n}^\pm(s, t))},
    \end{aligned} \]
    where $c_{i, j, k, \ell} > 0$ are coefficients depending only on which of the indices $i, j, k, \ell$ are equal, $c_1 := \max(\set{c_{i, j, k, \ell}})$, $\k_4(\De_{i, n}^\pm(s, t), \De_{j, n}^\pm(s, t), \De_{k, n}^\pm(s, t), \De_{\ell, n}^\pm(s, t))$ denotes the joint fourth-order cumulant, and we used the triangle inequality in the last step.
    From now on, we drop the arguments $(s, t)$ for brevity.
    Let $\set{\pi_1, \pi_2} \preceq \set{i, j, k, \ell}$ denote a partition of the set $\set{i, j, k, \ell}$, and define
    \begin{equation}
        \r(i, j, k, \ell) := \max_{\set{\pi_1, \pi_2} \preceq \set{i, j, k, \ell}} \big( \mrm{dist} \big( \bs{V}_{\pi_1}, \bs{V}_{\pi_2} \big) \big),
        \label{eq:rho_definition}
    \end{equation}
    where $\bs{V}_\pi := \set{\mbb{V}_a \co a \in \pi}$.
    Then,
    \[ \abs{\k_4(\De_n^\pm)} = c_1 \sum_{m = 0}^n \sum_{\substack{i, j, k, \ell \\ \r(i, j, k, \ell) = m}}^n \abs[\big]{\k_4(\De_{i, n}^\pm, \De_{j, n}^\pm, \De_{k, n}^\pm, \De_{\ell, n}^\pm)}. \]

    Next, we distinguish three cases.
    \begin{enumerate}[label=Case~(\arabic*), wide=12pt, leftmargin=*]
        \item
            In the first case, all the blocks are the same: $\r(i, j, k, \ell) = 0$, and thus $\V_i = \V_j = \V_k = \V_\ell$. \label{im:cumulant_case_all_in_one}
        \item
            In the second case, not all the blocks are identical.
            Three of them are relatively close to each other, while the fourth is separated from the rest: $\r(i, j, k, \ell) = \mrm{dist} (\V_i, \set{\V_j, \V_k, \V_\ell}) > 0$ and $\mrm{diam} (\set{\V_j, \V_k, \V_\ell}) \le \r(i, j, k, \ell) + 1$. \label{im:cumulant_case_1_3}
        \item
            In the third case, there are two pairs of blocks, with the blocks within each pair being close, but the pairs themselves relatively far apart: $\r(i, j, k, \ell) = \mrm{dist} (\set{\V_i, \V_k}, \set{\V_j, \V_\ell}) > 0$ and $\mrm{dist}(\V_i, \V_j) \vee \mrm{dist}(\V_k, \V_\ell) \le \r(i, j, k, \ell)$, \label{im:cumulant_case_2_2}
    \end{enumerate}
    where the term $+1$ in~\ref{im:cumulant_case_1_3} is due to the size of the blocks.
    Even though \ref{im:cumulant_case_1_3} and \ref{im:cumulant_case_2_2} are not mutually exclusive, we treat them separately for clarity.
    The remainder of the proof is devoted to showing that in all three cases, $\k_4(\De_n^\pm(s, t)) \in O(n (t - s))$, with all the details provided in the proof of Lemma~\ref{lem:cumulant_term_cases} below.

    Then, by Lemma~\ref{lem:cumulant_term_cases}, since $t - s > n^{- 1 / (1 + \eta)}$, we have in all cases that $\k_4(\De_n^\pm(s, t)) \in O(n (t - s)) \su O(n^2 (t - s)^{1 + \eta})$.
\end{proof}
Finally, we show that Condition~\ref{condition:third} of Theorem~\ref{thm:davydov} holds, which is about the convergence of the expected increments $\E[\big]{\De_n^\pm(t_k, t_{k + 1})}$.
\begin{proof}[Proof of Lemma~\ref{lem:expectation_of_increments}]
    Note that for some constant $c > 0$, we can bound
    \[ \E[\big]{\De_n^\pm(t_k, t_{k + 1})} = \int_{\S_n \ti \T^{0 \le}} \abs{\de_{t_k, t_{k + 1}}(N^\pm(p))} \d p \le c n (t_{k + 1} - t_k), \]
    as the integral factors into the product of the spatial and the temporal parts, and the spatial part is bounded by Lemma~\ref{lem:int_of_ns}~(\ref{lem:int_of_ns:power_p}) with $u_- = 0$, and the temporal part is bounded by Lemma~\ref{lem:int_of_nt_pm}~(\ref{lem:int_of_nt_pm:power_p_bound}).
    Now, setting $t_k := k n^{-1 / (1 + \eta)}$, we have $\E[\big]{\De_n^\pm(t_k, t_{k + 1})} \le c n^{1 - 1 / (1 + \eta)}$, which is independent of $k$.
    Finally, setting $\eta < 1$ gives
    \[ \max_{k \le \floor{n^{1 / (1 + \eta)}}} \big( n^{-1/2} \E[\big]{\De_n^\pm(t_k, t_{k + 1})} \big) \in O(n^{1/2 - 1 / (1 + \eta)}) \subset o(1), \]
    as desired.
\end{proof}

Since the proofs of the Lemmas~\ref{lem:limiting_covariance_function_Snt_pm} and~\ref{lem:bounds_of_error_terms_pm} are very similar to the proofs of the corresponding lemmas in the proof of Proposition~\ref{prop:multivariate_normal_limit}, we postpone their proofs to Appendix~\ref{sec:appendix_proofs_cov_func_error_terms_pm}.

Next, we present the bounds of the variance and cumulant terms that were required by Condition~\ref{condition:cumulant} of Theorem~\ref{thm:davydov} to show the tightness of the sequence of random variables $\So_n^\pm$.
While bounding the variance term $\Var{\De_n^\pm(s, t)}$ is relatively straightforward after the application of the Poincar\'e inequality, bounding the cumulant term $\k_4{\De_n^\pm(s, t)}$ requires a more careful analysis.
\begin{proof}[Proof of Lemma~\ref{lem:variance_of_De}]
    First, note that $D_p(\De_n^\pm(s, t))$ does not depend on the point process~$\PP$, and $D_{p'}(\De_n^\pm(s, t))$ does not depend on the point process~$\PP'$.
    We bound the variance term using the Poincar\'e inequality~\cite[Theorem~18.7]{poisBook}:
    \begin{equation}
        \Var{\De_n^\pm(s, t)} \le \int_{\S_n \ti \T^{0 \le}} \E[\big]{D_p(\De_n^\pm(s, t))^2} \mdp + \int_{\S_n \ti \R} \E[\big]{D_{p'}(\De_n^\pm(s, t))^2} \d p'.
        \label{eq:poincare}
    \end{equation}
    We consider the first term first.
    Note that
    \[ D_p(\De_n^\pm(s, t)) = D_p(S_n^\pm(t)) - D_p(S_n^\pm(s)) \]
    is the number of $\PP'$-points connecting to~$p$ in the time interval $[s, t]$ in both of the \quote{plus} and \quote{minus} cases.
    Thus, $D_p(\De_n^\pm(s, t))$ is Poisson distributed with mean
    \begin{equation}
        \E{D_p(\De_n^\pm(s, t))} = \E[\Big]{\sum_{P' \in \PP'} \big( \1{P' \in \de_{s, t}(N^\pm(p))} \big)} = \abs[\big]{\de_{s, t}(N^\pm(p))}.
        \label{eq:poincare_first_term_integrand}
    \end{equation}
    Let us recall the definition of $\de_{s, t}(N^\pm(p))$ from~\eqref{eq:def_of_de_Nt}.
    Then, we integrate the second moment of this random variable over $\S_n \ti \T^{0 \le}$:
    \begin{equation} \begin{aligned}
        \int_{\S_n \ti \T^{0 \le}} \E[\big]{D_p(\De_n^\pm(s, t))^2} \mdp &= \int_{\S_n \ti \T^{0 \le}} \E[\big]{D_p(\De_n^\pm(s, t))} + \E[\big]{D_p(\De_n^\pm(s, t))}^2 \mdp \\
        &= \int_{\S_n \ti \T^{0 \le}} \abs[\big]{\de_{s, t}(N^\pm(p))} + \abs[\big]{\de_{s, t}(N^\pm(p))}^2 \mdp.
        \label{eq:poincare_first_term}
    \end{aligned} \end{equation}
    As $\abs{\de_{s, t}(N^\pm(p))} = \abs{\Ns(\ps)} \abs{\de_{s, t}(\Nt^\pm(\pt))}$, applying Lemmas~\ref{lem:int_of_ns}~(\ref{lem:int_of_ns:power_p}) and~\ref{lem:int_of_nt_pm}~(\ref{lem:int_of_nt_pm:difference_p}) yields
    \[ \int_{\S_n \ti \T^{0 \le}} \E[\big]{D_p(\De_n^\pm(s, t))^2} \mdp \in O(n (t - s)). \]
    Note that $D_{p'}(\De_n^\pm(s, t))$ is also a Poisson distributed random variable with mean
    \begin{equation}
        \E{D_{p'}(\De_n^\pm(s, t))} = \E[\Big]{\sum_{P \in \PP \cap (\S_n \ti \T^{0 \le})} \1{p' \in \de_{s, t}(N^\pm(P))}} = \int_{\S_n \ti \T^{0 \le}} \1{p' \in \de_{s, t}(N^\pm(p))} \mdp.
        \label{eq:poincare_second_term_integrand}
    \end{equation}
    Then, following a similar calculation as in~\eqref{eq:poincare_first_term}, since $\E{D_{p'}(\De_n^\pm(s, t))^2} = \E{D_{p'}(\De_n^\pm(s, t))}^2 + \E{D_{p'}(\De_n^\pm(s, t))}$, we have
    \begin{equation}
        \int_{\S \ti \R} \E[\big]{D_{p'}(\De_n^\pm(s, t))^2} \d p' = \sum_{m = 1}^2 \int_{\S \ti \R} \bigg( \int_{\S_n \ti \T^{0 \le}} \1{p' \in \de_{s, t}(N^\pm(p))} \mdp \bigg)^m \d p'.
        \label{eq:poincare_second_term}
    \end{equation}
    Both integrals factor into a spatial and a temporal part.
    For the spatial parts,
    \[ \int_\S \bigg( \int_{\S_n} \1{\ps' \in \Ns(\ps)} \mdp \bigg)^m \d \ps' \in O(n), \]
    by Lemma~\ref{lem:int_of_ns}~(\ref{lem:int_of_ns:power_p_prime}), and for the temporal parts,
    \[ \int_\R \bigg( \int_{\T^{0 \le}} \1{r \in \de_{s, t}(\Nt^\pm(\pt))} \dpt \bigg)^m \d r \in O(t - s), \]
    by Lemma~\ref{lem:int_of_nt}~(\ref{lem:int_of_nt:power_prime}).
    Thus, $\int_{\S_n \ti \R} \E[\big]{D_{p'}(\De_n^\pm(s, t))^2} \d p' \in O(n (t - s))$, where we require again that $\g' < 1/2$.
    Then, $\Var{\De_n^\pm(s, t)}^2 \in O \big( n^2 (t - s)^{1 + \eta} \big)$, as desired.
\end{proof}

To bound the cumulant term in the three cases introduced in Section~\ref{sec:proofs_functional_normal}, we employ~\cite[Proposition~3.2.1]{wiener_chaos} to rewrite the fourth cumulant $\k_4(\De_n^\pm)$ in a form that is easier to bound.
\begin{proof}[Proof of Lemma~\ref{lem:cumulant_term_cases}]
    In the following, we show the proof for the three cases of the lemma.

    \medskip
    \noindent
    \textbf{\ref{im:cumulant_case_all_in_one}.}
    We begin with \ref{im:cumulant_case_all_in_one}.
    In this case, $i = j = k = \ell$, and using the definition~\cite[Proposition~3.2.1]{wiener_chaos} of the fourth cumulant $\k_4(\De_{i, n}^\pm)$, we have that
    \begin{equation}
        \k_4(\De_{i, n}^\pm) = \sum_{M_1^{(1)}, \dots, M_q^{(1)}} c_{M^{(1)}_1, \dots, M^{(1)}_q} \prod_{b = 1}^q \E[\Big]{\big( \De_{i, n}^\pm \big)^{\abs[\big]{M_b^{(1)}}}}, \qquad \text{with} \qquad c_{M^{(1)}_1, \dots, M^{(1)}_q} := (-1)^{q - 1} (q - 1)!,
        \label{eq:cumulant_formula_for_rho_0}
    \end{equation}
    where $\set{M_1^{(1)}, \dots, M_q^{(1)}} \preceq \set{i, i, i, i}$ denotes a partition of the multiset $\set{i, i, i, i}$ and $\abs{c_{M^{(1)}_1, \dots, M^{(1)}_q}} < \ff$ are coefficients depending on the number of groups~$q$ of the partition $M^{(1)}_1, \dots, M^{(1)}_q$.
    Then, taking absolute values and applying triangle inequality, we have the following upper bound:
    \begin{equation}
        \abs{\k_4(\De_{i, n}^\pm)} \le c_2 \sum_{M_1^{(1)}, \dots, M_q^{(1)}} \prod_{b = 1}^q \E[\Big]{\big( \De_{1, n}^\pm \big)^{\abs[\big]{M_b^{(1)}}}} \le c_2 \sum_{M_1^{(1)}, \dots, M_q^{(1)}} \prod_{b = 1}^q \E[\Big]{\big( \De_{1, n}^\pm \big)^4}^{\abs[\big]{M_b^{(1)}} / 4} \le c_3 \E[\big]{\big( \De_{1, n}^\pm \big)^4},
        \label{eq:cumulant_bound_fourth_moment}
    \end{equation}
    where $c_2 := \max(\set{\abs{c_{M^{(1)}_1, \dots, M^{(1)}_q}}})$, and in the first step we used stationarity of the Poisson processes $\PP, \PP'$, and that $\De_{1, n}^\pm \ge 0$.
    In the third step, recognizing that $\abs{M_b^{(1)}} \in \set{1, \dots, 4}$, we applied Jensen's inequality, and finally, we used that $\sum_{b = 1}^q \abs{M_b^{(1)}} = 4$ with a large enough constant $c_3 := \sum_{M_1^{(1)}, \dots, M_q^{(1)}} c_2$.
    \begin{lemma}[Bound on the fourth moment of $\De_{1, n}^\pm$]
        \label{lem:cumulant_fourth_moment_bound}
        Let $\g < 1/4$.
        Then, we have that
        \[ \E[\big]{\big( \De_{1, n}^\pm \big)^4} \in O(t - s). \]
    \end{lemma}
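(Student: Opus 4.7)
The plan is to expand the fourth moment via the multivariate Mecke formula applied to the pair of Poisson processes $(\PP, \PP')$, and to show that every term in the resulting combinatorial expansion carries a factor of at least $t - s$. First, I would write
\[ \De_{1, n}^\pm = \sum_{\substack{P \in \PP \cap (\V_1 \ti \T^{0 \le}) \\ P' \in \PP'}} \1{P' \in \de_{s, t}(N^\pm(P))} \]
and expand $(\De_{1, n}^\pm)^4$ as an eightfold sum. Applying Mecke once for $\PP$ and once for $\PP'$ and classifying the quadruples of indices by the pair of set partitions $(\pi, \pi')$ of $\set{1, 2, 3, 4}$ corresponding to coinciding $P$-indices and $P'$-indices yields
\[ \E[\big]{(\De_{1, n}^\pm)^4} = \sum_{\pi, \pi'} I_{\pi, \pi'}, \qquad I_{\pi, \pi'} = \int_{(\V_1 \ti \T^{0 \le})^{\abs{\pi}}} \bigg( \int_{(\S \ti \R)^{\abs{\pi'}}} \prod_{a = 1}^4 \1{p'_{\pi'(a)} \in \de_{s, t}(N^\pm(p_{\pi(a)}))} \d \bs{p}' \bigg) \m^{\otimes \abs{\pi}}(\d \bs{p}). \]
Integrating out each $p'_j$ against Lebesgue turns the innermost product into the intersection measure $\abs[\big]{\bigcap_{a \co \pi'(a) = j} \de_{s, t}(N^\pm(p_{\pi(a)}))}$, which factorizes as a spatial times a temporal piece, giving $I_{\pi, \pi'} = S_{\pi, \pi'} \, T_{\pi, \pi'}$.

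For the spatial factor, because $\V_1 = [0, 1] \ti [0, 1]$ is bounded in the $x$-coordinate and $\abs[\big]{\bigcap_a \Ns(\ps[a])} \le \min_a \abs{\Ns(\ps[a])} \propto \min_a u_a^{-\g}$, the worst case (namely $\abs{\pi} = 1$, $\abs{\pi'} = 4$) yields an integrand with total $u$-exponent $4 \g$ on a single spatial variable. Since $\int_0^1 u^{-k \g} \d u < \ff$ whenever $k \g < 1$, the assumption $\g < 1/4$ gives $S_{\pi, \pi'} \le C_1$ uniformly in $n, s, t$. In all other cases the relevant exponent is strictly smaller, so the same constant works.

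For the temporal factor, using $\abs{A} = \int_\R \1{r \in A} \d r$ and Fubini produces
\[ T_{\pi, \pi'} = \int_{\R^{\abs{\pi'}}} \prod_{b = 1}^{\abs{\pi}} \bigg( \int_{\T^{0 \le}} \prod_{a \co \pi(a) = b} \1{r_{\pi'(a)} \in \de_{s, t}(\Nt^\pm(\pt[b]))} \dpt[b] \bigg) \d \bs{r}. \]
From the explicit description $\de_{s, t}(\Nt^\pm(\pt[b])) \su [s, t]$ combined with integration of the exponential lifetime density, each inner integral is bounded by $\1{r_j \in [s, t] \text{ for all } j \in J(b)}$, where $J(b) := \set{\pi'(a) \co \pi(a) = b}$. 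Since every $j \in \pi'$ arises as $\pi'(a)$ for some $a$ and hence belongs to $J(\pi(a))$, we have $\bigcup_b J(b) = \pi'$, so integrating $\bs{r}$ over $\R^{\abs{\pi'}}$ bounds $T_{\pi, \pi'}$ by $(t - s)^{\abs{\pi'}} \le t - s$, using $\abs{\pi'} \ge 1$ and $t - s \le 1$. Summing the finitely many partition pairs concludes $\E[\big]{(\De_{1, n}^\pm)^4} \le C (t - s)$.

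The main obstacle is the combinatorial bookkeeping in the temporal step: the crucial surjectivity $\bigcup_b J(b) = \pi'$ is what guarantees that every $P'$-block produces a factor of $(t - s)$ via the constraint $r_j \in [s, t]$. Once this is set up, the spatial and temporal integrals separate cleanly, the spatial bound is a finite-measure computation that uses precisely $\g < 1/4$, and the enumeration runs over the finite set of partition pairs of $\set{1, 2, 3, 4}$.
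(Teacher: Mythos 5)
Your overall plan — joint Mecke expansion over $(\PP,\PP')$, classification by pairs of set partitions, factorization of each term into a spatial and a temporal integral, and using $\g < 1/4$ to control the worst-case $u$-exponent $u^{-4\g}$ arising from $\abs{\pi}=1$, $\abs{\pi'}=4$ — is sound and close in spirit to the paper's proof. (The paper instead inserts a H\"older step after the first Mecke to decouple the $\PP$-variables before touching $\PP'$, which avoids common neighborhoods entirely; your $\min_a\abs{\Ns(\ps[a])}$ bound serves the same purpose.) The spatial estimate is correct.

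The gap is in the temporal estimate, and it is a genuine one. You claim $\de_{s,t}(\Nt^\pm(\pt)) \su [s,t]$ and deduce that the inner $\pt$-integral is bounded by $\1{r_j \in [s,t] \text{ for all } j \in J(b)}$. This is true in the $+$ case, but it is \emph{false} in the $-$ case. Recall $\Nt^-(\pt;t) = [b,b+\ell]$ if $b+\ell\le t$ and $\es$ otherwise, so for $s < b+\ell \le t$ one has $\de_{s,t}(\Nt^-(\pt)) = [b,b+\ell]$; the birth time $b$ can be arbitrarily far below $s$ (any real $b$ with $b+\ell\ge 0$ is allowed), so this interval is not contained in $[s,t]$. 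Correspondingly, the inner integral $\int_{\T^{0\le}}\1{r\in\de_{s,t}(\Nt^-(\pt))}\dpt$ is strictly positive for every $r\le t$, not only on $[s,t]$: a direct computation gives $\bigl(\e^{-(s-r)}-\e^{-(t-r)}\bigr)\1{r\le s}+\bigl(1-\e^{-(t-r)}\bigr)\1{s<r\le t}$. Hence the bound $T_{\pi,\pi'}\le(t-s)^{\abs{\pi'}}$ does not follow by the argument you give.

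The conclusion can still be rescued, but by a different mechanism: for $r<s$ the inner integral decays like $\e^{-(s-r)}$ (with a prefactor $1-\e^{-(t-s)}\le t-s$), and for $r\in[s,t]$ it is bounded by $1$; after integrating $r$ over $\R$ one again finds $O(t-s)$. This is exactly the content of Lemma~\ref{lem:int_of_nt_pm}~(\ref{lem:int_of_nt_pm:difference_p_prime}) (and its sibling Lemma~\ref{lem:int_of_nt_pm}~(\ref{lem:int_of_nt_pm:difference_p})), which you would need to invoke or reprove here instead of the set inclusion $\de_{s,t}(\Nt^-(\pt))\su[s,t]$. Without that repair, the temporal step for the minus process is unjustified.
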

    \noindent
    Then, we have that in \ref{im:cumulant_case_all_in_one},
    \[ \abs{\k_4(\De_n^\pm(s, t))} \le c_1 c_3 \sum_{i = 1}^n \E[\big]{\big( \De_{1, n}^\pm \big)^4} \in O(n (t - s)). \]

    \medskip
    \noindent
    \textbf{\ref{im:cumulant_case_1_3}.}
    For \ref{im:cumulant_case_1_3}, we employ~\cite[Lemma~5.1]{baryshnikov} to decompose the cumulant to a sum of products, where the terms involve covariances and moments.
    Our strategy is to bound the moments by a constant, and then bound the covariances using the common neighborhood of the points.
    Let us denote the blocks of a partition of the indices $\set{j, k, \ell}$ by $M^{(2)}_1, \dots, M^{(2)}_q$, and let $\abs{c_{M^{(2)}_1, \dots, M^{(2)}_q}} < \ff$ be coefficients depending on the partition $M^{(2)}_1, \dots, M^{(2)}_q$ of the indices $\set{j, k, \ell}$.
    Then, we have that
    \begin{equation} \begin{aligned}
        \abs{\k_4(\De_{i, n}^\pm, \De_{j, n}^\pm, \De_{k, n}^\pm, \De_{\ell, n}^\pm)} &= \abs[\bigg]{\sum_{M^{(2)}_1, \dots, M^{(2)}_q} c_{M^{(2)}_1, \dots, M^{(2)}_q} \Cov[\Big]{\De_{i, n}^\pm, \prod_{m \in M^{(2)}_1} \De_{m, n}^\pm} \prod_{b = 2}^q \E[\Big]{\prod_{m \in M^{(2)}_b} \De_{m, n}^\pm}} \\
        &\le c_4 \sum_{M^{(2)}_1, \dots, M^{(2)}_q} \abs[\Big]{\Cov[\Big]{\De_{i, n}^\pm, \prod_{m \in M^{(2)}_1} \De_{m, n}^\pm}} \prod_{b = 2}^q \E[\Big]{\prod_{m \in M^{(2)}_b} \De_{m, n}^\pm},
    \end{aligned} \label{eq:complex_cumulant_formula} \end{equation}
    where we used the triangle inequality in the second step, set $c_4 := \max(\set{\abs{c_{M^{(2)}_1, \dots, M^{(2)}_q}}})$, and dropped the absolute values on the second term as $\De_{m, n}^\pm \ge 0$.
    Note that $\abs{M^{(2)}_b} \le 2$ as $M^{(2)}_1$ contains at least one of $\set{j, k, \ell}$.
    To upper bound the product, we apply H\"older's inequality:
    \begin{equation} \begin{aligned}
        \prod_{b = 2}^q \E[\Big]{\prod_{m \in M^{(2)}_b} \De_{m, n}^\pm} &\le \prod_{b = 2}^q \prod_{m \in M^{(2)}_b} \E[\big]{\big( \De_{m, n}^\pm \big)^{\abs{M^{(2)}_b}}}^{1/\abs{M^{(2)}_b}} = \prod_{b = 2}^q \E[\big]{\big( \De_{1, n}^\pm \big)^{\abs{M^{(2)}_b}}} \\
        &\le \prod_{b = 2}^q \E[\big]{\big( \De_{1, n}^\pm \big)^2}^{\abs{M^{(2)}_b} / 2} \le \E[\big]{\big( \De_{1, n}^\pm \big)^2}^{1/2 \sum_{b = 2}^q \abs{M^{(2)}_b}},
    \end{aligned} \label{eq:cumulant_product_bound} \end{equation}
    where we used that $\set{\De_{m, n}^\pm}$ are identically distributed in the second step, and employed Jensen's inequality in the third step as $\abs{M^{(2)}_b} \in \set{1, 2}$.
    As we saw in \ref{im:cumulant_case_all_in_one}, $\E{\abs{\De_{1, n}^\pm}^2} \le \sqrt{\E{\abs{\De_{1, n}^\pm}^4}} < \ff$ by Cauchy--Schwarz inequality, thus the product in~\eqref{eq:cumulant_product_bound} is finite.
    Next, we need to bound the covariance term in~\eqref{eq:complex_cumulant_formula}.
    With this, so far we have that
    \begin{equation}
        \abs[\big]{\k_4(\De_n^\pm(s, t))} \le c_5 \sum_{a = 0}^{n - 1} \sum_{\substack{i, j, k, \ell \\ \r(i, j, k, \ell) = a}} \sum_{M^{(2)}_1, \dots, M^{(2)}_q} \abs[\Big]{\Cov[\Big]{\De_{i, n}^\pm, \prod_{m \in M^{(2)}_1} \De_{m, n}^\pm}},
        \label{eq:cumulant_intermediate_result}
    \end{equation}
    where $c_5 > 0$ is a finite constant bounding $c_4$ and the sum of the product terms.
    Next, we bound the covariance term.
    Note that $\abs{M^{(2)}_1} \in \set{1, 2, 3}$.
    As each of these terms is similar, we only consider the case $\abs{M^{(2)}_1} = 3$.
    \begin{lemma}[Bound on the covariance term]
        \label{lem:cumulant_covariance_bound}
        Let $\g < 1/3$.
        Then, we have that
        \[ \abs[\big]{\Cov{\De_{i, n}^\pm, \De_{j, n}^\pm \De_{k, n}^\pm \De_{\ell, n}^\pm}} \le \sum_{\substack{P_1 \in \PP \cap (\V_i \cap \T^{0 \le}), P_2 \in \PP \cap (\V_j \cap \T^{0 \le}) \\ P_3 \in \PP \cap (\V_k \cap \T^{0 \le}), P_4 \in \PP \cap (\V_\ell \cap \T^{0 \le})}} A(\bs{P}_4, \pmb{\s}_4), \]
        where $\bs{P}_4 := \set{P_1, P_2, P_3, P_4}$, $\pmb{\s}_4 := \set{\s_1, \s_2, \s_3, \s_4} \su \set{s, t}$, and
        \[ A(\bs{P}_4, \pmb{\s}_4) := \abs{N^\pm(P_1; \s_1) \cap N^\pm(P_2; \s_2)} (\abs{N^\pm(P_3; \s_3)} + 2) (\abs{N^\pm(P_4; \s_4)} + 2). \]
    \end{lemma}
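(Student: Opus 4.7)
The plan is to reduce the covariance to an integral over four points by applying Mecke's formula on~$\PP$, then analyze the resulting inner expression as a covariance of Poisson counts over~$\PP'$ alone, and finally bound each contribution by a product of neighborhood sizes with a single pairwise intersection. The key structural input is that in~\ref{im:cumulant_case_1_3} the block~$\V_i$ is disjoint from $\V_j \cup \V_k \cup \V_\ell$, since $\r(i, j, k, \ell) > 0$, so the $\PP$-points in~$\V_i$ are independent of those in the remaining three blocks.

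First, I would write $\De_{m, n}^\pm = \sum_{P \in \PP \cap (\V_m \ti \T^{0 \le})} \deg_\de^\pm(P)$ with $\deg_\de^\pm(P) := \abs{\de_{s, t}(N^\pm(P)) \cap \PP'}$ and expand $\De_{j, n}^\pm \De_{k, n}^\pm \De_{\ell, n}^\pm$ as an ordered triple sum over $(P_2, P_3, P_4)$, picking up diagonal contributions whenever some of $\V_j$, $\V_k$, $\V_\ell$ coincide. Applying the multivariate Mecke formula to~$\PP$ would then yield, for the off-diagonal part,
\[
    \int \Cov[\big]{\deg_\de^\pm(p_1), \deg_\de^\pm(p_2) \deg_\de^\pm(p_3) \deg_\de^\pm(p_4)} \, \m^{\otimes 4}(\d (p_1, p_2, p_3, p_4)),
\]
where the integral runs over $(\V_i \ti \T^{0 \le}) \ti (\V_j \ti \T^{0 \le}) \ti (\V_k \ti \T^{0 \le}) \ti (\V_\ell \ti \T^{0 \le})$, the inner covariance is taken with respect to~$\PP'$ with $p_1, \dots, p_4$ fixed, and the diagonal parts produce analogous but lower-dimensional integrals. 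Writing $A_m := \de_{s, t}(N^\pm(p_m))$, the Poisson counts $\deg_\de^\pm(p_m) = \abs{A_m \cap \PP'}$ have joint moments that expand as a sum over set partitions~$\pi$ of $\set{1, 2, 3, 4}$ of products $\prod_{B \in \pi} \abs[\big]{\bigcap_{m \in B} A_m}$, and subtracting $\E{\deg_\de^\pm(p_1)} \E{\deg_\de^\pm(p_2) \deg_\de^\pm(p_3) \deg_\de^\pm(p_4)}$ exactly cancels those partitions in which $\set{1}$ forms a singleton. Every surviving partition pairs~$1$ with at least one $m \in \set{2, 3, 4}$, so each contribution carries a factor $\abs{A_1 \cap A_m}$. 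Bounding $\abs{A_m} \le \abs{N^\pm(p_m; \s_m)}$ and $\abs{A_1 \cap A_m} \le \abs{N^\pm(p_1; \s_1) \cap N^\pm(p_m; \s_m)}$, symmetrizing over the choice of~$m$, and finally reinterpreting the four-fold integral as the expected sum over Poisson points via the inverse Mecke formula would recover the announced form.

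The main obstacle is the combinatorial bookkeeping: besides the partition sum over $\set{1, 2, 3, 4}$ itself, one must absorb the diagonals arising when $\V_j$, $\V_k$, $\V_\ell$ partially coincide, which force some $\deg_\de^\pm(p_m)$ factors to appear squared or cubed. The additive constants~$+2$ in the stated bound need to be large enough to dominate simultaneously (i)~the count of surviving partitions in the Poisson covariance expansion and (ii)~the higher-order factorial-moment corrections $\E{\abs{A_m \cap \PP'}^r} - \abs{A_m}^r$ for $r \le 3$, which after bounding each $A_m$ by $\abs{N^\pm(p_m; \s_m)}$ are absorbed via the elementary Touchard-polynomial estimate $\t_r(x) \le (x + 2)^r$, analogous to the bound~\eqref{eq:poisson_moment_bounds} already used in the proof of Lemma~\ref{lem:bounds_of_error_terms}.
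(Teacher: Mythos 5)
Your approach is conceptually correct and identifies the same structural facts as the paper, but it takes a genuinely different route. The paper's proof begins with the law of total covariance: it decomposes $\Cov{\De_{i,n}^\pm, \De_{j,n}^\pm \De_{k,n}^\pm \De_{\ell,n}^\pm}$ into $\Cov{\E{\cdot \given \PP}, \E{\cdot \given \PP}} + \E{\Cov{\cdot, \cdot \given \PP}}$ and uses the spatial independence of $\PP$ restricted to $\V_i$ from $\PP$ restricted to $\V_j \cup \V_k \cup \V_\ell$ to kill the first cross-term entirely. The remaining conditional covariance (over $\PP'$ only, with $\PP$ frozen) is then bounded pointwise in $\PP$ by bilinearity and a direct Mecke-on-$\PP'$ computation that enumerates the cases of coincident $\PP'$-points. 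This yields the stated bound as a genuine pointwise-in-$\PP$ inequality for the conditional covariance, which is what is then fed into the expectation and Mecke-on-$\PP$ in the proof of Lemma~\ref{lem:cumulant_term_cases}. You instead apply Mecke on $\PP$ to the unconditional covariance up front, analyze the resulting per-$p$ Poisson covariance via the partition formula (your observation that the singleton-$\{1\}$ partitions cancel is exactly the paper's ``non-zero only if $P_1'\in\{P_2',P_3',P_4'\}$''), and then propose to run Mecke backwards. This works, but it only recovers $\abs{\Cov{\ldots}} \le \E{\sum A}$ rather than the pointwise conditional statement; the paper's condition-first ordering avoids the awkward inverse step and is closer to how the bound is actually consumed downstream. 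You also use the disjointness of $\V_i$ from the other blocks only to guarantee $P_1$ is distinct from $P_2, P_3, P_4$ in the Mecke expansion, whereas in the paper that disjointness plays the deeper role of annihilating $\Cov{\E{\cdot \given \PP}, \E{\cdot \given \PP}}$ — worth noting, since without that step the reduction to a per-point $\PP'$-covariance is not automatic. Finally, you bound $A_m = \de_{s,t}(N^\pm(p_m))$ directly by $N^\pm(p_m; t)$, while the paper first expands the differences $\De$ over $\s_m \in \{s,t\}$ via bilinearity; your choice gives a slightly leaner bound but renders the $\pmb{\s}_4$ bookkeeping in the lemma statement mostly vestigial, whereas the paper's expansion is what matches that notation. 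Your handling of the diagonals via Touchard estimates is sound and mirrors the paper's~\eqref{eq:poisson_moment_bounds}.
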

    Then,
    we have the following bound for $\abs{\k_4(\De_n^\pm(s, t))}$:
    \[ \abs[\big]{\k_4(\De_n^\pm(s, t))} \le c_6 \E[\bigg]{\sum_{a = 0}^{n - 1} \sum_{\substack{i, j, k, \ell \\ \r(i, j, k, \ell) = a}} \sum_{\substack{P_1 \in \PP \cap (\V_i \cap \T^{0 \le}), P_2 \in \PP \cap (\V_j \cap \T^{0 \le}) \\ P_3 \in \PP \cap (\V_k \cap \T^{0 \le}), P_4 \in \PP \cap (\V_\ell \cap \T^{0 \le})}} A(\bs{P}_4, \pmb{\s}_4)} \]
    with a large enough constant $c_6 > 0$.
    We distinguish several cases depending on which of the points $P_2, P_3, P_4$ are identical, and we apply the Mecke formula to each of these cases.
    The integrals with respect to $p_3$, $p_4$ factor, and we combine the sum over the blocks and the integral over a block to a single integral.
    Using $\abs{j - k} \vee \abs{j - \ell} \le a$, we obtain the following bounds for the factors:
    \[ \int_{\S_{[j - a, j + a + 1]} \ti \T^{0 \le}} \abs{N^\pm(p_3, \s_3)}^{m_k} \mdp[3] \le c_7 a \qquad \text{and} \qquad \int_{\S_{[j - a, j + a + 1]} \ti \T^{0 \le}} \abs{N^\pm(p_4, \s_4)}^{m_\ell} \mdp[4] \le c_7 a, \]
    with some large enough constant $c_7 > 0$, where the exponents $m_k, m_\ell \in \set{0, 1, 2}$ depending on the term we are looking at and on the case $P_3 = P_4$.
    We used Lemmas~\ref{lem:int_of_ns}~(\ref{lem:int_of_ns:power_p}) with $u_- = 0$ and~\ref{lem:int_of_nt_pm}~(\ref{lem:int_of_nt_pm:power_p_bound}). 
    Let us set $m_a \in \set{0, 1, 2}$ to be the number of points in $\set{P_3, P_4}$ that appear in the term we consider, and we examine the integral with respect to $p_2$.
    Then, combining again the integral within the block $\V_j$ and the sum of the blocks with respect to $a \le \abs{i - j} \le \abs{x_1 - x_2} + 1$ into a single integral, we have bounds of the form
    \begin{equation} \begin{aligned}
        \abs[\big]{\k_4(\De_n^\pm(s, t))} &\le c_8 \sum_{i = 1}^n \int_{\V_i \ti \T^{0 \le}} \sum_{a = 0}^{n - 1} \int_{\V_j \ti \T^{0 \le}} \abs{N^\pm(p_1, \s_1)}^{m_1} \abs{N^\pm(p_1, \s_1) \cap N^\pm(p_2, \s_2)} \\
        &\hspace{4.5cm} \ti \abs{N^\pm(p_2, \s_2)}^{m_2} a^{m_a} \mdp[2] \mdp[1] \\
        &\le 2^{m_a + 1} c_8 \int_{\S_n \ti \T^{0 \le}} \int_{\S_n \ti \T^{0 \le}} \abs{N^\pm(p_1, \s_1)}^{m_1} \abs{N^\pm(p_1, \s_1) \cap N^\pm(p_2, \s_2)} \\
        &\hspace{4.5cm} \ti \abs{N^\pm(p_2, \s_2)}^{m_2} (\abs{x_1 - x_2}^{m_a} + 1) \mdp[2] \mdp[1],
    \end{aligned} \label{eq:bound_pj_factor} \end{equation}
    where the exponents $m_1 = 0$ and $m_2 \in \set{0, 1, 2}$ depend on the term we are looking at and on the number of the points in $P_3, P_4$ that are identical to $P_2$.
    Even though $m_1 = 0$ in \eqref{eq:bound_pj_factor}, this formula is referenced later in \ref{im:cumulant_case_2_2}, where $m_1$ can be $1$.
    Note also that $m_2 + m_a \le 2$.
    Then, using Lemma~\ref{lem:int_of_ns}~(\ref{lem:int_of_ns:power_cap_power_0}), we have that the spatial part is $O(n)$. 
    The maximum values of the exponents in the application of Lemma~\ref{lem:int_of_ns}~(\ref{lem:int_of_ns:power_cap_power_0}) are summarized in Table~\ref{tab:max_exponent_constraints} for each case, where we used that the expressions are symmetric in the indices $3, 4$.
    Noting that we consider only $\PP$-vertices whose lifetime intersects the interval $[0, 1]$, the temporal part is bounded by a constant using Lemma~\ref{lem:int_of_nt_pm}~(\ref{lem:int_of_nt_pm:power_cap_power}).
    Then, $\abs{\k_4(\De_n^\pm)(s, t)} \in O(n)$. 
    If $\abs{M^{(2)}_1} < 3$, the same calculation can be followed with possibly different exponents $m_j, m_k, m_\ell, m_a$.

    \medskip
    \noindent
    \textbf{\ref{im:cumulant_case_2_2}.}
    We calculate \ref{im:cumulant_case_2_2} similarly.
    Let $M^{(3)}_1, \dots, M^{(3)}_{q_1}$ be the groups of the partition of $\set{i, j}$, and $M^{(3)}_{q_1 + 1}, \dots$, $M^{(3)}_{q_1 + q_2}$ be the groups of the partition of $\set{k, \ell}$, ($q_1, q_2 \in \set{1, 2}$).
    Applying~\cite[Lemma~5.1]{baryshnikov} yields
    \[ \begin{aligned}
        &\abs{\k_4(\De_{i, n}^\pm, \De_{j, n}^\pm, \De_{k, n}^\pm, \De_{\ell, n}^\pm)} \\
        &\qquad = \abs[\bigg]{\sum_{M^{(3)}_1, \dots, M^{(3)}_{q_1 + q_2}} c_{M^{(3)}_1, \dots, M^{(3)}_{q_1 + q_2}} \Cov[\Big]{\prod_{m_1 \in M_1} \De_{m_1, n}^\pm, \prod_{m_{q + 1} \in M_{q + 1}} \De_{m_{q + 1}, n}^\pm} \E[\big]{\De_{m_2, n}^\pm} \E[\big]{\De_{m_{q + 2}, n}^\pm}}.
    \end{aligned} \]
    After the application of the triangle inequality, we bound the product of the expectations by a constant in the same way as in \ref{im:cumulant_case_1_3}, which requires no constraint on $\g$ in this case.
    Then, we only need to show a bound for the covariance term.
    As all partitions of the set $\set{i, j}$ and $\set{k, \ell}$ are considered similar, we consider only the case when $q_1 = q_2 = 2$.
    Then, the covariance term is bounded by $\abs[\big]{\Cov{\De_{i, n}^\pm \De_{j, n}^\pm, \De_{k, n}^\pm \De_{\ell, n}^\pm}} \le \abs[\big]{\E[\big]{\Cov{\De_{i, n}^\pm \De_{j, n}^\pm, \De_{k, n}^\pm \De_{\ell, n}^\pm \given \PP}}}$, where we used the independence property of the Poisson process $\PP$ to see that $\Cov{\E{\De_{i, n}^\pm \De_{j, n}^\pm \given \PP}, \E{\De_{k, n}^\pm \De_{\ell, n}^\pm \given \PP}} = 0$.
    Using bilinearity and the triangle inequality again,
    \[ \begin{aligned}
        &\abs[\big]{\E[\big]{\Cov{\De_{i, n}^\pm \De_{j, n}^\pm, \De_{k, n}^\pm \De_{\ell, n}^\pm \given \PP}}} \\
        &\qquad \le \sum_{\substack{\s_1, \s_2, \s_3, \s_4 \in \set{s, t} \\ P_1 \in \PP \cap (\V_i \ti \T^{0 \le}), P_3 \in \PP \cap (\V_j \ti \T^{0 \le}) \\ P_2 \in \PP \cap (\V_k \ti \T^{0 \le}), P_4 \in \PP \cap (\V_\ell \ti \T^{0 \le})}} \abs[\Big]{\Cov[\Big]{\deg^\pm(P_1; \s_1) \deg^\pm(P_3; \s_3), \deg^\pm(P_2; \s_2) \deg^\pm(P_4; \s_4) \Biggiven \PP}}.
    \end{aligned} \]
    Note the indices of the points $P_1, P_2, P_3, P_4$ in the covariance term.
    We set them so that $P_1, P_2$ are the points corresponding to the indices $i, k$, respectively.
    This is because we would like to use the common neighborhood of the points $P_1, P_2$ to bound the covariance term, so that we can reuse the calculations from \ref{im:cumulant_case_1_3}.
    Expanding one of the individual covariance terms as in \ref{im:cumulant_case_1_3}, we have that
    \[ \begin{aligned}
        &\abs[\Big]{\Cov[\big]{\deg^\pm(P_1; \s_1) \deg^\pm(P_2; \s_2), \deg^\pm(P_3; \s_3) \deg^\pm(P_4; \s_4) \biggiven \PP}} \\
        &\qquad = \mbb{E} \Big[ \sum_{\substack{\set{P_1', P_2', P_3', P_4'} \in \PP'^4\\\set{P_1', P_3'} \cap \set{P_2', P_4'} \ne \es}} \1{P_1' \in N^\pm(P_1; \s_1)} \1{P_3' \in N^\pm(P_3; \s_3)} \\
        &\hspace{4.3cm} \ti \1{P_2' \in N^\pm(P_2; \s_2)} \1{P_4' \in N^\pm(P_4; \s_4)} \Biggiven \PP \Big] \\
        &\qquad = \E[\Big]{\sum_{\set{P_1', P_3', P_4'} \in \PP'^3} \1{P_1' \in N^\pm(P_1; \s_1) \cap N^\pm(P_2; \s_2)} \1{P_3' \in N^\pm(P_3; \s_3)} \1{P_4' \in N^\pm(P_4; \s_4)} \Biggiven \PP},
    \end{aligned} \]
    where in the last step we used that the formula above is symmetric in the indices $i, j$ and $k, \ell$, thus we assume that $P_1' = P_2'$.
    Apart from change of indices, this expression is identical to the one in~\eqref{eq:covterms} in \ref{im:cumulant_case_1_3}.
    Although the application of the Mecke formula for the sum over the points $P_1, P_2, P_3, P_4$ results in cases $P_1 = P_3 \ne P_2 = P_4$, $P_1 = P_3 \ne P_2 \ne P_4$, $P_1 \ne P_3 \ne P_2 = P_4$, $P_1 \ne P_3 \ne P_2 \ne P_4$, all of these terms can be upper bounded with the same calculations as in \ref{im:cumulant_case_1_3}.
    The only difference is the possible values of the exponents $m_1, m_2, m_3, m_4, m_a$.
    In this case, $m_1 = 1$ whenever $P_1 = P_3$ and $P_1' \ne P_3'$.

    We summarized the maximum values of the exponents in the application of Lemma~\ref{lem:int_of_ns}~(\ref{lem:int_of_ns:power_cap_power_0}) in Table~\ref{tab:max_exponent_constraints}.
\end{proof}
    \begin{table} [h] \centering \caption{
            Maximum values of the exponents.
            We put the identical points to the same set for each case.
        } \label{tab:max_exponent_constraints}
        \begin{tabular}{|lccccc|lccccc|} \hline
            \multicolumn{6}{|c|}{\ref{im:cumulant_case_1_3}} & \multicolumn{6}{|c|}{\ref{im:cumulant_case_2_2}} \\
            identical $\PP$-points & $m_1$ & $m_2$ & $m_3$ & $m_4$ & $m_a$ & identical $\PP$-points & $m_1$ & $m_2$ & $m_3$ & $m_4$ & $m_a$ \\ \hline
            $\set{P_1}, \set{P_2, P_3, P_4}$              &  0 & 2 & -- & -- & 0 & $\set{P_1, P_3},       \set{P_2, P_4}$        & 1 & 1 & -- & -- & 0 \\
            $\set{P_1}, \set{P_2}, \set{P_3, P_4}$        &  0 & 0 &  2 & -- & 1 & $\set{P_1, P_3},       \set{P_2}, \set{P_4}$  & 1 & 0 & -- &  1 & 1 \\
            $\set{P_1}, \set{P_2, P_3}, \set{P_4}$        &  0 & 1 & -- &  1 & 1 & $\set{P_1}, \set{P_3}, \set{P_2, P_4}$        & 0 & 1 &  1 & -- & 1 \\
            $\set{P_1}, \set{P_2}, \set{P_3}, \set{P_4}$  &  0 & 0 &  1 &  1 & 2 & $\set{P_1}, \set{P_3}, \set{P_2}, \set{P_4}$  & 0 & 0 &  1 &  1 & 2 \\ \hline
        \end{tabular}
    \end{table}

For~\ref{im:cumulant_case_all_in_one}, we bound the fourth moment of $\De_n^\pm(s, t)$ in the following proof.
\begin{proof}[Proof of Lemma~\ref{lem:cumulant_fourth_moment_bound}]
    Let us introduce the notation
    \[ \de_{s, t}^\pm(P) := \deg^\pm(P; t) - \deg^\pm(P; s) = \sum_{P' \in \PP'} \1{P' \in N^\pm(P; t) \sm N^\pm(P; s)}. \]
    Then,
    \[ \De_{1, n}^\pm = V_1^\pm(t) - V_1^\pm(s) = \sum_{P \in \PP \cap (\V_1 \ti \T^{0 \le})} \de_{s, t}^\pm (P), \]
    and we expand the fourth moment $\E{(\De_{1, n}^\pm)^4}$ as follows:
    \begin{equation} \begin{aligned}
        &\E[\big]{\big( \De_{1, n}^\pm \big)^4} = \E[\Big]{\Big( \sum_{P \in \PP \cap (\V_1 \ti \T^{0 \le})} \de_{s, t}^\pm(P) \Big)^4} = c_1 \E[\Big]{\sum_{\bs{P}_4 \in \PP_{\ne}^4 \cap (\V_1 \ti \T^{0 \le})^4} \prod_{i \le 4} \de_{s, t}^\pm(P_1)} \\
        &\quad + c_2 \E[\Big]{\sum_{\bs{P}_3 \in \PP_{\ne}^3 \cap (\V_1 \ti \T^{0 \le})^3} \de_{s, t}^\pm(P_1)^2 \de_{s, t}^\pm(P_2) \de_{s, t}^\pm(P_3)} + c_3 \E[\Big]{\sum_{\bs{P}_2 \in \PP_{\ne}^2 \cap (\V_1 \ti \T^{0 \le})^2} \de_{s, t}^\pm(P_1)^2 \de_{s, t}^\pm(P_2)^2} \\
        &\quad + c_4 \E[\Big]{\sum_{\bs{P}_2 \in \PP_{\ne}^2 \cap (\V_1 \ti \T^{0 \le})^2} \de_{s, t}^\pm(P_1)^3 \de_{s, t}^\pm(P_2)} + c_5 \E[\Big]{\sum_{P \in \PP \cap \V_1 \ti \T^{0 \le}} \de_{s, t}^\pm(P)^4},
    \end{aligned} \label{eq:fourth_moment_details_1} \end{equation}
    where $\bs{P}_m := (P_1, \dots, P_m)$, $\set{c_i \co i \in \set{4, \dots, 8}} \su (0, \ff)$ are real constants, and we dropped the arguments $t$ for brevity.
    The application of the Mecke formula to the above expression yields
    \begin{equation} \begin{aligned}
        \E[\big]{\big( \De_{1, n}^\pm \big)^4} &= c_1 \int_{(\V_1 \ti \T^{0 \le})^4} \E[\Big]{\prod_{i \le 4} \de_{s, t}^\pm(p_i)} \d \pp_4 + c_2 \int_{(\V_1 \ti \T^{0 \le})^3} \E[\big]{\de_{s, t}^\pm(p_1)^2 \de_{s, t}^\pm(p_2) \de_{s, t}^\pm(p_3)} \d \pp_3 \\
        &\phantom{=} + c_3 \int_{(\V_1 \ti \T^{0 \le})^2} \E[\big]{\de_{s, t}^\pm(p_1)^2 \de_{s, t}^\pm(p_2)^2} \d \pp_2 + c_4 \int_{(\V_1 \ti \T^{0 \le})^2} \E[\big]{\de_{s, t}^\pm(p_1)^3 \de_{s, t}^\pm(p_2)} \d \pp_2 \\
        &\phantom{=} + c_5 \int_{\V_1 \ti \T^{0 \le}} \E[\big]{\de_{s, t}^\pm(p)^4} \d p,
    \end{aligned} \label{eq:fourth_moment_details_2} \end{equation}
    where $\pp_m := (p_1, \dots, p_m)$.
    Each of the expectations in the integrands above can be written in the form $\E[\big]{\prod_{q = 1}^Q \de_{s, t}^\pm(p_q)^{m_q}}$, where $Q \in \set{1, \dots, 4}$ is the number of terms in the products and $m_q \in \set{1, \dots, 4}$ denote the exponents where $\sum_{q = 1}^Q m_q = 4$.
    Then, we bound the terms using H\"older's inequality as follows:
    \begin{equation} \E[\Big]{\prod_{q = 1}^Q \de_{s, t}^\pm(p_q)^{m_q}} \le \prod_{q = 1}^Q \E[\big]{\de_{s, t}^\pm(p_q)^4}^{m_q / 4}. \label{eq:cumulant_holder} \end{equation}
    Recalling the definition of $\de_{s, t}^\pm(p_q)$, to apply Mecke's formula to its fourth moment
    \[ \E[\big]{\de_{s, t}^\pm(p_q)^4} = \E[\bigg]{\bigg( \sum_{P' \in \PP'} \1{P' \in N^\pm(p_q; t) \sm N^\pm(p_q; s)} \bigg)^4}, \]
    we need to examine which of the four points in $P' \in \PP'$ are equal.
    To do so, we generate all the partitions of the four points $P_1', \dots, P_4'$ consisting of $Q'$ groups, so that the points in the same group are considered to be equal.
    Then, apart from combinatorial symmetries, we arrive again to five different partition types of the four points that we also used to distinguish the points in $\PP$ above.
    In each case, Mecke's formula leads to $\abs{\de_{s, t}(N^\pm(p_q; t))}^{Q'}$, where $Q' \in \set{1, \dots, 4}$ denotes the number of groups in the partition, i.e., the number of distinct points in $\set{P_1', \dots, P_4'}$.
    Then,
    \begin{equation} \E[\big]{\de_{s, t}^\pm(p_q)^4} \le c_6 \Big( \abs{\de_{s, t}(N^\pm(p_q))} + \abs{\de_{s, t}(N^\pm(p_q))}^2 + \abs{\de_{s, t}(N^\pm(p_q))}^3 + \abs{\de_{s, t}(N^\pm(p_q))}^4 \Big) \label{eq:fourth_moment_details_3} \end{equation}
    with some large enough constant $c_6 > 0$.
    Substituting the above expressions to the formula \eqref{eq:cumulant_holder}, the integrals in~\eqref{eq:fourth_moment_details_2} can be bounded by
    \[ \int_{\V_1 \ti \T^{0 \le}} \abs{\de_{s, t}(N^\pm(p))}^m \mdp = \int_{\V_1} \abs{\Ns(\ps)}^m \d \ps \int_{\T^{0 \le}} \abs{\de_{s, t}(\Nt^\pm(\pt))}^m \dpt \in O((t - s)^m), \]
    where $m \in \set{1, 2, 3, 4}$, and we applied Lemmas~\ref{lem:int_of_ns}~(\ref{lem:int_of_ns:power_p}) and~\ref{lem:int_of_nt_pm}~(\ref{lem:int_of_nt_pm:difference_p}). 
\end{proof}

For~\ref{im:cumulant_case_1_3}, we bound the covariance term appearing in \eqref{eq:complex_cumulant_formula}.
\begin{proof}[Proof of Lemma~\ref{lem:cumulant_covariance_bound}]
    First, note that
    \begin{equation} \begin{aligned}
        \abs[\big]{\Cov{\De_{i, n}^\pm, \De_{j, n}^\pm \De_{k, n}^\pm \De_{\ell, n}^\pm}} &\le \abs[\big]{\Cov[\big]{\E{\De_{i, n}^\pm \given \PP}, \E{\De_{j, n}^\pm \De_{k, n}^\pm \De_{\ell, n}^\pm \given \PP}}} + \abs[\big]{\E[\big]{\Cov[\big]{\De_{i, n}^\pm, \De_{j, n}^\pm \De_{k, n}^\pm \De_{\ell, n}^\pm \biggiven \PP}}} \\
        &= \abs[\big]{\E[\big]{\Cov[\big]{\De_{i, n}^\pm, \De_{j, n}^\pm \De_{k, n}^\pm \De_{\ell, n}^\pm \biggiven \PP}}},
    \end{aligned} \label{eq:conditional_covariance} \end{equation}
    where in the last step we used that the conditional expectations $\E{\De_{i, n}^\pm \given \PP}$ and $\E{\De_{j, n}^\pm \De_{k, n}^\pm \De_{\ell, n}^\pm \given \PP}$ are independent by the independence property of the Poisson process $\PP$ since $\V_i \notin \set{\V_j, \V_k, \V_\ell}$.
    Recalling the definition of $\De_{i, n}^\pm, \De_{j, n}^\pm, \De_{k, n}^\pm, \De_{\ell, n}^\pm$, let us upper bound the covariance using bilinearity:
    \begin{equation} \begin{aligned}
        &\abs[\big]{\Cov{\De_{i, n}^\pm, \De_{j, n}^\pm \De_{k, n}^\pm \De_{\ell, n}^\pm \given \PP}} \le \abs[\bigg]{\sum_{\s_1, \s_2, \s_3, \s_4 \in \set{s, t}} \Cov{V_i^\pm(\s_1), V_j^\pm(\s_2) V_k^\pm(\s_3) V_\ell^\pm(\s_4) \given \PP}} \\
        &\le \sum_{\substack{\s_1, \s_2, \s_3, \s_4 \in \set{s, t} \\ P_1 \in \PP \cap (\V_i \cap \T^{0 \le}), P_2 \in \PP \cap (\V_j \cap \T^{0 \le}) \\ P_3 \in \PP \cap (\V_k \cap \T^{0 \le}), P_4 \in \PP \cap (\V_\ell \cap \T^{0 \le})}} \abs[\Big]{\Cov[\big]{\deg^\pm(P_1; \s_1), \deg^\pm(P_2; \s_2) \deg^\pm(P_3; \s_3) \deg^\pm(P_4; \s_4) \biggiven \PP}}.
    \end{aligned} \label{eq:bilinearity_bound} \end{equation}
    Expanding one of the covariance terms in the sum, we recognize that it is non-zero only if $P_1' \in \set{P_2', P_3', P_4'}$:
    \begin{equation} \begin{aligned}
        &\abs[\Big]{\Cov[\big]{\deg^\pm(P_1; \s_1), \deg^\pm(P_2; \s_2) \deg^\pm(P_3; \s_3) \deg^\pm(P_4; \s_4) \biggiven \PP}} \\
        &= \E[\Big]{\sum_{\substack{\set{P_1', P_2', P_3', P_4'} \in \PP'^4 \\ P_1' \in \set{P_2', P_3', P_4'}}} \!\!\! \1{P_1' \in N^\pm(P_1; \s_1)} \1{P_2' \in N^\pm(P_2; \s_2)} \1{P_3' \in N^\pm(P_3; \s_3)} \1{P_4' \in N^\pm(P_4; \s_4)} \Biggiven \PP} \\
        &= \E[\Big]{\sum_{\set{P_1', P_3', P_4'} \in \PP'^3} \1{P_1' \in N^\pm(P_1; \s_1) \cap N^\pm(P_2; \s_2)} \1{P_3' \in N^\pm(P_3; \s_3)} \1{P_4' \in N^\pm(P_4; \s_4)} \Biggiven \PP},
    \end{aligned} \label{eq:covterms} \end{equation}
    where in the last step we assumed without loss of generality that $P_1' = P_2'$, as the formula above is symmetric in the indices $j, k, \ell$.
    We need to distinguish several cases depending on which of the points $P_1', P_3', P_4'$ are identical: $P_1' \ne P_3' \ne P_4'$, $P_1' = P_3' \ne P_4'$, $P_1' = P_4' \ne P_3'$, $P_1' \ne P_3' = P_4'$ and $P_1' = P_3' = P_4'$.
    Decomposing the covariance accordingly, we have that
    \begin{equation} \begin{aligned}
        &\abs[\Big]{\Cov[\big]{\deg^\pm(P_1; \s_1), \deg^\pm(P_2; \s_2) \deg^\pm(P_3; \s_3) \deg^\pm(P_4; \s_4) \biggiven \PP}} \\
        &\quad = c_1 \prod_{i \in \set{3, 4}} \E{\deg^\pm(P_i; \s_i) \given \PP} \E[\bigg]{\sum_{P' \in \PP} \1[\Big]{P' \in \bigcap_{i \in \set{1, 2}} N^\pm(P_i; \s_i)} \bigggiven \PP} \\
        &\quad \phantom{=} + c_2 \E{\deg^\pm(P_4; \s_4) \given \PP} \E[\bigg]{\sum_{P' \in \PP'} \1[\Big]{P' \in \bigcap_{i \in \set{1, 2, 3}} N^\pm(P_i; \s_i)} \bigggiven \PP} \\
        &\quad \phantom{=} + c_3 \E{\deg^\pm(P_3; \s_3) \given \PP} \E[\bigg]{\sum_{P' \in \PP'} \1[\Big]{P' \in \bigcap_{i = \set{1, 2, 4}} N^\pm(P_i; \s_i)} \bigggiven \PP} \\
        &\quad \phantom{=} + c_4 \E[\bigg]{\sum_{P' \in \PP'} \1[\Big]{P' \in \bigcap_{i \in \set{1, 2}} N^\pm(P_i; \s_i)} \bigggiven \PP} \E[\bigg]{\sum_{P' \in \PP'} \1[\Big]{P' \in \bigcap_{i \in \set{3, 4}} N^\pm(P_i; \s_i)} \bigggiven \PP} \\
        &\quad \phantom{=} + c_5 \E[\bigg]{\sum_{P' \in \PP'} \1[\Big]{P' \in \bigcap_{i \in \set{1, 2, 3, 4}} \!\!\! N^\pm(P_i; \s_i)} \bigggiven \PP},
    \end{aligned} \label{eq:covterms_explicit} \end{equation}
    where $c_1, \dots, c_5 > 0$ are some coefficients.
    Next, we apply the Mecke formula to the above expression:
    \begin{equation} \begin{aligned}
        &\abs[\Big]{\Cov[\big]{\deg^\pm(P_1; \s_1), \deg^\pm(P_2; \s_2) \deg^\pm(P_3; \s_3) \deg^\pm(P_4; \s_4) \biggiven \PP}} \\
        &\qquad = \abs{N^\pm(P_1; \s_1) \cap N^\pm(P_2; \s_2)} \abs{N^\pm(P_3; \s_3)} \abs{N^\pm(P_4; \s_4)} \\
        &\qquad \phantom{=} + \abs{N^\pm(P_1; \s_1) \cap N^\pm(P_2; \s_2)} \abs{N^\pm(P_3; \s_3) \cap N^\pm(P_4; \s_4)} \\
        &\qquad \phantom{=} + \abs{N^\pm(P_1; \s_1) \cap N^\pm(P_2; \s_2) \cap N^\pm(P_3; \s_3)} \abs{N^\pm(P_4; \s_4)} \\
        &\qquad \phantom{=} + \abs{N^\pm(P_1; \s_1) \cap N^\pm(P_2; \s_2) \cap N^\pm(P_4; \s_4)} \abs{N^\pm(P_3; \s_3)} \\
        &\qquad \phantom{=} + \abs{N^\pm(P_1; \s_1) \cap N^\pm(P_2; \s_2) \cap N^\pm(P_3; \s_3) \cap N^\pm(P_4; \s_4)} \\
        &\qquad \le \abs{N^\pm(P_1; \s_1) \cap N^\pm(P_2; \s_2)} (\abs{N^\pm(P_3; \s_3)} + 2) (\abs{N^\pm(P_4; \s_4)} + 2) =: A(\bs{P}_4, \pmb{\s}_4),
    \end{aligned} \label{eq:covterms_Mecke} \end{equation}
    where we bounded by neglecting some intersecting sets, and set $\bs{P}_4 := (P_1, P_2, P_3, P_4)$, $\pmb{\s}_4 := (\s_1, \s_2, \s_3, \s_4)$.
\end{proof}


\section{Proofs of the propositions and lemmas used to prove Theorem~\ref{thm:functional_stable}}\label{sec:proofs_functional_stable}

In this section, we first show the main propositions that were used in the proof of Theorem~\ref{thm:functional_stable}.

\subsection{Proofs of the propositions and lemmas used in Steps~1 and~2 of the proof of Theorem~\ref{thm:functional_stable}}\label{subsec:thm_stable_part_1}

We begin with proving that the high-mark edge count~$\So_n^\ge$ is negligible compared to the total edge count~$\So_n$, which was required by Step~1 of the proof.
The proof strategy is the same as in the proof of Theorem~\ref{thm:functional_normal} above, and we show convergence of $\So_n^\ge$ to $0$ in the Skorokhod topology using the plus-minus decomposition.
\begin{proof}[Proof of Proposition~\ref{prop:high_mark_edge_count}]
    We apply again~\cite[Theorem~2]{davydov1996weak} to the \quote{plus-minus decomposition} of $S_n^\ge(t)$, and verify the three conditions of the theorem.
    We first define
    \[ S_n^{\ge, +}(t) := \sum_{P \in \PP \cap (\S^{u_n \le}_n \ti \T^{0 \le})} \deg^+(P; t) \qquad \text{and} \qquad S_n^{\ge, -}(t) := \sum_{P \in \PP \cap (\S^{u_n \le}_n \ti \T^{0 \le})} \deg^-(P; t), \]
    and we use the notation $S_n^{\ge, \pm}(t)$ whenever we refer to both $S_n^{\ge, +}(t)$ and $S_n^{\ge, -}(t)$ together.

    For Condition~(\ref{condition:finite_dimensional}), we would like to show that for all $t_1, \dots, t_m \in [0, 1]$ and $\eps_2 > 0$,
    \[ \lim_{n \tff} \P[\Big]{\norm[\big]{\big( \So_n^{\ge, \pm}(t_1), \dots, \So_n^{\ge, \pm}(t_m) \big)}_\ff > \eps_2} = \lim_{n \tff} \P[\Big]{\max_{i \in \set{1, \dots, m}} \big( \So_n^{\ge, \pm}(t_i) \big) > \eps_2} = 0. \]
    We bound the probability of the maximum by the sum of the probabilities of the individual events, and then apply Chebyshev's inequality:
    \[ \P[\Big]{\max_{1 \le i \le m} \big( \So_n^{\ge, \pm}(t_i) \big) > \eps_2} \le \sum_{i = 1}^m \P[\Big]{\So_n^{\ge, \pm}(t_i) > \eps_2} \le \f{m n^{-2 \g}}{\eps_2^2} \Var[\big]{S_n^{\ge, \pm}(t)}. \]
    Thus, we need to show the following lemma.
    \begin{lemma}[Variance of the high-mark edge count $S_n^{\ge, \pm}(t)$]\label{lem:variance_high_mark_edge_count}
        If $\g > 1/2$ and $\g' < 1/2$, then
        \[ \Var[\big]{S_n^{\ge, \pm}(t)} \in o(n^{2 \g}). \]
    \end{lemma}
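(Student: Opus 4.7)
The plan is to follow the same Mecke-based decomposition used in the proofs of Lemmas~\ref{lem:mean_variance_of_Snt} and~\ref{lem:low_mark_edge_count_negligible}, and track carefully how the mark-threshold $u_n = n^{-2/3}$ enters each term. Applying the Mecke formula and cancelling mean contributions as in~\eqref{eq:variance_Snt}, I would write
\[
\Var[\big]{S_n^{\ge,\pm}(t)} = \int_{\S_n^{u_n \le} \ti \T^{0 \le}} \E[\big]{\deg^\pm(p;t)^2} \mdp + \iint_{(\S_n^{u_n \le} \ti \T^{0 \le})^2} \abs{N^\pm(p_1;t) \cap N^\pm(p_2;t)} \mdp[1] \mdp[2],
\]
since the Poisson-conditional variance of $\deg^\pm(p;t)$ contributes only when the two outer points coincide, and the covariance of $\deg^\pm(p_1;t), \deg^\pm(p_2;t)$ is given by the size of the common neighborhood (as in the calculations leading to~\eqref{eq:expectation_of_degree_square}).

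For the first term, use $\E{\deg^\pm(p;t)^2} = \abs{N^\pm(p;t)} + \abs{N^\pm(p;t)}^2$, factorize spatial and temporal parts, bound the temporal integrals by constants via Lemma~\ref{lem:int_of_nt_pm}~(\ref{lem:int_of_nt_pm:power_p_bound}), and use Lemma~\ref{lem:int_of_ns}~(\ref{lem:int_of_ns:power_p}) with $A = [0,n]$ and $u_- = u_n$ to evaluate the spatial integrals. The linear-in-$|\Ns(\ps)|$ piece is $O(n)$. The dominant contribution is the quadratic piece, where for $\g > 1/2$,
\[
\int_{\S_n^{u_n \le}} \abs{\Ns(\ps)}^2 \d \ps \le c_1 n \frac{u_n^{1 - 2\g} - 1}{2\g - 1} \in O\bigl(n \cdot u_n^{1-2\g}\bigr) = O\bigl(n^{(1 + 4\g)/3}\bigr).
\]

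For the second term, factorize spatial and temporal parts. The temporal part is bounded by a constant via Lemma~\ref{lem:int_of_nt_pm}~(\ref{lem:int_of_nt_pm:cap}) applied with $m = 2$. The spatial part is bounded by Lemma~\ref{lem:int_of_ns}~(\ref{lem:int_of_ns:power_cap_power_minus}) with $m_1 = m_2 = m_3 = 0$: since $\g < 1$ we have $(\g - 1)_+ = 0$ and the hypothesis $\g' < 1/2$ keeps the prefactor finite, so the spatial integral is $O(n)$. Thus the covariance term is $O(n)$.

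Combining these bounds,
\[
\Var[\big]{S_n^{\ge,\pm}(t)} \in O\bigl(n^{(1+4\g)/3}\bigr) + O(n),
\]
and it remains only to verify $(1 + 4\g)/3 < 2\g$, which is equivalent to $\g > 1/2$ and is given by hypothesis. Consequently the variance is $o(n^{2\g})$, as required. There is no real obstacle here; the only subtle point is making sure that among the two competing rates from the quadratic spatial integral and the linear spatial integrals, the former dominates precisely in the regime $\g > 1/2$, and that its exponent $(1+4\g)/3$ is strictly below $2\g$ — the same threshold that governs the failure of the Gaussian limit in Theorem~\ref{thm:functional_normal}.
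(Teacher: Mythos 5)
Your proof is correct and follows essentially the same Mecke-based decomposition, dominant-term analysis, and rate computation as the paper's own proof, landing on the same bounds $O(n^{(1+4\g)/3}) + O(n) \subset o(n^{2\g})$. The only differences are cosmetic choices of which sub-part of Lemma~\ref{lem:int_of_ns} and Lemma~\ref{lem:int_of_nt_pm} to cite (you use Lemma~\ref{lem:int_of_ns}~(\ref{lem:int_of_ns:power_cap_power_minus}) with $m_1=m_2=m_3=0$ and Lemma~\ref{lem:int_of_nt_pm}~(\ref{lem:int_of_nt_pm:power_p_bound}), whereas the paper uses Lemma~\ref{lem:int_of_ns}~(\ref{lem:int_of_ns:common_p_minus}) and Lemma~\ref{lem:int_of_nt_pm}~(\ref{lem:int_of_nt_pm:power_p_specific})); these yield the same orders.
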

    \noindent
    Then, the finite-dimensional distributions of $\So_n^\ge$ converge to~$0$.

    To verify Condition~(\ref{condition:cumulant}), we follow closely the arguments in the proof of Theorem~\ref{thm:functional_normal} for $\So_n^{\ge, \pm}$ in place of $\So_n^\pm$.
    This time, we set $\chi_1 := 4$, $\chi_2 := 1 + \eta$ with $\eta = 1/3$, and $a_n := n^{-1/2}$ also differs from the thin-tailed case.
    We define $\De_n^{\ge, \pm}(s, t) := S_n^{\ge, \pm}(t) - S_n^{\ge, \pm}(s)$, and we would like to show that if $t - s > n^{-1/2}$, then
    \[ \E[\big]{(\De_n^{\ge, \pm} (s, t) - \E{\De_n^{\ge, \pm} (s, t)})^4} = \k_4 \big( \De_n^{\ge, \pm} (s, t) \big) + 3 \Var[\big]{\De_n^{\ge, \pm} (s, t)}^2 \in O(n^{4 \g} (t - s)^{1 + \eta}). \]
    We state the following lemma to bound the variance term.
    \begin{lemma}[Variance of the change of high-mark edge count $\De_n^{\ge, \pm} (s, t)$]\label{lem:variance_change_of_high_mark_edge_count}
        If $\g > 1/2$ and $\g' < 1/2$, then
        \[ \Var[\big]{\De_n^{\ge, \pm} (s, t)} \in O(n^{2 \g} (t - s)^{(1 + \eta) / 2}). \]
    \end{lemma}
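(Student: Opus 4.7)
I will mimic the Poincar\'e-inequality argument used in Lemma~\ref{lem:variance_of_De}, but with the $\PP$-integration now restricted to the high-mark region $\S_n^{u_n \le}$ with $u_n = n^{-2/3}$. The key new phenomenon is that in the regime $\g > 1/2$ the quadratic spatial integral of $\abs{\Ns(\ps)}^2$ over $\S_n^{u_n \le}$ diverges once $u_n$ is removed: by Lemma~\ref{lem:int_of_ns}~(\ref{lem:int_of_ns:power_p}), it produces a factor of order $n \cdot u_n^{1-2\g} = n^{(4\g+1)/3}$. The job is to confirm that this new blow-up, after combining with the small-$(t-s)$ temporal weight, is still dominated by the target rate $n^{2\g}(t-s)^{2/3}$.

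\textbf{Applying Poincar\'e.} Since $\De_n^{\ge,\pm}(s,t)$ is a non-negative Poisson functional of $\PP \cap (\S_n^{u_n\le}\ti\T^{0\le}) \sqcup \PP'$, the Poincar\'e inequality gives
\[ \Var{\De_n^{\ge,\pm}(s,t)} \le \int_{\S_n^{u_n\le}\ti\T^{0\le}} \E{D_p(\De_n^{\ge,\pm}(s,t))^2}\,\mdp + \int_{\S\ti\R} \E{D_{p'}(\De_n^{\ge,\pm}(s,t))^2}\,\d p'. \]
Exactly as in~\eqref{eq:poincare_first_term_integrand} and~\eqref{eq:poincare_second_term_integrand}, both cost operators are Poisson-distributed with means $\abs{\de_{s,t}(N^\pm(p))}$ and $\int_{\S_n^{u_n\le}\ti\T^{0\le}}\1{p'\in\de_{s,t}(N^\pm(p))}\,\mdp$ respectively, so each second moment splits into a linear (mean) plus a quadratic (mean-squared) contribution.

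\textbf{Estimating the four contributions.} Each contribution factors into a spatial and a temporal part. For the spatial $\PP$-parts I apply Lemma~\ref{lem:int_of_ns}~(\ref{lem:int_of_ns:power_p}) with $u_- = u_n$ and $\a\in\set{1,2}$; for the spatial $\PP'$-parts I apply Lemma~\ref{lem:int_of_ns}~(\ref{lem:int_of_ns:power_p_prime}) with $m\in\set{1,2}$ after extending the inner domain from $\S_n^{u_n\le}$ to $\S_n$, which requires $\g'<1/2$; and for the temporal parts I use Lemmas~\ref{lem:int_of_nt_pm}~(\ref{lem:int_of_nt_pm:difference_p}) and~(\ref{lem:int_of_nt_pm:difference_p_prime}). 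Three of the four contributions come out as $O(n(t-s))$; only the quadratic piece of the first Poincar\'e integral produces the new exponent, giving $O(n^{(4\g+1)/3}(t-s)^2)$ as described above. Summing yields
\[ \Var{\De_n^{\ge,\pm}(s,t)} \in O\bigl(n^{(4\g+1)/3}(t-s)^2 + n(t-s)\bigr). \]

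\textbf{Matching the target rate.} With $\eta = 1/3$, so $(1+\eta)/2 = 2/3$, both pieces must be shown to be $O(n^{2\g}(t-s)^{2/3})$. Since $t-s\le 1$ and $\g>1/2$ forces $(4\g+1)/3 < 2\g$, the first term satisfies $n^{(4\g+1)/3}(t-s)^2 \le n^{2\g}(t-s)^{2/3}$. For the second, write $n(t-s) = n^{2\g}\cdot n^{1-2\g}(t-s)^{1/3}\cdot (t-s)^{2/3}$ and observe that $n^{1-2\g}(t-s)^{1/3}\le 1$ once $n\ge 1$, using $2\g>1$ and $t-s\le 1$. The main obstacle is precisely this bookkeeping: isolating the sharp exponent $(4\g+1)/3$ created by the $u_n$-cutoff and verifying that it sits below $2\g$ under the sole hypothesis $\g>1/2$. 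The rest of the argument is a direct adaptation of the Poisson--Poincar\'e calculation in Lemma~\ref{lem:variance_of_De}.
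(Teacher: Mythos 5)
Your proposal is correct and follows essentially the same route as the paper: apply the Poincar\'e inequality, recognise that both cost operators are Poisson so their second moments split into mean plus mean-squared pieces, factor each into spatial and temporal integrals, use Lemma~\ref{lem:int_of_ns}~(\ref{lem:int_of_ns:power_p}) with $u_-=u_n$ to extract the $n\,u_n^{-(2\g-1)}=n^{(4\g+1)/3}$ blow-up from the quadratic spatial piece, and then check that this and the $O(n(t-s))$ pieces both sit below $n^{2\g}(t-s)^{2/3}$ using $t-s\le 1$, $n\ge 1$, $\g>1/2$. One small remark: you quote Lemma~\ref{lem:int_of_nt_pm}~(\ref{lem:int_of_nt_pm:difference_p}) as giving the temporal factor $(t-s)^2$ for the quadratic piece, which is what that lemma \emph{states}, but its own proof (and the paper's use of it here) only yields $O(t-s)$; this makes your intermediate bound $O(n^{(4\g+1)/3}(t-s)^2)$ slightly optimistic, though the weaker $O(n^{(4\g+1)/3}(t-s))$ you would actually get is still dominated by $n^{2\g}(t-s)^{2/3}$, so the conclusion is unaffected.
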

    \noindent
    Then, $\Var{\De_n^{\ge, \pm} (s, t)}^2 \in O(n^{4 \g} (t - s)^{1 + \eta})$.
    Turning our attention to the fourth cumulant, we would like to show that $\k_4(\De_n^{\ge, \pm}(s, t)) \in O(n^{4 \g} (t - s)^{1 + \eta})$ whenever $t - s > n^{-1/2}$.
    We begin by partitioning the domain $\S_n^{u_n \le}$ to~$n$ disjoint blocks $\V_i^{u_n \le} := \S^{u_n \le}_{[i - 1, i]}$ $(i \in \set{1, \dots, n})$, and we introduce
    \begin{alignat*}{2}
        V_i^{\ge, \pm}(t) &:= \sum_{P \in \PP \cap (\V_i^{u_n \le} \ti \T^{0 \le})} \deg^\pm(P; t) & \qquad S_n^{\ge, \pm}(t) &= \sum_{i = 1}^n V_i^{\ge, \pm}(t) \\
        \De_{i, n}^{\ge, \pm}(s, t) &:= V_i^{\ge, \pm}(t) - V_i^{\ge, \pm}(s) & \qquad \De_n^{\ge, \pm}(s, t) &= \sum_{i = 1}^n \De_{i, n}^{\ge, \pm}(s, t),
    \end{alignat*}
    which are again monotone functions of~$t$.
    The multilinearity of the cumulant will lead to
    \[ \abs[\big]{\k_4(\De_n^{\ge, \pm})} \le c_1 \sum_{i, j, k, \ell = 1}^n \abs[\big]{\k_4 \big( \De_{i, n}^{\ge, \pm}, \De_{j, n}^{\ge, \pm}, \De_{k, n}^{\ge, \pm}, \De_{\ell, n}^{\ge, \pm} \big)}, \]
    with some constant $c_1 > 0$, and we dropped the arguments $(s, t)$ for brevity.
    We partition the indices again and define $\r^\ge(i, j, k, \ell)$ as in~\eqref{eq:rho_definition} with the blocks $\set{\V_i^{u_n \le}, \V_j^{u_n \le}, \V_k^{u_n \le}, \V_\ell^{u_n \le}}$ in place of $\set{\V_i, \V_j, \V_k, \V_\ell}$.
    Then, we distinguish three cases based on the distance of the blocks $\V_i^{u_n \le}, \V_j^{u_n \le}, \V_k^{u_n \le}, \V_\ell^{u_n \le}$ the same way as in the proof of Theorem~\ref{thm:functional_normal}, which we treat separately.
    \begin{enumerate}[label=Case~(\arabic*$^\ge$), wide=12pt, leftmargin=*]
        \item $\r^\ge(i, j, k, \ell) = 0$, i.e., $i = j = k = \ell$; \label{im:cumulant_case_all_in_one_ge}
        \item $\r^\ge(i, j, k, \ell) = \mrm{dist} (\V_i^{u_n \le}, \set{\V_j^{u_n \le}, \V_k^{u_n \le}, \V_\ell^{u_n \le}}) > 0$, \\
        and $\mrm{diam} (\set{\V_j^{u_n \le}, \V_k^{u_n \le}, \V_\ell^{u_n \le}}) \le \r^\ge(i, j, k, \ell) + 1$; \label{im:cumulant_case_1_3_ge}
        \item $\r^\ge(i, j, k, \ell) = \mrm{dist} (\set{\V_i^{u_n \le}, \V_k^{u_n \le}}, \set{\V_j^{u_n \le}, \V_\ell^{u_n \le}}) > 0$, \\
        and $\mrm{dist}(\V_i^{u_n \le}, \V_j^{u_n \le}) \vee \mrm{dist}(\V_k^{u_n \le}, \V_\ell^{u_n \le}) \le \r^\ge(i, j, k, \ell)$. \label{im:cumulant_case_2_2_ge}
    \end{enumerate}
    The following lemma summarizes the orders of the cumulant term $\abs{\k_4(\De_n^{\ge, \pm}(s, t))}$.
    \begin{lemma}[Order of the cumulant term $\abs{\k_4(\De_n^{\ge, \pm}(s, t))}$]\label{lem:cumulant_term_cases_ge}
        Let $\g > 1/2$ and $\g' < 1/4$.
        We have for $0 \le s < t \le 1$ the orders of the cumulant term $\abs{\k_4(\De_n^{\ge, \pm}(s, t))} \in O(n^{4 \g} (t - s)^{1 + \eta})$ in the three cases \ref{im:cumulant_case_all_in_one_ge}, \ref{im:cumulant_case_1_3_ge}, and \ref{im:cumulant_case_2_2_ge}.
    \end{lemma}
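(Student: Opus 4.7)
The plan is to mirror closely the three-case decomposition in the proof of Lemma~\ref{lem:cumulant_term_cases}, with the single structural difference that all spatial integrals are now performed over $\S_n^{u_n \le}$ with $u_n = n^{-2/3}$. Consequently, where the thin-tailed proof invoked Lemma~\ref{lem:int_of_ns}~(\ref{lem:int_of_ns:common_p_0}) and~(\ref{lem:int_of_ns:power_cap_power_0}), I would instead use Lemma~\ref{lem:int_of_ns}~(\ref{lem:int_of_ns:power_p}) with $u_- = u_n$ and Lemma~\ref{lem:int_of_ns}~(\ref{lem:int_of_ns:power_cap_power_minus}); the resulting bounds now carry $u_n$-dependent blow-up factors that produce powers of $n^{2/3}$.

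For \ref{im:cumulant_case_all_in_one_ge}, I would proceed as in the proof of Lemma~\ref{lem:cumulant_fourth_moment_bound}. The bound~\eqref{eq:cumulant_bound_fourth_moment} reduces the problem to controlling $\E{(\De_{1, n}^{\ge, \pm})^4}$. Expanding via Mecke, the dominant term is the single-point integral $\int_{\V_1^{u_n \le} \ti \T^{0 \le}} \abs{\de_{s, t}(N^\pm(p))}^4 \mdp$, which factorizes into a spatial integral of $\abs{\Ns(\ps)}^4$ over $\V_1^{u_n \le}$ times a temporal integral of $\abs{\de_{s, t}(\Nt^\pm(\pt))}^4$. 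Since $\g > 1/2 > 1/4$, Lemma~\ref{lem:int_of_ns}~(\ref{lem:int_of_ns:power_p}) with $u_- = u_n$ yields a spatial contribution of order $u_n^{1 - 4\g} = n^{(2/3)(4\g - 1)}$, while Lemma~\ref{lem:int_of_nt_pm}~(\ref{lem:int_of_nt_pm:difference_p}) gives $(t - s)^4$. Summing over the $n$ blocks produces $n^{(1 + 8\g)/3} (t - s)^4$, which fits into $O(n^{4\g} (t - s)^{1 + \eta})$ since $(1 + 8\g)/3 \le 4\g$ whenever $\g \ge 1/4$, and $(t - s)^4 \le (t - s)^{1 + \eta}$ for $\eta \le 3$ on $[0, 1]$. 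The remaining sub-dominant terms from the Mecke expansion are handled analogously with strictly smaller $u_n$-exponents.

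For~\ref{im:cumulant_case_1_3_ge} and~\ref{im:cumulant_case_2_2_ge}, I would invoke~\cite[Lemma~5.1]{baryshnikov} to decompose the joint cumulant into covariance and expectation factors, exactly as in~\eqref{eq:complex_cumulant_formula}. The expectation factors $\E{\prod_{m \in M_b^{(2)}} \De_{m, n}^{\ge, \pm}}$ would be bounded via H\"older by powers of $\E{(\De_{1, n}^{\ge, \pm})^2}$, which in turn is controlled by Jensen together with the fourth-moment bound established in \ref{im:cumulant_case_all_in_one_ge}. The covariance factor would be treated exactly as in Lemma~\ref{lem:cumulant_covariance_bound}: after the Mecke expansion analogous to~\eqref{eq:covterms_explicit}--\eqref{eq:covterms_Mecke}, one combines block sums with integrals as in~\eqref{eq:bound_pj_factor}, obtaining a spatial integral of the form bounded by Lemma~\ref{lem:int_of_ns}~(\ref{lem:int_of_ns:power_cap_power_minus}) with $u_- = u_n$. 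This yields a factor of order $n\,u_n^{-e}$ for the case-specific exponent $e$ read off from the corresponding sub-case of Table~\ref{tab:max_exponent_constraints}, while the temporal integrals are bounded as in Lemma~\ref{lem:int_of_nt_pm}~(\ref{lem:int_of_nt_pm:power_cap_power}). In the Case~(\ref{im:cumulant_case_2_2_ge}) variant, the conditional decomposition in~\eqref{eq:conditional_covariance} remains applicable, so the resulting covariance bound has the same shape.

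The main obstacle will be the careful exponent bookkeeping across all sub-cases of Table~\ref{tab:max_exponent_constraints}. In particular, the hypothesis $\g' < 1/4$ --- tighter than the $\g' < 1/3$ of Lemma~\ref{lem:cumulant_term_cases} --- should arise precisely from requiring Lemma~\ref{lem:int_of_ns}~(\ref{lem:int_of_ns:power_cap_power_minus}) to hold in the worst case $m_3 = 2$, where the constraint $\g' < (2 + m_3)^{-1}$ becomes $\g' < 1/4$. Verifying that in every sub-case the combined exponents --- namely the block-sum factor $n$, the spatial blow-up $u_n^{-e}$, the expectation-factor powers scaling as $n^{(4\g - 1)/3}(t - s)^{(1 + \eta)/2}$ per factor, and the temporal powers --- assemble to at most $n^{4\g}(t - s)^{1 + \eta}$ under the restriction $t - s > n^{-1/2}$ (which is essential to trade higher powers of $(t - s)$ against $n^{-1/2}$-factors) is the most delicate step and will require a case-by-case verification as in the proof of Lemma~\ref{lem:cumulant_term_cases}.
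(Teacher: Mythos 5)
Your proposal does follow the paper's three-case skeleton, invokes the same key lemmas (Lemma~\ref{lem:int_of_ns}~(\ref{lem:int_of_ns:power_p}) and~(\ref{lem:int_of_ns:power_cap_power_minus}) with $u_- = u_n$, Lemma~\ref{lem:int_of_nt_pm}~(\ref{lem:int_of_nt_pm:difference_p}) and~(\ref{lem:int_of_nt_pm:power_cap_power})), and correctly locates the origin of $\g' < 1/4$ in the constraint $\g' < (2 + m_3)^{-1}$ at $m_3 = 2$. However, there are two substantive flaws.

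First, your analysis of Case~(\ref{im:cumulant_case_all_in_one_ge}) tracks only the single term $\int_{\V_1^{u_n\le}\ti\T^{0\le}}\abs{\de_{s,t}(N^\pm(p))}^4\mdp$, which indeed gives $n^{(1+8\g)/3}(t-s)^4$ and fits into $O(n^{4\g}(t-s)^{1+\eta})$ without any use of $t-s \ge n^{-1/2}$. You then dismiss the other terms as having \emph{strictly smaller $u_n$-exponents}, implicitly concluding that the constraint is not needed in Case~1. That is false: the Mecke expansion also produces terms with $\sum_q m_q = 1$ (all $\PP$-points coincident and all $\PP'$-points coincident), whose spatial factor has $u_n$-exponent $0$ but whose temporal factor is only $O(t-s)$, giving a contribution of order $n(t-s)$ per $n$ blocks. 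Controlling $n(t-s) \le C n^{4\g}(t-s)^{1+\eta}$ requires bounding $(t-s)^{-\eta}$, which is unbounded as $t-s \downarrow 0$ and hence genuinely requires $t-s > n^{-1/2}$. The paper handles this by collapsing all temporal powers to $(t-s)^1$ and bounding every $u_n$-involved term by $u_n^{-(4\g-1)}(t-s)$ before applying the constraint. Your shortcut of keeping $(t-s)^4$ on the worst $u_n$-power term loses the constant term entirely, and the sentence claiming the rest is analogous with smaller $u_n$-exponents misdiagnoses where the difficulty sits (in the $(t-s)$-power, not the $u_n$-power).

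Second, two minor inaccuracies: Lemma~\ref{lem:cumulant_term_cases} already requires $\g' < 1/4$, not $\g' < 1/3$, so $\g' < 1/4$ here is not a tighter hypothesis but an identical one. And your per-factor estimate $n^{(4\g-1)/3}(t-s)^{(1+\eta)/2}$ for the expectation factors, obtained by Jensen from your fourth-moment bound, overshoots the paper's $O(n^{(2/3)(2\g-1)})$ obtained by directly bounding $\E{(V_1^{\ge,\pm})^2}$ at a fixed time (no $(t-s)$-dependence at all); using the inflated estimate the bookkeeping would still close, but the figure you quote does not match what you would actually derive (the $(t-s)$-power coming out of your Jensen step is $2$, not $(1+\eta)/2$). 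Neither of these breaks the overall plan, but the Case~1 omission would, if left uncorrected, lead you to assert a Case~1 bound that holds uniformly in $t - s$, and that assertion is false.
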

    \noindent
    Thus, considering all the three cases $\abs[\big]{\k_4(\De_n^{\ge, \pm}(s, t))} \in O(n^{4 \g} (t - s)^{1 + \eta})$ if $t - s > n^{-1/2}$.

    Finally, Condition~(\ref{condition:third}) is fulfilled if the following lemma holds.
    \begin{lemma}[Bound on the expectation of the increments of $\So_n^\pm(t)$]\label{lem:expectation_of_increments_ge}
        Let $t_k := k n^{1 / (1 + \eta)}$, and let $\g > 1/2$ and $\g' \in (0, 1)$.
        Then, for all $\eps > 0$,
        \[ \lim_{n \tff} \max_{k \le \floor{n^{1/2}}} \big\{ n^{-\g} \E[\big]{\De_n^{\ge, \pm}(t_k, t_{k + 1})} \big\} = 0. \]
    \end{lemma}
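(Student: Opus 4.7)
The plan is to apply the Mecke formula to rewrite the expected increment as a product of a spatial and a temporal integral, bound each factor separately using the preliminary lemmas of Section~\ref{sec:minor_lemmas}, and then conclude using the heavy-tail hypothesis $\g > 1/2$. Consistent with the set-up in the proof of Proposition~\ref{prop:high_mark_edge_count}, where the Davydov grid spacing for the heavy-tailed regime is fixed as $a_n = n^{-1/2}$ (so that the number of grid points on $[0,1]$ is $\floor{n^{1/2}}$, which matches the range of the max in the statement), the consecutive grid points satisfy $t_{k+1} - t_k \le n^{-1/2}$ uniformly for $k \le \floor{n^{1/2}}$.

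By the Mecke formula applied to $\PP$, together with the factorisation $\de_{t_k, t_{k+1}}(N^\pm(p)) = \Ns(\ps) \ti \de_{t_k, t_{k+1}}(\Nt^\pm(\pt))$ from~\eqref{eq:def_of_de_Nt}, the expected increment splits as
\[ \E[\big]{\De_n^{\ge, \pm}(t_k, t_{k+1})} = \int_{\S_n^{u_n \le} \ti \T^{0 \le}} \abs[\big]{\de_{t_k, t_{k+1}}(N^\pm(p))} \mdp = I_{\msf{s}, n} \ti I_{\msf{t}, k}, \]
where $I_{\msf{s}, n} := \int_{\S_n^{u_n \le}} \abs{\Ns(\ps)} \d \ps$ and $I_{\msf{t}, k} := \int_{\T^{0 \le}} \abs[\big]{\de_{t_k, t_{k+1}}(\Nt^\pm(\pt))} \dpt$.

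For the spatial factor, Lemma~\ref{lem:int_of_ns}~(\ref{lem:int_of_ns:power_p}) with $\a = 1$, $A = [0, n]$ and $u_- = u_n$ (applicable since $\g \in (0, 1)$) yields
\[ I_{\msf{s}, n} = \f{2 \b}{(1 - \g)(1 - \g')} \, n \, (1 - u_n^{1 - \g}) \le c_1 \, n, \]
for some constant $c_1 > 0$ independent of $n$, because $1 - u_n^{1 - \g}$ is bounded above by $1$ uniformly in $n$. For the temporal factor, Lemma~\ref{lem:int_of_nt_pm}~(\ref{lem:int_of_nt_pm:difference_p}) with $m = 1$ gives the uniform bound $I_{\msf{t}, k} \le t_{k+1} - t_k \le n^{-1/2}$, where the implicit restriction to $\T^{0 \le}$ causes no loss since the indicator in $\de_{t_k, t_{k+1}}(\Nt^\pm(\pt))$ already forces the lifetime to intersect $[0, 1]$.

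Combining the two estimates yields
\[ n^{-\g} \max_{k \le \floor{n^{1/2}}} \E[\big]{\De_n^{\ge, \pm}(t_k, t_{k+1})} \le c_1 \, n^{-\g} \cdot n \cdot n^{-1/2} = c_1 \, n^{1/2 - \g}, \]
which tends to $0$ as $n \tff$ since $\g > 1/2$ by hypothesis. No substantive obstacle arises beyond routine application of the neighbourhood-size lemmas; the role of the heavy-tail assumption $\g > 1/2$ is precisely to make the heavy-tail normalisation $n^{-\g}$ beat the $O(n^{1/2})$ bulk size of the expected increment, in contrast to the thin-tailed Lemma~\ref{lem:expectation_of_increments} where the normalisation was only $n^{-1/2}$ and a finer choice of the spacing $a_n$ was required.
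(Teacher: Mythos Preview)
Your proof is correct and follows essentially the same approach as the paper: apply the Mecke formula to factorise the expected increment into spatial and temporal parts, bound each via Lemmas~\ref{lem:int_of_ns}~(\ref{lem:int_of_ns:power_p}) and~\ref{lem:int_of_nt_pm}~(\ref{lem:int_of_nt_pm:difference_p}) to get $\E{\De_n^{\ge,\pm}(t_k, t_{k+1})} \le c\, n\, (t_{k+1}-t_k) \le c\, n^{1/2}$, and conclude from $n^{-\g}\cdot n^{1/2} = n^{1/2-\g} \to 0$ for $\g > 1/2$. Your citation of Lemma~\ref{lem:int_of_nt_pm}~(\ref{lem:int_of_nt_pm:difference_p}) for the temporal factor is in fact the appropriate one; the paper's own proof refers back to Lemma~\ref{lem:expectation_of_increments} more tersely.
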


    We conclude that if $n \to \ff$, we can approximate $\So_n(t)$ by the edge count of the low-mark vertices $\So_n^{(1)}(t)$ for all $t \in [0, 1]$.
\end{proof}

The proofs of the lemmas used to show that the finite-dimensional distributions of the high-mark edge count $\So_n^\ge$ converge to $0$ are similar to the proofs of the lemmas in the proof of Theorem~\ref{thm:functional_normal}.
Thus, we shift the proofs of the lemmas to Appendix~\ref{sec:appendix_proofs_part_1}.

Next, we show that we can approximate the low-mark edges by applying Chebyshev's inequality, which was the goal of Step~2 of the proof.
We aim to bound the difference of the low-mark edge count $\So_n^{(1)}$ and its approximation $\So_n^{(2)}$ by bounding the supremum of the difference.
\begin{proof}[Proof of Proposition~\ref{prop:low_mark_edge_count}]
    Note that $\norm{\So_n^{(1)} - \So_n^{(2)}} = n^{-\g} \norm{S_n^{(1)} - S_n^{(2)}}$.
    We define and bound the error term $E$ as follows:
    \[ \begin{aligned}
        E := \P[\Big]{n^{-\g} \norm{S_n^{(1)} - S_n^{(2)}} \ge \eps_4} &\le \P[\Big]{\sup_{t \in [0, 1]} \sum_{P \in \PP \cap \S^{\le u_n}_n \ti \T} \abs[\Big]{\deg(P; t) - \E[\big]{\deg(P; t) \biggiven \PP}} \ge \eps_4 n^\g} \\
        &\le \eps_4^{-1} n^{-\g} \E[\Big]{\sup_{t \in [0, 1]} \sum_{P \in \PP \cap \S^{\le u_n}_n \ti \T} \abs[\Big]{\deg(P; t) - \E[\big]{\deg(P; t) \biggiven \PP}}},
    \end{aligned} \]
    where we used Markov's inequality.
    We now upper bound the supremum of the sum by the sum of the suprema, and apply the Mecke formula to obtain:
    \[ \begin{aligned}
        E &\le \eps_4^{-1} n^{-\g} \int_{\S^{\le u_n}_n \ti \T} \E[\Big]{\sup_{t \in [0, 1]} \abs[\Big]{\deg(p; t) - \E[\big]{\deg(p; t)}}} \mdp \\
        &\le \eps_4^{-1} n^{-\g} \int_{\S^{\le u_n}_n \ti \T} \E[\Big]{\Big( \sup_{t \in [0, 1]} \abs[\Big]{\deg(p; t) - \E[\big]{\deg(p; t)}} \Big)^2}^{1/2} \mdp,
    \end{aligned} \]
    where we applied Jensen's inequality to the expectation inside the integral in the last step.
    Note that if $b > 1$ or $b + \ell < 0$, then $\deg(p; t) = 0$, and thus the supremum is zero.
    Therefore, we can assume that $b \le 1$ and $b + \ell \ge 0$, and we restrict the supremum to the interval $[0 \vee b, 1 \w (b + \ell)]$:
    \[ E \le \eps_4^{-1} n^{-\g} \int_{\S^{\le u_n}_n \ti \T_{\le 1}^{0 \le}} \E[\Big]{\Big( \sup_{t \in [0 \vee b, 1 \w (b + \ell)]} \abs[\Big]{\deg(p; t) - \E[\big]{\deg(p; t)}} \Big)^2}^{1/2} \mdp. \]
    Since the process $\deg(p; t) - \E{\deg(p; t)}$ is a martingale, the application of Doob's inequality~\cite[Theorem~1.3.8 (iv)]{karatzas} yields
    \[ E \le 2 \eps_4^{-1} n^{-\g} \int_{\S^{\le u_n}_n \ti \T_{\le 1}^{0 \le}} \E[\Big]{\Big( \deg(p; 1 \w (b + \ell)) - \E[\big]{\deg(p; 1 \w (b + \ell))} \Big)^2}^{1/2} \mdp. \]
    As $\deg(p; 1 \w (b + \ell))$ is Poisson distributed with expectation $\E{\deg(p; 1 \w (b + \ell)) \given p}$, we have
    \[ E \le 2 \eps_4^{-1} n^{-\g} \int_{\S^{\le u_n}_n \ti \T_{\le 1}^{0 \le}} \Big( \E[\big]{\deg(p; 1 \w (b + \ell))} \Big)^{1/2} \mdp. \]
    Noting that $\E{\deg(p; 1 \w (b + \ell)) \given p} = \abs{N(p; 1 \w (b + \ell))}$, we calculate the integral:
    \[ E \le 2 \eps_4^{-1} n^{-\g} \int_{\S^{\le u_n}_n} \abs{\Ns(\ps)}^{1/2} \d (x, u) \int_{\T_{\le 1}^{0 \le}} \abs{\Nt(\pt; 1 \w (b + \ell))}^{1/2} \dpt. \]
    We apply Lemma~\ref{lem:int_of_ns}~(\ref{lem:int_of_ns:power_p}) to the spatial integral:
    \[ \int_{\S^{\le u_n}_n} \abs{\Ns(\ps)}^{1/2} \d \ps = \Big( \f{2 \b}{1 - \g'} \Big)^{1/2} \f{2}{1 - \g/2} n u_n^{1 - \g/2} \in O(n^{(1 + \g) / 3}), \]
    where in the last step we do not need to impose any constraints on~$\g$, and we used that $u_n = n^{-2/3}$.
    The temporal integral can be calculated as follows:
    \[ \begin{aligned}
        \int_{\T_{\le 1}^{0 \le}} \abs{\Nt(\pt; 1 \w (b + \ell))}^{1/2} &\dpt = \int_{\T_{\le 1}^{0 \le}} \big( (1 \w (b + \ell)) - b \big)^{1/2} \1{b \le 1 \w (b + \ell) \le b + \ell} \dbdell \\
        &= \int_{\T_{\le 1}^{0 \le}} ((1 - b) \w \ell)^{1/2} \dbdell = \int_0^\ff \int_{\ell}^{1 - \ell} \ell^{1/2} \d b + \int_{1 - \ell}^1 (1 - b)^{1/2} \d b \dell \\
        &\le \int_0^\ff \int_{\ell}^1 \ell^{1/2} \d b \dell = \int_0^\ff (1 + \ell) \ell^{1/2} \dell = \Ga(3/2) + \Ga(5/2),
    \end{aligned} \]
    where we used that the indicator does not affect the integral in the second step, and split the integration domain with respect to $b$ to $[\ell, 1 - \ell]$ and $b \in [1 - \ell, 1]$ in the third step.
    Combining the spatial and temporal integrals, we obtain
    \[ E \le \f{4}{\eps_4 (1 - \g/2)} \Big( \Ga \Big( \f{3}{2} \Big) + \Ga \Big( \f{5}{2} \Big) \Big) \Big( \f{2 \b}{1 - \g'} \Big)^{1/2} n^{- (2 \g - 1) / 3} \in o(1), \]
    as desired.
\end{proof}

\subsection{Proofs of propositions and lemmas used in Steps~3,~4, and~5 of the proof of Theorem~\ref{thm:functional_stable}}\label{subsec:thm_stable_part_2}

In Step~3, we introduced the edge count $S_{n, \eps}^{(3)}$ to represent the edges of vertices with marks less than $1/(n \eps)$.
The following proof show that its plus-minus decomposition $S_{n, \eps}^{(3), \pm}$ converges to $S_n^{(2), \pm}(\any)$ as $\eps \to 0$ uniformly for all $n$.
This is done by bounding the supremum norm of the difference of the two edge counts.
For the minus part, we recognize that the difference is the supremum of a martingale, and we apply Doob's inequality to bound the difference.
For the plus part, we write the $\So_{n, \eps}^{(3), +}(t)$ as a sum of the integrals of the neighborhoods to bound the supremum.
\begin{proof}[Proof of Proposition~\ref{lem:convergence_of_Sn_eps_to_Sn}]
    We prove the two cases separately.

    \medskip
    \noindent
    \textbf{Minus case.}
    We consider the \quote{minus case} first.
    For a point $(X, U, B, L) \in \PP$, we introduce the notation $D := B + L$ for the death coordinate.
    The birth coordinates~$\set{B_i}$ constitute a Poisson point process with intensity measure $\Leb$.
    Since~$\set{L_i}$ is also a Poisson point process with intensity measure~$\mbb{P}_L$, by the displacement theorem~\cite[Exercise~5.1]{poisBook}, $\set{D_i}$ is also a Poisson point process on~$\R$ with intensity measure $\Leb \ast \mbb{P}_L = \Leb$.
    The set of points $(D_i, L_i)$ is a $\PPP(\Leb \otimes \mbb{P}_L)$, and we transform the point process~$\PP$ into a point process $\PP_-$ on $\S \ti \T$ with intensity measure $\Leb \otimes \mbb{P}_L$ by replacing the coordinates $\set{B_i}$ with $\set{D_i}$.
    Note that the total intensity measure of the point process $\PP_-$ is given~$\m$ as it was defined in Section~\ref{sec:model}.
    Using Lemmas~\ref{lem:size_of_ns}~(\ref{lem:size_of_ns:p}) and~\ref{lem:size_of_Nt_pm}~(\ref{lem:size_of_Nt_pm:size}), $\abs{N^-(P; t)} = \wt{c} U^{- \g} L \1{D \le t}$.
    Then, we have
    \[ S_{n, \eps}^{(3), -}(t) = \wt{c} \sum_{P \in \PP_- \cap (\S_n \ti \T)} U^{- \g} L \1{U \le 1 / (\eps n)} \1{D \in [0, t]}. \]
    We proceed similarly as in Step 2:
    \[ \begin{aligned}
        E_1^- &:= \lim_{\eps \downarrow 0} \limsup_{n \tff} \P[\Big]{ \sup_{t \in [0, 1]} \abs[\Big]{S_{n, \eps}^{(3), -}(t) - S_n^{(2), -}(t) - \E{S_{n, \eps}^{(3), -}(t) - S_n^{(2), -}(t)}} > n^\g \de} \\
        &= \lim_{\eps \downarrow 0} \limsup_{n \tff} \P[\bigg]{ \sup_{t \in [0, 1]} \abs[\bigg]{\sum_{P \in \PP \cap (\S_n^{1 / (\eps n) \le} \ti \T^{0 \le})} \abs{N^-(P; t)} - \E[\Big]{\sum_{P \in \PP \cap (\S_n^{1 / (\eps n) \le} \ti \T^{0 \le})} \abs{N^-(P; t)}}} > n^\g \de},
    \end{aligned} \]
    where only the high-mark vertices are considered in the sums.
    The expectation is calculated using the Mecke formula:
    \[ \begin{aligned}
        &\E[\Big]{\sum_{P \in \PP_- \cap (\S_n \ti \T)} U^{- \g} L \1{U \ge 1 / (\eps n)} \1{D \in [0, t]}} \\
        &\qquad = \int_{\S_n \ti \T} u^{- \g} \ell \1{u \ge 1 / (\eps n)} \1{w \in [0, t]} \d x \d u \d b \dell \\
        &\qquad = \int_0^n \int_{1 / (\eps n)}^1 \int_0^t \int_0^\ff u^{-\g} \ell \dell \d w \d u \d x = \f{\wt{c} n t}{1 - \g} \big( 1 - (\eps n)^{-(1 - \g)} \big).
    \end{aligned} \]
    Using the above, we write
    \[ \begin{aligned}
        M_n(t) &:= \sum_{P \in \PP_- \cap (\S_n \ti \T)} U^{- \g} L \1{U \ge 1 / (\eps n)} \1{D \in [0, t]} - \f{n t}{1 - \g} \big( 1 - (\eps n)^{-(1 - \g)} \big) \\
        E_1^- &\le \lim_{\eps \downarrow 0} \limsup_{n \tff} \P[\Big]{\sup_{t \in [0, 1]} \abs[\big]{M_n(t)} > \wt{c}^{-1} n^\g \de},
    \end{aligned} \]
    where only the high-mark vertices are considered in $M_n(t)$.
    Note that $M_n(t)$ is a martingale, since the points are independent.
    To see this, let us introduce $(\FF_t)_{t \ge 0}$ for the filtration generated by the points in $\PP_-$ up to time~$t$, and let $s, t \in [0, 1]$ be two time points such that $s \le t$.
    Then, the expectation of the increments conditioned on the sigma-algebra $\FF_s$ is given by
    \[ \begin{aligned}
        &\E[\big]{M_n(t) - M_n(s) \biggiven \FF_s} \\
        &\qquad = \E[\Big]{\sum_{P \in \PP_- \cap (\S_n \ti \T)} U^{- \g} L \1{U \ge 1 / (\eps n)} \1{D \in [s, t]} \Biggiven \FF_s} - \f{n (t - s)}{1 - \g} \big( 1 - (\eps n)^{-(1 - \g)} \big) = 0,
    \end{aligned} \]
    where in the last step we recognized that the expectation is zero, since the points are independent.
    This shows that $M_n(t)$ is a martingale with respect to the filtration $\FF_t$ generated by the points in $\PP_-$ up to time~$t$.
    Then, Doob's martingale inequality gives that
    \[ E_1^- \le \lim_{\eps \downarrow 0} \limsup_{n \tff} \wt{c} n^{-\g} \de^{-1} \E[\bigg]{\abs[\bigg]{\sum_{P \in \PP_- \cap (\S_n \ti \T)} U^{- \g} L \1{U \ge 1 / (\eps n)} \1{D \in [0, 1]} - \f{n}{1 - \g} \big( 1 - (\eps n)^{-(1 - \g)} \big)}}. \]
    The expectation contains the absolute difference of a random variable and its expectation, thus Jensen's inequality gives that
    \[ E_1^- \le \lim_{\eps \downarrow 0} \limsup_{n \tff} \wt{c} n^{-\g} \de^{-1} \Var[\Big]{\sum_{P \in \PP_- \cap (\S_n \ti \T)} U^{- \g} L \1{U \ge 1 / (\eps n)} \1{D \in [0, 1]}}^{1/2}. \]
    We calculate the variance similarly as in~\eqref{eq:variance_Snt}.
    Then, since $D = B + L$, the Mecke formula gives
    \begin{equation} \begin{aligned}
        &\Var[\Big]{\sum_{P \in \PP_- \cap (\S_n \ti \T)} U^{- \g} L \1{U \ge 1 / (\eps n)} \1{B + L \in [0, 1]}} \\
        &\qquad = \int_{\S_n^{1 / (\eps n) \le} \ti \T} u^{- 2\g} \ell^2 \1{b + \ell \in [0, 1]} \d x \d u \d b \dell \\
        &\qquad = n \int_{1 / (\eps n) \w 1}^1 u^{- 2\g} \d u \int_0^\ff \ell^2 \dell = \f{2 n}{2 \g - 1} \Big( (\eps n \vee 1)^{2 \g - 1} - 1 \Big),
    \end{aligned} \label{eq:variance_with_death} \end{equation}
    where in the first step we used that the covariance part of the expression for two distinct points is zero.
    Then, the bound for~$E_1^-$ becomes
    \[ \begin{aligned}
        E_1^- &\le c_1 \lim_{\eps \downarrow 0} \limsup_{n \tff} n^{-\g} \bigg( n \Big( (\eps n \vee 1)^{2 \g - 1} - 1 \Big) \bigg)^{1/2} \\
        &= c_1 \lim_{\eps \downarrow 0} \limsup_{n \tff} \Big( (\eps \vee 1 / n)^{2 \g - 1} - n^{- (2 \g - 1)} \Big)^{1/2} = c_1 \lim_{\eps \downarrow 0} \eps^{\g - 1/2} = 0,
    \end{aligned} \]
    where $c_1 = \wt{c} \de^{-1} (2 / (2 \g - 1))^{1/2}$, since $\g > 1/2$.

    \medskip
    \noindent
    \textbf{Plus case.}
    Next, we consider the \quote{plus case}.
    As for the \quote{minus case}, we define
    \[ E_1^+ := \lim_{\eps \downarrow 0} \limsup_{n \tff} \P[\Big]{\norm[\Big]{\So_{n, \eps}^{(3), +} - \So_n^{(2), +}} > \de}. \]
    First, we examine the edge count $\So_{n, \eps}^{(3), +}(\any) = n^{-\g} (S_{n, \eps}^{(3), +}(\any) - \E{S_{n, \eps}^{(3), +}(\any)})$.
    Noting that
    \[ \abs{\Nt^+(\Pt; t)} = \big( ((B + L) \w t) - B \big) \1{B \le t} \qquad \text{and} \qquad \f{\d}{\d t} \abs{\Nt^+(\Pt; t)} = \1{t \in [B, B + L]}, \]
    we see that $S_{n, \eps}^{(3), +}(t)$ is a continuous, monotone increasing function of~$t$, which we write as an integral over the interval $[0, t]$ as follows:
    \[ S_{n, \eps}^{(3), +}(t) = \sum_{P \in \PP \cap (\S_n^{\le 1 / (\eps n)} \ti \T^{0 \le})} \abs{N^+(P; t)} = S_{n, \eps}^{(3), +}(0) + \int_0^t \sum_{P \in \PP \cap (\S_n^{\le 1 / (\eps n)} \ti \T^{0 \le})} \f{\d}{\d t'} \abs{N^+(P; t')} \d t'. \]
    We define the edge count $H_{n, \eps}(t)$ that considers the vertices with higher marks than $1 / (\eps n)$:
    \begin{equation}
        H_{n, \eps}(t) := \sum_{P \in \PP \cap (\S_n^{1 / (\eps n) \le} \ti \T^{0 \le})} \f{\d}{\d t} \abs{N^+(P; t)},
        \label{eq:definition_of_H}
    \end{equation}
    where we note that the interval of allowed marks $[1 / (\eps n), 1]$ is the complement of $[0, 1 / (\eps n)]$ taken into account in the edge count $S_{n, \eps}^{(3), +}(t)$.
    Then,
    \[ E_1^+ = \lim_{\eps \downarrow 0} \limsup_{n \tff} \mbb{P} \bigg( \sup_{t \in [0, 1]} \abs[\Big]{\So_{n, \eps}^{(3), +}(0) - \So_n^{(2), +}(0) + \int_0^t H_{n, \eps}(t') - \E{H_{n, \eps}(t')} \d t'} > n^\g \de \bigg). \]
    The application of the triangle inequality and the union bound gives
    \[ \begin{aligned}
        E_1^+ \le \lim_{\eps \downarrow 0} \limsup_{n \tff} \bigg( &\P[\Big]{\abs[\big]{\So_{n, \eps}^{(3), +}(0) - \So_n^{(2), +}(0)} > n^\g \de / 2} \\
        &+ \P[\Big]{\sup_{t \in [0, 1]} \abs[\Big]{\int_0^t H_{n, \eps}(t') - \E{H_{n, \eps}(t')} \d t'} > n^\g \de / 2} \bigg).
    \end{aligned} \]
    We apply Chebyshev's inequality to the first term:
    \[ \begin{aligned}
        &\P[\Big]{\abs[\big]{\So_{n, \eps}^{(3), +}(0) - \So_n^{(2), +}(0)} > n^\g \de / 2} \\
        &\qquad \le 4 \de^{-2} n^{-2 \g} \Var[\big]{S_{n, \eps}^{(3), +}(0) - S_n^{(2), +}(0)} = 4 \de^{-2} n^{-2 \g} \E[\Big]{\sum_{P \in \PP \cap (\S_n^{1 / (\eps n) \le} \ti \T^{0 \le})} \abs{N^+(P; 0)}^2} \\
        &\qquad = 4 \de^{-2} n^{-2 \g} \int_{\S_n^{1 / (\eps n) \le} \ti \T^{0 \le}} \abs{N^+(p; 0)}^2 \mdp = \f{8 \wt{c}^2 \de^{-2}}{2 \g - 1} n^{- (2 \g - 1)} \big( ((\eps n)^{2 \g - 1} \vee 1) - 1 \big)
    \end{aligned} \]
    where we used that the covariance part of the expression for two distinct points is zero in the second step, and applied Lemmas~\ref{lem:int_of_ns}~(\ref{lem:int_of_ns:power_p}) and~\ref{lem:int_of_nt_pm}~(\ref{lem:int_of_nt_pm:power_p_specific}) for the spatial and the temporal parts, respectively, in the third step.
    Then, for the first term, we have
    \[ \lim_{\eps \downarrow 0} \limsup_{n \tff} \P[\Big]{\abs[\big]{\So_{n, \eps}^{(3), +}(0) - \So_n^{(2), +}(0)} > n^\g \de / 2} = \f{8 \wt{c}^2 \de^{-2}}{2 \g - 1} \lim_{\eps \downarrow 0} \eps^{2 \g - 1} = 0. \]
    Next, we consider the second term.
    We apply the triangle inequality to push the absolute value inside the integral, and then we use that $\sup_{t \in [0, 1]} \int_0^t \abs{\any} \d \wt{t} \le \int_0^1 \abs{\any} \d t$:
    \[ \begin{aligned}
        &\P[\Big]{\sup_{t \in [0, 1]} \int_0^t \abs[\big]{H_{n, \eps}(t') - \E{H_{n, \eps}(t')}} \d t' > n^\g \de / 2} \le \P[\Big]{\int_0^1 \abs[\big]{H_{n, \eps}(t') - \E{H_{n, \eps}(t')}} \d t' > n^\g \de / 2} \\
        &\qquad \le 2 \de^{-1} n^{-\g} \int_0^1 \E[\Big]{\abs[\big]{H_{n, \eps}(t') - \E{H_{n, \eps}(t')}}} \d t' \le 2 \de^{-1} n^{-\g} \int_0^1 \Var[\big]{H_{n, \eps}(t')}^{1/2} \d t',
    \end{aligned} \]
    where we applied Markov's inequality in the first step and Jensen's inequality in the second step.
    The variance is calculated using Lemmas~\ref{lem:int_of_ns}~(\ref{lem:int_of_ns:power_p}) and~\ref{lem:int_of_nt_pm}~(\ref{lem:int_of_nt_pm:power_p_specific}) as above:
    \[ \Var[\big]{H_{n, \eps}(t')} = \int_{\S_n^{1 / (\eps n) \le} \ti \T^{0 \le}} \abs{N^+(p; t')}^2 \mdp = \f{2 \wt{c}^2}{2 \g - 1} n \big( ((\eps n)^{2 \g - 1} \vee 1) - 1 \big) (t' + 1). \]
    Then, we have
    \[ \begin{aligned}
        &\lim_{\eps \downarrow 0} \limsup_{n \tff} \P[\Big]{\sup_{t \in [0, 1]} \int_0^t \abs[\big]{H_{n, \eps}(t') - \E{H_{n, \eps}(t')}} \d t' > n^\g \de / 2} \\
        &\qquad \le \lim_{\eps \downarrow 0} \limsup_{n \tff} \bigg( c_2 n^{- (\g - 1/2)} \big( ((\eps n)^{2 \g - 1} \vee 1) - 1 \big)^{1/2} \int_0^1 (t' + 1)^{1/2} \d t' \bigg) = c_3 \lim_{\eps \downarrow 0} \big( \eps^{\g - 1/2} \big) = 0,
    \end{aligned} \]
    where $c_2 = 2 \wt{c} \de^{-1} (2 / (2 \g - 1))^{1/2}$ and $c_3 = c_2 (4 \sqrt{2} - 1) / 3$.
    Thus, we have shown that $E_1^+ = 0$, and thus $E_1 = 0$.
\end{proof}

In the following, we present the results required for Steps~4 and~5 in the proof of Theorem~\ref{thm:functional_stable}.
In the next proof, we show that the size of the spatial neighborhoods of the points converges to a measure, which is done by showing convergence to a distribution function.
\begin{proof}[Proof of Lemma~\ref{lem:convergence_to_levy}]
    First, for $a > 0$, we have
    \[ \begin{aligned}
        \lim_{n \tff} n \P[\big]{n^{-\g} \abs{\Ns(\Ps)} \ge a} &= \lim_{n \tff} n \P[\big]{\wt{c} n^{-\g} U^{-\g} \ge a} = \lim_{n \tff} n \P[\big]{U \le \big( a n^{\g} / \wt{c} \big)^{-1/\g}} \\
        &= \lim_{n \tff} n \big( a n^{\g} / \wt{c} \big)^{-1 / \g} = \wt{c}^{1 / \g} a^{-1 / \g} =: \nu([a, \ff)),
    \end{aligned} \]
    where we applied Lemma~\ref{lem:size_of_ns}~(\ref{lem:size_of_ns:p}).
    Note that $\nu(\any)$ is totally skewed to the right, since $\nu((-\ff, 0]) = 0$.
\end{proof}

\begin{remark}
    The measure $\nu$ is a L\'evy measure, since it is non-negative, $\nu(\any) = 0$ for any set of Lebesgue measure zero, and
    \[ \int_0^1 \abs{a}^2 \nu(\d a) = 2 \wt{c}^{1 / \g} \int_0^1 a a^{-1 / \g} \d a = \f{2 \wt{c}^{1 / \g} \g}{2 \g - 1} < \ff \qquad \text{and} \qquad \int_1^\ff \d \nu = \nu([1, \ff)) = \wt{c}^{1 / \g} < \ff, \]
    where we used that $\g > 1/2$ for the finiteness of the first integral.
\end{remark}

In~\eqref{eq:summation_functional}, we defined the summation functional, which was used to define $S_\eps^\ast(\any)$.
To show convergence of $S_{n, \eps}^{(3)}(\any)$ to $S_\eps^\ast(\any)$, we need to show that the summation functional is continuous with respect to the Skorokhod metric $d_\msf{Sk}$.
As the proof is similar to the proof of the continuity of the summation functional in~\cite[Section~7.2.3]{heavytails}, we present the proof in Appendix~\ref{sec:appendix_proofs_part_2}.

In the last part of Step 4, we show that the expectation of $S_\eps^\ast(\any)$ is finite.
\begin{proof}[Proof of Lemma~\ref{lem:expectation_of_S_eps}]
    Applying the Mecke formula to the expectation of $S_\eps^\ast$, we have
    \[ \E[\big]{S_\eps^\ast(t)} = \E[\Big]{\sum_{(J, B, L) \in \PP_\ff} J (t - B) \1{J \ge \wt{c} \eps^\g} \1{B \le t \le B + L}} = \int_{\wt{c} \eps^\g}^\ff j \, \nu(\d j) \int_{\T_{\le t}^{t \le}} (t - b) \dbdell, \]
    where we used the fact that $J$ is independent of $(B, L)$, and $\nu$ is the intensity measure of $J$.
    The first integral can be calculated using the definition of $\nu$ as follows:
    \[ \int_{\wt{c} \eps^\g}^\ff j \, \nu(\d j) = \f{\wt{c}^{1/\g}}{\g} \int_{\wt{c} \eps^\g}^\ff j^{-1 / \g} \d j = \f{\wt{c}}{1 - \g} \eps^{-(1 - \g)}. \]
    The temporal integral is the integral of the size of the temporal neighborhood at time $t$, thus the application of Lemma~\ref{lem:int_of_nt}~(\ref{lem:int_of_nt:power_p}) gives that the second integral is equal to~$1$, which concludes the proof.
\end{proof}

Step~5 is about showing the convergence of $\So_\eps^\ast(\any)$ to $\So(\any)$ as $\eps \to 0$.
We claimed that the edge count $\So_\eps^\ast(t)$ converges to $\So(t)$ for a fixed time point $t$, which is shown in the following proof, following the arguments presented in~\cite[Section~5.5.1]{heavytails}.
\begin{proof}[Proof of Lemma~\ref{lem:convergence_of_So_eps_fixed_time}]
    Let us define the strictly decreasing sequence of $\eps_i \to 0$ such that $1 = \eps_0 > \eps_1 > \eps_2 > \cdots$, and let $I_{i + 1} := [\eps_{i + 1}, \eps_i]$.
    We also define the edge count where the jump size is restricted to the interval $I_{i + 1}$ as
    \[ S_{I_{i + 1}}^\ast(t) := \sum_{(J, B, L) \in \PP_\ff} J (t - B) \1{J \in I_{i + 1}} \1{B \le t \le B + L} \qquad \text{and} \qquad \So_{I_{i + 1}}^\ast(t) := S_{I_{i + 1}}^\ast(t) - \E[\big]{S_{I_{i + 1}}^\ast(t)}. \]
    The expectation is calculated using the Mecke formula:
    \[ \E[\big]{S_{I_{i + 1}}^\ast(t)} = \int_{I_{i + 1}} j \, \nu(\d j) \int_{-\ff}^t (t - b) \int_{t - b}^\ff \dell \d b = c_1 \big( \eps_{i + 1}^{-(1/\g - 1)} - \eps_i^{-(1/\g - 1)} \big), \]
    where we used the definition of the intensity measure~$\nu$, and $c_1 := \wt{c}^{1 / \g} / (1 - \g)$.
    Note that the variance of the edge count $S_{I_{i + 1}}^\ast(t)$ is given by
    \[ \Var{S_{I_{i + 1}}^\ast(t)} = \int_{I_{i + 1}} j^2 \nu(\d j) \int_{-\ff}^t (t - b)^2 \int_{t - b}^\ff \dell \d b = \f{2 \wt{c}^{1 / \g}}{2\g - 1} \big( \eps_i^{2 - 1/\g} - \eps_{i + 1}^{2 - 1/\g} \big), \]
    where we used that the points are independent.
    Then, again by independence, $\sum_{i = 0}^\ff \Var{S_{I_{i + 1}}^\ast(t)} = 2 \wt{c}^{1 / \g} / (2 \g - 1) < \ff$, since $\eps_0 = 1$ and $\g > 1/2$.
    By the Kolmogorov convergence criterion~\cite[Lemma~5.16]{kallenberg}, we have that $\sum_{i = 0}^\ff \So_{I_{i + 1}}^\ast(t)$ converges almost surely, and
    \[ \begin{aligned}
        \sum_{i = 0}^\ff \So_{I_{i + 1}}^\ast(t) &= \sum_{i = 0}^\ff \bigg( \sum_{(J, B, L) \in \PP_\ff} J (t - B) \1{J \in I_{i + 1}} \1{B \le t \le B + L} - c_1 \big( \eps_{i + 1}^{-(1/\g - 1)} - \eps_i^{-(1/\g - 1)} \big) \bigg) \\
        &= \lim_{\eps \downarrow 0} \bigg( \sum_{(J, B, L) \in \PP_\ff} J (t - B) \1{J \in [\eps, 1]} \1{B \le t \le B + L} - c_1 \big( \eps^{-(1/\g - 1)} - 1 \big) \bigg).
    \end{aligned} \]
    Next, for large jumps, we set
    \[ S_0^\ast(t) := \sum_{(J, B, L) \in \PP_\ff} J (t - B) \1{J \ge 1} \1{B \le t \le B + L} \qquad \text{and} \qquad \E[\big]{S_0^\ast(t)} = c_1 < \ff, \]
    where we used similar arguments as above to calculate the expectation.
    Since the expectation is finite, the sum is almost surely finite.
    Defining the centered version, we then have $\So_0^\ast(t) := S_0^\ast(t) - \E{S_0^\ast(t)}$.
    Using the above, for a fixed time point~$t$, we have
    \[ \begin{aligned}
        \So(t) &= \lim_{\eps \downarrow 0} \So_\eps^\ast(t) = \So_0^\ast(t) + \sum_{i = 0}^\ff \So_{I_{i + 1}}^\ast(t) \\
        &= \lim_{\eps \downarrow 0} \Big( \sum_{(J, B, L) \in \PP_\ff} J (t - B) \1{J \ge \eps} \1{B \le t \le B + L} - c_1 \eps^{-(1/\g - 1)} \Big),
    \end{aligned} \]
    which converges almost surely.
\end{proof}

Finally, the proofs showing that $\So_{\eps_n}^\ast$ is Cauchy in probability and almost surely with respect to the uniform convergence is postponed to Appendix~\ref{sec:appendix_proofs_part_2}, since the proof of the Cauchy property in probability is very similar to the arguments presented in Step~3, and the proof of the Cauchy property almost surely follows the same approach as the proof of Property 2 in~\cite[Proposition~5.7]{heavytails}.


\section{Proofs of the minor lemmas}\label{sec:proofs_minor_lemmas}

Finally, we present the proofs of the minor lemmas stated at the beginning of this section.
\begin{proof}[Proof of Lemma~\ref{lem:size_of_ns}]
    Here we calculate the size of the spatial neighborhoods of points.

    \medskip
    \noindent
    \textbf{Part (\ref{lem:size_of_ns:p}).}
    The size of the spatial neighborhood of a point $\ps := (x, u) \in \S$ is given by
    \[ \abs{\Ns(\ps)} = \int_\S \1{\ps' \in N(\ps)} \d \ps' = \int_\S \1{\abs{z - x} \le \b u^{-\g} w^{-\g'}} \d (z, w) = \f{2 \b}{1 - \g'} u^{-\g}, \]
    where we used the notation $p' := (z, w, r)$ as usual.

    \medskip
    \noindent
    \textbf{Part (\ref{lem:size_of_ns:common_p}).}
    We write the size of the common spatial neighborhood of points $p_1$ and $p_2$ as an integral:
    \[ \abs{\Ns(\ps[1], \ps[2])} = \int_\S \1[\big]{\abs{x_1 - z} \le \b u_1^{-\g} w^{-\g'}} \1[\big]{\abs{x_2 - z} \le \b u_2^{-\g} w^{-\g'}} \d (z, w). \]
    Next, we introduce $\r := \abs{x_1 - x_2} \le \abs{x_1 - z} + \abs{x_2 - z}$, where we used the triangle inequality.
    We assume that $\abs{x_1 - z} \ge \r / 2$.
    As the bound is symmetric in the indices $1, 2$, we have the same bound in the case $\abs{x_2 - z} \ge \r / 2$.
    Then,
    \[ \abs{\Ns(\ps[1], \ps[2])} \le \int_\S \1[\big]{\r/2 \le \b u_1^{-\g} w^{-\g'}} \1[\big]{\abs{x_2 - z} \le \b u_2^{-\g} w^{-\g'}} \d (z, w). \]
    The first indicator represents an upper bound on $w$:
    \[ \r/2 \le \b u_1^{-\g} w^{-\g'} \qquad \Longleftrightarrow \qquad w \le (2 \b)^{1/\g'} \r^{-1/\g'} u_1^{-\g / \g'} =: r(\r, u_1). \]
    Using these, we integrate with respect to $z$ and the integral can be bounded as
    \[ \begin{aligned}
        \abs{\Ns(\ps[1], \ps[2])} &\le 2 \b u_2^{-\g} \int_0^{r(\r, u_1)} w^{-\g'} \d w = \f{2 \b}{1 - \g'} u_2^{-\g} r(\r, u_1)^{1 - \g'} \\
        &= \f{(2 \b)^{1/\g'}}{1 - \g'} \r^{-(1/\g' - 1)} u_1^{-(1/\g' - 1) \g} u_2^{-\g} \le \f{(2 \b)^{1/\g'}}{1 - \g'} \r^{-(1/\g' - 1)} u_1^{-\g/\g'} u_2^{-\g/\g'}.
    \end{aligned} \]
    Thus, we obtain the desired bound.

    \medskip
    \noindent
    \textbf{Part (\ref{lem:size_of_ns:p_prime}).}
    The size of the neighborhood of a point $\ps' := (z, w) \in \S$ is given by
    \[ \begin{aligned}
        \int_{\S^{u_- \le}} \1{\ps' \in \Ns(\ps)} \d \ps &= \int_{\S^{u_- \le}} \1{\abs{x - z} \le \b u^{-\g} w^{-\g'}} \d (x, u) \\
        &= 2 \b w^{-\g'} \int_{u_-}^1 u^{-\g} \d u = \f{2 \b}{1 - \g} w^{-\g'} \big( 1 - u_-^{1 - \g} \big),
    \end{aligned} \]
    as desired.
\end{proof}

\begin{proof}[Proof of Lemma~\ref{lem:int_of_ns}]
    We calculate the integrals one by one.

    \medskip
    \noindent
    \textbf{Part (\ref{lem:int_of_ns:power_p})}.
    The integral is calculated as follows:
    \[ \int_{\S^{u_- \le}_A} \abs{\Ns(\ps)}^\a \d \ps = \Big( \f{2 \b}{1 - \g'} \Big)^\a \int_{\S^{u_- \le}_A} u^{- \a \g} \d (x, u) = \left\{ \begin{array}{ll} \Big( \f{2 \b}{1 - \g'} \Big)^\a \f{\abs{A}}{1 - \a \g} \Big( 1 - u_-^{1 - \a \g} \Big) & \text{if } \g \ne 1/\a \\ \Big( \f{2 \b}{1 - \g'} \Big)^\a \abs{A} \log[\big]{u_-^{-1}} & \text{if } \g = 1/\a. \end{array} \right. \]
    If $u_- = 0$, then the integral with respect to~$u$ requires that $\g < 1/\a$.

    \medskip
    \noindent
    \textbf{Part (\ref{lem:int_of_ns:common_p_0})}.
    If $m = 1$, we apply Lemma~\ref{lem:int_of_ns}~(\ref{lem:int_of_ns:power_p}) to see that lower and the upper bounds match.
    Thus, we assume that $m > 1$.
    Then,
    \begin{equation} I := \int_{\S_n^m} \abs{\Ns(\bsps[m])} \d \bsps[m] = \int_{\S_n} \int_\S \1{\ps' \in \Ns(\ps[1])} \bigg( \int_{\S_n} \1{\ps' \in \Ns(\ps)} \d \ps \bigg)^{m - 1} \d \ps' \d \ps[1]. \label{eq:int_ns_common_p_1} \end{equation}
    To show an upper bound $I_\msf{upper}$, the inner integral can be bounded using Lemma~\ref{lem:size_of_ns}~(\ref{lem:size_of_ns:p_prime}) after extending the integration domain from $\S_n$ to $\S$:
    \begin{equation} \begin{aligned}
        I \le I_\msf{upper} &= \int_{\S_n} \int_\S \1{\ps' \in \Ns(\ps[1])} \bigg( \int_\S \1{\ps' \in \Ns(\ps)} \d \ps \bigg)^{m - 1} \d \ps' \d \ps[1] \\
        &= c_1^{m - 1} \int_{\S_n} \int_\S w^{- (m - 1) \g'} \1{\abs{x_1 - z} \le \b u_1^{-\g} w^{-\g'}} \d (z, w) \d (x_1, u_1) \\
        &= 2 \b c_1^{m - 1} \int_{\S_n} u_1^{-\g} \int_0^1 w^{- m \g'} \d w \d (x_1, u_1) = \f{c_1^m}{1 - m \g'} \, n,
    \end{aligned} \label{eq:int_ns_common_p_2} \end{equation}
    where $c_1 := 2 \b / (1 - \g)$. 
    Then, $\limsup_{n \tff} I / n \le \lim_{n \tff} I_\msf{upper} / n$, which gives the upper bound
    \[ \limsup_{n \tff} \f{1}{n} \int_{\S_n^m} \abs{\Ns(\bsps[m])} \d \bsps[m] \le \f{(2 \b)^m}{(1 - \g)^m (1 - m \g')}, \]
    as mentioned in the lemma.
    For the lower bound, let us set the error term $E > 0$ such that $I = I_\msf{upper} - E$.
    We would like to show that $\lim_{n \tff} E / n = 0$ since then $\lim_{n \tff} I / n = \lim_{n \tff} I_\msf{upper} / n - \lim_{n \tff} E / n = \lim_{n \tff} I_\msf{upper} / n$.
    In $I_\msf{upper}$, let us assume without loss of generality that $p_1 \in \pp_m$ is the point which is furthest from the point $p'$, i.e., $\abs{x_1 - z} \ge \max_{i \in \set{2, \dots, m}} \abs{x_i - z}$.
    Then, due to symmetry,
    \[ \begin{aligned}
        I = m \int_{\S_n} \int_\S &\1{(z, w) \in \Ns(x_1, u_1)} \\
        &\ti \bigg( \int_\S \1{(z, w) \in \Ns(x, u)} \1{\abs{x - z} \le \abs{x_1 - z}} \d (x, u) \bigg)^{m - 1} \d (z, w) \d (x_1, u_1) - E,
    \end{aligned} \]
    and the lower bound matches the upper bound.
    Note that $E \ne 0$ only if there is at least one point $p_\ast \in \pp_m \sm \set{p_1}$ for which $x_\ast \notin [0, n]$.
    Then,
    \[ \begin{aligned}
        \f{E}{m} &\le \int_{\S_n} \int_\S \int_{\S_{[0, n]^C}} \!\!\! \1{(z, w) \in \Ns((x_1, u_1), (x_\ast, u_\ast))} \1{\abs{x_\ast - z} \le \abs{x_1 - z}} \d (x_\ast, u_\ast) \\
        &\hspace{1.3cm} \ti \bigg( \int_{\S_n} \1{(z, w) \in \Ns(\ps)} \d \ps \bigg)^{m - 2} \d (z, w) \d (x_1, u_1) \\
        &\le \int_{\S_n} \int_\S \int_{\S_{[0, n]^C}} \!\!\! \1{(z, w) \in \Ns((x_1, u_1), (x_\ast, u_\ast))} \1{\abs{x_\ast - z} \le \abs{x_1 - z}} \d (x_\ast, u_\ast) \\
        &\hspace{2.5cm} \ti c_1^{m - 2} w^{-(m - 2) \g'} \d (z, w) \d (x_1, u_1),
    \end{aligned} \]
    where we used again the fact that the inner integral can be bounded by Lemma~\ref{lem:size_of_ns}~(\ref{lem:size_of_ns:p_prime}) after extending the integration domain from $\S_n$ to $\S$.
    We distinguish two cases depending on whether the position $x_1$ of the point $p_1$ is close to the boundary of the window $[0, n]$.
    More precisely, with $\eps \in (0, 1/2)$, we consider the cases
    \begin{itemize}
        \item{Case A: $(x_1, u_1) \in \S_{[0, \eps n] \cup [(1 - \eps) n, n]} =: S_A$;}
        \item{Case B: $(x_1, u_1) \in \S_{(\eps n, (1 - \eps) n)} =: S_B$.}
    \end{itemize}
    \begin{center}
        \begin{tikzpicture}
            \begin{axis}[
                    axis lines=center,
                    axis on top,
                    axis equal,
                    hide y axis,
                    xmin=-5, xmax=25,
                    ymin=0, ymax=1.05,
                    xlabel={$\R$},
                    ylabel={mark},
                    xtick={0.01, 5, 15, 20},
                    xticklabels={$0$, $\eps n$, $(1 - \eps) n$, $n$},
                ]
                \draw[pattern=north west lines, draw=none, line width=0mm] ( 0, 0) rectangle ( 5, 1);
                \draw[pattern=north west lines, draw=none, line width=0mm] (15, 0) rectangle (20, 1);
                \node[draw=none] at ( 2.5, 2.5) {Case A};
                \node[draw=none] at (17.5, 2.5) {Case A};
                \node[draw=none] at (10  , 2.5) {Case B};
            \end{axis}
        \end{tikzpicture}
    \end{center}
    In Case~A, we bound the error term~$E$ similarly as in the case of the upper bound above as follows:
    \[ \begin{aligned}
        \f{E}{m} &\le c_1^{m - 1} \int_{\S_A} \int_\S w^{- (m - 1) \g'} \1{(z, w) \in \Ns(\ps[1])} \d (z, w) \d \ps[1] \\
        &= \f{2 \b c_1^{m - 1}}{1 - (m - 1) \g'} \int_{\S_A} u_1^{-\g} \d (x_1, u_1) = \f{2 c_1^m}{1 - (m - 1) \g'} \, \eps n \in O(n),
    \end{aligned} \]
    For Case~B, we have
    \[ \begin{aligned}
        \f{E}{m} &\le c_1^{m - 2} \int_{\S_B} \int_\S w^{-(m - 2) \g'} \\
        &\phantom{\le} \ti \int_{\S_{[0, n]^C}} \1{(z, w) \in \Ns((x_1, u_1), (x_\ast, u_\ast))} \1{\abs{x_\ast - z} \le \abs{x_1 - z}} \d (x_\ast, u_\ast) \d (z, w) \d (x_1, u_1).
    \end{aligned} \]
    As $\abs{x_1 - x_\ast} \ge \eps n$, the triangle inequality gives
    \[ \eps n / 2 \le \abs{x_1 - x_\ast} / 2 \le (\abs{x_1 - z} + \abs{x_\ast - z}) / 2 \le \abs{x_1 - z}. \]
    Extending the integration of domain of the integral with respect to $p_\ast$ from $\S_{[0, n]^C}$ to $\S$, we have
    \[ \begin{aligned}
        \f{E}{m} &\le c_1^{m - 1} \int_{\S_B} \int_\S w^{- (m - 1) \g'} \1{(z, w) \in \Ns((x_1, u_1))} \1{\eps n/2 \le \abs{x_1 - z}} \d (z, w) \d (x_1, u_1) \\
        &= c_1^{m - 1} \int_{\S_B} \int_\S w^{- (m - 1) \g'} \1{\eps n/2 \le \abs{z - x_1} \le \b u_1^{-\g} w^{-\g'}} \d (z, w) \d (x_1, u_1) \\
        &= c_1^{m - 1} \int_{\S_B} \int_0^1 w^{- (m - 1) \g'} \big( 2 \b u_1^{-\g} w^{-\g'} - \eps n \big)_+ \d w \d (x_1, u_1).
    \end{aligned} \]
    The integrand is non-zero only if $w \le (2 \b u_1^{-\g} / (\eps n))^{1 / \g'} \w 1 =: r_n(u_1)$.
    Then, neglecting the term $- \eps n$, we have
    \begin{equation} \begin{aligned}
        \f{E}{m} &\le 2 \b c_1^{m - 1} \int_{\S_B} u_1^{-\g} \int_0^{r_n(u_1)} w^{- m \g'} \d w \d (x_1, u_1) = \f{2 \b c_1^{m - 1}}{1 - m \g'} \int_{\S_B} u_1^{-\g} r_n(u_1)^{1 - m \g'} \d (x_1, u_1) \\
        &\le c_2 n \int_0^1 u_1^{-\g} r_n(u_1)^{1 - m \g'} \d u_1,
    \end{aligned} \label{eq:int_ns_common_p_3} \end{equation}
    where $c_2 = (2 \b)^m / ((1 - \g)^{m - 1} (1 - m \g'))$ is a positive constant if $\g' < 1 / m$, we integrated with respect to $x_1$, and substituted $c_1$.
    Let us examine when $r_n(u_1) \le 1$:
    \[ \Big( \f{2 \b}{\eps n} u_1^{-\g} \Big)^{1/\g'} \le 1 \qquad \Longleftrightarrow \qquad u_1 \ge \Big( \f{2 \b}{\eps n} \Big)^{1 / \g}. \]
    Then,
    \[ \f{E}{m} \le c_2 n \bigg( \int_0^{\big( \f{2 \b}{\eps n} \big)^{1 / \g}} u_1^{-\g} \d u_1 + \Big( \f{2 \b}{\eps n} \Big)^{1 / \g' - m} \int_{\big( \f{2 \b}{\eps n} \big)^{1 / \g}}^1 u_1^{-(1 / \g' - m + 1) \g} \d u_1 \bigg). \]
    For the first integral,
    \[ \int_0^{\big( \f{2 \b}{\eps n} \big)^{1 / \g}} u_1^{-\g} \d u_1 = \f{1}{1 - \g} \Big( \f{2 \b}{\eps n} \Big)^{1 / \g - 1} \in o(n). \]
    For the second integral,
    \[ \Big( \f{2 \b}{\eps n} \Big)^{1 / \g' - m} \int_{\big( \f{2 \b}{\eps n} \big)^{1 / \g}}^1 u_1^{-(1 / \g' - m + 1) \g} \d u_1 = \left\{ \begin{array}{ll} c_3 \Big( \Big( \f{2 \b}{\eps n} \Big)^{1 / \g' - m} - \Big( \f{2 \b}{\eps n} \Big)^{1 / \g - 1} \Big) & \text{if } 1/\g' - 1/\g \ne m - 1 \\ - \f{1}{\g} \Big( \f{2 \b}{\eps n} \Big)^{1 / \g' - m} \log[\Big]{\f{2 \b}{\eps n}} & \text{if } 1/\g' - 1/\g = m - 1, \end{array} \right. \]
    where $c_3 = (1 - (1 / \g' - m + 1) \g)^{-1} \in \R$, which is of order $o(n)$.
    Then, $\limsup_{n \tff} E / n = 0$ for all $\eps > 0$.

    \medskip
    \noindent
    \textbf{Part (\ref{lem:int_of_ns:common_p_minus})}.
    We begin this proof by following the same steps as in Part~(\ref{lem:int_of_ns:common_p_0}) through~\eqref{eq:int_ns_common_p_1} -- \eqref{eq:int_ns_common_p_2}, and we arrive at the following bound:
    \[ \int_{\big( \S_n^{u_n \le} \big)^m} \abs{\Ns(\bsps[m])} \d \bsps[m] \le \f{c_{u_n}^m}{1 - m \g'} \, n, \]
    where $c_{u_n} = 2 \b (1 - u_n^{1 - \g}) / (1 - \g)$, which shows the upper bound since $\lim_{n \tff} u_n^{1 - \g} = 0$.

    For the lower bound, we follow the exact same arguments from Part~(\ref{lem:int_of_ns:common_p_0}) with $\S_{\any}^{u_n \le}$, $c_{u_n}$ in place of $\S_{\any}$, $c_1$, respectively.
    Then, for Case~A, we arrive at the bound
    \[ \f{E}{m} \le \f{2 c_{u_n}^m}{1 - (m - 1) \g'} \, \eps n \in O(n). \]
    For Case~B, following the steps in Part~(\ref{lem:int_of_ns:common_p_0}) until \eqref{eq:int_ns_common_p_3}, we have
    \[ \f{E}{m} \le c_2 (1 - u_n^{1 - \g})^{m - 1} n \int_{u_n}^1 u_1^{-\g} r_n(u_1)^{1 - m \g'} \d u_1, \]
    where $c_2 = (2 \b)^m / ((1 - \g)^{m - 1} (1 - m \g'))$, and we substituted $c_{u_n}$.
    Deviating from the proof of the previous part, we bound $r_n(u_1) \le (2 \b u_1^{-\g} / (\eps n))^{1 / \g'}$ by neglecting the \quote{$\w 1$} part.
    Then,
    \[ \f{E}{m} \le c_2 (1 - u_n^{1 - \g})^{m - 1} n (\eps n)^{- (1 / \g' - m)} \int_{u_n}^1 u_1^{-(1 / \g' - m + 1) \g} \d u_1, \]
    where $c_3 = (2 \b)^{1 / \g' - m} c_2 > 0$.
    The integral can be bounded as follows:
    \[ \int_{u_n}^1 u_1^{-(1 / \g' - m + 1) \g} \d u_1 \le \left\{ \begin{array}{ll} \abs{c_4} u_n^{- ((1 / \g' - m + 1) \g - 1)_+} & \text{if } 1 / \g' - 1 / \g \ne m - 1 \\ \log[\big]{u_n^{-1}} & \text{if } 1 / \g' - 1 / \g = m - 1, \end{array} \right. \]
    where $c_4 = (1 - (1 / \g' - m + 1) \g)^{-1}$.
    Then, if $1 / \g' - 1 / \g \ne m - 1$, we have
    \[ \limsup_{n \tff} \f{E}{n m} \le c_3 \abs{c_4} \eps^{- (1 / \g' - m)} \limsup_{n \tff} \big( n^{- (1/\g' - m)} u_n^{- ((1 / \g' - m + 1) \g - 1)_+} \big). \]
    If the exponent of $u_n$ is $0$, $\limsup_{n \tff} E / n = 0$ for all $\eps > 0$ since $\g' < 1/m$.
    If the exponent of $u_n$ is negative, as $u_n > c_5 n^{-1}$ for some constant $c_5 > 0$, we apply the following bound:
    \[ \limsup_{n \tff} \f{E}{n m} \le c_3 \abs{c_4} c_6 \eps^{- (1 / \g' - m)} \limsup_{n \tff} \big( n^{- (1/\g' - m + 1) (1 - \g)} \big) = 0, \]
    where $c_6 = c_5^{1 - (1 / \g' - m + 1) \g} > 0$.
    On the other hand, if $1 / \g' - 1 / \g = m - 1$, then
    \[ \limsup_{n \tff} \f{E}{n m} \le c_3 \eps^{- (1 / \g' - m)} \limsup_{n \tff} \big( n^{- (1 / \g' - m)} \log{u_n^{-1}} \big) \le c_3 \eps^{- (1 / \g' - m)} \lim_{n \tff} \big( n^{- (1 / \g' - m)} \log{c_5^{-1} n} \big) = 0, \]
    where we used that $u_n > c_5 n^{-1}$ for large $n$.
    Thus, $\lim_{\eps \downarrow 0} \limsup_{n \tff} E/n = 0$ in both cases $A$ and $B$, and the lower bound matches the upper bound in the limit as $n \to \ff$.

    \medskip
    \noindent
    \textbf{Part (\ref{lem:int_of_ns:power_p_prime})}.
    We have that
    \[ \begin{aligned}
        \int_\S \bigg( \int_{\S_n^{u_- \le}} &\1{\ps' \in \Ns(\ps)} \d \ps \bigg)^m \d \ps'
        = \int_\S \int_{\big( \S_n^{u_- \le} \big)^m} \prod_{i = 1}^m \1{\ps' \in \Ns(\ps[i])} \d \bsps[m] \d \ps' \\
        &= \int_{\big( \S_n^{u_- \le} \big)^m} \int_\S \1{\ps' \in \Ns(\bsps[m])} \d \ps' \d \bsps[m] = \int_{\big( \S_n^{u_- \le} \big)^m} \abs[big]{\Ns(\bsps[m])} \d \bsps[m],
    \end{aligned} \]
    where used Fubini's theorem to switch the order of integration, and $\pp_m := (p_1, \ldots, p_m)$.
    This expression can be bounded using Lemma~\ref{lem:int_of_ns}~(\ref{lem:int_of_ns:common_p_minus}) as follows:
    \[ \int_{\big( \S_n^{u_- \le} \big)^m} \abs[big]{\Ns(\bsps[m])} \d \bsps[m] \le \bigg( \f{2 \b}{1 - \g} \bigg)^m \f{n}{1 - m \g'}, \]
    where we used that $1 - u_-^{1 - \g} < 1$. 

    \medskip
    \noindent
    \textbf{Part (\ref{lem:int_of_ns:power_cap_power_0})}.
    We write the common neighborhood as an integral, and then use Fubini's theorem.
    Then, using Lemma~\ref{lem:size_of_ns}~(\ref{lem:size_of_ns:p}) to calculate $\abs{\Ns(\ps[1])}$ and $\abs{\Ns(\ps[2])}$, we have that
    \[ \begin{aligned}
        I &:= \iint_{\S_n^2} \abs{\Ns((x_1, u_1))}^{m_1} \abs{\Ns((x_1, u_1), (x_2, u_2))} \abs{\Ns((x_2, u_2))}^{m_2} \abs{x_1 - x_2}^{m_3} \d (x_1, u_1) \d (x_2, u_2) \\
        &= c_1 \iint_{\S_n^2} u_1^{-m_1 \g} u_2^{-m_2 \g} \abs{\Ns((x_1, u_1), (x_2, u_2))} \abs{x_1 - x_2}^{m_3} \d (x_1, u_1) \d (x_2, u_2) \\
        &= c_1 \int_{\S_n} u_1^{-m_1 \g} \int_\S \1{\ps' \in \Ns((x_1, u_1))} \int_{\S_n} \1{\ps' \in \Ns((x_2, u_2))} u_2^{-m_2 \g} \abs{x_1 - x_2}^{m_3} \d (x_2, u_2) \d \ps' \d (x_1, u_1),
    \end{aligned} \]
    where $c_1 := (2 \b / (1 - \g'))^{m_1 + m_2}$, and we require that $\g < (m_1 \vee m_2)^{-1}$ to apply Lemma~\ref{lem:size_of_ns}~(\ref{lem:size_of_ns:p}).
    Focusing on the inner integral, we use the triangle inequality $\abs{x_1 - x_2}^{m_3} \le (\abs{x_1 - z} + \abs{x_2 - z})^{m_3}$:
    \[ \begin{aligned}
        I_\msf{inner} &:= \int_{\S_n} \1{\ps' \in \Ns((x_2, u_2))} u_2^{-m_2 \g} \abs{x_1 - x_2}^{m_3} \d (x_2, u_2) \\
        &\le 2^{m_3} \int_\S \1{\abs{z - x_2} \le \b u_2^{-\g} w^{-\g'}} u_2^{-m_2 \g} (\abs{z - x_2}^{m_3} + \abs{z - x_1}^{m_3}) \d (x_2, u_2) \\
        &= 2^{1 + m_3} \int_0^1 u_2^{-m_2 \g} \int_0^{\b u_2^{-\g} w^{-\g'}} (x_2')^{m_3} + \abs{z - x_1}^{m_3} \d x_2 \d u_2 \\
        &= c_2 w^{-(1 + m_3) \g'} \int_0^1 u_2^{- (1 + m_2 + m_3) \g} \d u_2 + 2^{1 + m_3} \b w^{-\g'} \abs{z - x_1}^{m_3} \int_0^1 u_2^{-(1 + m_2) \g} \d u_2 \\
        &= c_3 w^{-(1 + m_3) \g'} \big( 1 - u_-^{1 - (1 + m_2 + m_3) \g} \big) + c_4 w^{-\g'} \abs{z - x_1}^{m_3},
    \end{aligned} \]
    where $c_2 := (2 \b)^{1 + m_3} / (1 + m_3)$, $c_3 := c_2 / (1 - (1 + m_2 + m_3) \g)$, $c_4 := 2^{1 + m_3} \b / (1 - (1 + m_2) \g)$, we extended the integration domain from $\S_n$ to $\S$, and substituted for $x_2' := \abs{z - x_2}$.
    Furthermore, if $u_- = 0$, then we require $\g < (1 + m_2 + m_3)^{-1}$ for the finiteness of the integral with respect to $u_2$.
    Then,
    \[ \begin{aligned}
        I &\le c_1 \int_{\S_n} u_1^{-m_1 \g} \int_\S \1{(z, w) \in \Ns((x_1, u_1))} \Big( c_3 w^{-(1 + m_3) \g'} + c_4 w^{-\g'} \abs{z - x_1}^{m_3} \Big) \d (z, w) \d (x_1, u_1) \\
        &= 2 c_1 \int_{\S_n} u_1^{-m_1 \g} \int_0^1 \int_0^{\b u_1^{-\g} w^{-\g'}} \Big( c_3 w^{-(1 + m_3) \g'} + c_4 w^{-\g'} (z')^{m_3} \Big) \d z' \d w \d (x_1, u_1) \\
        &= \int_0^1 w^{-(2 + m_3) \g'} \d w \bigg( c_5 \int_{\S_n} u_1^{-(1 + m_1) \g} \d (x_1, u_1) + c_6 \int_{\S_n} u_1^{- (1 + m_1 + m_3) \g} \d (x_1, u_1) \bigg) \\
        &= c (c' + c'') n,
    \end{aligned} \]
    where we substituted for $z' := \abs{z - x_1}$ in the first step, and $c_5 := 2 \b c_1 c_3$, $c_6 := (2 \b^{1 + m_3} c_1 c_4) / (1 + m_3)$, $c_7 := c_5 / ((1 - (1 + m_1) \g) (1 - (2 + m_3) \g'))$, $c_8 := c_6 / ((1 - (1 + m_1 + m_3) \g) (1 - (2 + m_3) \g'))$.
    Moreover, the finiteness of the integral with respect to $w$ requires $\g' < 1 / (2 + m_3)$, and if $u_- = 0$, then the integral with respect to $u_1$ is finite if $\g < 1 / (1 + m_1 + m_3)$.
    All in all, we require $\g < 1 / (1 + (m_1 \vee m_2) + m_3)$ and $\g' < 1 / (2 + m_3)$.

    \medskip
    \noindent
    \textbf{Part (\ref{lem:int_of_ns:power_cap_power_minus})}.
    In this part, we follow the same strategy as in the previous part, but we consider the case where we have a minimum mark $u_- > 0$.
    Writing the common neighborhood as an integral, the application of Fubini's theorem and Lemma~\ref{lem:size_of_ns}~(\ref{lem:size_of_ns:p}) yields
    \[ \begin{aligned}
        I &:= \iint_{\big( \S_n^{u_- \le} \big)^2} \abs{\Ns((x_1, u_1))}^{m_1} \abs{\Ns((x_1, u_1), (x_2, u_2))} \abs{\Ns((x_2, u_2))}^{m_2} \abs{x_1 - x_2}^{m_3} \d (x_1, u_1) \d (x_2, u_2) \\
        &= c_1 \int_{\S_n^{u_- \le}} u_1^{-m_1 \g} \int_\S \1{\ps' \in \Ns((x_1, u_1))} \\
        &\hspace{3cm} \ti \int_{\S_n^{u_- \le}} \1{\ps' \in \Ns((x_2, u_2))} u_2^{-m_2 \g} \abs{x_1 - x_2}^{m_3} \d (x_2, u_2) \d \ps' \d (x_1, u_1),
    \end{aligned} \]
    where $c_1 := (2 \b / (1 - \g'))^{m_1 + m_2}$.
    For the inner integral, the triangle inequality gives:
    \[ \begin{aligned}
        I_\msf{inner} &:= \int_{\S_n^{u_- \le}} \1{\ps' \in \Ns((x_2, u_2))} u_2^{-m_2 \g} \abs{x_1 - x_2}^{m_3} \d (x_2, u_2) \\
        &= c_2 w^{-(1 + m_3) \g'} \int_{u_-}^1 u_2^{- (1 + m_2 + m_3) \g} \d u_2 + 2^{1 + m_3} \b w^{-\g'} \abs{z - x_1}^{m_3} \int_{u_-}^1 u_2^{-(1 + m_2) \g} \d u_2 \\
        &\le \abs{c_3} w^{-(1 + m_3) \g'} u_-^{- ((1 + m_2 + m_3) \g - 1)_+} + \abs{c_4} w^{-\g'} \abs{z - x_1}^{m_3} u_-^{- ((1 + m_2) \g - 1)_+},
    \end{aligned} \]
    where we followed the exact same steps as in the previous part, and set $c_2 := (2 \b)^{1 + m_3} / (1 + m_3)$, $c_3 := c_2 / (1 - (1 + m_2 + m_3) \g)$ and $c_4 := (2 \b c_3) / (1 - (1 + m_2) \g)$.
    Note that we do not require any bounds on~$\g$ for the finiteness of the integrals.
    Then,
    \[ \begin{aligned}
        I &\le c_1 \int_{\S_n^{u_- \le}} u_1^{-m_1 \g} \int_\S \1{(z, w) \in \Ns((x_1, u_1))} \Big( \abs{c_3} w^{-(1 + m_3) \g'} u_-^{-((1 + m_2 + m_3) \g - 1)_+} \\
        &\hspace{6.25cm} + \abs{c_4} w^{-\g'} \abs{z - x_1}^{m_3} u_-^{- ((1 + m_2) \g - 1)_+} \Big) \d (z, w) \d (x_1, u_1) \\
        &= 2 c_1 \int_{\S_n^{u_- \le}} u_1^{-m_1 \g} \int_0^1 \int_0^{\b u_1^{-\g} w^{-\g'}} \Big( \abs{c_3} w^{-(1 + m_3) \g'} u_-^{-((1 + m_2 + m_3) \g - 1)_+} \\
        &\hspace{5.5cm} + \abs{c_4} w^{-\g'} (z')^{m_3} u_-^{- ((1 + m_2) \g - 1)_+} \Big) \d z' \d w \d (x_1, u_1) \\
        &= \int_0^1 w^{-(2 + m_3) \g'} \d w \bigg( c_5 u_-^{-((1 + m_2 + m_3) \g - 1)_+} \int_{\S_n^{u_- \le}} u_1^{-(1 + m_1) \g} \d (x_1, u_1) \\
        &\hspace{3.5cm} + c_6 u_-^{- ((1 + m_2) \g - 1)_+} \int_{\S_n^{u_- \le}} u_1^{- (1 + m_1 + m_3) \g} \d (x_1, u_1) \bigg) \\
        &\le c \Big( \abs{c'} u_-^{-((1 + m_2 + m_3) \g - 1)_+ - ((1 + m_1) \g - 1)_+} + \abs{c''} u_-^{- ((1 + m_2) \g - 1)_+ -((1 + m_1 + m_3) \g - 1)_+} \Big) n,
    \end{aligned} \]
    where we substituted for $z' := \abs{z - x_1}$ in the first step, and $c_5 := 2 \b c_1 \abs{c_3}$, $c_6 := (2 \b^{1 + m_3} c_1 \abs{c_4}) / (1 + m_3)$, $c_7 := c_5 / ((1 - (1 + m_1) \g) (1 - (2 + m_3) \g'))$, $c_8 := c_6 / ((1 - (1 + m_1 + m_3) \g) (1 - (2 + m_3) \g'))$, and the constants $c, c', c''$ are specified in the statement of Lemma~\ref{lem:int_of_ns}~(\ref{lem:int_of_ns:power_cap_power_0}).
    We also used that $\g \notin \set{(1 + m_2 + m_3)^{-1}, (1 + m_1 + m_3)^{-1}}$ since $\g > 1/2$.
\end{proof}

\begin{proof}[Proof of Lemma~\ref{lem:size_of_nt}]
    Here we calculate the size of the temporal neighborhoods of points.

    \medskip
    \noindent
    \textbf{Part (\ref{lem:size_of_nt:p}).}
    The size of the temporal neighborhood of a point $\pt = (b, \ell) \in \T$ is given by
    \[ \abs{\Nt(\pt; t)} = \int_\R \1{b \le r \le t \le b + \ell} \d r = \1{b \le t \le b + \ell} (t - b). \]

    \medskip
    \noindent
    \textbf{Part (\ref{lem:size_of_nt:p_prime}).}
    The size of the neighborhood of a point $r \in \R$ is given by
    \[ \begin{aligned}
        \int_\T \1{r \in \Nt(\pt; t)} \dpt &= \int_\T \1{b \le r \le t \le b + \ell} \dbdell \\
        &= \1{r \le t} \int_{-\ff}^r \int_{t - b}^\ff \dell \d b = \1{r \le t} \int_{-\ff}^r \e^{-(t - b)} \d b = \1{r \le t} \e^{-(t - r)},
    \end{aligned} \]
    as required.
\end{proof}

\begin{proof}[Proof of Lemma~\ref{lem:int_of_nt}]
    In this proof, we calculate the integrals one by one using Fubini's theorem.

    \medskip
    \noindent
    \textbf{Part (\ref{lem:int_of_nt:power_p}).}
    The integral is given by
    \[ \int_\T \abs{\Nt(\pt)}^\a \dpt = \int_\T (t - b)^\a \1{b \le t \le b + \ell} \dbdell = \int_{-\ff}^t (t - b)^\a \e^{t - b} \d b = \Ga(\a + 1). \]

    \medskip
    \noindent
    \textbf{Part (\ref{lem:int_of_nt:common_p}).}
    We have that
    \[ \int_{\otimes_{i = 1}^m \T_i} \abs[\Big]{\bigcap_{i = 1}^m \Nt(\pt[i]; t_i)} \d \bspt[m] = \int_\R \bigg( \prod_{i = 1}^m \int_{\T_i} \1{r \in \Nt(\pt; t_i)} \dpt \bigg) \d r. \]
    If $\T_i = \T$ and $t_i = t$ for all indices $i \in \set{1, \dots, m}$, then
    \[ \begin{aligned}
        \int_{\otimes_{i = 1}^m \T_i} \abs{\Nt(\bspt[m]; t)} \d \bspt[m] &= \int_\R \bigg( \int_\T \1{r \in \Nt(\pt; t)} \dpt \bigg)^m \d r \\
        &= \int_\R \bigg( \int_\T \1{b \le r \le t \le b + \ell} \dbdell \bigg)^m \d r = \int_{-\ff}^t \e^{-m (t - r)} \d r = \f{1}{m}.
    \end{aligned} \]

    \medskip
    \noindent
    \textbf{Part (\ref{lem:int_of_nt:power_cap_power}).}
    We bound the integral using Lemma~\ref{lem:size_of_nt}~(\ref{lem:size_of_nt:p}) as follows:
    \[ \begin{aligned}
        \int_\T \int_\T &\abs{\Nt(\pt[1]; t_1)}^{\a_1} \abs{\Nt(\pt[1]; t_1) \cap \Nt(\pt[2]; t_2)} \abs{\Nt(\pt[2]; t_2)}^{\a_2} \dpt[2] \dpt[1] \\
        &\le \int_\T \abs{\Nt(\pt[1]; t_1)}^{\a_1} \dpt[1] \int_\T \abs{\Nt(\pt[2]; t_2)}^{\a_2 + 1} \dpt[2] \le \Ga(\a_1 + 1) \Ga(\a_2 + 2),
    \end{aligned} \]
    where we used Part (\ref{lem:int_of_nt:power_p}) of this lemma in the second step.

    \medskip
    \noindent
    \textbf{Part (\ref{lem:int_of_nt:power_prime}).}
    We use Lemma~\ref{lem:size_of_nt}~(\ref{lem:size_of_nt:p_prime}) to calculate the integral:
    \[ \int_\R \bigg( \int_\T \1{p' \in N(p; t)} \mdp \bigg)^\a \d p' = \int_{-\ff}^t \e^{-\a (t - r)} \d r = \f{1}{\a}, \]
    as required.
\end{proof}

\begin{proof}[Proof of Lemma~\ref{lem:size_of_Nt_pm}]

    \noindent
    \textbf{Part (\ref{lem:size_of_Nt_pm:size}).}
    Part~(\ref{lem:size_of_Nt_pm:size}) of the lemma is trivial.

    \medskip
    \noindent
    \textbf{Part (\ref{lem:size_of_Nt_pm:size_difference}).}
    For the \quote{plus case}, we have
    \[ \begin{aligned}
        \abs[\big]{\de_{t_1, t_2}(\Nt^+(\pt))} &= \abs{\Nt^+(\pt; t_2)} - \abs{\Nt^+(\pt; t_1)} = \int_\R \1{b \le r \le (b + \ell) \w t_2} - \1{b \le r \le (b + \ell) \w t_1} \d r \\
        &= \int_\R \1{b \vee t_1 \le r \le (b + \ell) \w t_2} \d r = \big( ((b + \ell) \w t) - (b \vee t_1) \big) \1{b \le t_2} \1{t_1 \le b + \ell}.
    \end{aligned} \]
    In the \quote{minus case},
    \[ \begin{aligned}
        \abs[\big]{\de_{t_1, t_2}(\Nt^-(\pt))} &= \abs{\Nt^-(\pt; t_2)} - \abs{\Nt^-(\pt; t_1)} = \int_\R \1{b \le r \le b + \ell \le t_2} - \1{b \le r \le b + \ell \le t_1} \d r \\
        &= \int_\R \1{b \le r \le t_1 \le b + \ell \le t_2} \d r = (t_1 - b) \1{b \le t_1 \le b + \ell \le t_2}.
    \end{aligned} \]

    \medskip
    \noindent
    \textbf{Part (\ref{lem:size_of_Nt_pm:size_p_prime}).}
    For the \quote{plus case}, we have
    \[ \begin{aligned}
        &\int_{\T^{0 \le}} \1{r \in \Nt^+(\pt; t)} \dpt = \int_{\T^{0 \le}} \1{b \le r \le (b + \ell) \w t} \dbdell \\
        &\qquad = \1{r \le 0} \int_{- \ff}^r \int_{- b}^\ff \dell \d b + \1{0 \le r \le t} \int_{-\ff}^r \int_{r - b}^\ff \dell \d b = \1{r \le 0} \e^r + \1{0 \le r \le t}.
    \end{aligned} \]
    For the \quote{minus case},
    \[ \begin{aligned}
        &\int_{\T^{0 \le}} \1{r \in \Nt^-(\pt; t)} \dpt = \int_{\T^{0 \le}} \1{b \le r \le (b + \ell) \w t} \dbdell \\
        &\qquad = \1{r \le 0} \int_{- \ff}^r \int_{- b}^{t - b} \dell \d b + \1{0 \le r \le t} \int_{-\ff}^r \int_{r - b}^{t - b} \dell \d b \\
        &\qquad = \1{r \le 0} \big( \e^r - \e^{-(t - r)} \big) + \1{0 \le r \le t} \big( 1 - \e^{-(t - r)} \big),
    \end{aligned} \]
    as required.
\end{proof}

\begin{proof}[Proof of Lemma~\ref{lem:int_of_nt_pm}]
    Here we show the integrals of the \quote{plus-minus} temporal neighborhoods.

    \medskip
    \noindent
    \textbf{Part (\ref{lem:int_of_nt_pm:power_p_specific})}.
    For the \quote{plus case}, we have
    \[ \begin{aligned}
        \int_{\T^{0 \le}} &\abs{\Nt^+(\pt; t)}^m \dpt = \int_{\T_{\le t}^{0 \le}} (((b + \ell) \w t) - b)^m \dbdell \\
        &= \int_{-\ff}^0 \int_{- b}^{t - b} \ell^m \dell \d b + \int_0^t \int_0^{t - b} \ell^m \dell \d b + \int_{-\ff}^t \int_{t - b}^\ff (t - b)^m \dell \d b.
    \end{aligned} \]
    Note that
    \[ \f{\d}{\d \ell} \bigg( - \e^{-\ell} m! \sum_{i = 0}^m \f{\ell^i}{i!} \bigg) = \ell^m \e^{-\ell}. \]
    Then,
    \[ \begin{aligned}
        &\int_{\T^{0 \le}} \abs{\Nt^+(\pt; t)}^m \dpt = \int_{-\ff}^0 m! \bigg( \e^b \sum_{i = 0}^m \f{(- b)^i}{i!} - \e^{-(t - b)} \sum_{i = 0}^m \f{(t - b)^i}{i!} \bigg) \d b \\
        &\qquad \phantom{=} + \int_0^t m! \bigg( 1 - \e^{-(t - b)} \sum_{i = 0}^m \f{(t - b)^i}{i!} \bigg) \d b + \int_{-\ff}^t (t - b)^m \e^{-(t - b)} \d b \\
        &\qquad = \sum_{i = 0}^m \f{m!}{i!} \bigg( \int_{-\ff}^0 (- b)^i \e^b \d b - \int_{-\ff}^t (t - b)^i \e^{-(t - b)} \d b \bigg) + m! (t + 1) = m! (t + 1),
    \end{aligned} \]
    where we applied dominated convergence theorem with $\exp{-(t - b)}$ and $\exp{b}$ as the dominating functions to interchange the integral and the summation, and recognized the Gamma functions.
    For the \quote{minus case}, we use the same argument to obtain
    \[ \int_{\T^{0 \le}} \abs{\Nt^-(\pt; t)}^m \dpt = \int_{\T^{[0, t]}} \ell^m \dbdell = \int_{-\ff}^0 \int_{- b}^{t - b} \ell^m \dell \d b + \int_0^t \int_0^{t - b} \ell^m \dell \d b = m! t. \]

    \medskip
    \noindent
    \textbf{Part (\ref{lem:int_of_nt_pm:power_p_bound}).}
    For the \quote{plus case}, we have
    \[ \begin{aligned}
        \int_{\T^{0 \le}} &\abs{\Nt^+(\pt; t)}^\a \dpt = \int_{\T_{\le t}^{0 \le}} (((b + \ell) \w t) - b)^\a \dbdell \\
        &= \int_{-\ff}^0 \int_{- b}^{t - b} \ell^\a \dell \d b + \int_0^t \int_0^{t - b} \ell^\a \dell \d b + \int_{-\ff}^t \int_{t - b}^\ff (t - b)^\a \dell \d b \\
        &\le 2 c \int_{-\ff}^0 \e^b - \e^{-(t - b)} \d b + 2 c \int_0^t 1 - \e^{-(t - b)} \d b + \int_{-\ff}^t (t - b)^\a \e^{-(t - b)} \d b = 2 c t + \Ga(\a + 1),
    \end{aligned} \]
    where we used that $\ell^\a \e^{-\ell} \le c \e^{-\ell / 2}$ with $c := (2 \a)^\a \e^{-\a}$, and substituted for $t - b$ in the last term.
    In the \quote{minus case}, we use the same argument as above to obtain
    \[ \int_{\T^{0 \le}} \abs{\Nt^-(\pt; t)}^\a \dpt = \int_{\T^{[0, t]}} \ell^\a \dbdell = \int_{-\ff}^0 \int_{- b}^{t - b} \ell^\a \dell \d b + \int_0^t \int_0^{t - b} \ell^\a \dell \d b \le 2 c t. \]

    \medskip
    \noindent
    \textbf{Part (\ref{lem:int_of_nt_pm:difference_p})}.
    First, using Lemma~\ref{lem:size_of_Nt_pm}~(\ref{lem:size_of_Nt_pm:size_difference}), we calculate the integral in the \quote{plus case}:
    \[ \begin{aligned}
        &\int_{\T_{\le t_2}^{t_1 \le}} \abs[\big]{\de_{t_1, t_2}(\Nt^+(\pt))}^m \dpt = \int_{\T_{\le t_2}^{t_1 \le}} \big( ((b + \ell) \w t_2) - (b \vee t_1) \big)^m \dbdell \\
        &\qquad = \int_{-\ff}^{t_1} \int_{t_1 - b}^{t_2 - b} (b + \ell - t_1)^m \dell \d b + \int_{-\ff}^{t_1} \int_{t_2 - b}^\ff (t_2 - t_1)^m \dell \d b \\
        &\qquad \phantom{=} + \int_{t_1}^{t_2} \int_0^{t_2 - b} \ell^m \dell \d b + \int_{t_1}^{t_2} \int_{t_2 - b}^\ff (t_2 - b)^m \dell \d b.
    \end{aligned} \]
    We treat the terms one by one.
    For the first integral, we substitute $a := b + \ell - t_1$:
    \[ \int_{-\ff}^{t_1} \int_{t_1 - b}^{t_2 - b} (b + \ell - t_1)^m \dell \d b = \int_{-\ff}^{t_1} \e^{- (t_1 - b)} \int_0^{t_2 - t_1} a^m \e^{-a} \d a \d b \le c_1 (t_2 - t_1) \in O(t_2 - t_1), \]
    where we used that $a^m \e^{-a} \le c_1$ is bounded in the second step.
    For the second integral, we have
    \[ \int_{-\ff}^{t_1} \int_{t_2 - b}^\ff (t_2 - t_1)^m \dell \d b = (t_2 - t_1)^m \e^{-(t_2 - t_1)} \in O(t_2 - t_1), \]
    where in the last step we used that $t_2 - t_1 \le 1$.
    For the third term, we have
    \[ \int_{t_1}^{t_2} \int_0^{t_2 - b} \ell^m \dell \d b \le \Ga(m + 1) (t_2 - t_1) \in O(t_2 - t_1), \]
    where we extended the integration domain from $[0, t_2 - b]$ to $[0, \ff)$.
    For the last integral,
    \[ \int_{t_1}^{t_2} \int_{t_2 - b}^\ff (t_2 - b)^m \dell \d b = \int_{t_1}^{t_2} (t_2 - b)^m \e^{-(t_2 - b)} \d b = \int_0^{t_2 - t_1} a^m \e^{-a} \d a \in O(t_2 - t_1), \]
    where we used the substitution $a := t_2 - b$ in the second step, and used that the integrand is bounded in the last step.
    Then, each of the terms is in $O(t_2 - t_1)$, which completes the proof for the \quote{plus case}.

    For the \quote{minus case},
    \[ \begin{aligned}
        \int_{\T^{0 \le}} &\abs[\big]{\de_{t_1, t_2}(\Nt^-(\pt))}^m \dpt = \int_{\T_{\le t_1}^{[t_1, t_2]}} (t_1 - b)^m \dbdell = \int_{-\ff}^{t_1} (t_1 - b)^m \big( \e^{-(t_1 - b)} - \e^{-(t_2 - b)} \big) \d b \\
        &= \big( 1 - \e^{-(t_2 - t_1)} \big) \int_0^\ff a^m \e^{-a} \d a = \Ga(m + 1) \big( 1 - \e^{-(t_2 - t_1)} \big) \in O(t_2 - t_1),
    \end{aligned} \]
    where we substituted $a := t_1 - b$ in the third step.
    Thus, $\int_{\T^{0 \le}} \abs[\big]{\de_{t_1, t_2}(\Nt^\pm(\pt))}^m \dpt \le (t_2 - t_1)^m$.

    \medskip
    \noindent
    \textbf{Part (\ref{lem:int_of_nt_pm:difference_p_prime})}.
    For the \quote{plus case},
    \[ \begin{aligned}
        \int_\R \bigg( &\int_{\T^{0 \le}} \1{r \in \de_{t_1, t_2}(\Nt^+(\pt))} \dpt \bigg)^m \d r = \int_\R \bigg( \int_{\T_{\le t_2}^{t_1 \le}} \1{b \vee t_1 \le r \le (b + \ell) \w t_2} \dbdell \bigg)^m \d r \\
        &= \int_\R \1{t_1 \le r \le t_2} \bigg( \int_\T \1{b \le r \le b + \ell} \dbdell \bigg)^m \d r \in O(t_2 - t_1),
    \end{aligned} \]
    since the inner integral is equal to $1$ since
    \[ \int_\T \1{b \le r \le b + \ell} \dbdell = \int_{-\ff}^t \int_{t - b}^\ff \dell \d b = \int_{-\ff}^t \e^{-(t - b)} \d b = 1. \]
    For the \quote{minus case}, we have
    \[ \begin{aligned}
        &\int_\R \bigg( \int_{\T^{0 \le}} \1{r \in \de_{t_1, t_2}(\Nt^-(\pt))} \dpt \bigg)^\a \d r = \int_{-\ff}^{t_1} \bigg( \int_{\T_{\le r}^{[t_1, t_2]}} \dpt \bigg)^\a \d r \\
        &\qquad = \int_{-\ff}^{t_1} \big( \e^{-(t_1 - r)} - \e^{-(t_2 - r)} \big)^\a \d r = \big( \e^{-t_1} - \e^{-t_2} \big)^\a \int_{-\ff}^{t_1} \e^{\a r} \d r = \f{1}{\a} \big( 1 - \e^{-(t_2 - t_1)} \big)^\a \in O(t_2 - t_1).
    \end{aligned} \]

    \medskip
    \noindent
    \textbf{Part (\ref{lem:int_of_nt_pm:cap})}.
    Note that by independence of the points $p_i$ and their neighborhoods $\Nt^\pm(\pt[i]; t)$, Fubini's theorem gives
    \[ \iint_{\big( \T^{0 \le} \big)^m} \abs{\Nt^\pm(\bspt[m]; t)} \d \bspt[m] = \int_\R \bigg( \int_{\T^{0 \le}} \1{r \in \Nt^\pm(\pt; t)} \dpt \bigg)^m \d r. \]
    For the \quote{plus case}, using Lemma~\ref{lem:size_of_Nt_pm}~(\ref{lem:size_of_Nt_pm:size_p_prime}), we have:
    \[ \begin{aligned}
        \int_\R &\bigg( \int_{\T^{0 \le}} \1{r \in \Nt^+(\pt; t)} \dpt \bigg)^m \d r = \int_\R \bigg( \int_{\T^{0 \le}} \1{b \le r \le (b + \ell) \w t} \dbdell \bigg)^m \d r \\
        &= \int_\R \bigg( \1{r \le 0} \int_{-\ff}^r \int_{- b}^\ff \dell \d b + \1{0 \le r \le t} \int_{-\ff}^r \int_{r - b}^\ff \dell \d b \bigg)^m \d r \\
        &= \int_{-\ff}^0 \e^{m r} \d r + \int_0^t \d r = \f{1}{m} + t.
    \end{aligned} \]
    For the \quote{minus case}, we have
    \[ \int_\R \bigg( \int_{\T^{0 \le}} \1{r \in \Nt^-(\pt; t)} \dpt \bigg)^m \d r \le \int_\R \bigg( \int_{\T^{0 \le}} \1{r \in \Nt^+(\pt; t)} \dpt \bigg)^m \d r. \]

    \medskip
    \noindent
    \textbf{Part (\ref{lem:int_of_nt_pm:power_cap_power})}.
    For the \quote{plus case}, we have
    \[ \begin{aligned}
        &\iint_{\big( \T_{\le 1}^{0 \le} \big)^2} \abs{\Nt^\pm(\pt[1]; t_1)}^{\a_1} \abs{\Nt^\pm(\pt[1]; t_1) \cap \Nt^\pm(\pt[2]; t_2)} \abs{\Nt^\pm(\pt[2]; t_2)}^{\a_2} \dpt[1] \dpt[2] \\
        &\qquad \le \int_{\T_{\le 1}^{0 \le}} \int_{\T^{0 \le}} \abs{\Nt^\pm(\pt[2]; t_2)}^{\a_2 + 1} \dpt[1] \dpt[2] \le c \int_{\T_{\le 1}^{0 \le}} \dpt \\
        &\qquad = c \int_{-\ff}^0 \int_{- b}^\ff \dell \d b + c \int_0^1 \int_0^\ff \dell \d b = 2 c < \ff
    \end{aligned} \]
    with some constant $c > 0$, and we used Lemma~\ref{lem:int_of_nt_pm}~(\ref{lem:int_of_nt_pm:power_p_bound}) in the second step for the integral with respect to $\pt[2]$.
\end{proof}

\appendix
\section{Proofs of Lemmas~\ref{lem:limiting_covariance_function_Snt_pm} and~\ref{lem:bounds_of_error_terms_pm}}\label{sec:appendix_proofs_cov_func_error_terms_pm}

The following two lemmas were necessary to apply Proposition~\ref{prop:multivariate_normal_limit} to show the finite-dimensional convergence of the plus and minus parts.
Here, we follow the same approach as in the proof of Proposition~\ref{prop:multivariate_normal_limit} and use the time interval-based decomposition of the edge counts to calculate the limiting covariance functions of~$\So_n^\pm$.
\begin{proof}[Proof of Lemma~\ref{lem:limiting_covariance_function_Snt_pm}]
    The proof follows the same steps as the proof of Proposition~\ref{prop:covariance_function_of_Snt}.
    Note that
    \[ \Cov{\So_n^\pm(s), \So_n^\pm(t)} = \f{1}{n} \Cov{S_n^\pm(s), S_n^\pm(t)}. \]
    For the covariance functions of $\So_n^\pm(t)$, we assume that $0 \le s \le t \le 1$, and decompose them based on
    \[ \begin{aligned}
        S_n^+(s) &= S_n^{A+}(s) \qquad \qquad S_n^+(t) = S_n^{A+}(s) + S_n^{B+}(s, t) + S_n^{C+}(s, t) \\
        S_n^-(s) &= S_n^{A-}(s) \qquad \qquad S_n^-(t) = S_n^{A-}(s) + S_n^{B-}(s, t) + S_n^{C-}(s, t),
    \end{aligned} \]
    where
    \[ \begin{aligned}
        S_n^{A+}(s) &:= \sum_{P \in \PP \cap (\S_n \ti \T^{0 \le})} \sum_{P' \in \PP'} \1{P' \in N^+(P; s)} \\
        S_n^{B+}(s, t) &:= \sum_{P \in \PP \cap (\S_n \ti \T^{0 \le}_{\le s})} \sum_{P' \in \PP'} \1{P' \in N^+(P; t)} \1{s \le R} \\
        S_n^{C+}(s, t) &:= \sum_{P \in \PP \cap (\S_n \ti \T_{[s, t]})} \sum_{P' \in \PP'} \1{P' \in N^+(P; t)}
    \end{aligned} \]
    for the \quote{plus case} and
    \begin{equation} \begin{aligned}
        S_n^{A-}(s) &:= \sum_{P \in \PP \cap (\S_n \ti \T^{[0, s]})} \sum_{P' \in \PP'} \1{P' \in N^-(P; s)} \\
        S_n^{B-}(s, t) &:= \sum_{P \in \PP \cap (\S_n \ti \T^{[s, t]})} \sum_{P' \in \PP'} \1{P' \in N^-(P; t)} \1{R \le s} \\
        S_n^{C-}(s, t) &:= \sum_{P \in \PP \cap (\S_n \ti \T^{[s, t]})} \sum_{P' \in \PP'} \1{P' \in N^-(P; t)} \1{s \le R}
    \end{aligned} \label{eq:decomp_edge_count_minus} \end{equation}
    for the \quote{minus case}.
    These decompositions of the edge counts are visualized in Figure~\ref{fig:covariance_intervals_plus_minus}.
    \begin{figure}[htb] \centering
        \begin{tikzpicture}[scale = 0.4]
            \draw[->,line width=0.07cm] (0,-3.75) -- (16.5,-3.75);
            \node[draw=none, anchor=north] at (16.5, -3.75) {$\R$};
            \node[draw=none, anchor=north] at (7.5, -3.75) {$s$};
            \node[draw=none, anchor=north] at (15, -3.75) {$t$};
            \node[draw=none] at (-1.5, 4.75) {\Large{$A+$}};
            \draw[line width=0.025cm] ( 0,5.25) -- (7.5,5.25);
            \draw[dashed, line width=0.025cm] (0,4.5) -- (7.5,4.5);
            \draw[fill=white] (7.5,4.5) circle (0.3);
            \draw[fill=white] (7.5,5.25) circle (0.3);
            \node[draw=none, anchor=south] at (3.75, 5.25) {$b$};
            \node[draw=none, anchor=north] at (3.75, 4.5) {$r$};
            \node[draw=none, ] at (-1.5, 1.75) {\Large{$B+$}};
            \draw[line width=0.025cm] (0,2) -- (7.5,2);
            \draw[dashed, line width=0.025cm] (7.5,1.25) -- (15,1.25);
            \draw[fill=white] (7.5,1.25) circle (0.3);
            \draw[fill=white] (15 ,1.25) circle (0.3);
            \draw[fill=white] (7.5,2.) circle (0.3);
            \node[draw=none, anchor=south] at (3.75, 2.) {$b$};
            \node[draw=none, anchor=north] at (11.25, 1.25) {$r$};
            \node[draw=none] at (-1.5, -1.25) {\Large{$C+$}};
            \draw[line width=0.025cm] (7.5,-1) -- (15,-1);
            \draw[dashed, line width=0.025cm] (7.5,-1.75) -- (15,-1.75);
            \draw[fill=white] (7.5,-1) circle (0.3);
            \draw[fill=white] (7.5,-1.75) circle (0.3);
            \draw[fill=white] (15,-1.75) circle (0.3);
            \draw[fill=white] (15,-1) circle (0.3);
            \node[draw=none, anchor=south] at (11.25, -1   ) {$b$};
            \node[draw=none, anchor=north] at (11.25, -1.75) {$r$};

            \draw[->,line width=0.07cm] (23,-3.75) -- (39.5,-3.75);
            \node[draw=none, anchor=north] at (39.5, -3.75) {$\R$};
            \node[draw=none, anchor=north] at (30.5, -3.75) {$s$};
            \node[draw=none, anchor=north] at (38,   -3.75) {$t$};
            \node[draw=none, ] at             (21   , 5   ) {\Large{$A-$}};
            \draw[line width=0.025cm]         (23   , 5.25) -- (30.5, 5.25);
            \draw[dashed, line width=0.025cm] (23   , 4.5 ) -- (30.5, 4.5 );
            \draw[fill=white]                 (30.5 , 4.5 ) circle (0.3);
            \draw[fill=white]                 (30.5 , 5.25) circle (0.3);
            \node[draw=none, anchor=south] at (26.75, 5.25) {$b+\ell$};
            \node[draw=none, anchor=north] at (26.75, 4.5 ) {$r$};
            \node[draw=none, ] at             (21   , 1.75) {\Large{$B-$}};
            \draw[line width=0.025cm]         (30.5 , 2   ) -- (38  , 2   );
            \draw[dashed, line width=0.025cm] (23   , 1.25) -- (30.5, 1.25);
            \draw[fill=white]                 (30.5 , 1.25) circle (0.3);
            \draw[fill=white]                 (30.5 , 2   ) circle (0.3);
            \draw[fill=white]                 (38   , 2   ) circle (0.3);
            \node[draw=none, anchor=south] at (34.25, 2   ) {$b + \ell$};
            \node[draw=none, anchor=north] at (26.75, 1.25) {$r$};
            \node[draw=none] at (21, -1.25) {\Large{$C-$}};
            \draw[line width=0.025cm] (30.5,-1) -- (38,-1);
            \draw[dashed, line width=0.025cm] (30.5,-1.75) -- (38,-1.75);
            \draw[fill=white] (30.5,-1   ) circle (0.3);
            \draw[fill=white] (38  ,-1.75) circle (0.3);
            \draw[fill=white] (38  ,-1   ) circle (0.3);
            \draw[fill=white] (30.5,-1.75) circle (0.3);
            \node[draw=none, anchor=south] at (34.25, -1   ) {$b + \ell$};
            \node[draw=none, anchor=north] at (34.25, -1.75) {$r$};
        \end{tikzpicture}
        \caption{
            Decomposition of the edge count functions $S_n^\pm(s)$ and $S_n^\pm(t)$.
            We require that $b \le r \le b+\ell$.
        }\label{fig:covariance_intervals_plus_minus}
    \end{figure}
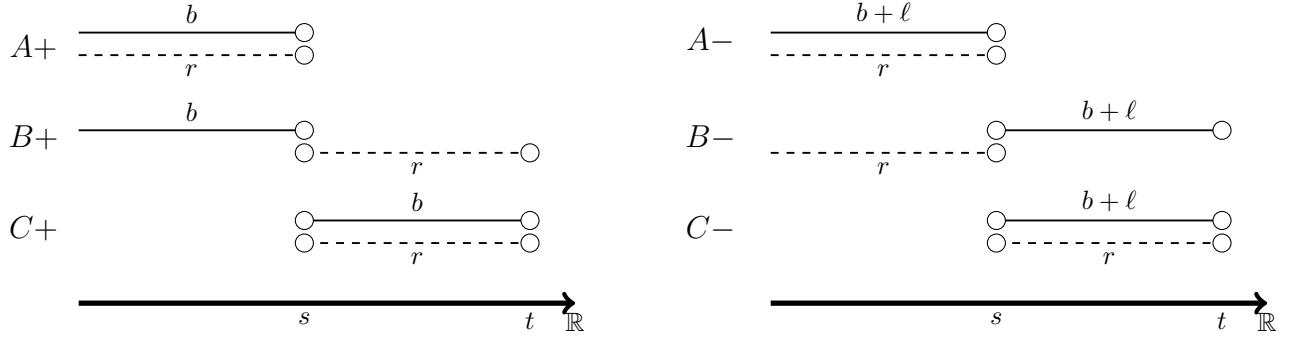
    With these notations, the covariance functions are given as follows:
    \begin{equation} \begin{aligned}
        \Cov{S_n^+(s), S_n^+(t)} &= \Var{S_n^{A+}(s)} + \Cov{S_n^{A+}(s), S_n^{B+}(s, t)} \\
        \Cov{S_n^-(s), S_n^-(t)} &= \Var{S_n^{A-}(s)} + \Cov{S_n^{A-}(s), S_n^{B-}(s, t)},
    \end{aligned} \label{eq:decomp_edge_count_pm} \end{equation}
    as $\Cov{S_n^{A+}(s), S_n^{C+}(s, t)} = \Cov{S_n^{A-}(s), S_n^{C-}(s, t)} = 0$ due to the independence properties of the Poisson processes $\PP$ and~$\PP'$.

    \medskip
    \noindent
    \textbf{Covariance function of $S_n^+(t)$.}
    We begin with the \quote{plus case}.
    The variance term in~\eqref{eq:decomp_edge_count_pm} is
    \[ \begin{aligned}
        &\Var{S_n^{A+}(s)} = \E[\Big]{\sum_{P \in \PP \cap \S_n \ti \T^{0 \le}} \hspace{-0.5cm} \deg^+(P; s)^2} \\
        &\qquad = \int_{\S_n \ti \T^{0 \le}} \E[\big]{\deg^+(p; s)^2} \mdp + \iint_{\big( \S_n \ti \T^{0 \le} \big)^2} \Cov[\big]{\deg^+(p_1; s), \deg^+(p_2; s)} \mdp[1] \mdp[2].
    \end{aligned} \]
    Using the same calculations as in the proof of Lemma~\ref{lem:mean_variance_of_Snt}, we see that $\E{\deg^+(p; s)^2} = \abs{N^+(p; s)} + \abs{N^+(p; s)}^2$, which leads to
    \[ \begin{aligned}
        \lim_{n \tff} \f{1}{n} \int_{\S_n \ti \T^{0 \le}} \E[\big]{\deg^+(p; s)^2} \mdp &= \sum_{k = 1}^2 \lim_{n \tff} \f{1}{n} \int_{\S_n} \abs{\Ns(\ps)}^k \d \ps \int_{\T^{0 \le}} \abs{\Nt^+(\pt; s)}^k \dpt \\
        &= \sum_{k = 1}^2 \Big( \f{2 \b}{1 - \g'} \Big)^k \f{k! (s + 1)}{1 - k \g},
    \end{aligned} \]
    where in the last step we used Lemma~\ref{lem:int_of_ns}~(\ref{lem:int_of_ns:power_p}) for the spatial part and Lemma~\ref{lem:int_of_nt_pm}~(\ref{lem:int_of_nt_pm:power_p_specific}) for the temporal part. 
    Following again the same calculations as in Lemma~\ref{lem:mean_variance_of_Snt}, we get $\Cov{\deg^+(p_1; s), \deg^+(p_2; s)} = \abs{N^+(p_1, p_2; s)}$.
    Using this result, we factorize the second integral:
    \[ \begin{aligned}
        \lim_{n \tff} \f{1}{n} \iint_{\big( \S_n \ti \T^{0 \le} \big)^2} &\abs{N^+(p_1, p_2; s)} \mdp[1] \mdp[2] \\
        &= \lim_{n \tff} \f{1}{n} \iint_{\S_n^2} \abs{\Ns(\ps[1], \ps[2])} \d \ps[1] \d \ps[2] \iint_{\big( \T^{0 \le} \big)^2} \abs{\Nt^+(\pt[1], \pt[2]; s)} \dpt[1] \dpt[2] < \ff,
    \end{aligned} \]
    where we used Lemma~\ref{lem:int_of_ns}~(\ref{lem:int_of_ns:common_p_0}) with $m = 2$ for the spatial part and Lemma~\ref{lem:int_of_nt_pm}~(\ref{lem:int_of_nt_pm:cap}) for the temporal part. 
    Similarly to the proof of Proposition~\ref{prop:covariance_function_of_Snt}, the covariance term $\Cov{S_n^{A+}(s), S_n^{B+}(s, t)}$ is determined by the common $\PP$-points of $S_n^{A+}(s)$ and $S_n^{B+}(s, t)$:
    \[ \begin{aligned}
        \lim_{n \tff} \f{1}{n} \Cov{S_n^{A+}(s), S_n^{B+}(s, t)} &= \lim_{n \tff} \f{1}{n} \int_{\S_n \ti \T_{\le s}^{0 \le}} \abs{N^+(p; s)} \int_{\S \ti [s, t]} \1{p' \in N^+(p; t)} \d p' \mdp = I_\msf{s} \ti I_\msf{t} \\
        I_\msf{s} &:= \lim_{n \tff} \f{1}{n} \int_{\S_n} \abs{\Ns(\ps)}^2 \d \ps \\
        I_\msf{t} &:= \int_{\T_{\le s}^{0 \le}} \abs{\Nt^+(\pt; s)} \int_{[s, t]} \1{r \in \Nt^+(\pt; t)} \d r \dpt,
    \end{aligned} \]
    where we used that the $\PP'$-points connecting to the point $p$ cannot be identical due to the disjoint sets the $\PP'$-points belong to.
    By Lemma~\ref{lem:int_of_ns}~(\ref{lem:int_of_ns:power_p}), we see that $I_\msf{s} < \ff$.
    Using Lemma~\ref{lem:int_of_nt_pm}~(\ref{lem:int_of_nt_pm:power_p_bound}), we see that $I_\msf{t}$ is bounded:
    \[ I_\msf{t} \le (t - s) \int_{\T^{0 \le}} \abs{\Nt^+(\pt; s)} \dpt < \ff. \]
    Thus, for some finite constant~$c_+ > 0$, $\lim_{n \tff} \Cov{\So_n^+(s), \So_n^+(t)} = c_+$.

    \medskip
    \noindent
    \textbf{Covariance function of $S_n^-(t)$.}
    Let us recall the notations in~\eqref{eq:decomp_edge_count_minus}.
    The variance term in~\eqref{eq:decomp_edge_count_pm} is
    \[ \begin{aligned}
        \Var{S_n^{A-}(s)} &= \E[\Big]{\sum_{P \in \PP \cap \S_n \ti \T^{[0, s]}} \hspace{-0.5cm} \deg^-(P; s)^2} \\
        &= \int_{\S_n \ti \T^{[0, s]}} \E[\big]{\deg^-(p; s)^2} \mdp + \iint_{\big( \S_n \ti \T^{[0, s]} \big)^2} \Cov[\big]{\deg^-(p_1; s), \deg^-(p_2; s)} \mdp[1] \mdp[2].
    \end{aligned} \]
    Using the same calculations as in Lemma~\ref{lem:mean_variance_of_Snt}, we have $\E[\big]{\deg^-(p; s)^2} = \abs{N^-(p; s)} + \abs{N^-(p; s)}^2$, thus
    \[ \lim_{n \tff} \f{1}{n} \int_{\S_n \ti \T^{[0, s]}} \E[\big]{\deg^-(p; s)^2} \mdp = \sum_{k = 1}^2 \lim_{n \tff} \f{1}{n} \int_{\S_n} \abs{\Ns(\ps)}^k \d \ps \int_{\T^{0 \le}} \abs{\Nt^-(\pt; s)}^k \dpt < \ff, \]
    where in the last step we used Lemma~\ref{lem:int_of_ns}~(\ref{lem:int_of_ns:power_p}) for the spatial part and Lemma~\ref{lem:int_of_nt_pm}~(\ref{lem:int_of_nt_pm:power_p_bound}) for the temporal part, and we require that $\g < 1/2$.
    As $\Cov{\deg^-(p_1; s), \deg^-(p_2; s)} = \abs{N^-(p_1, p_2; s)}$, factorizing the second integral, and the application of Lemma~\ref{lem:int_of_ns}~(\ref{lem:int_of_ns:common_p_0}) with $m = 2$ for the spatial part and Lemma~\ref{lem:int_of_nt_pm}~(\ref{lem:int_of_nt_pm:cap}) for the temporal part leads to
    \[ \lim_{n \tff} \f{1}{n} \iint_{\big( \S_n \ti \T^{[0, s]} \big)^2} \abs{N^-(p_1, p_2; s)} \mdp[1] \mdp[2] < \ff, \]
    where we require again that $\g' < 1/2$.
    The covariance term $\Cov{S_n^{A-}(s), S_n^{B-}(s, t)}$ is determined by the common $\PP'$-points of $S_n^{A-}(s)$ and $S_n^{B-}(s, t)$:
    \[ \begin{aligned}
        \lim_{n \tff} \f{1}{n} &\Cov{S_n^{A-}(s), S_n^{B-}(s, t)} \\
        &= \lim_{n \tff} \f{1}{n} \int_{\S_n \ti \T^{[0, s]}} \int_{\S_n \ti \T^{[s, t]}} \int_{\S \ti \R} \1{p' \in N^-(p_1; s)} \1{p' \in N^-(p_2; s)} \d p' \mdp[1] \mdp[2] = I_\msf{s} \ti I_\msf{t} \\
        I_\msf{s} &:= \lim_{n \tff} \f{1}{n} \int_{\S_n^2} \abs{\Ns(\ps[1], \ps[2])} \d \ps[1] \d \ps[2] \\
        I_\msf{t} &:= \int_{\T^{[0, s]}} \int_{\T^{[s, t]}} \abs{\Nt^-(\pt[1], \pt[2]; s)} \dpt[2] \dpt[1],
    \end{aligned} \]
    where we used that the $\PP$-points connecting to the point $p'$ cannot be identical due to the disjoint sets the $\PP$-points belong to.
    By Lemma~\ref{lem:int_of_ns}~(\ref{lem:int_of_ns:common_p_0}), $I_\msf{s} < \ff$ if $\g' < 1/2$.
    The temporal integral $I_\msf{t}$ is bounded by Lemma~\ref{lem:int_of_nt_pm}~(\ref{lem:int_of_nt_pm:cap}):
    \[ I_\msf{t} \le \iint_{\big( \T^{0 \le} \big)^2} \abs{\Nt^-(\pt[1], \pt[2]; s)} \dpt[1] \dpt[2] < \ff. \]
    Thus, for some constant $c_- > 0$, the covariance function of $\So_n^-(t)$ is given by $\lim_{n \tff} \Cov{\So_n^-(s), \So_n^-(t)} = c_-$, as desired.
\end{proof}

\begin{proof}[Proof of Lemma~\ref{lem:bounds_of_error_terms_pm}]
    The proof follows the same steps as the proof of Lemma~\ref{lem:bounds_of_error_terms}, and we calculate only the parts that differ from the previous calculations.
    As the spatial parts of the integrals are identical, we only need to show that the temporal integrals are finite.

    \medskip
    \noindent
    \textbf{Error term $E_1(n)$.}
    In case of the error term $E_1(n)$, if $\pw_1, \pw_2 \in \S_n \ti \T^{0 \le}$ and $\pw_3 \in \S \ti \R$, we have
    \[ \int_{\T^{0 \le}} \abs{\Nt^\pm(\pt; t)}^{m_1} \dpt \int_{\T^{0 \le}} \abs{\Nt^\pm(\pt; t)}^{m_2 + 1} \dpt < \ff \]
    due to Lemma~\ref{lem:int_of_nt_pm}~(\ref{lem:int_of_nt_pm:power_p_bound}).
    If $\pw_1, \pw_2 \in \S \ti \R$ and $\pw_3 \in \S_n \ti \T^{0 \le}$, then the argument in the proof of Lemma~\ref{lem:bounds_of_error_terms} applies, if we show first that
    \begin{equation}
        \int_{\T^{0 \le}} \abs{\Nt^\pm(\pt; t)}^2 \dpt < \ff,
        \label{eq:integral_of_Nt2_plus_minus}
    \end{equation}
    which holds due to Lemma~\ref{lem:int_of_nt_pm}~(\ref{lem:int_of_nt_pm:power_p_bound}), and second that
    \[ \int_{\T^{0 \le}} \bigg( \int_\R \1{r \in \Nt^\pm(\pt[1]; s)} \int_{\T^{0 \le}} \1{r \in \Nt^\pm(\pt[2]; t)} \dpt[2] \d r \bigg)^2 \dpt[1] < \ff. \]
    As the inner integral is a finite constant due to Lemma~\ref{lem:int_of_nt_pm}~(\ref{lem:int_of_nt_pm:power_p_bound}), the expression reduces to~\eqref{eq:integral_of_Nt2_plus_minus}, which is finite.

    \medskip
    \noindent
    \textbf{Error term $E_2(n)$.}
    Next, we turn our attention to the error term $E_2(n)$.
    Following along the same lines as in the proof of Lemma~\ref{lem:bounds_of_error_terms}, we need to show that
    \[ \iint_{(\T^{0 \le})^2} \abs{\Nt^\pm(\pt[1], \pt[2]; s)} \dpt[1] \dpt[2] < \ff, \]
    which follows from Lemma~\ref{lem:int_of_nt_pm}~(\ref{lem:int_of_nt_pm:cap}), and that~\eqref{eq:integral_of_Nt2_plus_minus} holds.

    \medskip
    \noindent
    \textbf{Error term $E_3(n)$.}
    For the error term $E_3(n)$,
    \[ \int_{\T^{0 \le}} \abs{\Nt^\pm(\pt; t)}^m \dpt < \ff \qquad \text{and} \qquad \int_\R \bigg( \int_{\T^{0 \le}} \1{r \in \Nt^+(\pt; t)} \dpt \bigg)^m \d r < \ff \]
    by Lemmas~\ref{lem:int_of_nt_pm}~(\ref{lem:int_of_nt_pm:power_p_bound}), and~\ref{lem:int_of_nt_pm}~(\ref{lem:int_of_nt_pm:cap}), respectively.
\end{proof}

\section{Proofs of Lemmas~\ref{lem:variance_high_mark_edge_count},~\ref{lem:variance_change_of_high_mark_edge_count}, and~\ref{lem:cumulant_term_cases_ge} used in Section~\ref{subsec:thm_stable_part_1}}\label{sec:appendix_proofs_part_1}

The following proofs verify the lemmas used to show that the finite-dimensional distributions of the high-mark edge count $\So_n^\ge$ converge to $0$.
The ideas are similar to the proof of Theorem~\ref{thm:functional_normal}.
\begin{proof}[Proof of Lemma~\ref{lem:variance_high_mark_edge_count}]
    After the application of Mecke's formula, similarly to~\eqref{eq:variance_Snt}, we have
    \[ \Var{S_n^{\ge, \pm}(t)} = \int_{\S_n^{u_n \le} \ti \T^{0 \le}} \E{\deg^\pm(p; t)^2} \mdp + \iint_{(\S_n^{u_n \le} \ti \T^{0 \le})^2} \Cov{\deg^\pm(p_1; t), \deg^\pm(p_2; t)} \mdp[1] \mdp[2]. \]
    Using~\eqref{eq:expectation_of_degree_square}, we have for the first term
    \[ \begin{aligned}
        \int_{\S_n^{u_n \le} \ti \T^{0 \le}} \E{\deg^\pm(p; t)^2} \mdp &= \int_{\S_n^{u_n \le} \ti \T^{0 \le}} \abs{N^\pm(p; t)} + \abs{N^\pm(p; t)}^2 \mdp \\
        &= n \big( c_1(t) \big( 1 - u_n^{1 - \g} \big) + c_2(t) \big( u_n^{- (2\g - 1)} - 1 \big) \big),
    \end{aligned} \]
    where $\g > 1/2$, $c_1(t), c_2(t) > 0$ are $t$-dependent constants, and we used Lemmas~\ref{lem:int_of_ns}~(\ref{lem:int_of_ns:power_p}) and~\ref{lem:int_of_nt_pm}~(\ref{lem:int_of_nt_pm:power_p_specific}) for the spatial and temporal parts respectively.
    For a large enough~$n$, the terms above can be bounded by $c_2(t) n u_n^{2 \g - 1}$.
    As $u_n = n^{-2/3}$ the first term is in $O(n^{1 + 2/3 (2 \g - 1)}) \subset o(n^{2 \g})$.
    Next, we calculate the covariance term by following again the steps in the proof of Lemma~\ref{lem:mean_variance_of_Snt}:
    \[ \iint_{(\S_n^{u_n \le} \ti \T^{0 \le})^2} \Cov{\deg^\pm(p_1; t), \deg^\pm(p_2; t)} \mdp[1] \mdp[2] = \iint_{(\S_n^{u_n \le} \ti \T^{0 \le})^2} \abs{N^\pm(p_1, p_2; t)} \mdp[1] \mdp[2]. \]
    Requiring $\g' < 1/2$, we bound the spatial part using Lemma~\ref{lem:int_of_ns}~(\ref{lem:int_of_ns:common_p_minus}) and the temporal part by $t + 1/2$ using Lemma~\ref{lem:int_of_nt_pm}~(\ref{lem:int_of_nt_pm:cap}).
    Thus, the covariance term is of order $O(n) \su o(n^{2 \g})$.
\end{proof}

\begin{proof}[Proof of Lemma~\ref{lem:variance_change_of_high_mark_edge_count}]
    As in~\eqref{eq:poincare}, we bound the variance term using the Poincar\'e inequality~\cite[Theorem~18.7]{poisBook}, and following~\eqref{eq:poincare_first_term_integrand}, we recognize again that $D_p(\De_n^{\ge, \pm} (s, t))$ is Poisson distributed with mean $\E{D_p(\De_n^{\ge, \pm} (s, t))} = \abs{N^\pm(p; t) \sm N^\pm(p; s)}$.
    We apply the same steps as in~\eqref{eq:poincare_first_term}, and then by the application of Lemmas~\ref{lem:int_of_ns}~(\ref{lem:int_of_ns:power_p}) and~\ref{lem:int_of_nt_pm}~(\ref{lem:int_of_nt_pm:difference_p}), we obtain
    \[ \begin{aligned}
        &\int_{\S_n^{u_n \le} \ti \T^{0 \le}} \abs[\big]{N^\pm(p; t) \sm N^\pm(p; s)} + \abs[\big]{N^\pm(p; t) \sm N^\pm(p; s)}^2 \mdp \\
        &\qquad \le n (t - s) \big( c_1 (1 - u_n^{1 - \g}) + c_2 (u_n^{-(2 \g - 1)} - 1) \big),
    \end{aligned} \]
    where $c_1, c_2 > 0$ are positive constants, and as $u_n > 0$, the bound is valid for~$\g \in (1/2, 1)$.
    To show that the above integral is in $O(n^{2 \g} (t - s)^{(1 + \eta) / 2})$, we check the order of each term:
    \[ \begin{aligned}
        n (t - s)                   \in O(n^{2 \g} (t - s)^{(1 + \eta) / 2}) \qquad &\Longleftrightarrow \qquad (t - s)^{(1 - \eta) / 2} \in O(n^{2 \g - 1}) \\
        n (t - s) u_n^{1 - \g}      \in O(n^{2 \g} (t - s)^{(1 + \eta) / 2}) \qquad &\Longleftrightarrow \qquad (t - s)^{(1 - \eta) / 2} \in O(n^{(4 \g - 1) / 3}) \\
        n (t - s) u_n^{-(2 \g - 1)} \in O(n^{2 \g} (t - s)^{(1 + \eta) / 2}) \qquad &\Longleftrightarrow \qquad (t - s)^{(1 - \eta) / 2} \in O(n^{(2 \g - 1) / 3})
    \end{aligned} \]
    as $\eta = 1/3$, $u_n = n^{-2/3}$. 
    Similarly to~\eqref{eq:poincare_second_term_integrand}, the cost operator $D_{p'}(\De_n^{\ge, \pm}(s, t))$ is also Poisson distributed with mean
    \[ \E{D_{p'}(\De_n^{\ge, \pm}(s, t))} = \int_{\S_n^{u_n \le} \ti \T^{0 \le}} \1{p' \in N^\pm(p; t) \sm N^\pm(p; s)} \mdp. \]
    Next, we follow along~\eqref{eq:poincare_second_term} and Lemmas~\ref{lem:int_of_ns}~(\ref{lem:int_of_ns:power_p_prime}) and~\ref{lem:int_of_nt_pm}~(\ref{lem:int_of_nt_pm:difference_p_prime}) yield
    \[ \int_{\S \ti \R} \E[\big]{D_{p'}(\De_n^{\ge, \pm}(s, t))} + \E[\big]{D_{p'}(\De_n^{\ge, \pm}(s, t))}^2 \d p' \in O(n (t - s)), \]
    where we used Lemmas~\ref{lem:int_of_ns}~(\ref{lem:int_of_ns:power_p_prime}) and~\ref{lem:int_of_nt_pm}~(\ref{lem:int_of_nt_pm:cap}) to bound the spatial and temporal parts, respectively.
\end{proof}

To show Condition~\ref{condition:cumulant} of Theorem~\ref{thm:davydov} for the high-mark edge count $\So_n^\ge$, we need to bound the cumulant term $\k_4(\De_n^{\ge, \pm}(s, t))$, which is done in the following proof.
\begin{proof}[Proof of Lemma~\ref{lem:cumulant_term_cases_ge}]
    The proof is similar to the proof of Lemma~\ref{lem:cumulant_term_cases} for the thin-tailed case.
    In this proof, we follow the same steps.

    \medskip
    \noindent
    \textbf{\ref{im:cumulant_case_all_in_one_ge}.}
    In \ref{im:cumulant_case_all_in_one_ge}, as in~\eqref{eq:cumulant_formula_for_rho_0}, we write the cumulant term $\k_4(\De_n^{\ge, \pm}(s, t))$ using~\cite[Proposition~3.2.1]{wiener_chaos}.
    Then, following the steps of~\eqref{eq:cumulant_bound_fourth_moment}, taking absolute values, applying triangle and Jensen's inequalities leads to $\abs{\k_4(\De_{1, n}^{\ge, \pm}(s, t))} \le c_1 n \E[\big]{\big( V_1^\pm(t) - V_1^\pm(s) \big)^4}$ with some $c_1 > 0$.
    Following the fourth moment calculations through~\eqref{eq:fourth_moment_details_1}~--~\eqref{eq:fourth_moment_details_3} with $\V_i^{u_n \le}$ and $\De_n^{\ge, \pm}$ in place of $\V_i$ and $\De_n^\pm$, respectively, we obtain that $\E{(\De_{1, n}^{\ge, \pm})^4}$ can be bounded by sums of products of integrals of the form
    \[ \int_{\V_i^{u_n \le} \ti \T^{0 \le}} \abs{\de_{s, t}(N^\pm(p))}^m \mdp \le c_3 u_n^{- (m \g - 1)_+} (t - s)^m \qquad \qquad m \in \set{1, 2, 3, 4}, \]
    which was bounded using Lemmas~\ref{lem:int_of_ns}~(\ref{lem:int_of_ns:power_p}) and~\ref{lem:int_of_nt_pm}~(\ref{lem:int_of_nt_pm:difference_p}) without imposing any constraints on the parameter~$\g \in (1/2, 1)$.
    This in turn leads to the bounds
    \[ \E[\big]{\big( \De_{1, n}^{\ge, \pm} \big)^4} \le \sum_{H_1 \dots, H_Q} \prod_{q = 1}^Q c_q u_n^{-(m_q \g - 1)_+} (t - s)^{m_q} \qquad \text{and} \qquad \sum_{q = 1}^Q m_q \le 4, \]
    where the sum is over all partitions $\set{H_1 \dots, H_Q} \preceq \set{1, \dots, 4}$ of the indices $\set{1, \dots, 4}$ into~$Q$ groups, $c_q > 0$, and $m_q$ is the number of factors in the $q$th group.
    The number of factors in the terms are denoted by $Q \in \set{1, \dots, 4}$ and $m_q \in \set{1, \dots, 4}$ for all indices $q \in \set{1, \dots, Q}$.
    Expanding the product, we obtain a similar bound regardless of the value of~$Q \le 4$:
    \[ \begin{aligned}
        \E[\big]{\big( \De_{1, n}^{\ge, \pm} \big)^4} \le \sum_{H_1 \dots, H_Q} \prod_{q = 1}^4 \Big( (t - s)^{m_q} \Big) \bigg( &c_4 + c_5 \sum_{q \le 4} u_n^{-(m_q \g - 1)_+} + c_6 \sum_{q_1 < q_2 \le 4} u_n^{-((m_{q_1} + m_{q_2}) \g - 2)_+} \\
        &+ c_7 \sum_{q_1 < q_2 < q_3 \le 4} u_n^{- ((m_{q_1} + m_{q_2} + m_{q_3}) \g - 3)_+} + c_8 u_n^{- (\sum_{q = 1}^4 m_q \g - 4)_+} \bigg),
    \end{aligned} \]
    where $c_4, \dots, c_8 > 0$ are positive constants.
    As $\sum_{q = 1}^Q m_q \le 4$, each of the terms involving~$u_n$ can be bounded by $c_9 u_n^{-(4 \g - 1)} (t - s)$ with some constant $c_9 > 0$.
    For~$n$ blocks, we need to show that
    \[ n u_n^{-(4 \g - 1)} (t - s) \in O(n^{4\g} (t - s)^{1 + \eta}) \quad \Longleftrightarrow \quad n u_n^{-(4 \g - 1)} \in O(n^{4 \g} (t - s)^\eta). \]
    Since $t - s > n^{-1/2}$ and $\g \in (1/2, 1)$, setting $\eta = 1/3$ and $u_n = n^{-2/3}$ yields $n^{1 + 2/3 (4 \g - 1)} \in O(n^{4 \g - 1 / 6})$.

    \medskip
    \noindent
    \textbf{\ref{im:cumulant_case_2_2_ge}.}
    In \ref{im:cumulant_case_2_2_ge}, we follow the same steps as in the proof of Lemma~\ref{lem:cumulant_term_cases} to conclude that $\abs{\k_4(\De_n^{\ge, \pm}(s, t))} \in O(n^{2/3 (3 \g - 1)})$ if $\g' < 1/4$, without any constraints on parameter $\g$.
    The orders of the terms $T_\msf{prod}$ and $T_\msf{cov}$ are presented in Table~\ref{tab:partitions_of_jkl}.

    \medskip
    \noindent
    \textbf{\ref{im:cumulant_case_1_3_ge}.}
    In \ref{im:cumulant_case_1_3_ge}, we introduce the notations
    \[ T_\msf{prod} := \prod_{b = 2}^q \E[\Big]{\prod_{m \in M^{(2)}_b} \De_{m, n}^{\ge, \pm}} \qquad \text{and} \qquad T_\msf{cov} := \abs[\Big]{\Cov[\Big]{\De_{i, n}^{\ge, \pm}, \prod_{m \in M^{(2)}_1} \De_{m, n}^{\ge, \pm}}} \]
    for the product and the covariance terms, respectively, and bound the cumulant $\abs{\k_4(\De_{i, n}^{\ge, \pm}$, $\De_{j, n}^{\ge, \pm}$, $\De_{k, n}^{\ge, \pm}$, $\De_{\ell, n}^{\ge, \pm})}$ as in~\eqref{eq:complex_cumulant_formula} by
    $\abs{\k_4(\De_{i, n}^{\ge, \pm}, \De_{j, n}^{\ge, \pm}, \De_{k, n}^{\ge, \pm}, \De_{\ell, n}^{\ge, \pm})} \le c_{10} \sum_{M^{(2)}_1, \dots, M^{(2)}_q} T_\msf{prod} T_\msf{cov}$.
    To ease understanding, we expanded the formula for each partition of $\set{i, j, k, \ell}$ in Table~\ref{tab:partitions_of_jkl}.
    \begin{table} [htb] \centering \caption{
        Possible partitions of the indices $\set{i, j, k, \ell}$ in \ref{im:cumulant_case_1_3_ge} and \ref{im:cumulant_case_2_2_ge}.
    } \label{tab:partitions_of_jkl}
        \begin{tabular}{|l|l|ll|ll|} \hline
            ~ & partition & $T_\msf{prod}$ & order & $T_\msf{cov}$ & order \\ \hline
            \parbox[t]{4mm}{\multirow{4}{*}{\rotatebox[origin=c]{90}{\ref{im:cumulant_case_1_3_ge}}}}
            & $\set{i}, \set{j, k, \ell}$                      & 1                                                        & $O(1)$                 & $\abs[\big]{\Cov[\big]{\De_{i, n}^{\ge, \pm}, \De_{j, n}^{\ge, \pm} \De_{k, n}^{\ge, \pm} \De_{\ell, n}^{\ge, \pm}}}$  & $O(n^{2/3 (3 \g - 1)})$ \\
            & $\set{i}, \set{j}, \set{k, \ell}$                & $\E{\De_{k, n}^{\ge, \pm} \De_{\ell, n}^{\ge, \pm}}$     & $O(n^{2/3(2 \g - 1)})$ & $\abs[\big]{\Cov[\big]{\De_{i, n}^{\ge, \pm}, \De_{j, n}^{\ge, \pm}}}$                                       & $O(1)$                  \\
            & $\set{i}, \set{j, k}, \set{\ell}$                & $\E{\De_{\ell, n}^{\ge, \pm}}$                           & $O(1)$                 & $\abs[\big]{\Cov[\big]{\De_{i, n}^{\ge, \pm}, \De_{j, n}^{\ge, \pm} \De_{k, n}^{\ge, \pm}}}$                      & $O(n^{2/3 (2 \g - 1)})$ \\
            & $\set{i}, \set{j}, \set{k}, \set{\ell}$          & $\E{\De_{k, n}^{\ge, \pm}} \E{\De_{\ell, n}^{\ge, \pm}}$ & $O(1)$                 & $\abs[\big]{\Cov[\big]{\De_{i, n}^{\ge, \pm}, \De_{j, n}^{\ge, \pm}}}$                                       & $O(1)$                  \\ \hline
            \parbox[t]{4mm}{\multirow{4}{*}{\rotatebox[origin=c]{90}{\ref{im:cumulant_case_2_2_ge}}}}
            & $\set{i, j}, \set{k, \ell}$             & 1                                                        & $O(1)$ & $\abs[\big]{\Cov[\big]{\De_{i, n}^{\ge, \pm}  \De_{j, n}^{\ge, \pm}, \De_{k, n}^{\ge, \pm} \De_{\ell, n}^{\ge, \pm}}}$ & $O(n^{2/3 (3 \g - 1)})$ \\
            & $\set{i, j}, \set{k}, \set{\ell}$       & $\E{\De_{\ell, n}^{\ge, \pm}}$                           & $O(1)$ & $\abs[\big]{\Cov[\big]{\De_{i, n}^{\ge, \pm}  \De_{j, n}^{\ge, \pm}, \De_{k, n}^{\ge, \pm}}}$                     & $O(n^{2/3 (3 \g - 1)})$ \\
            & $\set{i}, \set{j}, \set{k, \ell}$       & $\E{\De_{i, n}^{\ge, \pm}}$                              & $O(1)$ & $\abs[\big]{\Cov[\big]{\De_{j, n}^{\ge, \pm}, \De_{k, n}^{\ge, \pm}  \De_{\ell, n}^{\ge, \pm}}}$                  & $O(n^{2/3 (3 \g - 1)})$ \\
            & $\set{i}, \set{j}, \set{k}, \set{\ell}$ & $\E{\De_{i, n}^{\ge, \pm}} \E{\De_{\ell, n}^{\ge, \pm}}$ & $O(1)$ & $\abs[\big]{\Cov[\big]{\De_{j, n}^{\ge, \pm}, \De_{k, n}^{\ge, \pm}}}$                                       & $O(1)$                  \\ \hline
        \end{tabular}
    \end{table}
    We bound $\E{\De_{k, n}^{\ge, \pm} \De_{\ell, n}^{\ge, \pm}}$ using Cauchy--Schwarz inequality:
    \[ \E{\De_{k, n}^{\ge, \pm} \De_{\ell, n}^{\ge, \pm}} \le \E[\big]{\big( \De_{k, n}^{\ge, \pm} \big)^2}^{1/2} \E[\big]{\big( \De_{\ell, n}^{\ge, \pm} \big)^2}^{1/2} = \E[\big]{\big( \De_{1, n}^{\ge, \pm} \big)^2} \le \E{V_1^{\ge, \pm}(t)^2}, \]
    where we used that $\De_{k, n}^{\ge, \pm}$ and $\De_{\ell, n}^{\ge, \pm}$ are identically distributed in the second step, and that $V_i^{\ge, \pm}(t)$ is monotone in the second step.
    Following the calculations of \ref{im:cumulant_case_all_in_one_ge} for the second moment, we have
    \[ \E{(V_i^{\ge, \pm})^2} \le \prod_{q = 1}^Q c_q u_n^{- (m_q \g - 1)_+} \qquad \qquad \sum_{q = 1}^Q m_q \le 2, \]
    where $Q \in \set{1, 2}$ and $m_q \in \set{1, 2}$ for all indices $q \in \set{1, \dots Q}$.
    If $Q = 2$, then
    \[ \E{(V_i^{\ge, \pm})^2} \le c_{11} \Big( 1 + \sum_{q \le 2} u_n^{- (m_q \g - 1)_+} + \sum_{q_1 < q_2 \le 2} u_n^{- ((m_{q_1} + m_{q_2}) \g - 2)_+} \Big), \]
    with some constant $c_{11} > 0$.
    We bound again each term in the parentheses involving $u_n$ by $c_{12} u_n^{-(2 \g - 1)} \in O(n^{2/3 (2 \g - 1)})$ with some constant $c_{12} > 0$, where $u_n = n^{-2/3}$.
    For the first moments $\E{(V_i^{\ge, \pm})}$, it is easy to see that they are elements of $O(1)$.
    We move on to the covariance term $\abs{\Cov{\De_{i, n}^{\ge, \pm}, \De_{j, n}^{\ge, \pm} \De_{k, n}^{\ge, \pm} \De_{\ell, n}^{\ge, \pm}}}$ in the case $\abs{M^{(2)}_1} = 3$.
    Note that in this case the product term is of order $O(1)$.
    We apply again~\eqref{eq:conditional_covariance}:
    \[ \abs[\big]{\Cov[\big]{\De_{i, n}^{\ge, \pm}, \De_{j, n}^{\ge, \pm} \De_{k, n}^{\ge, \pm} \De_{\ell, n}^{\ge, \pm}}} = \abs[\big]{\E[\big]{\Cov[\big]{\De_{i, n}^{\ge, \pm}, \De_{j, n}^{\ge, \pm} \De_{k, n}^{\ge, \pm} \De_{\ell, n}^{\ge, \pm} \biggiven \PP}}}, \]
    and we bound the conditional covariance using bilinearity as in~\eqref{eq:bilinearity_bound}, where we use $\V_i^{u_n \le}$, $\V_j^{u_n \le}$, $\V_k^{u_n \le}$, $\V_\ell^{u_n \le}$ in place of $\V_i$, $\V_j$, $\V_k$, $\V_\ell$ for the domains of the points $P_1$, $P_2$, $P_3$, $P_4$, respectively.
    The arguments for~\eqref{eq:covterms}~--~\eqref{eq:covterms_Mecke} can be applied without any changes, and we obtain the bound
    \[ \abs[\big]{\k_4(\De_n^{\ge, \pm}(s, t))} \le c_{13} \E[\Bigg]{\sum_{a = 0}^{n - 1} \sum_{\substack{i, j, k, \ell\\ \r^\ge(i, j, k, \ell) = a}} \sum_{\substack{P_1 \in \PP \cap (\V_i^{u_n \le} \cap \T^{0 \le}), P_2 \in \PP \cap (\V_j^{u_n \le} \cap \T^{0 \le}) \\ P_3 \in \PP \cap (\V_k^{u_n \le} \cap \T^{0 \le}), P_4 \in \PP \cap (\V_\ell^{u_n \le} \cap \T^{0 \le})}} A(\bs{P}_4, \pmb{\s}_4)}. \]
    Depending on which of the points $P_2, P_3, P_4$ are identical, we apply the Mecke formula to all the cases.
    The integrals with respect to $p_3$, $p_4$ factor again leading to factors with bounds
    \[ \begin{aligned}
        \int_{\S^{u_n \le}_{[j - a, j + a + 1]} \ti \T^{0 \le}} \abs{N^\pm(p_3, \s_3)}^{m_3} \mdp &\le c_{14} a u_n^{- (m_3 \g - 1)_+} \qquad m_3 \in \set{0, 1, 2}  \\
        \int_{\S^{u_n \le}_{[j - a, j + a + 1]} \ti \T^{0 \le}} \abs{N^\pm(p_4, \s_4)}^{m_4} \mdp &\le c_{14} a u_n^{- (m_4 \g - 1)_+} \qquad m_4 \in \set{0, 1, 2},
    \end{aligned} \]
    with some constant $c_{14} \in \R$, where we used Lemmas~\ref{lem:int_of_ns}~(\ref{lem:int_of_ns:power_p}) and~\ref{lem:int_of_nt_pm}~(\ref{lem:int_of_nt_pm:power_p_bound}) with $u_- = u_n > 0$, which requires no constraint on~$\g \in (1/2, 1)$.
    For the integral with respect to $p_2$, similarly to~\eqref{eq:bound_pj_factor}, the following bound holds:
    \[ \begin{aligned}
        \abs[\big]{\k_4(\De_n^{\ge, \pm}(s, t))} &\le c_{15} \prod_{q \in \set{3, 4}} u_n^{- (m_q \g - 1)_+} \int_{\S_n^{u_n \le} \ti \T^{0 \le}} \int_{\S_n^{u_n \le} \ti \T^{0 \le}} \abs{N^\pm(p_1, \s_1)}^{m_1} \abs{N^\pm(p_1, \s_1) \cap N^\pm(p_2, \s_2)} \\
        &\hspace{5cm} \ti \abs{N^\pm(p_2, \s_2)}^{m_2} (\abs{x_1 - x_2}^{m_a} + 1) \mdp[2] \mdp[1],
    \end{aligned} \]
    where $m_1 = 0$ and $\sum_{q = 1}^{m_a} m_q \le 2$, $m_a \in \set{0, 1, 2}$.
    The temporal part can be bounded with Lemma~\ref{lem:int_of_nt_pm}~(\ref{lem:int_of_nt_pm:power_cap_power}), and the spatial part is bounded by Lemma~\ref{lem:int_of_ns}~(\ref{lem:int_of_ns:power_cap_power_minus}) with $u_- = u_n$.
    Then,
    \[ \begin{aligned}
        \abs[\big]{\k_4(\De_n^{\ge, \pm}(s, t))} &\le c_{16} n \prod_{q \in \set{3, 4}} u_n^{- (m_q \g - 1)_+} \\
        &\quad \ti \Big( u_n^{- ((1 + m_2 + m_a) \g - 1)_+ - ((1 + m_1) \g - 1)_+} + u_n^{- ((1 + m_1 + m_a) \g - 1)_+ - ((1 + m_2) \g - 1)_+} \Big),
    \end{aligned} \]
    where $c_{16} > 0$ is a large enough constant.
    To bound each term in the expansion of the product above, we would like to find the most negative exponent of~$u_n$.
    The exponent $-(m_q \g - 1)_+$ is nonzero only if $m_q \ge 2$.
    Looking into Table~\ref{tab:max_exponent_constraints}, this happens only if $P_3 = P_4$, in which case $m_1 = m_2 = 0$ and $m_a = 1$.
    Then, the exponent of~$u_n$ can be bounded by $-2 (2 \g - 1)$.
    Otherwise, $-(m_q \g - 1)_+ = 0$, and then $m_1 + m_2 + m_a \le 2$.
    In these cases, we bound the exponent of~$u_n$ by $-(3 \g - 1)$.
    Then, as $u_n = n^{-2/3}$, $T_\msf{cov} \in O(n^{2/3 (3 \g - 1)})$, and thus $T_\msf{prod} T_\msf{cov} \in O(n^{2/3 (3 \g - 1)})$ since $\E{\De_{k, n}^{\ge, \pm} \De_{\ell, n}^{\ge, \pm}}$ can only appear in $T_\msf{prod}$ if $M^{(2)}_1 = \set{j}$.
    If we look at the partition $\set{j}$, $\set{k, \ell}$ of the indices $j, k, \ell$, then the product term is or order $O(n^{2/3 (2 \g - 1)})$, and all the exponents $m_1 = m_2 = m_3 = m_4 = m_a = 0$ in the covariance term.
    Then, the covariance term is of order $O(1)$, and $T_\msf{prod} T_\msf{cov} \in O(n^{2/3 (2 \g - 1)})$.
    Following the same train of thought, we arrive to the orders of the terms $T_\msf{prod}$ and $T_\msf{cov}$ as in Table~\ref{tab:partitions_of_jkl}.
    Thus, we conclude that
    \[ \abs[\big]{\k_4(\De_n^{\ge, \pm}(s, t))} = c_{16} n T_\msf{prod} T_\msf{cov} \in O(n^{2 \g + 1/3}), \]
    as desired.
\end{proof}

Finally, Condition~\ref{condition:third} of Theorem~\ref{thm:davydov} is about the convergence of the expected increments $\E[\big]{\De_n^{\ge, \pm}(t_k, t_{k + 1})}$, which in proved next.
\begin{proof}[Proof of Lemma~\ref{lem:expectation_of_increments_ge}]
    Following the same argument as in the proof of Lemma~\ref{lem:expectation_of_increments}, the expectation $\E{S_n^{\ge, \pm}(t)} \le \E{S_n^\pm(t)}$ can be bounded identically by Lemmas~\ref{lem:int_of_ns}~(\ref{lem:int_of_ns:power_p}) and~\ref{lem:int_of_nt_pm}~(\ref{lem:int_of_nt_pm:power_p_specific}) for $\g \in (0, 1)$.
    As $t_k = k n^{-1/2}$, $\E[\big]{\De_n^{\ge, \pm}(t_k, t_{k + 1})} \in O(n^{1/2})$, and thus for $\g > 1/2$,
    \[ \max_{k \le \floor{n^{1/2}}} \big( n^{-\g} \E[\big]{\De_n^{\ge, \pm}(t_k, t_{k + 1})} \big) \in O(n^{- (\g - 1/2)}) \subset o(1), \]
    as required.
\end{proof}

\section{Proofs of Propositions~\ref{prop:continuity_of_summation},~\ref{prop:So_cauchy_uniform} and Lemma~\ref{lem:So_eps_cauchy_probability} used in Section~\ref{sec:edge_count_stable}}\label{sec:appendix_proofs_part_2}

Proposition~\ref{prop:continuity_of_summation} stated that the summation functional $\chi(\eta)(t)$ is almost surely continuous with respect to the Skorokhod metric $d_\msf{Sk}$.
In the proof of Proposition~\ref{prop:continuity_of_summation}, we follow the argument of~\cite[Section~7.2.3]{heavytails}.
Our proof is slightly more involved since we have to deal with vertices having potentially large lifetimes, and we need to restrict the domain of the summation functional~$\chi$ to a compact set~$K$.
In the first step, we show that if two point measures are close, then the numbers of points in a compact set are almost surely identical.
In the second step, we show that as the points of the point measures are close, the respective functions the summation functional maps these measures are also close with respect to the Skorokhod metric $d_\msf{Sk}$.
\begin{proof}[Proof of Proposition~\ref{prop:continuity_of_summation}]
    Let $\eta \in \Nl(K)$ denote a point measure on the domain~$K$.
    Let us define $M := M(\eta) := \max_{(j_i, b_i, \ell_i) \in \eta}{\ell_i} + 1$, and set $\overline{K}_{\eps, M} := \set{(j, b, \ell) \in \overline{K}_\eps \co \ell \le M}$, such that $\eta(\any) = \eta(\any \cap \overline{K}_{\eps, M})$.
    We define a subset~$\La_{\eps, M}$ of point measures as follows:
    \[ \begin{aligned}
        \La_{\eps, M} := \big\{ \eta \in \Nl(\overline{K}_{\eps, M}) \co \quad &\eta(\overline{K}_{\eps, M}) < \ff, \quad \eta(\partial \overline{K}_{\eps, M}) = 0, \\
        &\eta \big(\set{(j, b, \ell) \in \overline{K}_{\eps, M} \co b = 0} \big) = 0, \\
        &\eta \big(\set{(j_i, b_i, \ell_i) \in \overline{K}_{\eps, M} \co \exists j \ne i \co b_i = b_j} \big) = 0, \\
        &\eta \big( \set{(j_i, b_i, \ell_i) \in \overline{K}_{\eps, M} \co \exists j \ne i \co b_i + \ell_i = b_j + \ell_j} \big) = 0, \\
        &\eta \big( \set{(j_i, b_i, \ell_i) \in \overline{K}_{\eps, M} \co \exists j \co b_i = b_j + \ell_j} \big) = 0 \big\}.
    \end{aligned} \]
    We now follow two steps:
    \begin{enumerate}
        \item first, we show that $\PP_\ff \in \La_{M(\PP_\ff)}$ almost surely;
        \item then, we show that the functional $\chi$ is continuous on $\La_{\eps, M}$.
    \end{enumerate}

    \medskip
    \noindent
    \textbf{$\PP_\ff \in \La_{M(\PP_\ff)}$ almost surely.}
    Since the expected number of points $\E{\PP_\ff(\overline{K}_\eps)}$ is finite, the number of points~$\PP_\ff(\overline{K}_\eps)$ in the domain~$\overline{K}_\eps$ is almost surely finite.
    Then, $M(\PP_\ff) < \ff$ almost surely, and then $\overline{K}_{\eps, M(\PP_\ff)}$ is almost surely compact containing all the points contributing to~$S_{n, \eps}^{(3)}(\any)$.
    Recall that in the dimension $\overline{\J}$, the set $\overline{K}_{\eps, M}$ is compactified at $\ff$.
    We check each condition in the definition of~$\La_{\eps, M}$ to show that $\P{\eta \cap \overline{K}_{\eps, M} \in \La_{\eps, M}} = 1$.
    The coordinates~$J_i$ of the points $(J_i, B_i, L_i) \in \PP_\ff$ are iid random variables with distribution $\eps \nu([\wt{c} \eps^\g, \ff) \cap \any)$.
    Furthermore, $(B_i, L_i)$ are also iid with joint distribution
    \[ \begin{aligned}
        &\P{B_i \le b, L_i \le \ell} \\
        &\qquad = \left\{ \begin{array}{ll} \tfrac{1}{2} \int_{-\ell}^b \int_{-b'}^\ell \, \mbb{P}_L(\d \ell') \d b' = \tfrac{1}{2} \big( \e^b - (1 + b + \ell) \e^{-\ell} \big) & \text{if } b \in [-\ell, 0] \\[0.4cm] \tfrac{1}{2} \int_{-\ell}^0 \int_{-b'}^\ell \, \mbb{P}_L(\d \ell') \d b' + \tfrac{1}{2} \int_0^b \int_0^\ell \, \mbb{P}_L(\d \ell') \d b' = \tfrac{1}{2} \big( 1 + b - (1 + b + \ell) \e^{- \ell} \big) & \text{if } b \in [0, 1]. \end{array} \right.
    \end{aligned} \]
    Note that at $b = 0$, both parts of the distribution are equal, and thus the distribution of the points $(J_i, B_i, L_i)$ is continuous without any atoms.
    \begin{itemize}
        \item The first condition states that the total number of points we consider is finite.
        This holds almost surely since the expected number of points $\nu([\wt{c} \eps^\g, \ff)) c_1 = \eps c_1$ in the domain~$\overline{K}_{\eps, M}$ is finite, for some constant $c_1 > 0$.
        \item The second and third conditions require that no points can be born or die at the boundaries of the time interval~$[0, 1]$, and no points can have a lifetime of exactly~$M$.
        As the distribution of the points is continuous, the second and third conditions hold almost surely as well since the corresponding sets have measure~$0$.
        \item The last three conditions require that all points are born and die at different times, which hold almost surely since the distributions of the coordinates~$B_i$, $L_i$ are continuous:
        \[ \P[\bigg]{\bigcup_{i < j} \set{B_i = B_j}} \le \sum_{i < j} \P[\big]{\set{B_i = B_j}} = 0, \]
        which can be applied to the final two conditions as well.
    \end{itemize}
    Thus, $\P{\PP_\ff \in \La_{\eps, M}} = 1$.

    \medskip
    \noindent
    \textbf{$\chi$ is continuous on $\La_{\eps, M}$.}
    Next, we show that if $\eta \in \La_{\eps, M}$, then~$\chi$ is continuous at~$\eta$, i.e., if $\eta_n \xrightarrow{v} \eta$ is a converging sequence of point measures in the vague topology, then $\chi(\eta_n) \to \chi(\eta)$ in $D([0, 1], \R)$.
    Note that $\eta_n \xrightarrow{v} \eta$ if and only if for all continuous functions~$f$ with compact support $\abs{\int f \d \eta_n - \int f \d \eta} \to 0$.
    Also note that the function $a \mapsto \nu([a, \ff))$ from $(0, \ff)$ to $(0, \ff)$ is continuous by Lemma~\ref{lem:convergence_to_levy}.
    Thus, the restriction of the jumps using the indicator $\1{J \ge \wt{c} \eps^\g}$ is almost surely continuous.
    Consider two point measures $\eta_1, \eta_2 \in \La_{\eps, M}$.
    We would like to show that if~$\eta_1$,~$\eta_2$ are close to each other, then the summation functionals~$\chi(\eta_1)$,~$\chi(\eta_2)$ are close in the Skorokhod space $D([0, 1], \R)$.
    Note that~$\overline{K}_{\eps, M}$ is compact, and $\eta \in \La_{\eps, M}$ almost surely.
    We label the~$k \in \N$ number of points $(j_i, b_i, \ell_i) \in \mrm{supp}(\eta)$ such that $\eta(\any) = \sum_{i = 1}^k \de_{(J_i, B_i, L_i)}(\any)$ and $B_1 < \cdots < B_k$, which is possible since $\eta \in \La_{\eps, M}$.
    Next, we show that there exists an index~$n_0 \in \N$ such that for all $n \ge n_0$, there exists a labeling of the points $(j_i^{(n)}, b_i^{(n)}, \ell_i^{(n)}) \in \eta_n$ such that
    \[ \eta_n(\any \cap \overline{K}_{\eps, M}) = \sum_{i = 1}^k \de_{(j_i^{(n)}, b_i^{(n)}, \ell_i^{(n)})} (\any) \qquad \text{and} \qquad (j_i^{(n)}, b_i^{(n)}, \ell_i^{(n)}) \to (j_i, b_i, \ell_i), \]
    for all indices $i \in \set{1, \dots, k}$.
    We pick a~$\de > 0$ small enough such that $G_\de(P) \cap G_\de(P') = \es$ for all distinct pair of points $P \ne P'$ in $\eta$ and $G_\de(P) \su \overline{K}_{\eps, M}$, where $G_\de(P)$ is the ball of radius~$\de$ around a point~$P \in \eta$.
    For a large enough~$n$, we have that $(P_i^{(n)}) \in G_\de(P_i)$ for all $i \in \set{1, \dots, k}$.
    Then, by~\cite[Theorem~3.2]{heavytails}, $\eta_n(G_\de(P_i)) \to \eta(G_\de(P_i))$ for all indices~$i$.
    Since $\eta_n, \eta$ are integer valued, $\eta_n(G_\de(P_i)) = \eta(G_\de(P_i))$ for~$n \ge n_0$ with some large enough~$n_0$.
    Thus, the total number of points~$k$ in the set~$\overline{K}_{\eps, M}$ is the same for all measures~$\set{\eta_n}$ and~$\eta$ if $n \ge n_0$, and there is a possible labeling of the temporal coordinates $T := \big( \set{0, 1} \cup \bigcap_{i = 1}^k \set{b_i, b_i + \ell_i} \cap [0, 1] \big)$ such that $0 = \t_1 < \cdots < \t_{k'} = 1$, where $\t_i$ is the $i$th smallest element in the set~$T$, and the number of elements in the set~$T$ is $k'$.
    We denote by $\t_i^{(n)}$ the corresponding elements of the set~$T^{(n)} := \big( \set{0, 1} \cup \bigcap_{i = 1}^k \set{b_i^{(n)}, b_i^{(n)} + \ell_i^{(n)}} \cap [0, 1] \big)$.

    Next, we define a sequence of homeomorphisms $\la_n \co [0, 1] \to [0, 1]$ by $\la_n(\t_i^{(n)}) := \t_i$ for all $i \in \set{1, \dots, k'}$, and $\la_n(\any)$ is defined by linear interpolation between these points.
    Note that the domain of $\la_n$ was chosen so that it is defined for all time instants $\set{b_i} \cup \set{b_i + \ell_i}$, $i \in \set{1, \dots, k'}$.
    The graph of the homeomorphism $\la_n$ is shown in Figure~\ref{fig:lambda}.
    \begin{figure}[htb] \centering
        \begin{tikzpicture}[>=latex, scale=0.2]
            \draw[thick,->] (0, 0) -- (30,  0);
            \draw[thick,->] (0, 0) -- ( 0, 27);

            \draw [-] ( 7, -4) -- (15, -4);
            \node at  ( 7, -4) [circle,fill,inner sep=1pt]{};
            \node at  (15, -4) [circle,fill,inner sep=1pt]{};
            \draw [-] (18, -4) -- (22, -4);
            \node at  (18, -4) [circle,fill,inner sep=1pt]{};
            \node at  (22, -4) [circle,fill,inner sep=1pt]{};
            \draw [-] (12, -5) -- (23, -5);
            \node at  (12, -5) [circle,fill,inner sep=1pt]{};
            \node at  (23, -5) [circle,fill,inner sep=1pt]{};

            \draw [-] (-6,  6) -- (-6, 12);
            \node at  (-6,  6) [circle,fill,inner sep=1pt]{};
            \node at  (-6, 12) [circle,fill,inner sep=1pt]{};
            \draw [-] (-6, 16) -- (-6, 21);
            \node at  (-6, 16) [circle,fill,inner sep=1pt]{};
            \node at  (-6, 21) [circle,fill,inner sep=1pt]{};
            \draw [-] (-7, 11) -- (-7, 23);
            \node at  (-7, 11) [circle,fill,inner sep=1pt]{};
            \node at  (-7, 23) [circle,fill,inner sep=1pt]{};

            \node at ( 3,  0) [font=\Large]{+};
            \node at ( 3, -2) {$-M$};
            \node at ( 8,  0) [font=\Large]{+};
            \node at ( 8, -2) {$0$};
            \node at (20,  0) [font=\Large]{+};
            \node at (20, -2) {$1$};
            \node at (27,  0) [font=\Large]{+};
            \node at (27, -2) {$M + 1$};

            \node at ( 0  ,  8) [font=\Large]{+};
            \node at (-1.5,  8) {$0$};
            \node at ( 0  , 20) [font=\Large]{+};
            \node at (-1.5, 20) {$1$};
            \node at ( 0  ,  4) [font=\Large]{+};
            \node at (-2.5,  4) {$-M$};
            \node at ( 0  , 25) [font=\Large]{+};
            \node at (-3.5, 25) {$M + 1$};


            \draw [dashed] ( 8,  0) -- ( 8,  8);
            \draw [dashed] ( 0,  8) -- ( 8,  8);
            \draw [dashed] (20,  0) -- (20, 20);
            \draw [dashed] ( 0, 20) -- (20, 20);

            \draw [dotted, line width=0.15mm] (-7, 11) -- (12, 11) -- (12, -5);
            \draw [dotted, line width=0.15mm] (-6, 12) -- (15, 12) -- (15, -4);
            \draw [dotted, line width=0.15mm] (-6, 16) -- (18, 16) -- (18, -4);

            \node at ( 8,  8) [circle,fill,inner sep=1pt]{};
            \node at (12, 11) [circle,fill,inner sep=1pt]{};
            \node at (15, 12) [circle,fill,inner sep=1pt]{};
            \node at (18, 16) [circle,fill,inner sep=1pt]{};
            \node at (20, 20) [circle,fill,inner sep=1pt]{};
            \draw [thick] (8, 8) -- (12, 11) -- (15, 12) -- (18, 16) -- (20, 20);

            \node at (10, 5) {$\la_n$};
            \draw [] (10, 6) -- (10, 9.5);
        \end{tikzpicture}
        \caption{
            Graph of the temporal transformation~$\la_n$.
        }
        \label{fig:lambda}
    \end{figure}
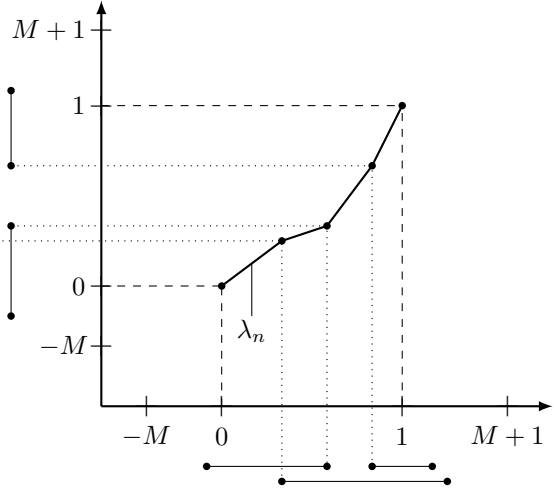
    The next lemma bounds the Skorokhod distance $d_\msf{Sk}(\chi(\eta_n), \chi(\eta))$ of the functionals $\chi(\eta_n)$ and~$\chi(\eta)$, which was defined in Definition~\ref{def:skorokhod_metric}.
    \begin{lemma}[Skorokhod distance between~$\chi(\eta_n)$ and~$\chi(\eta)$]\label{lem:bound_of_skorokhod_distance}
        The Skorokhod distance between~$\chi(\eta_n)$ and~$\chi(\eta)$ is bounded by
        \[ d_\msf{Sk}(\chi(\eta_n), \chi(\eta)) \le (c k + 3) \de, \]
        where~$c$ is a constant depending on the distribution of the points in~$\eta$.
    \end{lemma}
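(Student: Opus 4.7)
The plan is to control the two ingredients appearing in the Skorokhod distance $d_\msf{Sk}(\chi(\eta_n), \chi(\eta)) = \inf_\la(\norm{\la - I} \vee \norm{\chi(\eta_n) \circ \la^{-1} - \chi(\eta)})$ using the specific piecewise-linear homeomorphism $\la_n$ constructed above. First, since $\la_n$ is defined by linear interpolation between the points $\tau_i^{(n)} \mapsto \tau_i$ for $i = 1, \ldots, k'$, and since the choice of~$\de$ and the labeling ensure $\abs{\tau_i^{(n)} - \tau_i} \le \de$ for every~$i$, on each sub-interval $[\tau_i^{(n)}, \tau_{i+1}^{(n)}]$ the map $\la_n - I$ is affine and takes values of magnitude at most~$\de$ at the endpoints. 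Hence $\norm{\la_n - I} \le \de$, and by symmetry the same bound holds for $\norm{\la_n^{-1} - I}$.

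The key step is bounding $\norm{\chi(\eta_n) \circ \la_n^{-1} - \chi(\eta)}$. By the construction of the labeling and the ordering of the times in~$T$ and~$T^{(n)}$, for any~$t$ in a sub-interval $(\tau_i, \tau_{i+1})$ the set of indices~$\ell$ for which the $\ell$-th point of~$\eta$ is active at~$t$ coincides with the set of indices for which the $\ell$-th point of~$\eta_n$ is active at $\la_n^{-1}(t)$. This is where the assumption $\eta \in \La_{\eps, M}$ plays an essential role: it rules out coincidences among the birth and death times, so the piecewise-linear interpolation of time correctly matches the support intervals $[b_\ell^{(n)}, b_\ell^{(n)} + \ell_\ell^{(n)}] \cap [0,1]$ onto their counterparts $[b_\ell, b_\ell + \ell_\ell] \cap [0,1]$. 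Consequently, both $\chi(\eta)$ and $\chi(\eta_n) \circ \la_n^{-1}$ are affine in~$t$ on each such sub-interval, so the supremum of their difference is attained at one of the breakpoints~$\tau_i$.

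At a breakpoint~$\tau_i$, I estimate
\[ \abs[\big]{\chi(\eta)(\tau_i) - \chi(\eta_n)(\tau_i^{(n)})} \le \sum_{\ell \text{ active}} \abs[\big]{j_\ell (\tau_i - b_\ell) - j_\ell^{(n)}(\tau_i^{(n)} - b_\ell^{(n)})}, \]
and then decompose each summand via the triangle inequality into three contributions stemming from $\abs{j_\ell - j_\ell^{(n)}}$, from $\abs{\tau_i - \tau_i^{(n)}}$, and from $\abs{b_\ell - b_\ell^{(n)}}$. Each of these is at most~$\de$ times a bound depending only on $M = \max_\ell \ell_\ell + 1$ and on $j_{\max} := \max_\ell j_\ell$, both of which are finite since~$\eta$ has only finitely many points in the compact set $\overline{K}_{\eps, M}$. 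Summing over the at most~$k$ active indices yields a bound of the form $c\, k\, \de$, where the constant~$c$ depends on the distribution of the points of~$\eta$ only through~$M$ and~$j_{\max}$.

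Combining the two components through the $\vee$ in the Skorokhod distance, together with a small additive contribution to absorb the $\norm{\la_n - I} \le \de$ term and the boundary effects at $t = 0$ and $t = 1$, produces the claimed bound $d_\msf{Sk}(\chi(\eta_n), \chi(\eta)) \le (ck+3)\de$. The main obstacle is the bookkeeping required to verify that the active-index sets genuinely coincide under the time change; this hinges on the definition of $\La_{\eps, M}$, which rules out points sharing birth times, death times, or coincidences between births and deaths, and on choosing~$n$ large enough so that the canonical labeling derived from the disjoint balls $G_\de(P_i)$ is well defined.
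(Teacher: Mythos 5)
Your proposal is correct and follows essentially the same route as the paper: both proofs evaluate the Skorokhod distance along the explicit piecewise-linear homeomorphism $\la_n$, bound $\norm{\la_n - I}$ via the control $\abs{\t_i - \t_i^{(n)}} \le \de$ on the breakpoints, and control $\norm{\chi(\eta_n) \circ \la_n^{-1} - \chi(\eta)}$ by a term-by-term triangle-inequality estimate over the at most $k$ contributing points, with the $\La_{\eps, M}$ conditions guaranteeing that the same indices are ``active'' in both processes. The only notable difference is in the bookkeeping: you exploit that $\la_n - I$ is affine on each sub-interval to get the sharper bound $\norm{\la_n - I} \le \de$ directly from the endpoint values, whereas the paper applies the triangle inequality to the explicit interpolation formula and settles for $3\de$; conversely, the paper handles the indicator agreement by the algebraic identity $abc - a'b'c' = (a-a')bc + a'(b-b')c + a'b'(c-c')$ and then explicitly verifies that the third summand vanishes, which makes the cancellation of the indicator term fully transparent, while you assert the coincidence of active index sets up front and should really spell out the case analysis (handling the points that are born before $0$ or die after $1$, using $\la_n(b_i^{(n)}) = b_i$, etc.) to make the argument watertight.
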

    \noindent
    Thus, by Lemma~\ref{lem:bound_of_skorokhod_distance}, $\chi_{\eta_n} \to \chi(\eta)$ in $D([0, 1], \R)$ almost surely.
\end{proof}

In the next proof, we bound the Skorokhod distance $d_\msf{Sk}(\chi(\eta_n), \chi(\eta))$ of the functionals $\chi(\eta_n)$ and~$\chi(\eta)$ used in the proof of Proposition~\ref{prop:continuity_of_summation}.
\begin{proof}[Proof of Lemma~\ref{lem:bound_of_skorokhod_distance}]
    The Skorokhod distance can be bounded by
    \[ d_\msf{Sk}(\chi(\eta_n), \chi(\eta)) \le \norm{\la_n - I} \vee \norm{\chi(\eta_n) \circ \la_n^{-1} - \chi(\eta)}. \]
    For the first term, we have
    \[ \begin{aligned}
        \norm{\la_n - I} &= \sup_{i \in \set{1, \dots, k'}} \sup_{s \in [\t_i^{(n)}, \t_{i + 1}^{(n)}]} \abs{\la_n(s) - s} = \sup_{i \in \set{1, \dots, k'}} \sup_{s \in [\t_i^{(n)}, \t_{i+1}^{(n)}]} \abs[\bigg]{\t_i + \f{\t_{i + 1} - \t_i}{\t_{i + 1}^{(n)} - \t_i^{(n)}} (s - \t_i^{(n)}) - s} \\
        &= \sup_{i \in \set{1, \dots, k'}} \sup_{s \in [\t_i^{(n)}, \t_{i+1}^{(n)}]} \abs[\bigg]{\t_i - \t_i^{(n)} + \Big( \f{\t_{i + 1} - \t_i}{\t_{i + 1}^{(n)} - \t_i^{(n)}} - 1 \Big) (s - \t_i^{(n)})}.
    \end{aligned} \]
    Next, we apply the triangle inequality, and use that $s - \t_i^{(n)} \le \t_{i + 1}^{(n)} - \t_i^{(n)}$:
    \[ \norm{\la_n - I} \le \sup_{i \in \set{1, \dots, k'}} \bigg( \abs[\big]{\t_i - \t_i^{(n)}} + \abs[\Big]{\f{\t_{i + 1} - \t_i}{\t_{i + 1}^{(n)} - \t_i^{(n)}} - 1} (\t_{i + 1}^{(n)} - \t_i^{(n)}) \bigg) \le 3 \de. \]
    For the second term,
    \[ \begin{aligned}
        &\norm{\chi(\eta_n) \circ \la_n^{-1} - \chi(\eta)} \\
        &\qquad = \norm[\Big]{\sum_{(j^{(n)}, b^{(n)}, \ell^{(n)}) \in \eta_n} j^{(n)} (\la_n^{-1}(t) - b^{(n)}) \1{j^{(n)} \ge \wt{c} \eps^\g} \1{b^{(n)} \le \la_n^{-1}(t) \le b^{(n)} + \ell^{(n)}} \\
        &\qquad \qquad - \sum_{(j, b, \ell) \in \eta} j (t - b) \1{j \ge \wt{c} \eps^\g} \1{b \le t \le b + \ell}} \\
        &\qquad = \norm[\Big]{\sum_{i = 1}^k j_i^{(n)} (\la_n^{-1}(t) - b_i^{(n)}) \1{b_i^{(n)} \le \la_n^{-1}(t) \le b_i^{(n)} + \ell_i^{(n)}} - j_i (t - b_i) \1{b_i \le t \le b_i + \ell_i}}.
    \end{aligned} \]
    Using the identity $a b c - a' b' c' = (a - a') b c + a' (b - b') c + a' b' (c - c')$ for the above expression, we have
    \[ \begin{aligned}
        \norm{\chi(\eta_n) \circ \la_n^{-1} - \chi(\eta)} = \sup_{t \in [0, 1]} \abs[\bigg]{\sum_{i = 1}^k &\big( j_i^{(n)} - j_i \big) \big( \la_n^{-1}(t) - b_i^{(n)} \big) \1{b_i^{(n)} \le \la_n^{-1}(t) \le b_i^{(n)} + \ell_i^{(n)}} \\
        &+ j_i \Big( (\la_n^{-1}(t) - b_i^{(n)}) - (t - b_i) \Big) \1{b_i \le t \le b_i + \ell_i} \\
        &+ j_i (t - b_i) \Big( \1[\big]{b_i^{(n)} \le \la_n^{-1}(t) \le b_i^{(n)} + \ell_i^{(n)}} - \1[\big]{b_i \le t \le b_i + \ell_i} \Big)}.
    \end{aligned} \]
    Next, we apply the triangle inequality, and examine the three terms separately.
    For the first term, $\la_n^{-1}(t) b_i^{(n)} < \ff$ whenever the indicator is $1$, and $\abs{j_i^{(n)} - j_i} < \de$.
    In the second term, $j_i < \ff$ almost surely, thus we set $c_1 := \sup_{i \in \set{1, \dots, k}} j_i < \ff$.
    Then, $\abs{(\la_n^{-1}(t) - b_i^{(n)}) - (t - b_i)} = \abs{\la_n^{-1}(t) - t - (b_i^{(n)} - b_i)} \le \abs{\la_n^{-1}(t) - t} - \abs{b_i^{(n)} - b_i}$.
    We have seen that $\abs{\la_n^{-1}(t) - t} \le 3 \de$ above, and $\abs{b_i^{(n)} - b_i} \le \de$ by assumption for all indices~$i$, thus the second term is bounded by $4 c_1 \de$.
    In the last term, $j_i (t - b_i) < \ff$, and we need to show that the indicators are the same.
    \begin{itemize}
        \item Note that $b_i^{(n)} < 0$ if and only if $b_i < 0$.
        If $b_i < 0$, then $\1{b_i^{(n)} \le \la_n^{-1}(t)} = \1[\big]{b_i \le t} = 1$.
        Otherwise, $\1{b_i^{(n)} \le \la_n^{-1}(t)} = \1{\la_n(b_i^{(n)}) \le t} = \1{b_i \le t}$.
        \item Similarly, $b_i^{(n)} + \ell_i^{(n)} > 1$ if and only if $b_i + \ell_i > 1$.
        If $b_i + \ell_i > 1$, then $\1{\la_n^{-1}(t) \le b_i^{(n)} + \ell_i^{(n)}} = \1[\big]{t \le b_i + \ell_i} = 1$.
        Otherwise, $\1{\la_n^{-1}(t) \le b_i^{(n)} + \ell_i^{(n)}} = \1{t \le \la_n(b_i^{(n)} + \ell_i^{(n)})} = \1{t \le b_i + \ell_i}$.
    \end{itemize}
    Then, the indicators are the same, and the last term is~$0$ as well.
    Then,
    \[ \norm{\chi(\eta_n) \circ \la_n^{-1} - \chi(\eta)} = \sup_{t \in [0, 1]} \sum_{i = 1}^k c_2 \de = c_2 k \de \]
    for some constant $c_2 > 0$.
    Considering the above two terms, we have
    \[ \inf_\la \Big( \norm{\la - I} \vee \norm{\chi(\eta_n) \circ \la^{-1} - \chi(\eta)} \Big) \le (c_2 k + 3) \de, \]
    as required.
\end{proof}

The next proofs show that the sequence $\So_{\eps_n}^\ast$ is Cauchy in probability and almost surely with respect to the supremum norm, and we follow a similar approach to what was done in Step~3.
\begin{proof}[Proof of Lemma~\ref{lem:So_eps_cauchy_probability}]
    To ease notations, we set $\eps_n' := \wt{c} \eps_n^\g$ for all indices~$n \ge 0$.
    Furthermore, without loss of generality, we assume that $n \le m$.
    Then, similarly to Step 2, we write
    \[ \begin{aligned}
        E &:= \lim_{N \tff} \sup_{N \le n \le m} \mbb{P} \bigg( \norm[\Big]{\sum_{(J, B, L) \in \PP_\ff} J (t - B) \1{J \in [\eps'_m, \eps'_n]} \1{B \le t \le B + L} \\
        &\hspace{3cm} - c \big( (\eps_m')^{-(1 / \g - 1)} - (\eps_n')^{-(1 / \g - 1)} \big)} \ge 2 \de \bigg),
    \end{aligned} \]
    where $c := \wt{c}^{1 / \g} / (1 - \g)$.
    We use the usual \quote{plus-minus decomposition} again with the notation
    \[ \begin{aligned}
        \So_\eps^{\ast, +}(t) &:= \sum_{(J, B, L) \in \PP_\ff} J (((B + L) \w t) - B) \1{J \ge \eps'} \1{B \le t} \1{B + L \ge 0} - c (\eps')^{-(1 / \g - 1)} (t + 1) \\
        \So_\eps^{\ast, -}(t) &:= \sum_{(J, B, L) \in \PP_\ff} J L \1{J \ge \eps'} \1{B + L \in [0, t]} - c (\eps')^{-(1 / \g - 1)},
    \end{aligned} \]
    where we applied Lemma~\ref{lem:int_of_nt_pm}~(\ref{lem:int_of_nt_pm:power_p_specific}) to calculate the expectations.
    Then, we have that $\So_\eps^\ast = \So_\eps^{\ast, +} - \So_\eps^{\ast, -}$, and the triangle inequality gives $E \le E^+ + E^-$, where
    \[ E^+ := \lim_{N \tff} \sup_{N \le n \le m} \P[\Big]{\norm[\Big]{\So_{\eps_n}^{\ast, +} - \So_{\eps_m}^{\ast, +}} \ge \de} \qquad \text{and} \qquad E^- := \lim_{N \tff} \sup_{N \le n \le m} \P[\Big]{\norm[\big]{\So_{\eps_n}^{\ast, -} - \So_{\eps_m}^{\ast, -}} \ge \de}. \]
    Similarly to Step 3, $\norm{\So_{\eps_n}^{\ast, -} - \So_{\eps_m}^{\ast, -}}$ in the term $E^-$ is a martingale since $\So_{\eps_n}^{\ast, -}$ is a sum of independent random variables, and the expectation is equal to zero by the independence of the points $(J, B, L) \in \PP_\ff$ in the definition of $\So_{\eps_n}^{\ast, -}$.
    Then, by Doob's martingale inequality, we have
    \[ \begin{aligned}
        E^- &\le \lim_{N \tff} \sup_{N \le n \le m} \de^{-1} \E[\Big]{\abs[\big]{\So_{\eps_n}^{\ast, -}(1) - \So_{\eps_m}^{\ast, -}(1)}} \\
        &\le \lim_{N \tff} \sup_{N \le n \le m} \de^{-1} \Var[\Big]{\sum_{(J, B, L) \in \PP_\ff} J L \1{J \in [\eps'_m, \eps'_n]} \1{B + L \in [0, 1]}}^{1/2},
    \end{aligned} \]
    where we applied the Cauchy--Schwarz inequality in the second step.
    The variance is calculated using the independence of the points, and then Mecke's formula gives
    \[ \begin{aligned}
        \Var[\Big]{\sum_{(J, B, L) \in \PP_\ff} J L \1{J \in [\eps'_m, \eps'_n]} \1{B + L \le 1}} &= \int_{\T^{[0, 1]}} \int_{\eps'_m}^{\eps'_n} j^2 \ell^2 \nu(\d j) \dbdell \\
        &= \int_{\eps'_m}^{\eps'_n} j^2 \nu(\d j) = \f{2 \wt{c}^{1 / \g} \g}{2 \g - 1} \big( (\eps_n')^{2 - 1 / \g} - (\eps_m')^{2 - 1 / \g} \big),
    \end{aligned} \]
    where in the second step we used that the temporal part of the integral is~$2$ by Lemma~\ref{lem:int_of_nt_pm:power_p_specific}.
    Note that the variance is similar to the variance given in the proof of Lemma~\ref{lem:convergence_of_So_eps_fixed_time}.
    Then, $E^- = 0$ since $\g > 1/2$.
    For the $E^+$ term, we have
    \[ \begin{aligned}
        &E^+ = \lim_{N \tff} \sup_{N \le n \le m} \mbb{P} \bigg( \sup_{t \in [0, 1]} \abs[\bigg]{\sum_{(J, B, L) \in \PP_\ff} J (t - B) \1{J \in [\eps'_m, \eps'_n]} \1{B \le t} \1{B + L \ge 0} \\
        &\hspace{4.5cm} - c \big( (\eps_m')^{-(1 / \g - 1)} - (\eps_n')^{-(1 / \g - 1)} \big) (t + 1)} \ge \de \bigg)
    \end{aligned} \]
    We use the same strategy as in Step 3, and similarly to the Definition~\eqref{eq:definition_of_H}, we introduce $H_{\eps_m', \eps_n'}'(t)$ and write the edge count as an integral:
    \[ \begin{aligned}
        H_{\eps_m', \eps_n'}'(t) &:= \sum_{(J, B, L) \in \PP_\ff} J \1{J \in [\eps_m', \eps_n']} \1{t \in [B, B + L]} \1{B + L \ge 0} \\
        \So_\eps^{\ast, +}(t) &= \So_\eps^{\ast, +}(0) + \int_0^t H_{\eps_m', \eps_n'}'(t') - \E{H_{\eps_m', \eps_n'}'(t')} \d t',
    \end{aligned} \]
    for and $0 < \eps_m \le \eps_n$.
    Then, following Step 3, we applied triangle inequality twice to bound $E^+$:
    \[ \begin{aligned}
        E^+ &= \lim_{N \tff} \sup_{N \le n \le m} \P[\bigg]{\sup_{t \in [0, 1]} \abs[\bigg]{\So_{\eps_m'}^\ast(0) - \So_{\eps_n'}^\ast(0) + \int_0^t H_{\eps_m', \eps_n'}'(t') - \E[\big]{H_{\eps_m', \eps_n'}'(t')} \d t'} \ge \de} \\
        &\le \lim_{N \tff} \sup_{N \le n \le m} \bigg( \P[\Big]{\abs[\Big]{\So_{\eps_m'}^\ast(0) - \So_{\eps_n'}^\ast(0)} \ge \de / 2} + \P[\Big]{\int_0^1 \abs[\Big]{H_{\eps_m', \eps_n'}'(t') - \E{H_{\eps_m', \eps_n'}'(t')}} \d t' \ge \de / 2} \bigg),
    \end{aligned} \]
    where, following Step 3, we applied triangle inequality twice in the last step, and used that $\sup_{t \in [0, 1]} \int_0^t \abs{\any} \le \int_0^1 \abs{\any}$.
    For the first term, Chebyshev's inequality yields
    \[ \begin{aligned}
        &\P[\Big]{\abs[\big]{\So_{\eps_m}^\ast(0) - \So_{\eps_n}^\ast(0)} \ge \de / 2} \le 4 \de^{-2} \Var[\big]{S_{\eps_m}^\ast(0) - S_{\eps_n}^\ast(0)} \\
        &\qquad = 4 \de^{-2} \int_{\T_{\le 0}^{0 \le}} \int_{\eps_m'}^{\eps_n'} j^2 b^2 \nu(\d j) \dbdell = \f{16 \wt{c}^{1/\g} \g \de^{-2}}{2 \g - 1} \big( (\eps_n')^{2 - 1 / \g} - (\eps_m')^{2 - 1 / \g} \big),
    \end{aligned} \]
    where we applied the Mecke formula to calculate the variance in the second step, and in the last step we used that the integral of the temporal part is $2$ by Lemma \ref{lem:int_of_nt_pm}~(\ref{lem:int_of_nt_pm:power_p_specific}).
    This term converges to~$0$ as $n, m \to 0$ since $\g > 1/2$.
    For the second term, we follow again the same arguments as in Step 3, and first we apply Markov's, and then Jensen's inequality:
    \[ \begin{aligned}
        \P[\Big]{\int_0^1 \abs[\Big]{H_{\eps_m', \eps_n'}'(t') - \E{H_{\eps_m', \eps_n'}'(t')}} \d t' \ge \de / 2} &\le 2 \de^{-1} \E[\bigg]{\int_0^1 \abs[\Big]{H_{\eps_m', \eps_n'}'(t') - \E{H_{\eps_m', \eps_n'}'(t')}} \d t'} \\
        &\le 2 \de^{-1} \int_0^1 \Var{H_{\eps_m', \eps_n'}'(t')}^{1/2} \d t'.
    \end{aligned} \]
    We calculate the variance using Mecke's formula:
    \[ \Var{H_{\eps_m', \eps_n'}'(t')} = \int_{\T^{0 \le}} \int_{\eps_m'}^{\eps_n'} j^2 \1{b \le t \le b + \ell} \, \nu(\d j) \dbdell = \f{2 \wt{c}^{1/\g} \g}{2 \g - 1} \big( (\eps_n')^{2 - 1/\g} - (\eps_m')^{2 - 1/\g} \big) (t' + 1), \]
    where the temporal integral is $t' + 1$ by Lemma~\ref{lem:int_of_nt_pm:power_p_specific}.
    Then,
    \[ \P[\Big]{\int_0^1 \abs[\Big]{H_{\eps_m', \eps_n'}'(t') - \E{H_{\eps_m', \eps_n'}'(t')}} \d t' \ge \de / 2} \le \f{4}{3} \de^{-1} \Big( \f{2 \wt{c}^{1/\g} \g}{2 \g - 1} \big( (\eps_n')^{2 - 1/\g} - (\eps_m')^{2 - 1/\g} \big) \Big)^{1/2} \big( 2^{3/2} - 1 \big), \]
    which converges to~$0$ as $n, m \to \ff$ since $\g > 1/2$.
\end{proof}

In the next proof showing the Cauchy property almost surely, we follow the approach of the proof of Property 2 in~\cite[Proposition~5.7]{heavytails}.
\begin{proof}[Proof of Proposition~\ref{prop:So_cauchy_uniform}]
    Let~$\wt{\La}$ be a set with $\P{\wt{\La}} = 1$ such that if $\om \in \wt{\La}$, then the sequence $\set{\So_{\eps_k}^\ast(\any, \om), k \ge 1}$ is Cauchy with respect to the uniform convergence on $[0, 1]$.
    We prove the Cauchy property by showing $\sup_{m, n \ge N} \norm[\big]{\So_{\eps_m}^\ast - \So_{\eps_n}^\ast} \to 0$ almost surely as $N \to \ff$.
    Since the supremum is non-increasing in~$N$, it is enough to show that it converges to~$0$ in probability~\cite[Equation~(5.45)]{heavytails}.
    Then, with $\de' := \de / 4$, we have
    \[ \begin{aligned}
        \lim_{N \tff} \P[\Big]{\sup_{m, n \ge N} \norm[\big]{\So_{\eps_m}^\ast - \So_{\eps_n}^\ast} \ge \de} &= \lim_{N \tff} \lim_{M \tff} \P[\Big]{\sup_{m, n \in \set{N, \dots, M}} \norm[\big]{\So_{\eps_m}^\ast - \So_{\eps_n}^\ast} \ge \de} \\
        &\le \lim_{N \tff} \lim_{M \tff} \P[\Big]{\sup_{n \in \set{N, \dots, M}} \norm[\big]{\So_{\eps_n}^\ast - \So_{\eps_N}^\ast} \ge 2 \de'},
    \end{aligned} \]
    where in the last step we used that by the triangle inequality, $\norm[\big]{\So_{\eps_m}^\ast - \So_{\eps_n}^\ast} \le \norm[\big]{\So_{\eps_m}^\ast - \So_{\eps_N}^\ast} + \norm[\big]{\So_{\eps_N}^\ast - \So_{\eps_n}^\ast}$, but then $\sup_{m, n \in \set{N, \dots, M}} \norm[\big]{\So_{\eps_m}^\ast - \So_{\eps_n}^\ast} \le 2 \sup_{n \in \set{N, \dots, M}} \norm[\big]{\So_{\eps_n}^\ast - \So_{\eps_N}^\ast}$.
    Again, by the triangle inequality, we have that $\norm{\So_{\eps_M}^\ast - \So_{\eps_N}^\ast} \ge \norm{\So_{\eps_i}^\ast - \So_{\eps_N}^\ast} - \norm{\So_{\eps_M}^\ast - \So_{\eps_i}^\ast}$, which implies that $\set{\norm{\So_{\eps_i}^\ast - \So_{\eps_N}^\ast} > 2 \de', \norm{\So_{\eps_M}^\ast - \So_{\eps_i}^\ast} \le \de'} \su \set{\norm{\So_{\eps_M}^\ast - \So_{\eps_N}^\ast} > \de'}$ for all~$i$, and then
    \[ \bigcup_{i = N + 1}^M \set[\bigg]{\max_{k \in \set{N, \dots, i - 1}} \Big( \norm[\big]{\So_{\eps_k}^\ast - \So_{\eps_N}^\ast} \Big) \le 2 \de', \norm[\big]{\So_{\eps_i}^\ast - \So_{\eps_N}^\ast} > 2 \de', \norm[\big]{\So_{\eps_M}^\ast - \So_{\eps_i}^\ast} \le \de'} \su \set[\Big]{\norm[\big]{\So_{\eps_M}^\ast - \So_{\eps_N}^\ast} > \de'}. \]
    Note that the union is a disjoint union, since for all $k \in \set{N \dots, i - 1}$, $\So_{\eps_k}^\ast$ is within a distance of $2 \de'$ from $\So_{\eps_N}^\ast$, and $\So_{\eps_i}^\ast$ is further from $\So_{\eps_N}^\ast$ than $\de'$.
    Thus, we have
    \[ \begin{aligned}
        &\P[\Big]{\norm[\big]{\So_{\eps_M}^\ast - \So_{\eps_N}^\ast} > \de'} \\
        &\qquad \ge \sum_{i = N + 1}^M \P[\bigg]{\max_{k \in \set{N, \dots, i - 1}} \Big( \norm[\big]{\So_{\eps_k}^\ast - \So_{\eps_N}^\ast} \Big) \le 2 \de', \norm[\big]{\So_{\eps_i}^\ast - \So_{\eps_N}^\ast} > 2 \de', \norm[\big]{\So_{\eps_M}^\ast - \So_{\eps_i}^\ast} \le \de'} \\
        &\qquad = \sum_{i = N + 1}^M \P[\bigg]{\max_{k \in \set{N, \dots, i - 1}} \Big( \norm[\big]{\So_{\eps_k}^\ast - \So_{\eps_N}^\ast} \Big) \le 2 \de', \norm[\big]{\So_{\eps_i}^\ast - \So_{\eps_N}^\ast} > 2 \de'} \P[\Big]{\norm[\big]{\So_{\eps_M}^\ast - \So_{\eps_i}^\ast} \le \de'},
    \end{aligned} \]
    where in the last step we used the independence of the points whose jumps are in $[\eps_i, \eps_N]$ and those whose jumps are between $[\eps_M, \eps_i]$.
    Then, for the last term, we have $\P[\big]{\norm[\big]{\So_{\eps_M}^\ast - \So_{\eps_i}^\ast} \le \de'} \to 1$ as $N \to \ff$, where we applied Lemma~\ref{lem:So_eps_cauchy_probability}.
    Now, we choose $N \ge N_0$ large enough such that $\P{\max_{i \in \set{N, \dots, M}} (\norm{\So_{\eps_M}^\ast - \So_{\eps_i}^\ast}) \le \de'} \ge 1/2$.
    Then, continuing the lower bound for $\P{\norm{\So_{\eps_M}^\ast - \So_{\eps_N}^\ast > \de'}}$, we have
    \[ \begin{aligned}
        2 \P{\norm{\So_{\eps_M}^\ast - \So_{\eps_N}^\ast} > \de'} &\ge \sum_{i = N + 1}^M \P[\bigg]{\max_{k \in \set{N, \dots, i - 1}} \Big( \norm[\big]{\So_{\eps_k}^\ast - \So_{\eps_N}^\ast} \Big) \le 2 \de', \norm[\big]{\So_{\eps_i}^\ast - \So_{\eps_N}^\ast} > 2 \de'} \\
        &= \P[\bigg]{\max_{n \in \set{N + 1, \dots, M}} \Big( \norm[\big]{\So_{\eps_n}^\ast - \So_{\eps_N}^\ast} \Big) > 2 \de'}.
    \end{aligned} \]
    The second application of Lemma~\ref{lem:So_eps_cauchy_probability} gives that $\lim_{N \tff} \lim_{M \tff} 2 \P{\norm[\big]{\So_{\eps_M}^\ast - \So_{\eps_N}^\ast} > \de'} = 0$.
    Then, $\lim_{N \tff} \lim_{M \tff} \P{\sup_{m, n \in \set{N, \dots, M}} \norm{\So_{\eps_m}^\ast - \So_{\eps_n}^\ast} \ge \de} = 0$.
\end{proof}


\medskip
\noindent
\textbf{Acknowledgement.}
This work was supported by the Danish Data Science Academy, which is funded by the Novo Nordisk Foundation (NNF21SA0069429) and Villum Fonden (40516). BJ is supported by the Leibniz Association within the Leibniz Junior Research Group on \emph{Probabilistic Methods for Dynamic Communication Networks} as part of the Leibniz Competition (grant no.\ J105/2020) and the Berlin Cluster of Excellence \emph{MATH+} through the project \emph{EF45-3} on \emph{Data Transmission in Dynamical Random Networks}.

\bibliographystyle{abbrv}
\bibliography{main.bbl}

\end{document}